\newcommand{\balpha}{\boldsymbol{\alpha}}
\renewcommand{\vartheta}{\theta}
\newcommand{\B}{\mathcal{B}}
\newcommand{\KK}{\mathcal{K}}
\newcommand{\LL}{\mathcal{L}}
\newcommand{\N}{\mathbb{N}}
\newcommand{\Z}{\mathbb{Z}}
\renewcommand{\P}{\mathcal{P}}
\newcommand{\R}{\mathbb{R}}
\newcommand{\Q}{\mathcal{Q}}
\renewcommand{\S}{\mathcal{S}}
\newcommand{\p}{\mathfrak{p}}
\renewcommand{\leq}{\leqslant}
\newcommand{\la}{\lambda}
\renewcommand{\a}{\alpha}
\numberwithin{equation}{section}
\newtheorem{theorem}{Theorem}[section]
\newtheorem{lemma}[theorem]{Lemma}
\newtheorem{remark}[theorem]{Remark}
\newtheorem{corollary}[theorem]{Corollary}
\newtheorem{proposition}[theorem]{Proposition}
\theoremstyle{definition}
\newtheorem{definition}[theorem]{Definition}
\newtheorem{example}[theorem]{Example}
\newtheorem{conjecture}[theorem]{Conjecture}
\newtheorem{question}[theorem]{Question}
  \newtheoremstyle{TheoremNum}
        {\topsep}{\topsep}              
        {\itshape}                      
        {}                              
        {\bfseries}                     
        {.}                             
        { }                             
        {\thmname{#1}\thmnote{ \bfseries #3}}
    \theoremstyle{TheoremNum}
    \newtheorem{thmn}{Theorem}
    \newtheorem{propn}{Proposition}
\DeclareMathOperator{\spe}{sp}
\DeclareMathOperator{\spn}{span}
\DeclareMathOperator{\Tr}{Tr}
\DeclareMathOperator{\trop}{trop}
\DeclareMathOperator{\inti}{int}
\DeclareMathOperator{\clo}{cl}
\DeclareMathOperator{\tcone}{tcone}
\DeclareMathOperator{\conv}{conv}
\DeclareMathOperator{\cone}{cone}
\DeclareMathOperator{\M}{\mathcal{M}}
\DeclareMathOperator{\rank}{rk}
\DeclareMathOperator{\dhu}{dh}
\def\PSD{\operatorname{PSD}}
\def\sym{\operatorname{sym}}
\def\m{\mathfrak{m}}
\def\v{\mathbf{v}}
\DeclareFontFamily{U}{mathx}{\hyphenchar\font45}
\DeclareFontShape{U}{mathx}{m}{n}{
      <5> <6> <7> <8> <9> <10>
      <10.95> <12> <14.4> <17.28> <20.74> <24.88>
      mathx10
      }{}
\DeclareSymbolFont{mathx}{U}{mathx}{m}{n}
\DeclareMathAccent{\widecheck}{0}{mathx}{"71}
\lstdefinelanguage{Sage}[]{Python}
{morekeywords={False,sage,True},sensitive=true}
\definecolor{dblackcolor}{rgb}{0.0,0.0,0.0}
\definecolor{dbluecolor}{rgb}{0.01,0.02,0.7}
\definecolor{dgreencolor}{rgb}{0.2,0.4,0.0}
\definecolor{dgraycolor}{rgb}{0.30,0.3,0.30}
\begin{document}
\title[]
{ 
Symmetric nonnegative functions, the tropical Vandermonde cell and superdominance of power sums
}

\author{Jose Acevedo}
\address{Escuela de Matem\'aticas, Universidad Industrial de Santander, Bucaramanga, Colombia}
\email{jgacehab@correo.uis.edu.co}

\author{Grigoriy Blekherman}
\address{School of Mathematics, Georgia Institute of Technology, 686 Cherry Street Atlanta, GA 30332, USA}
\email{greg@math.gatech.edu}

\author{Sebastian Debus}
\address{Fakultät für Mathematik, Technische Universität Chemnitz, 09107 Chemnitz, Germany}
\email{sebastian.debus@math.tu-chemnitz.de}

\author{Cordian Riener}
\address{Department of Mathematics and Statistics, UiT - the Arctic University of Norway, 9037 Troms\o, Norway}
\email{cordian.riener@uit.no}


\begin{abstract}
We study nonnegative and sums of squares symmetric (and even symmetric) functions of fixed degree. We can think of these as limit cones of symmetric nonnegative polynomials and symmetric sums of squares of fixed degree as the number of variables goes to infinity. We compare these cones, including finding explicit examples of nonnegative polynomials which are not sums of squares for any sufficiently large number of variables, and compute the tropicalizations of their dual cones in the even symmetric case. We find that the tropicalization of the dual cones is naturally understood in terms of the overlooked \emph{superdominance order} on partitions. The power sum symmetric functions obey this same partial order (analogously to how term-normalized power sums obey the dominance order \cite{cuttler2011inequalities}). 
\end{abstract}
\thanks{The first and second author were partially supported by NSF Grant DMS-1901950. The third and fourth author were partially supported by European Union’s Horizon 2020 research and innovation programme under the
Marie Skłodowska-Curie grant agreement 813211 (POEMA). The third author was also partially supported by DFG grant 314838170,
GRK 2297 MathCoRe,  and the fourth author by the Tromsø Research foundation grant agreement
17matteCR}
\subjclass{14P99,\, 05E05,\, 14T90}
\keywords{Nonnegative polynomials, sums of squares, symmetric functions, inequalities, tropicalization, superdominance order}
\maketitle
\markboth{J.~Acevedo, G.~Blekherman, S.~Debus, and C.~Riener}{Symmetric forms}

\section{Introduction}

The relationship between nonnegative polynomials and sums of squares plays an important role in real algebraic geometry, and symmetric polynomials form a distinguished family of examples. The study of inequalities in real symmetric polynomials goes back to Newton \cite{hardy1952inequalities,newton1732arithmetica}. We are interested in inequalities in symmetric polynomials which hold regardless of the number of variables; we can view such inequalities as inequalities between \emph{symmetric functions} (see e.g. \cite[Chapter I.2]{macdonald1998symmetric}). We can also regard symmetric functions as natural limits of symmetric polynomials as the number of variables increases. Therefore symmetric functions can be seen as functions on the {\em image at infinity} of the so-called {\em Vandermonde map}, which has been studied extensively by Arnold, Givental, Kostov, and Ursell in finitely many variables \cite{arnol1986hyperbolic,givental1987moments,kostov1989geometric,kostov1999hyperbolicity,kostov2004very,ursell1959inequalities} and at infinity \cite{kostov2004very,kostov2007stably,acevedo2023wonderful}. 
\smallskip

Inequalities in symmetric functions can be naturally expressed in the power sum basis. Examining such inequalities leads us to the \emph{superdominance order} on partitions, and we find extensions of inequalities in $\ell^p$-norms for integer exponents and Lyapunov's inequality (\cite[Sec.~ 2.9 and 2.10]{hardy1952inequalities}). We find explicit uniform examples of power sum inequalities valid for any number of variables, which do not have a sum of squares certificate, e.g. the quartic $ 4p_1^4-5p_2p_1^2-\frac{139}{20}p_3p_1+4p_2^2+4p_4$ and the decic $ \frac{1}{18}p_2^5+3 p_8p_2 + 6 p_6p_4-3p_6p_2^2$. Moreover, we prove that for symmetric functions, the cones of symmetric (and even symmetric) nonnegative forms are not semialgebraic for any degree $2d\ge4$, (respectively $2d\ge6$), while the cones of symmetric, and even symmetric sums of squares are semialgebraic, hence proving their difference (Theorem~\ref{thm:Limitcones not equal}).
We then study the tropicalizations of the dual cones of the limit cones, and discover hidden combinatorial structure, also naturally expressed in terms of the superdominance order.
\smallskip

Recently \textit{tropicalization} has been applied beyond classical algebraic geometry in extremal combinatorics \cite{blekherman2022tropicalization,blekherman2022path} and real algebraic geometry \cite{blekherman2022moments,acevedo2024power}. Using tropicalization we study the dual cones to nonnegative and sums of squares homogeneous even symmetric functions. We show that the tropicalization of the \emph{Vandermonde cell} (the limit of the image of $\R^n$ under the even power sum map) is a rational polyhedral cone, and give an explicit list of facet-defining inequalities (Theorem~\ref{thm:trop(Nd)}). This allows us to compute the tropicalization of the dual cone to even symmetric nonnegative functions (Proposition~\ref{prop:trop dual to psd}) via \emph{tropical convexity} \cite{develin2004tropical} similar to \cite{blekherman2022moments, acevedo2024power}. Furthermore, the inequalities defining the tropical Vandermonde cell can be described in terms of the superdominance order, a partial order on partitions which is also related to power sum inequalities (Theorem~\ref{thm: power sums order}), analogous to how \emph{term-normalized power sums} obey the dominance order (see \cite{cuttler2011inequalities}). We give a combinatorial description of the tropicalization of the dual cone to even symmetric nonnegative functions (Theorem~\ref{thm:tropmom}). The superdominance order also helps us compute the tropicalization of the dual cone to symmetric sums of squares via a lemma on tropical spectrahedra (Lemma~\ref{le:trop of sos}).  We also use symmetry-adapted bases constructed from higher Specht polynomials (as in \cite{blekherman2021symmetric}) and \emph{partial symmetry reduction} (a technique introduced in \cite{acevedo2024power}). We conjecture that the tropicalization of the dual cone to symmetric sums of squares has a combinatorial characterization in terms of the superdominance order (Conjecture~\ref{conj:tropsos}).
We now introduce background and notation, and discuss our results in detail.

\subsection{Background and notation}
Let $\S_n$ denote the \emph{symmetric group} on $n$ letters, i.e. the group of permutations of $[n]:=\{1,2,\dots,n\}$. Symmetric polynomials in $\R[X_1,\dots,X_n]$ are invariant polynomials under the action of $\S_n$ that permutes the variables.
\smallskip

Let $\B_n$ denote the \emph{hyperoctahedral group} on $n$ letters, i.e. the group of signed permutations of $[n]$. We call a polynomial in $\R[X_1,\dots,X_n]$ \emph{even symmetric} if it is invariant under the action of $\B_n$ that permutes the variables (multiplying by the sign of the permutation). Equivalently, $n$-variate even symmetric polynomials are symmetric polynomials in the ring $\R[X_1^2,\dots,X_n^2]$.
\smallskip

Let $H_{n,d}:=\R[X_1,\dots,X_n]_d$ be the $\R$-vector space of $n$-variate homogeneous polynomials (forms) of degree $d$ in the variables $X_1,\dots,X_n$. With the above actions, each vector space $H_{n,d}$, for fixed $n$ and any $d$, is both an $\S_n$-module and a $\B_n$-module. Denote by $H_{n,d}^\S$ the subspace of $\S_n$-invariant forms in $H_{n,d}$, and by $H_{n,d}^\B$ the subspace of $\B_n$-invariant forms in $H_{n,d}$.
\smallskip

It is well-known that the dimension of $H_{n,d}^\S$ (resp. $H_{n,2d}^\B$) as an $\R$-vector space is the number of partitions of $d$ with at most $n$ parts. A \emph{partition} $\la$ of a positive integer $d$ is a decreasing sequence of positive integers that add up to $d$, i.e. $\la=(\la_1,\dots,\la_k)$ where $\la_1\ge\la_2\ge\dots\ge\la_k$, $\la_1+\la_2+\dots+\la_k=d$, and $\la_i\in\N_{>0}$ for $i=1,\dots,k$. The numbers $\la_i$ are called the \emph{parts} of $\la$, and the number of parts of $\la$ is called its \emph{length}, which we denote by $\ell(\la)$. The sum of the parts of a partition $\la$, denoted $|\la|$, is called its \emph{size}. We call a partition \emph{even} if all its parts are even numbers. For example, if $\la=(4,2,2)$ then $\la$ is even, $\la_1=4,\la_2=\la_3=2$, $\ell(\la)=3$ and $|\la|=8$. We condense the number of repeated parts in an exponent, for example, $(4,2^3,1^2):=(4,2,2,2,1,1)$. We consider the empty set $\emptyset$ to be the unique partition of $0$, so that $\ell(\emptyset)=|\emptyset|=0$. 
\smallskip

We denote by $\Lambda_d$ the set of partitions of $d$ and by $\Lambda_{2d}^\varepsilon$ the set of even partitions of $2d$. We will often use the notation
\[ \la\vdash d \text{ if } \la\in\Lambda_d, \text{ and }\la\vDash 2d \text{ if } \la\in\Lambda_{2d}^{\varepsilon}.\]

The vector spaces $H_{n,d}^\S$ and $H_{n,2d}^\B$ have many well-known bases, some of them being the \emph{power sum polynomials}, the \emph{monomial symmetric polynomials}, and the \emph{elementary symmetric polynomials}. For a positive integer $r$, $p_r(X_1,\dots,X_n):=X_1^r+\dots+X_n^r$ is the $r$-th power sum polynomial, and, if $r\le n$, $e_r(X_1,\dots,X_n):=\sum_{1\le i_1<\dots<i_r\le n}X_{i_1}\cdots X_{i_r}$ is the $r$-th elementary symmetric polynomial. We will write $p_r$ and $e_r$ when the number of variables is clear from the context. 
For a partition $\la$ we define
\begin{align*}
    p_\la =\prod_{i=1}^{\ell(\la)}p_{\la_i}, \quad    e_\la =\prod_{i=1}^{\ell(\la)}e_{\la_i}, \quad
    m_\la =\sum X^\la,
\end{align*}
the power sum, elementary, and monomial symmetric polynomials respectively, and where we sum over all monomials whose exponents are precisely $\la_1,\dots,\la_{\ell(\la)}$ in $m_\la$. For $n\ge d$ the sets $\{p_\la:\la\vdash d\}$, $\{e_\la:\la\vdash d\}$, $\{m_\la:\la\vdash d\}$ are bases for $H_{n,d}^\S$, and the sets $\{p_\la:\la\vDash 2d\}$, $\{e_\la:\la\vDash 2d\}$, $\{m_\la:\la\vDash 2d\}$ are bases for $H_{n,2d}^\B$. Hence, for $n\ge d$ the dimension of $H_{n,d}^\S$ (and the dimension of $H_{n,2d}^\B$) is the number of partitions of $d$, which we denote by $\pi(d)$. 
The transition matrices between all three bases do not depend on the number of variables for all $n \geq d$  \cite[Sec.~I.6.]{macdonald1998symmetric}. 
\smallskip

A \emph{symmetric function} is a formal power series in countably many variables that is invariant under the permutation of the variables and of bounded degree. We consider the \emph{power sum functions} $\p_k := \sum_{i \in \N} X_i^k$, \emph{elementary symmetric functions} $\mathfrak{e}_k = \sum_{I \subset \N, |I| = k}\prod_{i \in I}X_i$ and \emph{monomial symmetric functions} $\mathfrak{m}_\la = \sum X^\la$. An \emph{even symmetric function} is a symmetric function in which the exponent vector of every monomial contains only even integers.
\smallskip

We will usually write $\R^{\pi(d)}$, indicating that the coordinates are labeled with the partitions of $d$ ordered according to the \emph{reverse lexicographic order}. The reverse lexicographic order $>_{revlex}$ is a total order on the set of partitions. For any partition $\la$ let $\la_{(1)}\le\la_{(2)}\le\dots\le\la_{(\ell(\la))}$ denote the parts of $\la$ in increasing order, i.e., $\la_{(i)}=\la_{\ell(\la)+1-i}$ for $i=1,\dots,\ell(\la)$. For distinct partitions $\la$ and $\mu$ we say that $\la>_{revlex}\mu$ if for the smallest $i$ such that $\la_{(i)}\ne\mu_{(i)}$ we have $\la_{(i)}<\mu_{(i)}$. For example, $(1^4)>(2,1^2)>(3,1)>(2^2)>(4)$ where $>$ is $>_{revlex}$. We list coordinates of vectors $c\in\R^{\pi(d)}$ in this order, from largest to smallest, e.g. for $c\in\R^{\pi(4)}$: $c=(c_1,c_2,c_3,c_4,c_5)=(c_{(1^4)},c_{(2,1^2)},c_{(3,1)},c_{(2^2)},c_{(4)})$.
\medskip

\begin{definition}\label{def:Vandermonde map}
The \emph{Vandermonde map} of degree $d$ in $n$ variables is the function 
\begin{align*}
\nu_{n,d}:\R^n&\longrightarrow\R^d\\
x&\longmapsto(x_1+\dots+x_n,x_1^2+\dots+x_n^2,\dots,x_1^d+\dots+x_n^d).
\end{align*}
We write $\mathcal{M}_{n,d} = \nu_{n,d}(\R^n)$ for the image of the Vandermonde map. In our work, the Vandermonde map appears naturally in the study of nonnegative symmetric forms. For fixed degree $d$ and increasing number of variables the images of these maps form an ascending chain, i.e., we have $\mathcal{M}_{n,d}  \subseteq  
 \mathcal{M}_{n+1,d}$ for all $n\ge1$. We are particularly interested in the image of the \textit{Vandermonde map at infinity}, i.e. in the closure of the union of all the images $$\mathcal{M}_d:=\clo \left( \bigcup_{n=1}^\infty\mathcal{M}_{n,d} \right)~.$$ We also consider the \textit{even Vandermonde map} 
\begin{align*}
\nu_{n,d}^\varepsilon:\R^n&\longrightarrow\R^d\\
x&\longmapsto(x_1^2+\dots+x_n^2,x_1^4+\dots+x_n^4,\dots,x_1^{2d}+\dots+x_n^{2d}).
\end{align*} 
its image $\mathcal{N}_{n,d} = \nu_{n,d}^\varepsilon (\R^n)$,
and its \textit{image at infinity} $\mathcal{N}_d:=\clo \left( \bigcup_{n=1}^\infty\mathcal{N}_{n,d} \right)$.  Note that $\mathcal{N}_{n,d}$ equals the image of $\nu_{n,d}$ restricted to the nonnegative orthant. We call the image of the even Vandermonde map \emph{the Vandermonde cell}. 
\end{definition}

When working in finitely many variables we use the power sum basis of $H^S_{n,d}$. With the restriction $p_\la(X_1,\ldots,X_{n+1}) \mapsto p_\la(X_1,\ldots,X_n,0)$ we identify the vector spaces $H_{n,d}^{\S}$ for all $n \geq d$. Analogously, we identify the vector spaces $H_{n,2d}^{\B}$. This defines an inverse system whose limit can be identified with the homogeneous component of degree $d$ (resp. $2d$) of the ring of (resp. even) symmetric functions. \smallskip

Denoting by $\Sigma_{n,2d}^\S$ and $\P_{n,2d}^\S$ the cones of sums of squares and nonnegative forms in $H_{n,2d}^{\S}$, we have the inclusions $\Sigma_{n+1,2d}^\S \subset \Sigma_{n,2d}^\S$ and $\P_{n+1,2d}^\S \subset \P_{n,2d}^\S$.
The cones $$\S\Sigma_{2d} = \bigcap_{n \geq 2d} \Sigma_{n,2d}^\S \qquad\text{and}\qquad \S\P_{2d} = \bigcap_{n \geq 2d} \P_{n,2d}^\S$$ then contain any homogeneous symmetric function of degree $2d$ that is a sum of squares/nonnegative in any number of variables. 
Analogously, we define $\B\Sigma_{2d}$ and $\B\P_{2d}$ for the subspace of even symmetric forms.
\smallskip

\subsection{Main results in detail}

In Section \ref{sec:superdominance} we introduce the \emph{superdominance order} $\succeq$ on the set $\Lambda_d$ (see Definition \ref{def:superdominance}) and observe that the power sum polynomials are ordered in the same way as partitions with respect to $\succeq$. \medskip

\begin{thmn}[\ref{thm: power sums order}]
Let $\la,\mu\vdash d$. Then $p_\la\ge p_\mu$ on $\R_{\ge0}^n$ for all $n$ if and only if $\la\succeq\mu$. 
\end{thmn}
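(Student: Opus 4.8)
The plan is to prove the equivalence in two directions, with the substantive content lying in the ``only if'' direction. I expect the ``if'' direction ($\la\succeq\mu \Rightarrow p_\la\ge p_\mu$ on $\R_{\ge0}^n$) to follow by decomposing the covering relations of the superdominance order into a small number of elementary moves on partitions and checking that each move corresponds to a valid pointwise inequality between the associated products of power sums. The natural building block should be an inequality of the form $p_a p_b \ge p_{a+c} p_{b-c}$ (for suitable $a\le b$, $0< c\le b-a$, all indices positive) valid on $\R_{\ge 0}^n$; this is a ``rearrangement/Chebyshev-type'' inequality that one can prove by clearing denominators and invoking the fact that for nonnegative reals the sequence $t\mapsto x_i^t$ is log-convex, or more concretely by the Schur-convexity / majorization of exponent vectors. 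Once the generating inequalities are in hand, transitivity of $\ge$ on $\R_{\ge0}^n$ gives the full implication, provided the superdominance order is generated (as the transitive closure) by these moves — establishing that combinatorial fact is the first thing I would pin down, presumably directly from Definition~\ref{def:superdominance}.

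For the converse, suppose $\la\not\succeq\mu$; I want to exhibit a point $x\in\R_{\ge0}^n$ (for some $n$) with $p_\la(x) < p_\mu(x)$. The idea is to test against points of the form $x=(t,t,\dots,t,1,1,\dots,1,0,\dots)$ or, more flexibly, $x=(N^{w_1},N^{w_2},\dots)$ and let $N\to\infty$: on such a point, $p_r$ behaves like $N^{\,r\cdot\max_j w_j}$ up to lower order, so $\log p_\la \sim (\max_j w_j)\,|\la|$ to leading order — which is the same for $\la$ and $\mu$ — and one must look at the next-order terms, which is exactly where the combinatorics of the superdominance order re-enters. Concretely, evaluating at $x$ with a single large coordinate $t$ and $m$ coordinates equal to $1$ gives $p_r(x) = t^r + m$, so $p_\la(x) = \prod_i (t^{\la_i}+m)$, a polynomial in $t$ whose exponents and coefficients encode subsums of the parts of $\la$; comparing the expansions of $p_\la$ and $p_\mu$ as $t\to\infty$ (with $m$ fixed, then varied) should detect precisely a failure of the defining inequalities of $\succeq$. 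I would phrase this cleanly using tropical/valuation language: the leading behavior of $p_\la$ along the curve $x(t)$ picks out $\max$ over the relevant index sets, and $\la\not\succeq\mu$ guarantees some test curve along which $p_\mu$ strictly dominates.

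The main obstacle, I expect, is the converse direction — specifically, engineering the test point that certifies $p_\la < p_\mu$ when $\la \not\succeq \mu$, and doing so uniformly from the (possibly several) ways in which the superdominance inequalities can fail. One has to be careful that the chosen $x$ lies in $\R_{\ge0}^n$ (not merely a limit of such) and that strictness is genuinely achieved, which may force taking $t$ large but finite and tracking the sign of a subleading coefficient rather than just the leading exponent. A secondary technical point is bookkeeping in the ``if'' direction: verifying that the elementary move $p_ap_b\ge p_{a+c}p_{b-c}$ is exactly compatible with a covering step in $\succeq$ (including edge cases where a part drops to zero and the length of the partition changes, or where parts merge), and that no move leaves the set $\Lambda_d$. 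If the superdominance order turns out not to be generated by such two-part moves alone, I would fall back on a direct argument: for fixed $x\in\R_{\ge0}^n$, the map $\la\mapsto \log p_\la(x)$ should be shown to be monotone for $\succeq$ by a convexity argument on exponent multiset, bypassing the need to enumerate covering relations.
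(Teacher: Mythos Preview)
Your ``if'' direction is essentially the paper's: the covering relations of $\succeq$ (Proposition~\ref{prop:covering}) are (i) a same-length dominance move and (ii) the merge $\la\mapsto\la^*$; log-convexity of $r\mapsto p_r$ handles (i) via $p_ap_b\ge p_{a+c}p_{b-c}$, and $p_{\la_1}p_{\la_2}\ge p_{\la_1+\la_2}$ handles (ii). The paper packages this as Corollary~\ref{cor:sd->sos}.

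The ``only if'' direction is where your proposal and the paper diverge, and where there is a real gap in your sketch. Your test points $(t,1^m)$ with $m=t^{\alpha}$ are well chosen: as $t\to\infty$ they realize, modulo the lineality line $\R\cdot(1,2,\dots,d)$, all the extremal rays of $\trop(\mathcal{N}_d)$, namely $(k,k-1,\dots,1,0,\dots,0)$ for $k=1,\dots,d-2$ together with $\mathbf{1}$. But on the ray $(k,k-1,\dots,1,0,\dots,0)$ the functional $\sum_r(\la(r)-\mu(r))y_r$ equals $\sum_{p\in\la}(k{+}1{-}p)_+-\sum_{q\in\mu}(k{+}1{-}q)_+$, which compares parts by \emph{value} (those $\le k$) rather than by \emph{rank} (the $j$ smallest). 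Passing from nonnegativity of these quantities to the superdominance inequalities $\sum_{i\le j}\la_{(i)}\le\sum_{i\le j}\mu_{(i)}$ is not automatic and is exactly the missing step; your sentence ``should detect precisely a failure of the defining inequalities of~$\succeq$'' hides it. The paper does not attempt a direct witness construction at all: it computes the full $H$-representation of $\trop(\mathcal{N}_d)$ (Theorem~\ref{thm:trop(Nd)}), uses Farkas' lemma to write the valid inequality $\sum_r\Delta_r y_r\ge0$ as a nonnegative integer combination of the $d-1$ facets, and then applies the fusion and cancellation Lemmas~\ref{lem:fusion} and~\ref{lem:arquimedean} to extract $\la\succeq\mu$ from that combination. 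Your approach could be completed instead via a Hardy--Littlewood--P\'olya style argument (the functions $\a\mapsto\sum_{p\in\la}(\a-p)_+$ are Legendre-dual to the partial sums $j\mapsto\sum_{i\le j}\la_{(i)}$), but that argument needs to be made, with care for the case $\ell(\la)\ne\ell(\mu)$.
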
 \smallskip

Furthermore, if $\la$ and $\mu$ are even partitions then $\la\succeq\mu$ is equivalent to $p_\la-p_\mu$ being a sum of squares for all number of variables (Proposition \ref{prop:+impliesSOS}). By exploiting combinatorial properties of the superdominance order we prove one direction of Theorem \ref{thm: power sums order} in Section \ref{sec:superdominance}, and use properties of tropicalization to finally prove the theorem in Section \ref{sec:tropicalization}. In Section \ref{sec:trop/superdominance/power sums/ limit cones} we show that the superdominance order is tightly related to the tropicalizations of the dual cones to $\B\Sigma_{2d}$ and $\B\P_{2d}$.

In Section \ref{sec:limit cones} we study the general convex geometry of the cones $\S\Sigma_{2d}$, $\S\P_{2d}$, $\B\Sigma_{2d}$, $\B\P_{2d}$.
Our first observation in Theorem \ref{thm:Limitconesare full dimensional} is that these cones are full-dimensional pointed closed convex cones and that these are the Kuratowski limits of the cones in finitely many variables. We study the symmetric quartics in detail and find a form in $\S\P_4 \setminus \S\Sigma_4 $. 
To the best of our knowledge, this is the first example of a symmetric nonnegative form that is not a sum of squares \emph{regardless of the number of variables} ($n\ge4$), where the coefficients are independent of the number of variables when expressed in the power sum basis. \smallskip

\begin{thmn}[\ref{thm:limit quartic}] 
The symmetric quartic $ 4p_1^4-5p_2p_1^2-\frac{139}{20}p_3p_1+4p_2^2+4p_4$ is nonnegative for all number of variables. Moreover, these quartic forms are not sums of squares for any number of variables $n\ge4$.
\end{thmn}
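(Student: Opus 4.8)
The plan is to establish the two claims separately: first nonnegativity for all $n$, then failure to be a sum of squares for every $n\ge 4$.

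For nonnegativity, I would first reduce to a bounded number of variables. Since the form is homogeneous of degree $4$ and symmetric, and since by Theorem~\ref{thm:Limitconesare full dimensional} the cone $\S\P_4$ is the Kuratowski limit of the $\P_{n,4}^\S$, it suffices to check nonnegativity of $f = 4p_1^4-5p_2p_1^2-\frac{139}{20}p_3p_1+4p_2^2+4p_4$ on $\R^n$ for all $n$. The key point (used repeatedly for degree-$4$ symmetric forms, cf.\ the theory of the Vandermonde map of Arnold--Kostov) is that a nonnegativity test for a symmetric quartic can be localized: an extremal point of the relevant moment/Vandermonde body $\M_{n,4}$ on which such a form could be minimized has support concentrated on few distinct values. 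Concretely, I would argue that it is enough to verify $f\ge 0$ on points $x\in\R^n$ taking at most two distinct nonzero values with multiplicities, i.e.\ $x$ of the form with $a$ coordinates equal to $s$ and $b$ coordinates equal to $t$. Substituting $p_k = a s^k + b t^k$ turns $f$ into an explicit polynomial inequality in $a,b,s,t$ (homogeneous, so one can normalize), and after clearing denominators one checks this reduces to a nonnegativity statement for a polynomial in few variables, which can be certified by an explicit SOS decomposition over $\R(a,b)$ or by a direct discriminant computation. The coefficient $\frac{139}{20}$ is presumably chosen exactly so that this localized inequality is tight (the minimum is $0$, attained at a specific ratio), which is what forces nonnegativity to be sharp and simultaneously obstructs the SOS property.

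For the failure of the SOS property, the strategy is duality. For each fixed $n\ge 4$, it suffices to exhibit a symmetric linear functional $L$ on $H_{n,4}^\S$ that is nonnegative on $\Sigma_{n,4}^\S$ (equivalently, $L$ comes from a positive semidefinite symmetry-adapted Gram pairing, or a symmetric quadratic form that is a moment functional) but with $L(f)<0$. By the symmetry reduction for degree-$4$ symmetric sums of squares (as in \cite{blekherman2021symmetric}), the cone $\Sigma_{n,4}^\S$ has a small, $n$-independent description after block-diagonalization: a symmetric quartic is SOS iff certain matrices of bounded size, indexed by the isotypic components occurring in degree $2$, are positive semidefinite. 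I would write down the dual cone in these coordinates, identify the extreme ray corresponding to the point where the localized nonnegativity inequality above is tight, and lift it to the desired functional $L$. Evaluating $L(f)$ should give a negative number independent of $n$ (for $n\ge 4$; the threshold $n=4$ enters because one needs enough variables to realize the two-value configuration with the right multiplicities, and because the degree-$2$ isotypic structure stabilizes only for $n\ge 4$). Equivalently — and this may be the cleanest route — one can transfer to the limit: show $f$ lies strictly outside $\S\Sigma_4$ by producing a single functional on symmetric functions separating $f$ from all $p_\mu$-type squares, then note that since $\S\Sigma_4 = \bigcap_{n\ge 4}\Sigma_{n,4}^\S$ and the reverse inclusions are strict in a controlled way, non-membership propagates to each finite $n$.

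The main obstacle I anticipate is the second part: producing the separating functional with a genuinely $n$-uniform negativity, and in particular justifying the threshold $n\ge 4$ rather than some larger $n_0$. This requires the symmetry-reduced (partial symmetry reduction) description of $\Sigma_{n,4}^\S$ to be sharp enough that the relevant Gram matrix blocks and their PSD-ness do not change with $n$ once $n\ge 4$; this stabilization is exactly the kind of statement the higher Specht polynomial / symmetry-adapted basis machinery referenced in the introduction is designed to provide, so I would lean on that. The first part, by contrast, is essentially a finite computation once the reduction to two-value configurations is in place — the only subtlety is proving that reduction, which follows from the structure of the boundary of $\M_{n,4}$ (the Vandermonde image), again a result available from the cited Arnold--Givental--Kostov--Ursell theory and its version at infinity in \cite{acevedo2023wonderful}.
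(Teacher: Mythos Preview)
Your proposal has the right skeleton, but it misreads the mechanism behind the example and contains a genuine logical gap in the non-SOS direction.

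\textbf{Nonnegativity and the role of $139/20$.} You guess the coefficient is chosen so that nonnegativity is \emph{tight}; it is not. The engine is Theorem~\ref{thm:testset}: a symmetric quartic function lies in $\S\P_4$ iff it is nonnegative on the \emph{discrete} set $\{(x,1,n^{-1/2},n^{-1}):x\in\R,\ n\in\N_{>0}\}$, whereas it lies in $\S\Sigma_4$ iff it is nonnegative on the \emph{continuous} set $\{(x,1,u,u^2):x\in\R,\ 0\le u\le 1\}$. Writing $g_x(u)=4u^2-\tfrac{139}{20}\,xu+4x^4-5x^2+4$, the coefficient is chosen so that for every $x$ the real roots of $g_x$ (when they exist) land in $[\tfrac{1}{\sqrt2},1]\cup[-1,-\tfrac{1}{\sqrt2}]$, i.e.\ in the \emph{gap} between the consecutive test values $u=1$ and $u=1/\sqrt2$. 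Hence $g_x(n^{-1/2})\ge0$ for all $n\ge1$ (nonnegativity, strictly), while $g_1(0.9)<0$ (failure of SOS in the limit). The verification reduces to two explicit univariate quartics being globally nonnegative. A two-value reduction via the half-degree principle is not wrong for fixed $n$, but it does not expose the discrete-versus-continuous gap that makes this example work, and your ``tightness'' heuristic would send you looking at the wrong configurations.

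\textbf{Non-SOS for all $n\ge4$.} Your ``transfer to the limit'' is backwards. From $f\notin\S\Sigma_4=\bigcap_{n}\Sigma_{n,4}^\S$ you get $f\notin\Sigma_{N,4}^\S$ for \emph{some} $N$, and then for all $n\ge N$ by the nesting $\Sigma_{n+1,4}^\S\subset\Sigma_{n,4}^\S$; this says nothing about small $n$. Dually, a separating functional in $\S\Sigma_4^*$ need not lie in any $(\Sigma_{n,4}^\S)^*$, since $(\Sigma_{n,4}^\S)^*\subset\S\Sigma_4^*$. Your hope that the symmetry-reduced Gram blocks ``do not change with $n$ once $n\ge4$'' is also false: their entries are nonconstant rational functions of $n$, and $\Sigma_{n,4}^\S$ shrinks strictly as $n$ grows. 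The paper closes this gap by a single finite check: verify $f\notin\Sigma_{4,4}^\S$ directly (Macaulay2, \texttt{SumsOfSquares}), and then the nesting $\Sigma_{n,4}^\S\subset\Sigma_{4,4}^\S$ for $n\ge4$ gives the conclusion for every $n\ge4$. No $n$-uniform functional is constructed.
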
 \smallskip

We are able to construct this family by finding a \emph{test set for nonnegativity} (and a test set for being a sum of squares), similar to \cite{choi1987even}, for symmetric quartics.
The test set for nonnegativity is derived from \cite{kostov2007stably}, while the test set for being a sum of squares is obtained from examining the extremal rays of the dual cone to $\S\Sigma_4$. \medskip

\begin{thmn}[\ref{thm:testset}] 
Let $f(\p_1,\p_2,\p_3,\p_4)$ be a homogeneous even symmetric function of degree $4$. Then, $f$ is nonnegative in all number of variables if and only if $f$ is nonnegative on the discrete set of parallel lines $$\{(x,1,n^{-1/2},n^{-1})\,\,|\,\,x\in\R,\, n\in\N_{>0}\}.$$ Moreover, $f$ is a sum of squares for all number of variables if and only if\/ $f$ is nonnegative on $$\{(x,1,u,u^2)\,\,|\,\,x\in\R,\, 0\le u\le1\}.$$
\end{thmn}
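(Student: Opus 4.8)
The plan is to reduce nonnegativity and the sum-of-squares property to membership questions for the image of the even Vandermonde map $\mathcal{N}_4$ and its convex hull $\conv(\mathcal{N}_4)$, and then to identify an explicit one-parameter "test curve" in each of these two sets whose closed conic hull already generates all the relevant valuations. Concretely, a homogeneous even symmetric function $f$ of degree $4$ in $\p_1,\dots,\p_4$ is nonnegative in all numbers of variables precisely when $f\ge 0$ on $\mathcal{N}_4 = \clo(\bigcup_n \mathcal{N}_{n,4})$, and $f$ is a sum of squares for all $n$ (i.e. lies in $\B\Sigma_8$) precisely when $f\ge 0$ on $\conv(\mathcal{N}_4)$, since $\B\P_8$ and $\B\Sigma_8$ are dual respectively to $\conv(\mathcal{N}_4)$ and to its closure's sos-analogue — here one should first note that in degree $4$ the relevant moment cone and the sos cone are actually equal on the nose on this four-dimensional space, so the only gap between $\B\P_8$ and $\B\Sigma_8$ is captured by the difference between $\mathcal{N}_4$ and its convex hull. (This last point is where I would lean on the structural results of Section~\ref{sec:limit cones}, in particular the description of the dual cone to $\S\Sigma_4$.)

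For the nonnegativity half, the idea is to use the Kostov-type test set from \cite{kostov2007stably}: after normalizing (dehomogenizing by $\p_2$, which we may assume positive, and then by scaling), the closure $\mathcal{N}_4$ is swept out by the slices one gets from finitely-supported nonnegative vectors, and a compactness/limiting argument shows that the extreme behavior is attained in the limit by configurations of the form $(x,1,1,\dots)$ with one large coordinate and $n-1$ equal small ones — which after the even-power-sum substitution produces exactly the points $(x,1,n^{-1/2},n^{-1})$ as $n$ ranges over $\N_{>0}$ and $x$ over $\R$. So I would: (i) recall from Section~\ref{sec:superdominance}/\ref{sec:limit cones} the precise parametrization of $\partial\mathcal{N}_4$ coming from Kostov's work on stably hyperbolic polynomials; (ii) argue that a degree-$4$ even symmetric function, being affine-linear in $\p_1$ after fixing $\p_2,\p_3,\p_4$, attains its minimum over $\mathcal{N}_4$ on the boundary, and that the boundary pieces are covered by these parallel lines; (iii) conclude the "if and only if" for nonnegativity.

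For the sum-of-squares half, the strategy is to compute the extreme rays of the dual cone to $\S\Sigma_4$ (equivalently $\B\Sigma_8$) — this is a cone in $\R^{5}$ (or $\R^4$ after restricting to even partitions), so its extreme rays form a finite list — and observe that each extreme ray is the evaluation functional at a point of the curve $\{(x,1,u,u^2): x\in\R,\ 0\le u\le 1\}$, i.e. at a point coming from a two-atom configuration $(x,\underbrace{t,\dots,t}_{?})$ or more precisely $(\sqrt{x},\sqrt{u},0,0,\dots)$-type supports, together with the homogeneity scaling that turns $(1,u)$-supported even Vandermonde images into $(x,1,u,u^2)$. Since $f\in\B\Sigma_8$ iff $\langle f,\ell\rangle\ge 0$ for every extreme ray $\ell$ of the dual cone, and every such $\ell$ is a point evaluation on that curve, nonnegativity of $f$ on the curve is equivalent to $f$ being sos for all $n$. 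Here one must be slightly careful that the dual cone in question is the one coming from \emph{intersecting over all $n$}, so I would use the earlier result that $\B\Sigma_8$ is already attained at some finite $n_0$ in degree~$8$ (or that the dual cone stabilizes), making the extreme-ray computation a finite semidefinite computation.

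The main obstacle I anticipate is the sum-of-squares direction: one needs to actually pin down that the extremal dual functionals are \emph{exactly} the evaluations on the stated curve and nothing more — in particular that the parameter genuinely runs over $0\le u\le1$ and not some larger or disconnected set, and that no "extra" extreme rays appear that are not point evaluations (this can happen in general sos duality when the Gram spectrahedron has positive-dimensional faces). Resolving this should come down to the symmetry-adapted / higher-Specht basis description of $\Sigma_{n,4}^\S$ from \cite{blekherman2021symmetric, blekherman2012nonnegative}: in degree $4$ the symmetry reduction breaks the Gram matrix into small blocks (one scalar block from the trivial isotypic component and one $1\times1$ or $2\times2$ block from the standard representation), so the dual cone is cut out by just a couple of determinantal inequalities, which one can solve by hand to read off the curve. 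The nonnegativity direction is more routine given Kostov's results, though verifying that the discrete family of lines (rather than a continuum) suffices requires a genuine argument — essentially that for even Vandermonde the relevant boundary strata are parametrized by the \emph{number} of coalescing small coordinates, which is an integer, hence the "$n\in\N_{>0}$" rather than a real parameter.
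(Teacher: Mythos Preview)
Your overall architecture matches the paper's: Kostov's description of the limit Vandermonde image for the nonnegativity test set, and an extreme-ray computation for the dual of the sums-of-squares cone for the sos test set. But several concrete claims in the proposal are wrong and would derail the argument.

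First, you have misidentified the cones. A degree~$4$ symmetric function in $\p_1,\p_2,\p_3,\p_4$ lives in $H^\S_{n,4}$, not in the even-symmetric space; the relevant cones are $\S\P_4$ and $\S\Sigma_4$, and the relevant image is $\mathcal{M}_4$ (the full Vandermonde map), not $\mathcal{N}_4$. Writing ``i.e.\ lies in $\B\Sigma_8$'' and ``equivalently $\B\Sigma_8$'' is incorrect, and the assertion that ``the relevant moment cone and the sos cone are actually equal on the nose'' is both confused and false here --- the whole point of the section is that $\S\Sigma_4\subsetneq\S\P_4$.

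Second, and more seriously, your key technical step for nonnegativity is backwards. You write that $f$ is ``affine-linear in $\p_1$ after fixing $\p_2,\p_3,\p_4$''; in fact $f=c_1\p_1^4+c_2\p_2\p_1^2+c_3\p_3\p_1+c_4\p_2^2+c_5\p_4$ is quartic in $\p_1$. The correct observation, and what the paper uses, is the opposite: after fixing $\p_1=x$ and normalizing $\p_2=1$, the function $f(x,1,\p_3,\p_4)$ is \emph{affine in $(\p_3,\p_4)$}. This is precisely what lets one pass from the full projection $\mathcal{L}$ of $\mathcal{M}_4\cap\{\p_2=1\}$ onto the $(\p_3,\p_4)$-plane to the \emph{extreme points} of $\conv(\mathcal{L})$. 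Those extreme points are identified, via Newton's identities and Kostov's parametrization of the cusps of his leaf, as the discrete set $\{(\pm n^{-1/2},n^{-1}):n\in\N_{>0}\}$; this, not a vague ``number of coalescing coordinates'', is why the test set is discrete. Your reversed linearity claim gives no mechanism to reduce to finitely many $(\p_3,\p_4)$-values.

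For the sos half your plan (find extreme rays of $\S\Sigma_4^*$ and show they are point evaluations along the stated curve) is exactly what the paper does in Proposition~\ref{sos4xrays}: using the explicit $5\times5$ block-diagonal spectrahedral description of $\S\Sigma_4^*$ one classifies maximal-kernel rank-one patterns and obtains the two-parameter family $(1,t^2,st^2,t^4,s^2t^2)$ with $|s|\le t$, together with two isolated rays; a change of variables then yields $(x,1,u,u^2)$ with $0\le u\le 1$. Your worry about ``extra extreme rays that are not point evaluations'' is handled there by a direct maximal-kernel analysis, not by any stabilization-at-finite-$n$ argument.
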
 \medskip

In Section \ref{sec:trop/superdominance/power sums/ limit cones} we find, with the help of tropicalization, an example of an even symmetric nonnegative decic that is not a sum of squares for any sufficiently large number of variables.
\medskip

\begin{propn}[\ref{prop:deg 10 nonnegative but not sos}]
The even symmetric form $  \frac{1}{18}p_{(2^5)}+3 p_{(8,2)} + 6 p_{(6,4)}-3p_{(6,2^2)}$
is nonnegative for all number of variables but not a sum of squares for any sufficiently large number of variables $n \geq N$ for some $N$.
\end{propn}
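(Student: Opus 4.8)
The plan is to split the statement into its two halves: nonnegativity for all $n$, and failure of the sum of squares property for all large $n$. For nonnegativity, the natural strategy is to exhibit the decic $g := \frac{1}{18}p_{(2^5)}+3p_{(8,2)}+6p_{(6,4)}-3p_{(6,2^2)}$ as a nonnegative combination of expressions known to be nonnegative on $\R_{\ge0}^n$ (equivalently on the Vandermonde cell $\mathcal N_5$ at infinity). Since all monomials here are even partitions, one should rewrite $g$ in terms of the variables $q_i := p_{2i}$ and recognize a sum of pieces of the form $c_{\la\mu}(p_\la - p_\mu)$ with $\la \succeq \mu$ even partitions of $10$, each of which is nonnegative on $\R_{\ge 0}^n$ for all $n$ by Theorem~\ref{thm: power sums order}. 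The single negative term $-3p_{(6,2^2)}$ must be dominated: one checks $(8,2)\succeq(6,2^2)$ and $(6,4)\succeq(6,2^2)$ in the superdominance order, so $3p_{(8,2)}$ or $3p_{(6,4)}$ (or a convex split across both, plus the AM--GM-type contribution of $\frac{1}{18}p_{(2^5)}$) absorbs it; one then verifies the leftover is a nonnegative combination of superdominance differences. Concretely I would look for a decomposition $g = \alpha\big(\tfrac{1}{18}p_{(2^5)} - \beta\, p_{(6,2^2)}\big) + \gamma\,(p_{(8,2)}-p_{(6,2^2)}) + \delta\,(p_{(6,4)}-p_{(6,2^2)}) + (\text{nonneg.\ remainder})$ with nonnegative coefficients, using that a weighted power mean inequality gives $\tfrac{1}{18}p_{(2^5)} \ge p_{(6,2^2)}$-type bounds in the appropriate normalization; this is a finite linear-algebra check once the superdominance relations among the four partitions $(2^5),(8,2),(6,4),(6,2^2)$ of $10$ are laid out.

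For the failure of the sos property, the approach is dual and quantitative: one shows that $g$ lies outside $\B\Sigma_{2d}$ (with $2d=10$) for all sufficiently large $n$ by producing, for each large $n$, a linear functional in the dual cone to $\Sigma_{n,10}^\B$ that is negative on $g$. The cleanest route is tropical: by the analysis of Section~\ref{sec:trop/superdominance/power sums/ limit cones}, the tropicalization of the dual cone to the even symmetric sos cone is governed by the superdominance order, whereas the tropicalization of the dual to $\B\P_{2d}$ (equivalently the tropical Vandermonde cell, Theorem~\ref{thm:trop(Nd)}) is strictly larger. One identifies the exponent/valuation data of $g$ — essentially the Newton-polytope-at-infinity datum recording which $p_\la$ appear with which signs — and shows this datum is a point that the tropical moment cone for nonnegativity contains (consistent with the first half) but the tropical spectrahedral shadow for sos does not, by exhibiting a separating tropical-linear inequality coming from a partition pair that is comparable in the Vandermonde/nonnegativity order but not witnessed by an sos certificate. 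The point of passing to the tropical picture is that membership there is a purely combinatorial (polyhedral) condition, so the separation becomes a concrete inequality check on partitions of $10$; de-tropicalizing then yields that for $n \ge N$ the form $g$ is genuinely not in $\Sigma_{n,10}^\B$.

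The main obstacle I anticipate is the second half — certifying non-sos \emph{uniformly in $n$} rather than for a single $n$. For any fixed $n$ one could in principle solve a semidefinite feasibility problem, but we need an asymptotic statement, and the sos cones $\Sigma_{n,10}^\B$ shrink with $n$, so one cannot simply take a limit of finite-$n$ certificates without control. This is exactly where the tropicalization machinery earns its keep: it converts the limiting behavior of the shrinking sos spectrahedra into a fixed polyhedral object (Lemma~\ref{le:trop of sos}), and the gap between that object and the tropical Vandermonde cell is a finite combinatorial discrepancy. So the real work is (i) pinning down the precise tropical datum attached to $g$, and (ii) locating the specific facet-type inequality of the tropical sos-dual cone that $g$ violates while still respecting all inequalities of the tropical nonnegativity-dual cone. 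Once that separating inequality is in hand, both conclusions follow, and the first half (nonnegativity) is a reassuring consistency check that $g$ does lie on the nonnegativity side of exactly that gap.
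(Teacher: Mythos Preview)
Both halves of your plan have genuine gaps.

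\textbf{Nonnegativity.} Your claimed superdominance relations are backwards. In the paper's order one has, for $d=5$,
\[
(2^5)\succ(4,2^3)\succ(6,2^2)\succ(4^2,2)\succ(8,2)\succ(6,4),
\]
so by Theorem~\ref{thm: power sums order} it is $p_{(6,2^2)}\ge p_{(8,2)}$ and $p_{(6,2^2)}\ge p_{(6,4)}$, not the reverse. Hence neither $p_{(8,2)}-p_{(6,2^2)}$ nor $p_{(6,4)}-p_{(6,2^2)}$ is nonnegative, and no nonnegative combination of superdominance binomials can absorb the $-3p_{(6,2^2)}$ term. The relevant inequality here is a genuinely ternary one, $(2^5)(8,2)(6,4)\succeq(6,2^2)^{\circ 3}$, which lives in $T_{10}^{(3)}$ but not in $T_{10}^{(1)}$ or $T_{10}^{(2)}$. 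The paper's proof (Lemma~\ref{lem:decicsnonnegative}) does not use superdominance binomials at all: it chains Lyapunov's inequality $p_6^3\le p_8p_6p_4$ (giving $p_{(6,2^2)}\le \sqrt[3]{p_{(2^5)}p_{(8,2)}p_{(6,4)}}$) with the AM--GM inequality, using that the product of coefficients $\tfrac{1}{18}\cdot 3\cdot 6=1$.

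\textbf{Failure of sos.} The tropical discrepancy $\trop(\B\Sigma_{10}^*)\supsetneq\trop(\B\P_{10}^*)$ motivates the construction of $g$, but it does not by itself certify $g\notin\B\Sigma_{10}$. Your ``de-tropicalizing'' step is exactly the missing content, and in fact it fails for this $g$: along any tropical ray $\varphi_t\in\B\Sigma_{10}^*$ one has $y_{(2^5)}\ge y_\lambda$ for every $\lambda\vDash 10$ (Proposition~\ref{prop: sd sos*}, since $(2^5)$ is the superdominance maximum), so the dominant term of $\langle g,\varphi_t\rangle$ is the positive $\tfrac{1}{18}\varphi_{t,(2^5)}$ and the pairing stays nonnegative asymptotically. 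What is actually needed is a single explicit dual point $\mathbf{a}\in\B\Sigma_{10}^*$ with $\langle g,\mathbf{a}\rangle<0$; the paper produces one numerically via the spectrahedral description of $\B\Sigma_{10}^*$ from Example~\ref{ex:sos10}, namely $\mathbf{a}=(450228,75326,24986,12656,8325,4159,2803)$, and checks $\langle g,\mathbf{a}\rangle=-49/3$. The nestedness $\Sigma_{n+1,10}^{\B}\subset\Sigma_{n,10}^{\B}$ then propagates $g\notin\Sigma_{n,10}^{\B}$ to all sufficiently large $n$.
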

\medskip

We also show that the cones of sums of squares and nonnegative symmetric functions are distinct for all even degrees $\geq 6$. \medskip

\begin{thmn}[\ref{thm:Limitcones not equal}]
For all $d \geq 2$ we have $\S\Sigma_{2d}\subsetneq \S\P_{2d}$ and for all $d \geq 3$ we have $\B\Sigma_{2d} \subsetneq \B\P_{2d}$. Moreover, the sets $\S\P_{2d}$ and $\B\P_{2d}$ are non-semialgebraic in all these cases, and the sets $\S\Sigma_{2d}$, $\B\Sigma_{2d}$ are semialgebraic. 
\end{thmn}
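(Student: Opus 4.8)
These are three assertions, and the strict inclusions are a formal consequence of the other two: one always has $\S\Sigma_{2d}\subseteq\S\P_{2d}$ and $\B\Sigma_{2d}\subseteq\B\P_{2d}$, and a semialgebraic set cannot coincide with a non-semialgebraic one. So the plan is to prove (i) that $\S\Sigma_{2d}$ and $\B\Sigma_{2d}$ are semialgebraic for every $d$, and (ii) that $\S\P_{2d}$ is not semialgebraic for $d\ge2$ and $\B\P_{2d}$ is not semialgebraic for $d\ge3$.

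For (i) I would show that $\S\Sigma_{2d}$ is a spectrahedral shadow, hence semialgebraic by Tarski--Seidenberg. Writing a degree-$2d$ symmetric form as a sum of squares amounts to exhibiting a positive semidefinite Gram matrix on the space of degree-$d$ forms; decomposing that space into $\S_n$-isotypic components (symmetry reduction in the sense of Gatermann--Parrilo, via the symmetry-adapted higher-Specht bases of \cite{blekherman2021symmetric}) turns membership in $\Sigma_{n,2d}^\S$ into positive semidefiniteness of a finite collection of matrix pencils, one for each type $V_{(n-|\mu|,\mu)}$ occurring in $H_{n,d}$, i.e.\ for each partition $\mu$ with $|\mu|\le d$. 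Crucially, for $n\ge2d$ this list of types is finite and independent of $n$, and the entries of the corresponding pencils are affine in the coefficients of the form with coefficients polynomial in $n$. Intersecting over all $n$, the requirement ``positive semidefinite for every $n$'' reduces, after grouping by the degree in $n$ and a Schur-complement argument, to finitely many linear matrix inequalities in the coefficients of the form together with the remaining Gram entries as auxiliary variables. Hence $\S\Sigma_{2d}$ is a spectrahedral shadow; for $\B\Sigma_{2d}$ the same works with the hyperoctahedral isotypic types.

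For (ii) it suffices to treat the base cases $\S\P_4$ and $\B\P_6$, because $g\mapsto p_2 g$ is an injective linear map carrying $\S\P_{2d}$ isomorphically onto the linear slice $\S\P_{2d+2}\cap\big(p_2\cdot H_{n,2d}^\S\big)$ — since $p_2$ vanishes only at the origin, $p_2 g\ge0$ if and only if $g\ge0$ — and linear slices of semialgebraic sets are semialgebraic, so non-semialgebraicity propagates to all higher degrees (and likewise for $\B$, as $p_2$ is even symmetric). For $\S\P_4$ I would use the test set of Theorem~\ref{thm:testset}: with $f=\sum_{\lambda\vdash4}c_\lambda p_\lambda$, membership in $\S\P_4$ is equivalent to nonnegativity on $\R$ of the quartic $q_n(x)=c_{(1^4)}x^4+c_{(2,1^2)}x^2+c_{(3,1)}n^{-1/2}x+c_{(2^2)}+c_{(4)}n^{-1}$ for every $n\in\N_{>0}$. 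One then chooses a suitable two-dimensional affine slice $S$ of the coefficient space so that on $S$ the cone $\S\P_4$ is $\bigcap_n K_n$ with $K_n=\{q_n\ge0\text{ on }\R\}$, each $\partial K_n$ lying on the algebraic curve $\{E_n=0\}$ where $E_n$ is the $x$-discriminant of $q_n$ (whose coefficients depend on $n$ through $n^{-1/2}$), and so that for infinitely many $n$ the boundary of $S\cap\S\P_4$ contains a genuine one-dimensional arc $\Gamma_n$ on $\{E_n=0\}$ — i.e.\ there are forms in $\partial(S\cap\S\P_4)$ whose unique active test parameter is exactly $n^{-1/2}$, which is possible because the parameters $n^{-1/2}$ accumulate at $0$. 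Since for infinitely many $n$ the curves $\{E_n=0\}$ are pairwise distinct, the boundary of $S\cap\S\P_4$ then meets infinitely many distinct irreducible algebraic curves in positive-dimensional arcs (equivalently, has infinitely many non-smooth points), which is impossible if $S\cap\S\P_4$, and hence $\S\P_4$, were semialgebraic. The case $\B\P_6$ is entirely analogous, using the identification of $\B\P_6$ with the even symmetric forms nonnegative on the Vandermonde cell $\mathcal{N}_3$ together with its associated discrete family of extremal test configurations; alternatively one can argue dually, observing that the extreme rays of $\S\P_4^\ast$ are supported at the test points and already encode the non-semialgebraic set $\{n^{-1/2}:n\in\N_{>0}\}$.

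The main obstacle is the part of (ii) that makes the argument bite: singling out the right two-dimensional slice $S$ and verifying that infinitely many arcs $\Gamma_n$ actually appear on the boundary of $S\cap\S\P_4$ with pairwise distinct underlying curves $\{E_n=0\}$ (equivalently, that this boundary has infinitely many singular points). This needs the concrete analysis of the quartics $q_n$ — in particular explicit forms active at exactly one test parameter $n^{-1/2}$, with the monotonicity estimates ensuring strict positivity at every other test parameter — and a careful use of the accumulation of the test parameters at $0$. By contrast, step (i), once the symmetry-reduced sum-of-squares description is in hand, and the degree bootstrapping in (ii), are comparatively routine.
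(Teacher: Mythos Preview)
Your overall architecture---deduce the strict inclusions from ``SOS side semialgebraic, PSD side not''---is exactly the paper's. The substantive differences, and one genuine gap, are in how you execute the two halves.

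For part (i) your proposed route is shakier than it looks. You want to show directly that $\S\Sigma_{2d}$ is a spectrahedral shadow by taking the $n$-indexed family of symmetry-reduced Gram LMIs (whose entries are polynomial in $n$) and collapsing the condition ``PSD for every $n$'' to finitely many LMIs via ``grouping by degree in $n$ and a Schur-complement argument.'' That last step is not a standard maneuver and you have not justified it: a countable intersection of semialgebraic sets need not be semialgebraic, and Schur complements reduce block LMIs, not parameterized families of LMIs. The paper avoids this entirely. Using partial symmetry reduction with a uniform symmetry-adapted basis, the normalized symmetrization matrices $\mathbf{M}_n$ converge entrywise as $n\to\infty$ to a single matrix $\mathbf{M}$, and $\S\Sigma_{2d}$ (resp.\ $\B\Sigma_{2d}$) is exactly the psd projection with respect to $\mathbf{M}$. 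Hence the \emph{dual} cone is a spectrahedron, and one concludes by the easy lemma that the dual of a semialgebraic set is semialgebraic (Tarski--Seidenberg). This is both shorter and rigorous where your sketch is not.

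For part (ii) your $p_2$-multiplication bootstrap is correct and close in spirit to the paper's reduction, which instead intersects $\B\P_{2d}$ with the subspace $\langle \p_2^d,\p_2^{d-2}\p_4,\p_2^{d-3}\p_6\rangle$ to land directly on $\B\P_6^*$ (and then gets $\S\P_{2d}$ for $d\ge3$ as a linear slice of $\B\P_{2d}$). For the base cases the paper argues on the dual side: using the Choi--Lam--Reznick test set for even symmetric sextics one sees $\B\P_6^*=\cone\{(t^3,t^2,t):t\in\Z_{>0}\}$ has infinitely many isolated extreme rays, hence is not semialgebraic; and for $\S\P_4$ one shows, via the test set of Theorem~\ref{thm:testset} and the fact that points on the moment curve are in convex position, that $\S\P_4^*$ likewise has infinitely many isolated extreme rays. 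This dual-cone argument is considerably cleaner than your primal-side plan of exhibiting infinitely many boundary arcs on distinct discriminant curves---which is plausible but, as you yourself note, leaves the hardest verification (that the arcs $\Gamma_n$ really appear on the boundary for infinitely many $n$) undone.
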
 \medskip

The \emph{Vandermonde cell} $\mathcal{N}_d = \bigcup_{n \in \N}(p_1,\ldots,p_d)(\R_{\geq 0}^n)$ is closed under addition and under coordinatewise (Hadamard) multiplication (see Lemma \ref{lem:newton}). This, together with the two families of classical binomial inequalities between power sums (Remark \ref{rem:classical ieqs}), and the aid of a \emph{tropical Farkas lemma} (Lemma \ref{le:max-closure}), allows us to compute the tropicalization of the Vandermonde cell $\mathcal{N}_d$, which is a rational polyhedron.\medskip  

\begin{thmn}[\ref{thm:trop(Nd)}]
The set $\trop(\mathcal{N}_d)$ is a rational polyhedron with the facet-defining inequalities: 
\begin{align*}
    \trop \left( \mathcal{N}_d \right) = \left\{ \left. y\in\R^d : \begin{cases}
    y_k + y_{k+2}\geq 2y_{k+1}\; ,\quad k=1,\ldots,d-2 \\
    dy_{d-1}\geq (d-1)y_d\; .
    \end{cases} \right\}  \right\}.
\end{align*}
\end{thmn}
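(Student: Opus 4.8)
The plan is to prove both inclusions. For the inclusion $\trop(\mathcal{N}_d) \subseteq \{y : \ldots\}$ (the easy direction), I would start from the classical binomial power sum inequalities recalled in Remark~\ref{rem:classical ieqs}: on $\R_{\geq 0}^n$ one has $p_k p_{k+2} \geq p_{k+1}^2$ (a consequence of Cauchy--Schwarz, since $p_{k+1} = \sum x_i^{(k)/2}\cdot x_i^{(k+2)/2}$ with $x_i \geq 0$) and $p_{d-1}^d \geq p_d^{d-1}$ (the power-mean / $\ell^p$-monotonicity inequality $\|x\|_{d-1} \geq \|x\|_d$ raised to the appropriate power, valid coordinatewise on the nonnegative orthant). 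Each such polynomial inequality $\prod p_{\lambda_i} \geq \prod p_{\mu_i}$ with $|\lambda| = |\mu|$ is, after taking logarithms in the ``$\log$'' chart defining tropicalization, precisely the linear inequality $\sum_i y_{\lambda_i} \geq \sum_i y_{\mu_i}$ on $\trop(\mathcal{N}_d)$; so both displayed families of inequalities hold on $\trop(\mathcal{N}_d)$, giving the containment. Here I rely on the standard fact (as used in \cite{blekherman2022moments, acevedo2024power}) that a valid monomial inequality between two monomials that are both products of coordinates transfers to a valid linear inequality on the tropicalization.

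For the reverse inclusion (the substantive direction), the plan is to invoke the tropical Farkas lemma (Lemma~\ref{le:max-closure}) together with the fact, from Lemma~\ref{lem:newton}, that $\mathcal{N}_d$ is closed under coordinatewise addition and coordinatewise (Hadamard) multiplication. A set of this kind — a ``subsemiring'' of $\R_{\geq 0}^d$ under $(+,\cdot)$ — has a tropicalization that is a max-closed polyhedral cone, and the tropical Farkas lemma should say that $\trop(\mathcal{N}_d)$ is cut out exactly by those linear inequalities $\langle a, y\rangle \geq \langle b, y\rangle$ (with $a,b$ nonnegative integer vectors of equal coordinate sum) that correspond to valid monomial inequalities $p^a \geq p^b$ on $\mathcal{N}_d$. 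So the task reduces to: every valid monomial inequality between power sum products on $\R_{\geq 0}^n$ (uniformly in $n$) is a consequence, as a linear inequality, of the two displayed families. Equivalently, letting $P$ denote the polyhedron on the right-hand side of the statement, I must show every valid monomial power-sum inequality, read as a linear functional, is nonnegative on $P$ — i.e. that $P$ is contained in the tropicalization, which combined with the first paragraph closes the loop. Concretely I would: (i) describe the valid monomial inequalities $p_\lambda \geq p_\mu$ in terms of the superdominance order (this is exactly Theorem~\ref{thm: power sums order}, one direction of which is available combinatorially from Section~\ref{sec:superdominance}); (ii) show the superdominance relation $\lambda \succeq \mu$ implies $\sum y_{\lambda_i} \geq \sum y_{\mu_i}$ holds on $P$, by decomposing a superdominance covering relation into the elementary moves ``replace $\{k, k+2\}$ by $\{k+1, k+1\}$'' and the move at the top encoded by $d y_{d-1} \geq (d-1) y_d$; and (iii) conclude that no further inequalities are needed, so $\trop(\mathcal{N}_d) = P$.

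The main obstacle I anticipate is step (ii)–(iii): proving that the two displayed families of inequalities already \emph{generate} (as a polyhedral cone's facets) all valid power-sum monomial inequalities, i.e. that $P$ has no ``extra'' points outside $\trop(\mathcal{N}_d)$. This requires a clean combinatorial lemma: any superdominance-monotone linear functional — or more precisely any linear inequality valid on all of $\mathcal{N}_d$'s tropicalization — lies in the cone generated by the $\binom{d}{2}$-ish elementary convexity inequalities $y_k + y_{k+2} \geq 2y_{k+1}$ plus the single boundary inequality $d y_{d-1} \geq (d-1) y_d$. I would prove this by an explicit combinatorial/linear-algebra argument: take an arbitrary point $y \in P$, show it can be realized (in the tropical limit) by a sequence of points $\nu_{n,d}^\varepsilon(x^{(n)})$ — for instance, for generic $y$ satisfying the strict convexity inequalities, choose a single point $x$ with suitably chosen, widely separated coordinates $x_i = t^{c_i}$ and let $t \to \infty$, matching $y$ to the concave ``lower envelope'' of the exponents; the convexity conditions $y_k + y_{k+2} \geq 2 y_{k+1}$ are exactly what is needed for such a concave sequence of exponents to exist, and the inequality $d y_{d-1} \geq (d-1) y_d$ governs the behavior at the last coordinate where the largest exponent may vanish. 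Verifying this realization carefully — in particular handling the facets, the role of the point at the origin (contributing the $\ell^p$-type inequality), and the passage to the limit $n \to \infty$ — is the technical heart of the argument; the rest is bookkeeping with the tropical Farkas lemma.
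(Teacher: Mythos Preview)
Your easy direction (Claim 1) is fine and matches the paper exactly.

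For the hard direction there is a genuine circularity. Your plan (i)--(iii) hinges on knowing that the valid binomial inequalities $p_\la\ge p_\mu$ on $\mathcal{N}_d$ are \emph{exactly} the superdominance inequalities, so that once you show every superdominance inequality is a conical combination of the $d-1$ displayed facets you are done. But only the direction $\la\succeq\mu\Rightarrow p_\la\ge p_\mu$ is available from Section~\ref{sec:superdominance}; the converse is precisely what the paper derives \emph{from} Theorem~\ref{thm:trop(Nd)} (see the proof of Theorem~\ref{thm: power sums order} at the end of Section~\ref{sec:tropicalization}). Without that converse you have no control over which linear functionals lie in $\trop(\mathcal{N}_d)^*$, and step~(iii) does not close.

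The paper avoids this by never characterizing all valid binomial inequalities. Instead it exhibits three explicit rays $\mathbf{1}$, $\balpha=(1,2,\dots,d)$, $-\balpha$ inside $\trop(\mathcal{N}_d)$, and then computes the double hull $\dhu(\cone(\mathbf{1},\balpha,-\balpha))$ directly: since $\trop(\mathcal{N}_d)$ is max-closed (from additive closure of $\mathcal{N}_d$) and contains $\R\balpha$ in its lineality space, Corollary~\ref{cor:dual of max-closure} forces every extreme ray of the dual to have exactly one negative coordinate. A short case analysis on the location of that negative coordinate (four cases, two of which are vacuous) shows every such extreme ray is a conical combination of the $d-1$ displayed facets. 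This is the missing idea in your outline: use the max-closed/lineality structure to restrict the \emph{shape} of the dual's extreme rays, rather than trying to enumerate them via superdominance.

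Your alternative realization idea in the last paragraph is closer in spirit, but the specific mechanism you describe does not produce the polyhedron. A single family $x_i=t^{c_i}$ gives $p_{2k}(x)\sim t^{2k\max_i c_i}$, so tropically only the line $\R\balpha$. Points like $(k,k-1,\dots,1,0,\dots,0)$ arise only by combining different scales (e.g.\ one coordinate equal to $1$ together with many coordinates of size $t^{-1/2}$), which is exactly the tropical-sum/max-closure step; once you allow that, you are effectively computing the double hull of $\mathbf{1}$ and $\pm\balpha$, i.e.\ the paper's argument.
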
 \medskip

 We then use the $H$-representation of $\trop(\mathcal{N}_d)$ to immediately prove that power sums obey the superdominance order (Theorem \ref{thm: power sums order}). \medskip
 
We turn to the tropicalization of the dual cones $\B\Sigma_{2d}^*$ and $\B\P_{2d}^*$ in Section \ref{sec:trop/superdominance/power sums/ limit cones}. Using the superdominance order we define a decreasing chain of rational polyhedra $T^{(k)}_{2d}$ (see Definition \ref{def:sequence T2dk}) and prove that the sequence stabilizes and it does so at $\trop(\B\P_{2d}^*)$ (Theorem \ref{thm:tropmom}). With the aid of Lemma \ref{le:trop of sos} (bounds the tropicalization of a spectrahedral cone), using properties of symmetric functions and the superdominance order, we find that $\trop(\B\Sigma_{2d}^*)$ is a polyhedral cone sitting in between $T_{2d}^{(1)}$ and $T_{2d}^{(2)}$. We finally conclude the strict inclusion between the tropicalizations of the dual cones. \medskip

\begin{thmn}[\ref{thm:tropsdiffer}]
    $\trop(\B\Sigma_{2d}^*)\supsetneq\trop(\B\P_{2d}^*)$ for all $d\ge5$.   
\end{thmn}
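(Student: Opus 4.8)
The plan is to separate the (free) inclusion from the strictness. The inclusion $\trop(\B\Sigma_{2d}^*)\supseteq\trop(\B\P_{2d}^*)$ is immediate: $\B\Sigma_{2d}\subseteq\B\P_{2d}$ gives the reverse inclusion of dual cones $\B\P_{2d}^*\subseteq\B\Sigma_{2d}^*$, and tropicalization is monotone under inclusion. For the strictness we combine the two descriptions already available. By Theorem~\ref{thm:tropmom}, $\trop(\B\P_{2d}^*)=\bigcap_{k\geq1}T^{(k)}_{2d}=:T^{(\infty)}_{2d}$, where the $T^{(k)}_{2d}$ form a decreasing chain of rational polyhedra (Definition~\ref{def:sequence T2dk}) that stabilizes. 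On the other hand, feeding a symmetry-adapted Gram representation of $\B\Sigma_{2d}$ — built from higher Specht polynomials and partial symmetry reduction — into Lemma~\ref{le:trop of sos} sandwiches the sum-of-squares dual cone as $T^{(2)}_{2d}\subseteq\trop(\B\Sigma_{2d}^*)\subseteq T^{(1)}_{2d}$; we shall only use the lower bound. Consequently it is enough to show that for every $d\geq5$ the chain does \emph{not} already stabilize at step two, that is, $T^{(2)}_{2d}\supsetneq T^{(\infty)}_{2d}$: any point of $T^{(2)}_{2d}\setminus T^{(\infty)}_{2d}$ then lies in $\trop(\B\Sigma_{2d}^*)\subseteq$ but $\notin\trop(\B\P_{2d}^*)$, which together with the trivial inclusion gives $\trop(\B\Sigma_{2d}^*)\supsetneq\trop(\B\P_{2d}^*)$.

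For the base case $d=5$ the witness comes from the decic of Proposition~\ref{prop:deg 10 nonnegative but not sos}. Since $p_{(2^5)},p_{(8,2)},p_{(6,4)},p_{(6,2^2)}$ are nonnegative on $\R_{\geq0}^n$ for all $n$, and $\tfrac{1}{18}p_{(2^5)}+3p_{(8,2)}+6p_{(6,4)}\geq3p_{(6,2^2)}$ holds there, passing to tropical shadows (as in the proof of Theorem~\ref{thm:trop(Nd)} and Proposition~\ref{prop:trop dual to psd}) turns the single negative term into the valid tropical inequality
\[
w_{(6,2^2)}\ \leq\ \max\{\,w_{(2^5)},\,w_{(8,2)},\,w_{(6,4)}\,\}
\]
for all $w\in\trop(\B\P_{10}^*)=T^{(\infty)}_{10}$; this is an inequality involving three superdominance predecessors rather than one. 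One then checks, by a finite linear-programming verification against the explicit $H$-description of Definition~\ref{def:sequence T2dk}, that it is \emph{not} a consequence of the inequalities of order $\leq 2$ defining $T^{(2)}_{10}$: concretely, one exhibits a point $w^\star\in T^{(2)}_{10}$ at which the displayed inequality fails. Then $w^\star\in T^{(2)}_{10}\subseteq\trop(\B\Sigma_{10}^*)$ but $w^\star\notin T^{(\infty)}_{10}=\trop(\B\P_{10}^*)$, which settles $d=5$. (This is essentially how the decic of Proposition~\ref{prop:deg 10 nonnegative but not sos} is found: one rounds the facet of $T^{(3)}_{10}$ separating such a $w^\star$ to an honest power-sum inequality and then certifies the rounded form is nonnegative but not a sum of squares.)

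For $d\geq6$ we bootstrap by homogenizing the decic. The form
\[
g_d\ :=\ p_{2(d-5)}\,\Bigl(\tfrac{1}{18}p_{(2^5)}+3p_{(8,2)}+6p_{(6,4)}-3p_{(6,2^2)}\Bigr)
\]
is a nonnegative even symmetric function of degree $2d$ whose tropical shadow is the $d=5$ inequality with a common part $2(d-5)$ appended to each of the four partitions — again an inequality with three predecessors, valid on $\trop(\B\P_{2d}^*)$. One checks from Definition~\ref{def:sequence T2dk} that the relevant inequalities of $T^{(k)}_{2d}$ are compatible with this ``append a fixed part'' operation: multiplying power sums by a fixed $p_{2(d-5)}$ both preserves and reflects the superdominance order (cf. the combinatorics of Section~\ref{sec:superdominance}), so the polyhedral witness of the $d=5$ case carries over to a point of $T^{(2)}_{2d}\setminus T^{(\infty)}_{2d}$. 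Hence $T^{(2)}_{2d}\supsetneq T^{(\infty)}_{2d}$ for every $d\geq5$, and the theorem follows. (Alternatively one may rerun the $d=5$ search separately in each degree $2d$; the homogenization only avoids this extra computation.)

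Two steps carry the weight. The first is matching Lemma~\ref{le:trop of sos} to the polyhedron $T^{(2)}_{2d}$: one must choose the Gram/moment representation of $\B\Sigma_{2d}$ adapted to the isotypic decomposition so that the generic bound of that lemma on the tropicalization of a spectrahedral cone specializes precisely to the order-two superdominance inequalities, which is where the symmetric-function machinery and the power-sum ordering of Theorem~\ref{thm: power sums order} enter. The second is the verification that the (lifted) decic inequality is genuinely new at level three, i.e. that the chain $(T^{(k)}_{2d})_k$ does not collapse at $k=2$; this is a concrete but non-trivial polyhedral computation, and it is exactly where the failure of the sum-of-squares property in Proposition~\ref{prop:deg 10 nonnegative but not sos} surfaces at the tropical level. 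The remaining ingredients — monotonicity of tropicalization and nonnegativity of $g_d$ — are routine.
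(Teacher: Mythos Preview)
Your high-level plan matches the paper's: use $T^{(2)}_{2d}\subseteq\trop(\B\Sigma_{2d}^*)$ (Proposition~\ref{prop:tropsos*supsetT2}) and $\trop(\B\P_{2d}^*)=T^{(\infty)}_{2d}$ (Theorem~\ref{thm:tropmom}), and then show $T^{(2)}_{2d}\supsetneq T^{(\infty)}_{2d}$. But your execution of the last step has a genuine gap.

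The tropical inequality you extract from the decic, $w_{(6,2^2)}\le\max\{w_{(2^5)},w_{(8,2)},w_{(6,4)}\}$, is \emph{useless as a separator}: since $(2^5)\succ(6,2^2)$, already $T^{(1)}_{10}$ satisfies $y_{(2^5)}\ge y_{(6,2^2)}$, hence so does $T^{(2)}_{10}$, and the max-inequality holds trivially there. Tropicalizing a generic nonnegative linear form only yields a max-inequality, and that is too weak. What actually separates is the \emph{linear} inequality $y_{(2^5)}+y_{(8,2)}+y_{(6,4)}\ge 3y_{(6,2^2)}$, which is valid on $T^{(3)}_{10}$ by definition (because $(2^5)(8,2)(6,4)\succeq(6,2^2)^{\circ3}$) but not on $T^{(2)}_{10}$. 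You cannot obtain this linear inequality by tropicalizing the decic; you need the binomial inequality $p_{(2^5)}p_{(8,2)}p_{(6,4)}\ge p_{(6,2^2)}^3$ on $\nu(\mathcal{N}_5)$, which comes from the superdominance relation via Theorem~\ref{thm: power sums order}, and then Lemma~\ref{le:inequality defines convex set} to pass to the convex hull. In fact the logic runs opposite to what you suggest: the decic in Proposition~\ref{prop:deg 10 nonnegative but not sos} is \emph{constructed from} this facet of $T^{(3)}_{10}$, not the other way around.

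Your bootstrap to $d\ge6$ is also not a proof. Multiplying by $p_{2(d-5)}$ tells you that $y_{(2(d-5),2^5)}+y_{(2(d-5),8,2)}+y_{(2(d-5),6,4)}\ge 3y_{(2(d-5),6,2^2)}$ is valid on $T^{(3)}_{2d}$, but to exhibit a violating point in $T^{(2)}_{2d}$ you must assign coordinates to \emph{all} even partitions of $2d$ and check \emph{all} order-two inequalities among them---there is no canonical way to lift a $\pi(5)$-dimensional witness to $\pi(d)$ dimensions. The paper handles this in Proposition~\ref{prop:T3subsetT2} by writing down an explicit uniform witness $\gamma_d=(0,-3,-5,-6,-7,-9,\dots,-9)\in\R^{\pi(d)}$ and verifying, via a case analysis of the first six coordinates and the superdominance order, that $\gamma_d\in T^{(2)}_{2d}$ while $\gamma_d$ violates $y_{(2^d)}+y_{(8,2^{d-4})}+y_{(6,4,2^{d-5})}\ge 3y_{(6,2^{d-3})}$ (note the partitions here are padded with $(2^{d-5})$, not with a single part $(2(d-5))$). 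That verification is the actual content of the strictness, and your proposal skips it.
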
 \medskip

Although we have $\B\P_{6}^* \subsetneq \B\Sigma_{6}^*$ and $\B\P_{8}^* \subsetneq \B\Sigma_{8}^*$ we find that their tropicalizations coincide.  \smallskip

In Appendix \ref{sec:appendix} we briefly explain symmetry and partial symmetry reduction techniques for representing symmetric sums of squares.  
Using partial symmetry reduction we find that the cones $\S\Sigma_{2d}^*$ and $\B\Sigma_{2d}^*$ are spectrahedra, and construct an explicit spectrahedral representation for $\S\Sigma_4^*$ and each $\B\Sigma_{2d}^*$ for $d=3,4,5$.

\section{Superdominance Order}\label{sec:superdominance}

In this section, we introduce a partial order on partitions of the same size, which we call the \emph{superdominance order}. We will see that the inequalities on power sums $p_\la\ge p_\mu$ that hold for all number of variables on the nonnegative orthant, where $\la$ and $\mu$ are partitions of the same size, are precisely those for which $\la$ superdominates $\mu$ (Theorem \ref{thm: power sums order}). We show that $\la$ superdominates $\mu$ implies $p_\la \geq p_\mu$, delaying the proof of the reverse implication to Section \ref{sec:tropicalization}.
\medskip

\begin{definition}\label{def:superdominance}
Let $\la,\mu\vdash d$. We say that $\la$ \emph{superdominates} $\mu$, denoted $\la\succeq\mu$, if
\begin{align*}
\sum_{i=1}^j\la_{(i)}\le\sum_{i=1}^j\mu_{(i)}
\end{align*}
for each $j=1,\dots,\min\{\ell(\la),\ell(\mu)\}$. We write $\la\succ\mu$ if $\la\succeq\mu$ and $\la\ne\mu$.
\end{definition}

Observe that $\succeq$ defines a partial order on $\Lambda_d$. For $d\le5$ the order $\succeq$ is a total order on $\Lambda_d$, e.g. for $d=5$,
\begin{align*}
(1^5)\succ(2,1^3)\succ(3,1^2)\succ(2^2,1)\succ(4,1)\succ(3,2)\succ(5)
\end{align*}
but $\succeq$ is no longer a total order on $\Lambda_d$ for all $d\ge6$.

\begin{remark}
Definition \ref{def:superdominance} is motivated by the following definition in the literature \cite[A.2.~Definition]{marshall1979inequalities} for real vectors. For $x,y\in\R^n$ it is said that $x$ is \emph{weakly supermajorized} by $y$, denoted $x\prec^\mathbf{w} y$, if $$\sum_{i=1}^k x_{(i)}\ge\sum_{i=1}^k y_{(i)}$$ for all $k=1,\dots,n$, where $(z_{(1)},\dots,z_{(n)})$ is obtained by permuting the entries of $z\in\R^n$ such that $z_{(1)}\le\dots\le z_{(n)}$.
\end{remark}

Using properties of the superdominance order and Theorem \ref{thm:trop(Nd)} we show that power sums of a fixed degree are ordered in precisely the same way as partitions with respect to the superdominance order. 

\begin{theorem}\label{thm: power sums order}
Let $\la,\mu\vdash d$. Then $p_\la\ge p_\mu$ on $\R_{\ge0}^n$ for all $n$ if and only if $\la\succeq\mu$. 
\end{theorem}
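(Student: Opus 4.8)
The plan is to prove the two directions separately, using the tools announced in the introduction. For the direction $\la\succeq\mu\implies p_\la\ge p_\mu$ on $\R_{\ge0}^n$ for all $n$, I would argue combinatorially, by reducing a general superdominance relation to a chain of ``elementary moves'' between partitions. The key observation is that if $\la\succ\mu$ then one can pass from $\mu$ up to $\la$ by a sequence of steps, each of which either (a) merges two parts into their sum, or (b) replaces a pair $(a,b)$ with $a\ge b$ by $(a+1,b-1)$ (an inverse-dominance move on equal-size partitions). For step (a) one needs $p_{a}\cdot p_{b}\ge p_{a+b}$ pointwise on $\R_{\ge0}^n$, i.e. $(\sum x_i^a)(\sum x_i^b)\ge \sum x_i^{a+b}$, which is immediate since all cross terms are nonnegative. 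For step (b) one needs $p_{a+1}p_{b-1}\ge p_a p_b$ on $\R_{\ge0}^n$; this is exactly the Chebyshev/rearrangement-type inequality $\sum x_i^{a+1}\sum x_i^{b-1}\ge \sum x_i^a\sum x_i^b$ for nonnegative $x_i$, which follows from the fact that $(x^{a+1}-x^a)$ and $(x^{b-1}-x^b)$... more cleanly, it follows from the two-variable inequality $x^{a+1}y^{b-1}+x^{b-1}y^{a+1}\ge x^a y^b + x^b y^a$ for $x,y\ge0$, i.e. $(x^{a+1}-y^{a+1})(x^{b-1}-y^{b-1})\ge 0$ after the substitution shifting exponents; summing over all pairs $i,j$ gives the claim. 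The main work in this direction is the purely combinatorial lemma that $\succeq$ is generated by moves (a) and (b), respecting the order; I would prove this by induction on $|\la|$ (or on $\sum_i\la_{(i)}$ read as a potential), peeling off the smallest part.

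For the converse, $p_\la\ge p_\mu$ on $\R_{\ge0}^n$ for all $n$ $\implies\la\succeq\mu$, I would use tropicalization and Theorem~\ref{thm:trop(Nd)}, exactly as flagged in the text (``We then use the $H$-representation of $\trop(\mathcal{N}_d)$ to immediately prove that power sums obey the superdominance order''). The point is that a pointwise inequality $p_\la\ge p_\mu$ on $\bigcup_n\R_{\ge0}^n$ implies, after taking logs along the scaling $x\mapsto t^{y}$ and letting $t\to\infty$ (the standard ``tropicalization of nonnegativity''), the corresponding tropical inequality $\sum_i y_{\la_i}\ge \sum_i y_{\mu_i}$ — read additively in the power-sum coordinates $y_k=\trop(p_k)$ — for every $y\in\trop(\mathcal{N}_d)$. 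So it suffices to check: if $\sum_i y_{\la_i}\ge\sum_i y_{\mu_i}$ holds for all $y$ in the polyhedron described in Theorem~\ref{thm:trop(Nd)} (convexity $y_k+y_{k+2}\ge 2y_{k+1}$ plus $dy_{d-1}\ge (d-1)y_d$), then $\la\succeq\mu$. This is a finite linear-programming / LP-duality statement: the linear functional $y\mapsto \sum_i y_{\mu_i}-\sum_i y_{\la_i}$ is nonnegative on the cone (after homogenizing; note constant functions $y\mapsto(c,\dots,c)$ lie in the recession cone and the functional vanishes on them since $\ell$ is the same, wait — here $|\la|=|\mu|=d$ but $\ell(\la)\ne\ell(\mu)$ in general). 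I would handle the length discrepancy by noting that the inequality must in particular survive on the ray $y=(-1,-2,\dots,-d)\cdot(-1)$... more carefully, plugging the extreme rays of $\trop(\mathcal{N}_d)$ into the functional and demanding nonnegativity yields precisely the inequalities $\sum_{i\le j}\la_{(i)}\le\sum_{i\le j}\mu_{(i)}$; dually, writing $\sum_i y_{\mu_i}-\sum_i y_{\la_i}$ as a nonnegative combination of the facet functionals $y_k+y_{k+2}-2y_{k+1}$ and $dy_{d-1}-(d-1)y_d$ is possible iff the partial-sum conditions hold, because those facet functionals are exactly the ``discrete second differences,'' whose nonnegative span (the discrete concave-sequence cone) is controlled by partial sums.

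I expect the main obstacle to be the bookkeeping in the converse: correctly matching the extreme rays of $\trop(\mathcal{N}_d)$ (equivalently, the decomposition into discrete second differences) to the partial-sum inequalities defining $\succeq$, and in particular getting the boundary/length effects right when $\ell(\la)\ne\ell(\mu)$ — the condition is only imposed up to $j=\min\{\ell(\la),\ell(\mu)\}$, and one must see why no further constraint appears (this is where the specific last facet $dy_{d-1}\ge(d-1)y_d$, rather than a ``two-sided'' convexity at the end, does the job). On the forward side, the only nontrivial point is the generation lemma for $\succeq$, which I would isolate as a standalone combinatorial lemma and prove by a straightforward induction; the underlying two-variable inequalities $(x^{s}-y^{s})(x^{t}-y^{t})\ge0$ are elementary. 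Since Theorem~\ref{thm:trop(Nd)} is proved later (Section~\ref{sec:tropicalization}), in Section~\ref{sec:superdominance} I would only record the forward implication (as the text says), and defer the converse.
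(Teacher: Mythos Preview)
Your overall strategy matches the paper's: the forward direction via elementary moves (merge and dominance shift) together with the pointwise inequalities $p_ap_b\ge p_{a+b}$ and $p_{a+1}p_{b-1}\ge p_ap_b$, and the converse via tropicalizing to a linear inequality on $\trop(\mathcal{N}_d)$ followed by Farkas. Your forward direction is in fact slightly more self-contained than the paper's, which routes through the SOS statement of Proposition~\ref{prop:equivdominance} and Corollary~\ref{cor:sd->sos}; your generation lemma is exactly Proposition~\ref{prop:covering}.

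The gap is in the last step of the converse. Plugging the extreme rays $(k,k-1,\dots,1,0,\dots,0)$ and $\mathbf{1}$ of $\trop(\mathcal{N}_d)$ into $\sum_i\Delta_iy_i$ does \emph{not} yield the superdominance partial-sum inequalities: what comes out is $\sum_{j\le k}N_\la(j)\ge\sum_{j\le k}N_\mu(j)$, where $N_\nu(j)=|\{i:\nu_i\le j\}|$, a different family of linear conditions in the multiplicity vector. They are equivalent to $\la\succeq\mu$ (that is the theorem), but the equivalence is precisely the combinatorial content you flagged as ``the main obstacle,'' and it is not a direct matching. The paper does not resolve it by comparing extreme rays to partial sums. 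Instead it carries out the Farkas decomposition explicitly: writes $\sum_i\Delta_iy_i=\sum_{k}a_k(y_k-2y_{k+1}+y_{k+2})+b\bigl(dy_{d-1}-(d-1)y_d\bigr)$ with $a_k,b\in\Z_{\ge0}$, builds from these coefficients two auxiliary partitions $L,M$ of a common larger size, proves $L\succeq M$ by \emph{fusing} the basic relations $(k,k+2)\succeq(k+1,k+1)$ and $((d-1)^d)\succeq(d^{d-1})$ via Lemma~\ref{lem:fusion}, verifies the multiset identity $L\mu=\la M$, and finally applies the \emph{cancellation} Lemma~\ref{lem:arquimedean} to conclude $\la\succeq\mu$. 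Neither the fusion lemma nor the cancellation lemma appears in your sketch, and without something playing their role (or a genuinely different argument relating the discrete-concave dual cone to superdominance) the converse does not close.
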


An analogous result was obtained by Cuttler, Greene, and Skandera \cite[Theorem 4.2]{cuttler2011inequalities} for \emph{term-normalized power sums} and the \emph{dominance order}. \medskip

For a partition $\la$ the $\la$\emph{-th} \emph{term-normalized power sum} is $P_\la:=\frac{p_\la}{n^{\ell(\la)}}$. The dominance order $\unrhd$ on $\Lambda_d$, is defined as follows: $\la\unrhd\mu$ if
\begin{align*}
\sum_{i=1}^{j}\la_i\ge\sum_{i=1}^j\mu_i
\end{align*}
for each $j=1,\dots,\min(\ell(\la),\ell(\mu))$.

\begin{remark}\label{rem:equiv of orders}
    Observe that the dominance order is equivalent to the superdominance order on partitions of the same length, i.e., if $\la,\mu\vdash d$ satisfy $\ell(\la)=\ell(\mu)$ then $\la\succeq\mu$ if and only if $\la\unrhd\mu$.
\end{remark}

\begin{remark}
In analogy to Theorem \ref{thm: power sums order}, the binomial inequalities in elementary symmetric polynomials $e_k = \sum_{1 \leq i_1 < \ldots < i_k \leq n}X_{i_1}\cdots X_{i_k}$ are characterized by the reverse-dominance order. For partitions $\lambda,\mu \in \Lambda_d$ we have $e_\la \geq e_\mu$ on $\R_{\geq 0}^n$ for all $n \in \N$ if and only if $\lambda \unlhd \mu$. 
This characterization is the same as for term-normalized elementary symmetric polynomials $E_\lambda:=\frac{e_\lambda}{e_\lambda(1,\ldots,1)}$ (see \cite[Theorem~3.2]{cuttler2011inequalities}). One observes that $e_{(k-1,l+1)}(\underline{X}^2)-e_{(k,l)}(\underline{X}^2)$ is a sum of squares and thus nonnegative, where $\underline{X}^2 := (X_1^2,\ldots,X_n^2)$. The converse direction can be proved exactly as in \cite{cuttler2011inequalities} for $E_\la \geq E_\mu$.
\end{remark}

\subsection{Properties of the superdominance order}
It is well-known that the lexicographic order \emph{refines} the dominance order. We show that the reverse lexicographic order refines the superdominance order, among other elementary properties. 

\begin{proposition}\label{prop:weakorderprpts}
If $\la,\mu\vdash d$ and $\la\succ\mu$, then $\ell(\la)\ge\ell(\mu)$ and $\la>_{revlex}\mu$.
\end{proposition}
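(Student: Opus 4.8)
The plan is to prove the two assertions separately, starting with $\ell(\la)\ge\ell(\mu)$ and then deducing $\la>_{revlex}\mu$.

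\textbf{Step 1: Length inequality.} Suppose for contradiction that $\ell(\la)<\ell(\mu)$. Set $k=\ell(\la)=\min\{\ell(\la),\ell(\mu)\}$. By the superdominance hypothesis applied at $j=k$ we have $\sum_{i=1}^k\la_{(i)}\le\sum_{i=1}^k\mu_{(i)}$. But $\sum_{i=1}^k\la_{(i)}=\sum_{i=1}^{\ell(\la)}\la_{(i)}=|\la|=d$, while $\sum_{i=1}^k\mu_{(i)}\le\sum_{i=1}^{\ell(\mu)}\mu_{(i)}=|\mu|=d$, with the inequality strict because $\mu$ has additional positive parts $\mu_{(k+1)},\dots,\mu_{(\ell(\mu))}$ not counted on the left. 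This forces $d\le d-(\text{something positive})$, a contradiction. Hence $\ell(\la)\ge\ell(\mu)$. (If $\ell(\la)=\ell(\mu)$ the argument is vacuous and there is nothing to contradict; we only needed strict inequality of lengths to get the contradiction.)

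\textbf{Step 2: Refinement by revlex.} Now I want to show $\la>_{revlex}\mu$. Recall the definition: $\la>_{revlex}\mu$ iff, for the smallest index $i$ with $\la_{(i)}\ne\mu_{(i)}$, one has $\la_{(i)}<\mu_{(i)}$. Let $i_0$ be this smallest index; note $i_0$ exists since $\la\ne\mu$. The key point is that $i_0\le\min\{\ell(\la),\ell(\mu)\}=\ell(\mu)$ (using Step 1): indeed, if $i_0>\ell(\mu)$ then $\la_{(i)}=\mu_{(i)}$ for all $i\le\ell(\mu)$, so $\sum_{i=1}^{\ell(\mu)}\la_{(i)}=\sum_{i=1}^{\ell(\mu)}\mu_{(i)}=d$, which (as in Step 1) forces $\ell(\la)=\ell(\mu)$ and then $\la=\mu$, contradiction. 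So $i_0\le\ell(\mu)$ and both $\la_{(i_0)},\mu_{(i_0)}$ are genuine parts. Applying superdominance at $j=i_0$ and at $j=i_0-1$ (the latter being trivially an equality since the first $i_0-1$ increasing parts agree), we subtract to get $\la_{(i_0)}\le\mu_{(i_0)}$; since they are unequal, $\la_{(i_0)}<\mu_{(i_0)}$, which is exactly $\la>_{revlex}\mu$.

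\textbf{Main obstacle.} There is no deep obstacle here — this is an elementary manipulation of partial-sum inequalities — but the one genuinely careful point is handling the case $\ell(\la)>\ell(\mu)$ correctly when establishing $i_0\le\ell(\mu)$: one must rule out the situation where $\la$ and $\mu$ agree on all parts indexed up to $\ell(\mu)$ but $\la$ has extra parts beyond. The size-counting argument above closes this gap, so the proof goes through cleanly. I would present Steps 1 and 2 as two short paragraphs with the contradiction arguments spelled out.
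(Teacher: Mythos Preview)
Your proof is correct and follows essentially the same approach as the paper's. The only cosmetic difference is that the paper argues Step~2 by contrapositive (assuming $\mu>_{revlex}\la$ and deriving $\sum_{i=1}^{j}\mu_{(i)}<\sum_{i=1}^{j}\la_{(i)}$ at the first differing index), whereas you argue directly; your version is slightly more careful in explicitly verifying that $i_0\le\ell(\mu)$ so that the relevant superdominance inequality is actually available.
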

\begin{proof}
Since both are partitions of $d$ then one cannot finish adding the parts of $\la$ first, so $\ell(\la)\ge\ell(\mu)$.

If $\mu>_{revlex}\la$ then the smallest index $j$ such that $\la_{(j)}\ne\mu_{(j)}$ also satisfies $\mu_{(j)}<\la_{(j)}$, and then $\sum_{i=1}^j\mu_{(i)}<\sum_{i=1}^j\la_{(i)}$, so $\la\not\succeq\mu$.
\end{proof}

The converse of Proposition \ref{prop:weakorderprpts} is not true: for example $\la=(3^4,1)$ and $\mu=(7,2^3)$ are incomparable in the superdominance order.

For $\la,\mu\vdash d$ denote $\la\gtrdot\mu$ if $\la$ \emph{covers} $\mu$ in the superdominance order, i.e., $\la\succ\mu$ and there is no $\nu\vdash d$ such that $\la\succ\nu\succ\mu$.

\begin{definition}
    If $\ell(\la)>1$ denote by $\la^*$ the partition obtained from $\la$ by merging $\la_1$ and $\la_2$ into a single part $\la_1+\la_2$.
\end{definition}

For example $(3^2)^*=(6)$ and $(4,3,1)^*=(7,1)$.

\begin{proposition}\label{Prop:coverprpts}
    Let $\la,\mu\vdash d$ with $\ell(\la)>\ell(\mu)$. Then 
    \begin{enumerate}
        \item[(i)] $\la\succ\mu\Rightarrow\la^*\succeq\mu$,
        \item[(ii)] $\la\gtrdot\mu\Rightarrow\mu=\la^*$,
        \item[(iii)] $\la\gtrdot\la^*\iff \la_1-\la_2\le1$.
    \end{enumerate}    
\end{proposition}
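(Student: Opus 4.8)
\textbf{Proof plan for Proposition~\ref{Prop:coverprpts}.}

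The plan is to work directly from Definition~\ref{def:superdominance} and the arithmetic of the partial sums $S_j(\la) := \sum_{i=1}^j \la_{(i)}$ of the smallest parts. Throughout, recall that $\la \succeq \mu$ means $S_j(\la) \le S_j(\mu)$ for all $j \le \min\{\ell(\la),\ell(\mu)\}$, and note the useful fact that since $|\la| = |\mu| = d$, the condition $S_j(\la) \le S_j(\mu)$ for $j < \ell(\mu)$ is about a ``prefix'' of the increasing part-sequences, while at $j = \ell(\mu)$ (when $\ell(\la) > \ell(\mu)$) we get $S_{\ell(\mu)}(\la) < d = S_{\ell(\mu)}(\mu)$ automatically once $\la \succeq \mu$ with $\la \ne \mu$. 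For (i): I would observe that $\la^*$ is obtained from $\la$ by replacing the two largest parts $\la_1 \ge \la_2$ by the single part $\la_1 + \la_2$, so $\la^*$ has the same smallest $\ell(\la) - 2$ parts as $\la$, and then one extra (large) part. Hence for $j \le \ell(\la) - 2$ we have $S_j(\la^*) = S_j(\la)$, and for the remaining relevant indices $j$ (namely $j = \ell(\la^*) - 1 = \ell(\la) - 2$ up to $j = \min\{\ell(\la^*), \ell(\mu)\} = \ell(\mu)$, using $\ell(\la^*) = \ell(\la) - 1 \ge \ell(\mu)$), I would show $S_j(\la^*) \le S_j(\mu)$ by comparing with $S_j(\la) \le S_j(\mu)$ together with the fact that merging two parts can only \emph{increase} a partial sum of the smallest parts when $j$ is large enough to ``see'' the merged part, but for the indices that matter the inequality still survives because $S_j(\la^*) \le S_{j+1}(\la) \le \dots$; concretely one checks $S_j(\la^*) \le S_{j}(\la)$ fails in general, so instead I argue $S_j(\la^*) \le S_{j+1}(\la) \le S_{j+1}(\mu)$ only helps at the right index — the cleanest route is: the increasing-part sequence of $\la^*$ is obtained from that of $\la$ by deleting one of the two largest entries and enlarging the other, so each partial sum $S_j(\la^*)$ equals either $S_j(\la)$ (if the deleted/enlarged entries sit beyond position $j$) or is the full sum $d$ minus the sum of the top parts of $\la^*$; since $j \le \ell(\mu) < \ell(\la)$ the former case applies and $S_j(\la^*) = S_j(\la) \le S_j(\mu)$. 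I expect some care is needed to verify this ``positional'' claim precisely, but it is bookkeeping.

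For (ii): suppose $\la \gtrdot \mu$ with $\ell(\la) > \ell(\mu)$. By (i), $\la^* \succeq \mu$. I claim also $\la \succeq \la^*$ always (both are partitions of $d$; $\la^*$ has one fewer part obtained by merging the two largest, and again the smallest $\ell(\la) - 2$ parts agree while $\la$ has an extra part, so $S_j(\la) \le S_j(\la^*)$ for all $j \le \ell(\la^*)$ — indeed for $j \le \ell(\la)-2$ they agree, and at $j = \ell(\la)-1 = \ell(\la^*)$ we need $S_{\ell(\la)-1}(\la) \le S_{\ell(\la^*)}(\la^*) = d$, which holds since $\la$ has positive parts). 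Moreover $\la \ne \la^*$ because $\ell(\la) > \ell(\la^*)$, so $\la \succ \la^*$. Now $\la \succ \la^* \succeq \mu$; since $\la$ covers $\mu$ and $\la^*$ lies strictly between $\la$ and $\mu$ (or equals $\mu$), the covering hypothesis forces $\la^* = \mu$.

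For (iii): the forward direction. If $\la \gtrdot \la^*$ but $\la_1 - \la_2 \ge 2$, I would construct an intermediate partition $\nu$ with $\la \succ \nu \succ \la^*$, contradicting the covering relation. The natural candidate is $\nu := $ the partition obtained from $\la$ by replacing $\la_1, \la_2$ with $\la_1 - 1, \la_2 + 1$ (still a valid partition since $\la_1 - 1 \ge \la_2 + 1$); then $\nu$ has the same length as $\la$ and, reasoning on partial sums of smallest parts, $\la \succ \nu$ and $\nu \succ \la^*$ — here I would use Remark~\ref{rem:equiv of orders} to reduce the comparison $\la$ vs.\ $\nu$ (equal lengths) to the ordinary dominance order, where $(\la_1 - 1, \la_2+1)$ dominating-below $(\la_1,\la_2)$ is the standard elementary move, and separately check $\nu \succ \la^*$. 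For the reverse direction: if $\la_1 - \la_2 \le 1$, I need to show nothing strictly separates $\la$ from $\la^*$. Suppose $\la \succ \nu \succ \la^*$ for some $\nu \vdash d$. From $\nu \succ \la^*$ and Proposition~\ref{prop:weakorderprpts} we get $\ell(\nu) \ge \ell(\la^*) = \ell(\la) - 1$, and from $\la \succ \nu$ we get $\ell(\la) \ge \ell(\nu)$, so $\ell(\nu) \in \{\ell(\la) - 1, \ell(\la)\}$. If $\ell(\nu) = \ell(\la)$, then $\la \succ \nu$ with equal lengths is dominance, and I show the only partition of length $\ell(\la)$ dominated strictly by $\la$ and superdominating $\la^*$ cannot exist when $\la_1 - \la_2 \le 1$ (the top two parts of $\la$ are as equal as possible, so there is no room for an elementary dominance move that keeps us above $\la^*$). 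If $\ell(\nu) = \ell(\la) - 1 = \ell(\la^*)$, then by part (ii)/(i)-type reasoning $\nu$ and $\la^*$ have the same length, and $\la \succ \nu$ forces (by an argument parallel to (i)) that $\nu \succeq \la^*$ with the partial sums pinned down — one checks $\nu$ must equal $\la^*$, again a contradiction.

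\textbf{Main obstacle.} The genuinely delicate part is (iii), specifically handling the length-drop case and making the ``no room for an intermediate move'' argument airtight: one must control both partitions of length $\ell(\la)$ and of length $\ell(\la) - 1$ that could squeeze between $\la$ and $\la^*$, and the partial-sum-of-smallest-parts bookkeeping is less transparent than ordinary dominance because merging the two \emph{largest} parts interacts with the \emph{smallest}-parts partial sums only through the total-sum constraint $|\la| = d$. I would streamline this by consistently translating equal-length comparisons into ordinary dominance via Remark~\ref{rem:equiv of orders}, and reserving the superdominance-specific arguments for the length-changing steps, where the sum-to-$d$ constraint does most of the work.
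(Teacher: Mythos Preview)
Your approach is correct and essentially the paper's. Two small points. In (i), your positional claim ``$S_j(\la^*)=S_j(\la)$ for all $j\le\ell(\mu)$'' is not quite right when $j=\ell(\mu)=\ell(\la)-1$, since then position $j$ in $\la^*$ is the merged part $\la_1+\la_2$; but there $S_j(\la^*)=d=S_j(\mu)$ anyway, so the inequality still holds (the paper handles this case explicitly). More importantly, your ``main obstacle'' in (iii) dissolves once you reuse (i): if $\la\succ\nu\succ\la^*$ with $\ell(\nu)<\ell(\la)$, then (i) applied to the pair $(\la,\nu)$ gives $\la^*\succeq\nu$, contradicting $\nu\succ\la^*$. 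So only the equal-length case $\ell(\nu)=\ell(\la)$ survives, and there the squeeze $S_j(\la)\le S_j(\nu)\le S_j(\la^*)=S_j(\la)$ for $j\le\ell(\la)-2$ forces $\nu_i=\la_i$ for $i\ge3$, after which strictness of $\la\succ\nu$ yields $\la_1>\nu_1\ge\nu_2>\la_2$ and hence $\la_1-\la_2\ge2$.
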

\begin{proof}
    (i) Let $\ell(\la)=l$. We have
    \begin{align*}
      \sum_{i=1}^k\la_{(i)}^*=\sum_{i=1}^k\la_{(i)}\le\sum_{i=1}^k\mu_{(i)}
    \end{align*}
    for each $k=1,\dots,l-2$. Since $\ell(\la^*)=l-1$ and $\ell(\mu)\le l-1$ then $\sum_{i=1}^{l-1}\la_{(i)}^*=d=\sum_{i=1}^{l-1}\mu_{(i)}$, therefore $\la^*\succeq\mu$.
    \smallskip

    (ii) By (i) $\la\gtrdot\mu$ implies $\la^*\succeq\mu$. But $\la\succ\la^*$, so $\la\succ\la^*\succeq\mu$, and since $\la\gtrdot\mu$ we must have $\mu=\la^*$.
    \smallskip

    (iii) If $\la_1-\la_2>1$ the partition $\nu$ with $\nu_1=\la_1-1$ and $\nu_2=\la_2+1$ and the rest of its parts coinciding with $\la$ lies strictly between $\la$ and $\la^*$. If $\la$ does not cover $\la^*$ then we have $\la\succ\nu\succ\la^*$ for some $\nu\vdash d$, which, due to (i), is only possible if $\ell(\nu)=\ell(\la)$. But then $\la_i=\nu_i$ for $i=3,\dots,\ell(\la)$, and so $(\la_1,\la_2)\rhd(\nu_1,\nu_2)$ and $\la_1+\la_2=\nu_1+\nu_2$ so $\la_1-\la_2>1$.
\end{proof}

\begin{proposition}\label{prop:covering}
    Let $\la,\mu\vdash d$. Then $\la\gtrdot\mu$ if and only if one of the following holds:
    \begin{enumerate}
        \item[(i)] $\ell(\la)=\ell(\mu)$ and $\la$ covers $\mu$ in the dominance order.
        \item[(ii)] $\la_1-\la_2\le1$ and $\mu=\la^*$.
    \end{enumerate}
\end{proposition}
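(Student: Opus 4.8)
The plan is to reduce everything to the already-proved Proposition~\ref{Prop:coverprpts}, Proposition~\ref{prop:weakorderprpts}, and Remark~\ref{rem:equiv of orders}, splitting into the two cases $\ell(\la)=\ell(\mu)$ and $\ell(\la)>\ell(\mu)$; the case $\ell(\la)<\ell(\mu)$ never occurs for a covering pair, since $\la\gtrdot\mu$ implies $\la\succ\mu$, hence $\ell(\la)\ge\ell(\mu)$ by Proposition~\ref{prop:weakorderprpts}.

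For the ``if'' direction: if (ii) holds, then Proposition~\ref{Prop:coverprpts}(iii) gives $\la\gtrdot\la^*=\mu$ directly. If (i) holds, then $\ell(\la)=\ell(\mu)$ and $\la\rhd\mu$ in the dominance order, so $\la\succ\mu$ by Remark~\ref{rem:equiv of orders}; if some $\nu\vdash d$ satisfied $\la\succ\nu\succ\mu$, then $\ell(\la)\ge\ell(\nu)\ge\ell(\mu)$ (Proposition~\ref{prop:weakorderprpts}) forces $\ell(\nu)=\ell(\la)$, and Remark~\ref{rem:equiv of orders} turns this into $\la\rhd\nu\rhd\mu$ in dominance, contradicting that $\la$ covers $\mu$ in dominance. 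Hence $\la\gtrdot\mu$.

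For the ``only if'' direction, assume $\la\gtrdot\mu$. If $\ell(\la)>\ell(\mu)$, then Proposition~\ref{Prop:coverprpts}(ii) gives $\mu=\la^*$, and Proposition~\ref{Prop:coverprpts}(iii) applied to $\la\gtrdot\la^*$ then yields $\la_1-\la_2\le1$, i.e. (ii). If $\ell(\la)=\ell(\mu)=:\ell$, then $\la\rhd\mu$ in dominance by Remark~\ref{rem:equiv of orders}, and it remains to check that $\la$ covers $\mu$ in dominance. Suppose not, so $\la\rhd\nu\rhd\mu$ strictly in dominance for some $\nu\vdash d$. Here I would insert the one genuinely non-formal observation: a partition of $d$ cannot strictly dominate a partition of $d$ of strictly smaller length, because if $\ell(\sigma)>\ell(\tau)$ and $|\sigma|=|\tau|=d$ then $\sum_{i=1}^{\ell(\tau)}\sigma_i<d=\sum_{i=1}^{\ell(\tau)}\tau_i$, so $\sigma\not\rhd\tau$. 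Applying this to $\la\rhd\nu$ excludes $\ell(\nu)<\ell$, and applying it to $\nu\rhd\mu$ excludes $\ell(\nu)>\ell$, so $\ell(\nu)=\ell$; then Remark~\ref{rem:equiv of orders} gives $\la\succ\nu\succ\mu$, contradicting $\la\gtrdot\mu$. Thus $\la$ covers $\mu$ in dominance and (i) holds.

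The only step that is not a one-line citation is the length-monotonicity of the dominance order used in the last paragraph; it is what guarantees that a superdominance cover between equal-length partitions is genuinely a dominance cover inside all of $\Lambda_d$ (and not merely among partitions of length $\ell$), and I expect it to be the main — though still short — obstacle.
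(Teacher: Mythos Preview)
Your proof is correct and follows essentially the same route as the paper's: split on whether $\ell(\la)=\ell(\mu)$ or $\ell(\la)>\ell(\mu)$, invoke Proposition~\ref{Prop:coverprpts} in the latter case, and use Remark~\ref{rem:equiv of orders} in the former. The paper's argument for the equal-length case is a single sentence (``by Remark~\ref{rem:equiv of orders}, $\la\gtrdot\mu$ is equivalent to $\la$ covering $\mu$ in the dominance order''), whereas you correctly unpack the one point that deserves care --- namely, that any intermediate partition in either order must again have length $\ell$, using the standard fact that $\sigma\rhd\tau$ forces $\ell(\sigma)\le\ell(\tau)$ on one side and Proposition~\ref{prop:weakorderprpts} on the other. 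So your version is the same proof, just with the gap the paper leaves implicit made explicit.
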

\begin{proof}
    By Remark \ref{rem:equiv of orders}, if $\ell(\la)=\ell(\mu)$, then $\la\gtrdot\mu$ is equivalent to $\la$ covering $\mu$ in the dominance order.
    \smallskip

    If $\ell(\la)>\ell(\mu)$ then by Proposition \ref{Prop:coverprpts}, $\la\gtrdot\mu$ implies $\mu=\la^*$ and $\la_1-\la_2\le1$. The other implication follows from Proposition \ref{Prop:coverprpts} (iii).
\end{proof}

\begin{definition} \label{def:fusion}
The \emph{fusion} of partitions $\la,\mu,\dots,\omega$, denoted by $\la\mu\cdots\omega$, is the unique partition whose parts are precisely the parts of $\la,\mu,\dots,\omega$ put together. Denote by $\mu^{\circ k}$ the fusion of $k$ copies of $\mu$.
\end{definition}

\hspace{-1mm}For example, if $\la=(2^2,1)$ and $\mu=(3,2,1)$ then $\la\mu=(3,2^3,1^2)$ and $\mu^{\circ 2}=\mu\mu=(3^2,2^2,1^2)$. Observe that $\ell(\la\mu\cdots\omega)=\ell(\la)+\ell(\mu)+\dots+\ell(\omega)$, and $|\la\mu\cdots\omega|=|\la|+|\mu|+\dots+|\omega|$.

\begin{lemma}\label{lem:fusion}
    Let $k\ge2$ and $\la^i,\mu^i\vdash d_i$ such that $\la^i\succeq\mu^i$ for $i=1,\dots,k$, then $\la^1\cdots\la^k\succeq\mu^1\cdots\mu^k$.
\end{lemma}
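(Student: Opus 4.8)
The plan is to reduce the general statement to the case $k=2$ by induction, and then prove the $k=2$ case directly from the definition of superdominance by manipulating partial sums of the sorted-in-increasing-order parts. For the inductive step, assuming the result for $k-1$ factors, I would note that $\la^2\cdots\la^k$ and $\mu^2\cdots\mu^k$ are partitions of $d_2+\dots+d_k$ with $\la^2\cdots\la^k\succeq\mu^2\cdots\mu^k$, so applying the $k=2$ case to the pair $(\la^1, \mu^1)$ and the pair $(\la^2\cdots\la^k, \mu^2\cdots\mu^k)$ gives $\la^1(\la^2\cdots\la^k)\succeq\mu^1(\mu^2\cdots\mu^k)$; since fusion is associative this is exactly $\la^1\cdots\la^k\succeq\mu^1\cdots\mu^k$.

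For the base case $k=2$, write $\la=\la^1\la^2$ and $\mu=\mu^1\mu^2$, both partitions of $d_1+d_2$. The key observation is how the sorted increasing parts of a fusion arise from merging the two sorted increasing sequences. The cleanest way to handle the partial-sum condition $\sum_{i=1}^j\la_{(i)}\le\sum_{i=1}^j\mu_{(i)}$ is to use the following characterization: for a partition $\nu$, and any value $t$, the sum of the $j$ smallest parts is the minimum over all ways of choosing $j$ parts. Concretely, $\sum_{i=1}^j \nu_{(i)} = \min\{ \sum_{p\in S} p : S \text{ a size-}j\text{ sub-multiset of the parts of }\nu\}$. For the fusion $\la=\la^1\la^2$, any size-$j$ sub-multiset $S$ of its parts splits as $S = S_1\sqcup S_2$ with $S_a$ a sub-multiset of $\la^a$ of some size $j_a$, $j_1+j_2=j$; hence $\sum_{i=1}^j\la_{(i)} = \min_{j_1+j_2=j}\left(\sum_{i=1}^{j_1}\la^1_{(i)} + \sum_{i=1}^{j_2}\la^2_{(i)}\right)$, with the convention that the sum is $+\infty$ if $j_a>\ell(\la^a)$ (equivalently, restrict to $j_a\le\ell(\la^a)$). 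The same formula holds for $\mu$. Now for $j\le\min\{\ell(\la),\ell(\mu)\}$, pick $j_1,j_2$ with $j_1+j_2=j$ attaining the minimum for $\la$; I must check $j_a\le\ell(\mu^a)$ so that $\sum_{i=1}^{j_a}\mu^a_{(i)}$ is legitimate — this needs a small argument using $\ell(\la^a)\le\ell(\mu^a)$, which holds since $\la^a\succeq\mu^a$ forces $\ell(\la^a)\ge\ell(\mu^a)$... wait, that is the wrong direction. Let me instead choose the optimal split on the $\mu$ side: pick $j_1,j_2$ with $j_1+j_2=j$, $j_a\le\ell(\mu^a)$, attaining $\sum_{i=1}^j\mu_{(i)} = \sum_{i=1}^{j_1}\mu^1_{(i)}+\sum_{i=1}^{j_2}\mu^2_{(i)}$. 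Since $\la^a\succeq\mu^a$ gives $\ell(\la^a)\ge\ell(\mu^a)\ge j_a$, each partial sum $\sum_{i=1}^{j_a}\la^a_{(i)}$ is defined and, by hypothesis, $\le\sum_{i=1}^{j_a}\mu^a_{(i)}$ for $j_a\le\min\{\ell(\la^a),\ell(\mu^a)\}=\ell(\mu^a)$. Therefore
\[
\sum_{i=1}^j\la_{(i)} \;\le\; \sum_{i=1}^{j_1}\la^1_{(i)} + \sum_{i=1}^{j_2}\la^2_{(i)} \;\le\; \sum_{i=1}^{j_1}\mu^1_{(i)} + \sum_{i=1}^{j_2}\mu^2_{(i)} \;=\; \sum_{i=1}^j\mu_{(i)},
\]
which is exactly the superdominance condition for $\la$ and $\mu$ at level $j$. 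Ranging over $j$ completes the base case.

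I expect the main obstacle to be purely bookkeeping: justifying the min-formula for partial sums of a fusion (this is elementary but should be stated carefully), and keeping the length bounds straight so that every partial sum appearing is over a legitimate range of indices. One should also double-check the degenerate situations, e.g. when some $\la^i$ or $\mu^i$ is empty, and confirm $\min\{\ell(\la),\ell(\mu)\}=\min\{\ell(\la^1)+\ell(\la^2),\ell(\mu^1)+\ell(\mu^2)\}$ interacts correctly with the per-factor length inequalities $\ell(\la^a)\ge\ell(\mu^a)$ (so that $\ell(\la)\ge\ell(\mu)$ and the relevant range of $j$ is $1,\dots,\ell(\mu)$). None of these is conceptually hard; the argument is a clean consequence of the "sum of $j$ smallest parts = min over $j$-subsets" principle together with additivity of fusion.
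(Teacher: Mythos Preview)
Your argument is correct. Both you and the paper reduce to $k=2$ and exploit the length inequality $\ell(\la^a)\ge\ell(\mu^a)$ coming from $\la^a\succeq\mu^a$, but the core of the $k=2$ step differs. The paper pads each $\mu^a$ with large dummy parts up to length $\ell(\la^a)$ to produce vectors of equal length, then invokes the general fact (cited from Marshall--Olkin) that weak supermajorization is preserved under concatenation, and finally strips the dummy parts. Your route is more self-contained: you use the characterization $\sum_{i=1}^j\nu_{(i)}=\min_{|S|=j}\sum_{p\in S}p$ to write the $j$-th partial sum of a fusion as a minimum over splittings $j_1+j_2=j$, then compare term by term using the optimal splitting on the $\mu$ side. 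This avoids the padding device and the external citation entirely, at the cost of stating and checking the min-over-splittings identity (which is immediate). Both approaches are short; yours is arguably the cleaner one to present from scratch, while the paper's has the virtue of situating the statement within the existing majorization literature.
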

\begin{proof}
    It suffices to show the result for $k=2$. Let $\la:=\la^1\la^2$, $\mu:=\mu^1\mu^2$ and $m:=\ell(\mu)$.
    \smallskip
    
    Since $\la^i\succeq\mu^i$, by Proposition \ref{prop:weakorderprpts} we have $\ell(\la^i)\ge\ell(\mu^i)$ and so $\ell(\la)\ge m$. 
    \smallskip
    
    Define $\tilde{\mu}^i$ to be the partition of length $\ell(\la^i)$ such that $\tilde{\mu}^i_{(j)}=\mu_{(j)}^i$ for $j=1,\dots,\ell(\mu^i)$, and $\tilde{\mu}^i_{(j)}=r$ for all $j>\ell(\mu^i)$, where $r$ is larger than any part of $\la^i$ and $\mu^i$ for $i=1,2$. So $\tilde{\mu}^i\prec^\mathbf{w}\la^i$ and, using \cite[5.A.7]{marshall1979inequalities}: if $x\prec^\mathbf{w} y$ on $\R^n$ and $a\prec^\mathbf{w} b$ on $\R^m$ then $(x,a)\prec^\mathbf{w}(y,b)$ on $\R^{m+n}$, we obtain $\tilde{\mu}\prec^\mathbf{w}\la$ where $\tilde{\mu}:=\tilde{\mu}^1\tilde{\mu}^2$. Therefore 
    $$\sum_{j=1}^k\la_{(j)}\le\sum_{j=1}^k\tilde{\mu}_{(j)}$$ for each $k=1,\dots,\ell(\la)$, and in particular for each $k=1,\dots,m$. But then $\la\succeq\mu$ because by construction of $\tilde{\mu}$, the parts $\tilde{\mu}_{(1)},\dots,\tilde{\mu}_{(m)}$ are all smaller than $r$, so they must be precisely the parts of $\mu$.
\end{proof}

The following is a cancellation property of the superdominance order.

\begin{lemma}\label{lem:arquimedean}
Let $\la,\mu\vdash d$ and $\nu$ be any partition. Then $\la\succeq\mu\iff\la\nu\succeq\mu\nu$.
\end{lemma}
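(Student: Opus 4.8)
The forward implication will come essentially for free from Lemma~\ref{lem:fusion}: applying it with $k=2$, $\la^1=\la$, $\mu^1=\mu$ and $\la^2=\mu^2=\nu$ (legitimate because $\la\succeq\mu$ and trivially $\nu\succeq\nu$) gives $\la\nu=\la^1\la^2\succeq\mu^1\mu^2=\mu\nu$; the degenerate case $\nu=\emptyset$ is vacuous.

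The real content is the reverse (cancellation) implication, and the plan is first to reduce it to the case in which $\nu$ has a single part. Since fusion is just the multiset union of parts, for $\nu=(\nu_1,\dots,\nu_p)$ one has $\la\nu=\big(\la(\nu_2)\cdots(\nu_p)\big)(\nu_1)$ and likewise for $\mu$ (here $(\nu_i)$ is the one-part partition and juxtaposition is fusion as in Definition~\ref{def:fusion}). Hence it suffices to prove the one-part statement: \emph{if $\sigma,\tau\vdash N$, $a\in\N_{>0}$ and $\sigma(a)\succeq\tau(a)$, then $\sigma\succeq\tau$}; the general case then follows by peeling the parts of $\nu$ off one at a time, i.e.\ by induction on $\ell(\nu)$.

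To prove this one-part statement I plan to use the elementary identity: for any partition $\rho$ and $a\in\N_{>0}$, writing $S_\rho(j):=\rho_{(1)}+\dots+\rho_{(j)}$ for the sum of the $j$ smallest parts,
\[
S_\rho(j)=\max\big(S_{\rho(a)}(j),\,S_{\rho(a)}(j+1)-a\big),\qquad 0\le j\le\ell(\rho),
\]
where $\rho(a)$ is the fusion of $\rho$ with $(a)$. This is checked by distinguishing whether the adjoined part $a$ lands among the $j$ smallest parts of $\rho(a)$ or not: if it does, then $S_{\rho(a)}(j+1)-a=S_\rho(j)$ while the other term of the maximum is $\le S_\rho(j)$; if it does not, then $S_{\rho(a)}(j)=S_\rho(j)$ while the other term is $\le S_\rho(j)$. (The boundary cases $j=0,\ell(\rho)$ and the possibility of parts of $\rho$ equal to $a$ are harmless.) Granting the identity: if $\sigma(a)=\tau(a)$ then $\sigma=\tau$ and we are done; otherwise $\sigma(a)\succ\tau(a)$, so Proposition~\ref{prop:weakorderprpts} gives $\ell(\sigma)=\ell(\sigma(a))-1\ge\ell(\tau(a))-1=\ell(\tau)=:m$, and $\sigma(a)\succeq\tau(a)$ means $S_{\sigma(a)}(k)\le S_{\tau(a)}(k)$ for $0\le k\le m+1$. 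Applying the identity to $\sigma$ and to $\tau$ then gives, for every $1\le j\le m$,
\[
S_\sigma(j)=\max\big(S_{\sigma(a)}(j),S_{\sigma(a)}(j+1)-a\big)\le\max\big(S_{\tau(a)}(j),S_{\tau(a)}(j+1)-a\big)=S_\tau(j),
\]
which is exactly $\sigma\succeq\tau$.

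The main obstacle is the cancellation direction itself: superdominance (equivalently, weak supermajorization) is not obviously cancellative, and the naive route through the infimal-convolution description $S_{\rho(a)}(k)=\min_i\big(S_\rho(i)+S_{(a)}(k-i)\big)$ stalls, because the minimizing split can land in an unhelpful position. What makes the argument work is isolating the $\max$-identity above, which pins down $S_\rho(j)$ in terms of exactly two prescribed values of $S_{\rho(a)}$; after that, monotonicity of $\max$ finishes it at once. The one place that needs genuine (if routine) care is the verification of that identity, including its boundary behaviour.
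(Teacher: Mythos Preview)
Your proof is correct. Both you and the paper obtain the forward implication from Lemma~\ref{lem:fusion} and then reduce the cancellation direction to the case $\ell(\nu)=1$. The difference lies in how the one-part cancellation is carried out. The paper argues by a direct case analysis on the relative positions at which the added part $t$ appears in the increasing orderings of $\la\nu$ and $\mu\nu$, and then does some index-chasing with the partial sums $\check{\la}_k$, $\check{\mu}_k$. Your route instead isolates the identity $S_\rho(j)=\max\bigl(S_{\rho(a)}(j),\,S_{\rho(a)}(j+1)-a\bigr)$, which expresses the small-part sums of $\rho$ purely in terms of those of $\rho(a)$; once this is in hand, monotonicity of $\max$ gives $S_\sigma(j)\le S_\tau(j)$ in one line. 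Your argument is cleaner and avoids the paper's case split, at the cost of having to verify the auxiliary identity (which is routine); the paper's computation is more hands-on but needs no such lemma.
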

\begin{proof}
The forward implication follows from Lemma \ref{lem:fusion}. For the other implication it is enough to consider the case $\ell(\nu)=1$, so let $\nu=(t)$ with $t\in\N_{>0}$. If $t$ occurs first in $\mu\nu$ than in $\la\nu$, i.e., if $t=\mu\nu_{(i)}$ and $t=\la\nu_{(j)}$ with $i\le j$, then we can cancel $t$ without altering the order. The only remaining case is when $t=\mu\nu_{(i)}$ and $t=\la\nu_{(j)}$ with $i>j$. If $t\ge\la_1$ then, since both partitions are of the same size, it is clear we can cancel $t$ from both partitions, and the inequality is preserved. So suppose $\la_{(j-1)}\le t<\la_{(j)}$. For each $j=1,\dots,\ell(\la)$ denote $\check{\la}_j=\sum_{i=1}^j\la_{(i)}$, and define $\check{\la}_j=|\la|$ for all $j>\ell(\la)$. Since $\la\nu\succeq\mu\nu$ we have 
\begin{align*}
    \check{\la}_{j-1}+t+\la_{(j)}&\le\check{\mu}_{j-1}+\mu_{(j)}+\mu\nu_{(j+1)}\\
    \Rightarrow\check{\la}_{j-1}+\la_{(j)}&\le\check{\mu}_{j-1}+\mu_{(j)}+\mu\nu_{(j+1)}-t\\
    &\le\check{\mu}_{j-1}+\mu_{(j)}
\end{align*}
the last inequality because $\mu\nu_{(j+1)}\le t$. Hence $\check{\la_j}\le\check{\mu_j}$, and similarly $\check{\la}_k\le\check{\mu}_k$ as long as $\mu\nu_{(k+1)}\le t$. But after $t$ has appeared in both sides of the inequalities $\check{\la\nu}_r\le\check{\mu\nu}_r$ then the inequalities $\check{\la}_r\le\check{\mu}_r$ are immediate because we can just cancel $t$ on both sides.
\end{proof}

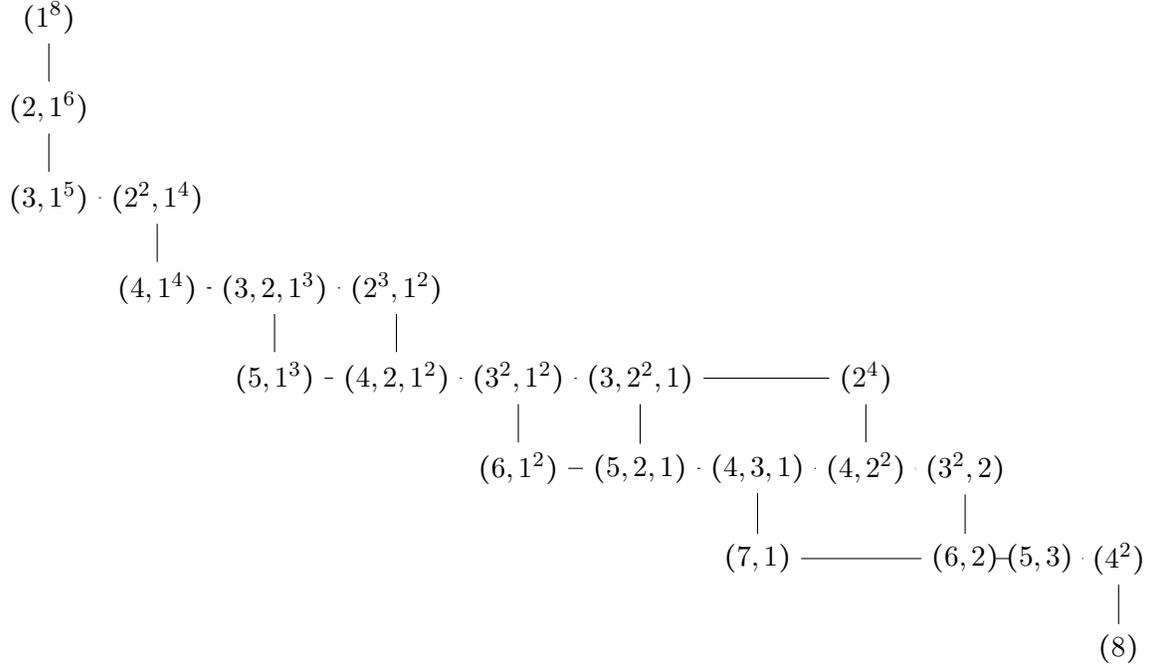
\begin{figure}
\begin{center}
\begin{tikzpicture}[scale=.6]
    \node (a) at (0,16) {$(1^8)$};
    \node (b) at (0,14) {$(2,1^6)$};
    \node (c) at (0,12) {$(3,1^5)$};
    \node (d) at (2.4,12) {$(2^2,1^4)$};
    \node (e) at (2.4,10) {$(4,1^4)$};  
    \node (f) at (5,10) {$(3,2,1^3)$};
    \node (g) at (7.7,10) {$(2^3,1^2)$};
    \node (h) at (5,8) {$(5,1^3)$};
    \node (i) at (7.7,8) {$(4,2,1^2)$};
    \node (j) at (10.4,8) {$(3^2,1^2)$};
    \node (k) at (13.1,8) {$(3,2^2,1)$};
    \node (u) at (18.1,8) {$(2^4)$};
    \node (l) at (10.4,6) {$(6,1^2)$};
    \node (m) at (13.1,6) {$(5,2,1)$};
    \node (n) at (15.7,6) {$(4,3,1)$};
    \node (o) at (18.1,6) {$(4,2^2)$};
    \node (v) at (20.3,6) {$(3^2,2)$};
    \node (p) at (15.7,4) {$(7,1)$};
    \node (q) at (20.3,4) {$(6,2)$};
    \node (r) at (21.95,4) {$(5,3)$};
    \node (s) at (23.7,4) {$(4^2)$};
    \node (t) at (23.7,2) {$(8)$};
  
    \draw (a)--(b)--(c);
    \draw (f)--(h)--(i);
    \draw (g)--(i)--(j)--(k);
    \draw (c)--(d)--(e)--(f);
    \draw (e)--(f)--(g);
    \draw (k)--(u)--(o)--(v)--(q);
    \draw (j)--(l)--(m)--(n)--(o);
    \draw(k)--(m);
    \draw (n)--(p)--(q)--(r)--(s)--(t);
\end{tikzpicture}
\caption{Superdominance poset $(\Lambda_8,\succeq)$ (decreases going down or right).}
\end{center}
\end{figure}

\subsection{Consequences for power sums} \label{sec: Consequences for power sums}

\begin{proposition}\label{prop:equivdominance}
    Let $\la,\mu\vDash 2d$ and $P_\la = \frac{p_\la}{n^{\ell(\la)}}, P_\mu = \frac{p_\mu}{n^{\ell(\mu)}}$, then the following are equivalent:
    \begin{enumerate}
        \item[(i)] $P_\la-P_\mu$ is a sum of squares  for all $n\in\N$,
        \item[(ii)] $P_\la\ge P_\mu$ \hspace{2mm} for all $n\in\N$, 
        \item[(iii)] $\la\unrhd\mu$.
    \end{enumerate}
\end{proposition}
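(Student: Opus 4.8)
The plan is to establish the cycle $(i)\Rightarrow(ii)\Rightarrow(iii)\Rightarrow(i)$. The implication $(i)\Rightarrow(ii)$ is immediate, since a sum of squares is nonnegative. For $(ii)\Rightarrow(iii)$ I would pass through the substitution $X_i\mapsto X_i^2$: since $\la$ and $\mu$ are even, write $\la=2\alpha$, $\mu=2\beta$ with $\alpha,\beta\vdash d$; then $p_\la(x)=p_\alpha(x_1^2,\dots,x_n^2)$ and $\ell(\la)=\ell(\alpha)$, so $P_\la\geq P_\mu$ on $\R^n$ for all $n$ is the same as $P_\alpha\geq P_\beta$ on $\R_{\geq0}^n$ for all $n$, which by the theorem of Cuttler, Greene and Skandera \cite[Theorem~4.2]{cuttler2011inequalities} is equivalent to $\alpha\unrhd\beta$, i.e. to $\la\unrhd\mu$.

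The real content is $(iii)\Rightarrow(i)$, which I would prove by a transfer decomposition together with two explicit sum-of-squares identities. The combinatorial input is that if $\la\unrhd\mu$ are even partitions of $2d$, then there is a chain $\la=\la^{(0)}\unrhd\la^{(1)}\unrhd\dots\unrhd\la^{(m)}=\mu$ of even partitions of $2d$ in which each $\la^{(t+1)}$ is obtained from $\la^{(t)}$ either by decreasing one part by $2$ and increasing another part by $2$, or by decreasing a part by $2$ and adjoining a new part equal to $2$; this is the classical fact that the dominance order on partitions of a fixed size is generated by single-box transfers (see \cite{marshall1979inequalities}), applied to $\tfrac12\la$ and $\tfrac12\mu$ and transported back along $\la\mapsto\tfrac12\la$. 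Since the cone of sums of squares is closed under addition and $P_\la-P_\mu=\sum_{t}\bigl(P_{\la^{(t)}}-P_{\la^{(t+1)}}\bigr)$, it suffices to check that a single transfer step yields a sum of squares.

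So let $\nu:=\la^{(t)}$, $\nu':=\la^{(t+1)}$. In the first case a part $a$ of $\nu$ becomes $a-2$ and a part $b$ becomes $b+2$, with $a\geq b+4$ (both even), so $\ell(\nu)=\ell(\nu')=:\ell$ and $p_\nu=Q\,p_a p_b$, $p_{\nu'}=Q\,p_{a-2}p_{b+2}$, where $Q$ is the product of the remaining power sums, all of even index. Grouping the pairs $(i,j)$ and $(j,i)$ in $p_a p_b-p_{a-2}p_{b+2}=\sum_{i,j}\bigl(x_i^a x_j^b-x_i^{a-2}x_j^{b+2}\bigr)$ and pulling out the factor $x_i^2-x_j^2$ twice gives
\[ p_a p_b-p_{a-2}p_{b+2}\;=\;\sum_{1\leq i<j\leq n}(x_i^2-x_j^2)^2\,x_i^b x_j^b\sum_{m=0}^{(a-b-4)/2}x_i^{2m}x_j^{a-b-4-2m}, \]
which is a sum of squares because $a$ and $b$ are even; as $Q$ is a sum of squares and $n^{-\ell}>0$, so is $P_\nu-P_{\nu'}=n^{-\ell}Q\,(p_a p_b-p_{a-2}p_{b+2})$. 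The second case is the ``$b=0$'' version: if $\nu'$ replaces a part $a\geq4$ of $\nu$ by $a-2$ and adjoins a new part $2$, then $\ell(\nu')=\ell(\nu)+1$, $p_{\nu'}=Q\,p_{a-2}p_2$, and $P_\nu-P_{\nu'}=n^{-\ell(\nu)-1}Q\,(np_a-p_{a-2}p_2)$ with
\[ np_a-p_{a-2}p_2\;=\;\sum_{1\leq i<j\leq n}(x_i^2-x_j^2)^2\sum_{m=0}^{(a-4)/2}x_i^{2m}x_j^{a-4-2m}, \]
again a sum of squares.

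I expect the only genuinely delicate point to be the combinatorial reduction: making sure dominance between even partitions of $2d$ is realized by moves that transfer exactly two boxes at a time, and keeping the two cases (moving a box block onto an existing part versus creating a new part) separate, since they affect the normalizing power of $n$ differently. The two sum-of-squares identities are routine once the pairing is in place, and $(ii)\Rightarrow(iii)$ is a citation, so after the combinatorial setup the argument is short.
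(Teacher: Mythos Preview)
Your proof is correct and follows essentially the same route as the paper: the paper's proof is three citations, using \cite[Theorem~4.2]{cuttler2011inequalities} for $(ii)\Leftrightarrow(iii)$ and \cite[Proposition~2.3]{brylawski1973lattice} together with \cite[Lemmas~1,~3]{frenkel2014minkowski} for $(iii)\Rightarrow(i)$, which is precisely the dominance-cover decomposition plus the two explicit sum-of-squares identities you wrote out by hand. You have made the argument self-contained where the paper defers to the literature, but the underlying structure is identical.
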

\begin{proof}
    Trivially (i) implies (ii). By \cite[Theorem 4.2]{cuttler2011inequalities} (ii) and (iii) are equivalent. 
    
    If $\la\unrhd\mu$ then \cite[Proposition 2.3]{brylawski1973lattice} and \cite[Lemma 1, Lemma 3]{frenkel2014minkowski} imply (i).
\end{proof}

\begin{corollary}\label{cor:sd->sos}
    Let $\la,\mu\vDash2d$. If $\la\succeq\mu$ then $p_\la-p_{\mu}$ is a sum of squares for all $n\in\N$.
\end{corollary}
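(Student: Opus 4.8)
The plan is to prove Corollary~\ref{cor:sd->sos} by reducing the statement about the non-normalized power sums $p_\la, p_\mu$ to the statement about the term-normalized power sums $P_\la, P_\mu$ already treated in Proposition~\ref{prop:equivdominance}, using the cancellation/fusion machinery of Section~\ref{sec:superdominance}. The key obstacle is that $\la$ and $\mu$, though of the same size $2d$, need not have the same length, so we cannot directly compare $p_\la = n^{\ell(\la)}P_\la$ with $p_\mu = n^{\ell(\mu)}P_\mu$ by clearing a common denominator; the powers of $n$ differ. I would handle this by padding: if $\ell(\la) \ge \ell(\mu)$ (which holds by Proposition~\ref{prop:weakorderprpts} since $\la \succeq \mu$), set $r := \ell(\la) - \ell(\mu) \ge 0$ and compare $p_\la$ with $p_\mu \cdot p_2^{\,r}$-type corrections — more precisely, I want to fuse $\mu$ with a partition of $2d$... no, the sizes must match, so instead the cleanest route is to work with the normalized forms directly and observe that $p_2 \ge 0$ together with $P_{(2)} = p_2/n = 1$ on the relevant locus does not quite work either.

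The genuinely clean approach is the following. First I would use Remark~\ref{rem:equiv of orders} together with the fusion lemma (Lemma~\ref{lem:fusion}) to reduce to the case where $\la$ and $\mu$ have the same length: since $\la \succeq \mu$ with $\ell(\la) \ge \ell(\mu)$, consider the partition $\mu' := \mu \cdot (2^{\,r})$ obtained by fusing $\mu$ with $r = \ell(\la)-\ell(\mu)$ copies of the part $2$; but this changes the size. So instead I abandon size-preservation at the intermediate step and argue homogeneously: $p_\la - p_\mu$ is a homogeneous even symmetric form of degree $2d$, and I want to exhibit it as a sum of squares. Write $p_\la = n^{\ell(\la)} P_\la$ only as a bookkeeping device. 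The real content is: by Proposition~\ref{prop:equivdominance}, if $\la \unrhd \mu$ and $\ell(\la) = \ell(\mu)$ then $P_\la - P_\mu = n^{-\ell}(p_\la - p_\mu)$ is a sum of squares, hence so is $p_\la - p_\mu$. So it suffices to reduce the superdominance hypothesis $\la \succeq \mu$ to a dominance hypothesis between two partitions of the \emph{same} length whose difference of power sums dominates $p_\la - p_\mu$ as a sum of squares.

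Here is where I would use Proposition~\ref{prop:covering} and induction on the covering relation. It suffices to prove the claim when $\la \gtrdot \mu$, and then chain: if $\la = \nu^{(0)} \gtrdot \nu^{(1)} \gtrdot \cdots \gtrdot \nu^{(m)} = \mu$ and each consecutive difference $p_{\nu^{(i)}} - p_{\nu^{(i+1)}}$ is a sum of squares, then so is the telescoping sum $p_\la - p_\mu$. By Proposition~\ref{prop:covering}, each covering $\la \gtrdot \mu$ is of one of two types. In case (i), $\ell(\la) = \ell(\mu)$ and $\la$ covers $\mu$ in the dominance order, so $\la \unrhd \mu$ and Proposition~\ref{prop:equivdominance} gives directly that $n^{\ell}(P_\la - P_\mu) = p_\la - p_\mu$ is a sum of squares. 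In case (ii), $\mu = \la^*$ with $\la_1 - \la_2 \le 1$, so $p_\mu = p_{\la_1+\la_2} \prod_{i\ge 3} p_{\la_i}$ while $p_\la = p_{\la_1} p_{\la_2}\prod_{i \ge 3}p_{\la_i}$; factoring out the common even symmetric sum-of-squares factor $\prod_{i\ge3}p_{\la_i}$ (a product of even power sums, itself a sum of squares), it suffices to show $p_a p_b - p_{a+b}$ is a sum of squares when $a,b$ are even and $|a-b| \le 1$, i.e. $a = b$. And $p_a^2 - p_{2a} = \sum_{i \ne j} X_i^a X_j^a = \sum_{i<j}(X_i^{a/2}X_j^{a/2} \cdot \text{sign stuff})$ — concretely $p_a(\underline X^2)^2 - p_{2a}(\underline X^2)$... wait, here $a$ is already even, so $p_a - $ etc.; one checks $p_a^2 - p_{2a} = 2\sum_{i<j}(X_i^{a/2}X_j^{a/2})^2$ when $a$ is even, a manifest sum of squares. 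This last identity is the only computation, and it is routine; the main structural obstacle is organizing the reduction via the covering relation and recognizing that Proposition~\ref{prop:equivdominance} already disposes of the equal-length case. (Indeed, the reader may notice this corollary is essentially the "only if" direction of the forthcoming Proposition~\ref{prop:+impliesSOS} mentioned in the introduction, and the proof above is exactly the natural one.)
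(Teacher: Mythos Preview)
Your proposal is correct and takes essentially the same approach as the paper: reduce to a chain of coverings via Proposition~\ref{prop:covering}, handle same-length coverings with Proposition~\ref{prop:equivdominance} (using Remark~\ref{rem:equiv of orders}), and handle the length-dropping covering $\la\gtrdot\la^*$ by the explicit identity $p_{\la_1}p_{\la_2}-p_{\la_1+\la_2}=\sum_{i\ne j}X_i^{\la_1}X_j^{\la_2}$, which is a sum of squares since all parts are even. The paper compresses all of this into two sentences, simply asserting that ``$p_\la-p_{\la^*}$ is clearly a sum of squares''; your observation that for even partitions the condition $\la_1-\la_2\le1$ forces $\la_1=\la_2$ (so one only needs the trivial identity $p_a^2-p_{2a}=2\sum_{i<j}(X_i^{a/2}X_j^{a/2})^2$) is a pleasant sharpening of that remark, though the paper's implicit version works for any even $\la_1,\la_2$ without needing the equality.
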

\begin{proof}
    If $\ell(\la)=\ell(\mu)$ then this is a consequence of Proposition \ref{prop:equivdominance}. Otherwise, it is a consequence of Proposition \ref{prop:covering}, since $p_\la-p_{\la^*}$ is clearly a sum of squares for all $n\in\N$.
\end{proof}

\begin{proof}[Proof of ``$\Leftarrow$'' direction of Theorem \ref{thm: power sums order}]
Observe that for $\la,\mu\vdash d$ we have $p_\la\ge p_\mu$ on $\R_{\ge0}^n$ if and only if $p_{2\la}\ge p_{2\mu}$ on $\R^n$, where we denote $2\nu:=(2\nu_1,\dots,2\nu_{\ell(\nu)})$ for a partition $\nu$. Hence, Corollary \ref{cor:sd->sos} implies a stronger statement than the ``$\Leftarrow$'' direction of Theorem \ref{thm: power sums order}.    
\end{proof}

The ``$\Rightarrow$'' direction of Theorem \ref{thm: power sums order} (which we prove in Section \ref{sec:tropicalization}), together with Corollary \ref{cor:sd->sos}, implies the following.

\begin{proposition}\label{prop:+impliesSOS}
    Let $\la,\mu\vDash2d$. If\/ $p_\la\ge p_\mu$ $\forall n\in\N$ then $p_\la-p_\mu$ is a sum of squares $\forall n\in\N$.
\end{proposition}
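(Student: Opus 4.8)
The plan is to read this proposition as the combination of two facts already available (or soon to be): the ``$\Rightarrow$'' direction of Theorem~\ref{thm: power sums order}, which converts the inequality hypothesis into a superdominance relation, and Corollary~\ref{cor:sd->sos}, which converts that superdominance relation back into a sum-of-squares certificate. The only thing to check is that the domains match up, which is immediate for even partitions.

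First I would note that, since $\la$ and $\mu$ are even, $p_\la$ and $p_\mu$ are even symmetric forms, so each of them depends only on $x_1^2,\dots,x_n^2$; hence $p_\la \geq p_\mu$ on all of $\R^n$ is equivalent to $p_\la \geq p_\mu$ on $\R_{\geq 0}^n$ (this is the same reduction $p_{2\nu}(x_1,\dots,x_n) = p_\nu(x_1^2,\dots,x_n^2)$ used in the proof of the ``$\Leftarrow$'' direction of Theorem~\ref{thm: power sums order}). Thus the hypothesis says exactly that $p_\la \geq p_\mu$ on $\R_{\geq 0}^n$ for all $n$, and the ``$\Rightarrow$'' direction of Theorem~\ref{thm: power sums order} (applied with $d$ replaced by $2d$) yields $\la \succeq \mu$. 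Since $\la,\mu \vDash 2d$, Corollary~\ref{cor:sd->sos} then gives that $p_\la - p_\mu$ is a sum of squares for every $n$, which is the assertion.

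I do not expect any real obstacle inside this argument: it is purely a matter of invoking the two named results in sequence. The genuine content has been pushed elsewhere --- into the ``$\Rightarrow$'' direction of Theorem~\ref{thm: power sums order}, proved in Section~\ref{sec:tropicalization} via the explicit $H$-description of $\trop(\mathcal{N}_d)$, and into Corollary~\ref{cor:sd->sos}, which rests on Proposition~\ref{prop:equivdominance} (the dominance/SOS equivalence for equal-length even partitions, via \cite{brylawski1973lattice,frenkel2014minkowski}) together with the covering description in Proposition~\ref{prop:covering} and the trivial fact that $p_\la - p_{\la^*}$ is a sum of squares. This is precisely why the statement appears here, after Corollary~\ref{cor:sd->sos}, rather than as a self-contained result: it cannot be proved until the ``$\Rightarrow$'' direction of Theorem~\ref{thm: power sums order} is in hand.
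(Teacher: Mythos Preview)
Your proof is correct and follows exactly the argument the paper indicates in the sentence immediately preceding the proposition: apply the ``$\Rightarrow$'' direction of Theorem~\ref{thm: power sums order} to obtain $\la\succeq\mu$, then invoke Corollary~\ref{cor:sd->sos}. Your explicit remark that for even partitions nonnegativity on $\R^n$ and on $\R_{\ge0}^n$ coincide is a helpful clarification, though strictly only the trivial implication (restriction to the orthant) is needed.
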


\section{Limit cones} \label{sec:limit cones}

In this section, we analyze the cones of homogeneous nonnegative symmetric functions and sums of squares, i.e. the symmetric functions which are nonnegative/sums of squares in any number of variables. In Subsection \ref{Sec:Sums of squares and nonnegative symmetric functions} we show that these cones are full-dimensional. Subsection \ref{sec:symquartics} is dedicated to studying symmetric quartics, where we identify test sets for determining whether a symmetric quartic is nonnegative or a sum of squares for all number of variables (Theorem \ref{thm:testset}). Moreover, we present a family of nonnegative polynomials that are never sums of squares for any number of variables $n \geq 4$ (Theorem \ref{thm:limit quartic}). This shows that the cone of symmetric sum of squares quartics is strictly contained in the cone of symmetric nonnegative quartics, in contrast to the term-normalized case \cite[Theorem 2.9]{blekherman2021symmetric}.
Finally, in Subsection \ref{sec:non semialgebraic}, we show that for all higher degrees, the cone of symmetric sums of squares is also strictly contained in the cone of symmetric nonnegative forms (Theorem \ref{thm:Limitcones not equal}).

\subsection{Sums of squares and nonnegative symmetric functions} \label{Sec:Sums of squares and nonnegative symmetric functions}

 \begin{definition}\label{def:limit sets}
 The limit sets of sums of squares and nonnegative symmetric forms in $n$ variables of degree $2d$ are defined as 
 \begin{align*}
   \S\Sigma_{2d} := \bigcap_{n \geq 2d} \Sigma_{n,2d}^{\mathcal{S}}\qquad \mbox{and} \qquad \S\P_{2d} := \bigcap_{n \geq 2d} \P_{n,{2d}}^{\mathcal{S}}~.
 \end{align*} 
 \end{definition}

We consider the sets $\S\Sigma_{2d},\S\P_{2d}$ as convex cones in the vector space $\R^{\pi(2d)}$.
\begin{theorem} \label{thm:Limitconesare full dimensional}
 The sets $ \S\Sigma_{2d}$ and $\S\P_{2d}$ are full-dimensional pointed closed convex cones and \begin{align*}
         \S\Sigma_{2d} & = \lim_{n \rightarrow \infty} \Sigma_{n,2d}^{\mathcal{S}}~, \quad 
         \S\P_{2d} = \lim_{n \rightarrow \infty} \P_{n,2d}^{\mathcal{S}}
     \end{align*} 
     are the Kuratowski limits.
\end{theorem}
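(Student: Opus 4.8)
The plan is to prove the two halves of the statement separately: first that $\S\Sigma_{2d}$ and $\S\P_{2d}$ are full-dimensional, pointed, closed convex cones, and then that they arise as Kuratowski limits of the finite-variable cones. Throughout, recall that $\S\Sigma_{2d} = \bigcap_{n\geq 2d}\Sigma_{n,2d}^{\mathcal S}$ and $\S\P_{2d} = \bigcap_{n\geq 2d}\P_{n,2d}^{\mathcal S}$ are nested intersections (by the inclusions $\Sigma_{n+1,2d}^{\mathcal S}\subset\Sigma_{n,2d}^{\mathcal S}$ and $\P_{n+1,2d}^{\mathcal S}\subset\P_{n,2d}^{\mathcal S}$ recorded earlier), living in the fixed vector space $\R^{\pi(2d)}$ under the stabilized power-sum identification of all $H_{n,2d}^{\mathcal S}$ for $n\geq 2d$.

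First, the easy structural properties. Each $\Sigma_{n,2d}^{\mathcal S}$ is a closed convex cone (it is the image of the PSD cone under a linear map, intersected with the symmetric subspace, or more simply the cone generated by squares of symmetric-adapted forms — closedness of SOS cones in a fixed finite-dimensional space is standard), and each $\P_{n,2d}^{\mathcal S}$ is closed and convex as an intersection of the halfspaces $\{f : f(x)\geq 0\}$ over $x\in\R^n$. Hence $\S\Sigma_{2d}$ and $\S\P_{2d}$, being intersections of closed convex cones, are closed convex cones. For pointedness it suffices to pointed-ness of the larger cone $\S\P_{2d}$: if $f$ and $-f$ are both nonnegative in every number of variables, then $f\equiv 0$ on $\R^n$ for $n\geq 2d$, and since the power-sum coordinates are determined by the values of $f$ (the $p_\lambda$ with $\lambda\vdash 2d$ form a basis of $H_{n,2d}^{\mathcal S}$ for $n\geq 2d$), $f=0$ as a vector in $\R^{\pi(2d)}$; so $\S\P_{2d}\cap(-\S\P_{2d})=\{0\}$ and a fortiori the same holds for $\S\Sigma_{2d}$.

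Full-dimensionality is the crux and the part I expect to be the main obstacle. It is enough to show $\S\Sigma_{2d}$ is full-dimensional, since $\S\Sigma_{2d}\subseteq\S\P_{2d}$. One needs a symmetric form that is SOS in \emph{every} number of variables and lies in the interior of $\Sigma_{n,2d}^{\mathcal S}$ for all $n$, with an interior neighborhood whose size does not shrink to zero as $n\to\infty$ — equivalently, a point with a uniform margin of SOS-ness. A natural candidate is a suitable positive combination of ``universally SOS'' building blocks: powers and products of $p_2=\sum X_i^2$, e.g. $p_2^{\,d}=(\sum X_i^2)^d$, together with the power-sum forms $p_{2\lambda}$ for $\lambda\vdash d$, each of which is manifestly a sum of squares in every number of variables (it is a sum of $2d$-th powers of monomials). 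The claim to establish is that $\{p_{2\lambda} : \lambda\vdash d\}$ — which has cardinality $\pi(d)$, \emph{not} $\pi(2d)$ — does not span $\R^{\pi(2d)}$, so one must instead exhibit $\pi(2d)$ linearly independent universally-SOS forms, or argue abstractly. The cleaner route: show that $\Sigma_{n,2d}^{\mathcal S}$ has nonempty interior in $H_{n,2d}^{\mathcal S}$ uniformly, by taking $f_n := p_2^{\,d} + \varepsilon\cdot(\text{strictly positive definite symmetric form})$ and checking the SOS-certificate stays uniformly bounded away from the boundary; concretely, one can use that $p_2^{\,d}$ is strictly positive on the sphere in every $n$, hence interior to $\P_{n,2d}^{\mathcal S}$, and then invoke a symmetric-SOS sufficiency criterion (e.g. via the symmetry-adapted/higher-Specht decomposition of Appendix~\ref{sec:appendix}, as in \cite{blekherman2021symmetric}) to promote strict positivity of a well-chosen form to a uniform SOS interior point. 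The technical content is controlling the certificate uniformly in $n$; one trick is that $\Sigma_{n,2d}^{\mathcal S}$ projects onto a fixed finite-dimensional picture via partial symmetry reduction, so it suffices to find an interior point of $\Sigma_{2d,2d}^{\mathcal S}$ and verify its SOS certificate survives the restriction maps $p_\lambda(X_1,\dots,X_{n+1})\mapsto p_\lambda(X_1,\dots,X_n,0)$, which it does since these maps carry squares to squares.

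Finally, the Kuratowski-limit claim. For a decreasing (nested) sequence of closed sets $C_n$ in a finite-dimensional space with $C := \bigcap_n C_n$, the Kuratowski limit $\lim_n C_n$ exists and equals $C$: indeed $\limsup_n C_n \subseteq C_n$ for... more precisely, $\liminf_n C_n \supseteq C$ trivially since the constant sequence $x\in C$ sits in every $C_n$, while $\limsup_n C_n \subseteq \bigcap_n \overline{\bigcup_{m\geq n}C_m} = \bigcap_n C_n = C$ using that the sequence is decreasing and each $C_n$ is closed. Hence $C\subseteq\liminf\subseteq\limsup\subseteq C$ and all are equal to $C$. Applying this to $C_n = \Sigma_{n,2d}^{\mathcal S}$ and to $C_n = \P_{n,2d}^{\mathcal S}$ (both decreasing in $n$, both closed) yields $\S\Sigma_{2d} = \lim_{n\to\infty}\Sigma_{n,2d}^{\mathcal S}$ and $\S\P_{2d} = \lim_{n\to\infty}\P_{n,2d}^{\mathcal S}$. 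This last step is essentially formal; the only care needed is to confirm the decreasing-sequence-of-closed-sets lemma in the form we want, which is standard.
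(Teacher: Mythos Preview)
Your treatment of closedness, convexity, pointedness, and the Kuratowski limit is correct and essentially identical to the paper's (nested decreasing closed sets in a fixed finite-dimensional space have Kuratowski limit equal to their intersection).

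The genuine gap is in full-dimensionality, which you rightly flag as the crux but do not actually prove. None of your three sketches works:
\begin{itemize}
\item You already observe that $\{p_{2\lambda}:\lambda\vdash d\}$ spans only a $\pi(d)$-dimensional subspace, not $\R^{\pi(2d)}$.
\item The ``uniform interior point $p_2^{\,d}$'' route fails outright: $p_2^{\,d}$ is \emph{not} in the interior of $\S\P_{2d}$. Indeed, for any $\varepsilon>0$ the form $p_2^{\,d}-\varepsilon p_1^{2d}$ evaluated at $x=(n^{-1/2},\dots,n^{-1/2})\in\R^n$ gives $1-\varepsilon n^{d}<0$ for large $n$. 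So no ball around $p_2^{\,d}$ sits in all $\P_{n,2d}^{\mathcal S}$ simultaneously.
\item The observation that restriction maps ``carry squares to squares'' points the wrong way: restriction sends $\Sigma_{n+1,2d}^{\mathcal S}$ into $\Sigma_{n,2d}^{\mathcal S}$, so an interior point of $\Sigma_{2d,2d}^{\mathcal S}$ lies in the \emph{largest} cone of the chain, not in the intersection $\S\Sigma_{2d}$.
\end{itemize}

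The paper avoids interior points altogether and instead shows that $\spn(\S\Sigma_{2d})=\R^{\pi(2d)}$ directly. It first observes that every $\omega\vdash 2d$ is either a fusion $\lambda\mu$ with $\lambda,\mu\vdash d$, or a fusion $(a+b)\lambda\mu$ with $1\le a,b\le d$, $\lambda\vdash d-a$, $\mu\vdash d-b$. For the first type, $\p_\lambda^2,\p_\mu^2,(\p_\lambda-\p_\mu)^2\in\S\Sigma_{2d}$ already span $\p_\lambda\p_\mu$. For the second type one symmetrizes products of the $\mathbb{S}^{(n-1,1)}$-equivariants $(X_1^a-X_2^a)p_\lambda$ and $(X_1^b-X_2^b)p_\mu$, obtains explicit elements of $\Sigma_{n,2d}^{\mathcal S}$ whose power-sum coefficients have well-defined limits as $n\to\infty$, and reads off that $\p_{a+b}\p_\lambda\p_\mu$ lies in $\spn(\S\Sigma_{2d})$. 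This constructive spanning argument is the missing idea in your proposal.
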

\begin{proof}
To show that the limit sets defined in Definition \ref{def:limit sets} are indeed the Kuratowski limits of the sequences of convex sets $(\Sigma_{n,2d}^{\mathcal{S}})_n$ and $(\P_{n,2d}^{\mathcal{S}})_n$ we need to prove the inclusions
\begin{align*}
    \limsup_{n} \Sigma_{n,2d}^{\mathcal{S}}  \subset  \S\Sigma_{2d} \subset   \liminf_{n} \Sigma_{n,2d}^{\mathcal{S}}, \quad
     \limsup_{n} \P_{n,2d}^{\mathcal{S}} \subset  \S\P_{2d} \subset   \liminf_{n} \P_{n,2d}^{\mathcal{S}}
\end{align*}
which follow from the nestedness of the sequences.  \\
The limit sets are closed, pointed convex cones as intersections of closed, pointed convex cones. We need to show that they are also full-dimensional.  
The homogeneous component of the ring of symmetric functions of degree $2d$ is spanned by the set \begin{align} \label{eq:partition division}
    \{ \p_\lambda \p_\mu : \lambda,\mu \vdash d \} \cup \{ \p_{a+b} \p_\lambda \p_\mu : 1 \leq a,b \leq d, \lambda \vdash d-a, \mu \vdash d-b\}. 
\end{align} 
This is because the partitions $\omega$ of $2d$ can be divided into two types: those that are the fusions of partitions $\lambda,\mu \vdash d$, and those that are the fusion of three partitions $(a+b),\lambda,\mu$ for some integers $1 \leq a,b \leq d$ and $\lambda \vdash d-a, \mu \vdash d-b$. To see this, let $\omega \vdash 2d$ be a partition that is not the fusion of two partitions of $d$. Let $0 \leq k \leq d-1$ be the maximal integer such that $\omega$ is the fusion of a partition $\mu^1 \vdash k$ and a partition $\nu \vdash 2d-k\geq d+1$. We have $2d-k-\nu_1 < d$ since otherwise $d \geq k + \nu_1$, and since $\lambda $ does not contain a partition of $d$, we have $d > k + \nu_1$ which contradicts the maximality of $k$. We have $\nu = (\nu_1)\mu^2$ for a partition $\mu^2 \vdash 2d-k-\nu_1$. We obtain the fusion representation of $\omega$ by setting $a=d-k, b=k+\nu_1-d$ and have $\omega = (a+b)\mu^1 \mu^2$, where $1 \leq a,b \leq d$ and $\mu^1 \vdash d-a, \mu^2 \vdash d-b$.  \\
We show that $\S\Sigma_{2d}$ is full-dimensional by proving that any element from (\ref{eq:partition division}) is contained in the linear span of $\S\Sigma_{2d}$. Note that $\p_{(\lambda,\lambda)}$, $\p_{(\mu,\mu)}$, $(\p_\lambda-\p_\mu)^2 = \p_{(\lambda,\lambda)}-2\p_{(\lambda,\mu)} +\p_{(\mu,\mu)} \in \S\Sigma_{2d}$ which implies $\{ \p_\lambda \p_\mu : \lambda,\mu \vdash d \}$ is contained the linear span of $\S\Sigma_{2d}$. \\
For some integers $1 \leq a,b \leq d$ and partitions $\lambda \vdash d-a,~ \mu \vdash d-b$ we have $(X_1^a-X_2^a)p_\lambda,(X_1^b-X_2^b)p_\mu \in H_{n,d}$ are equivariants of the Specht module $\mathbb{S}^{(n-1,1)}$. For $a \neq b$ we have 
\begin{align*}   
& f_1 := \frac{n}{2} \sum_{\sigma \in \mathcal{S}_n}\frac{1}{n!}\sigma \cdot \left( (x_1^a-x_2^a)^2p_\lambda^2\right)  &= \quad p_\la p_\la\left(p_{2a}-\frac{p_{(a,a)}-p_{2a}}{n-1}\right), \\
&f_2 :=\frac{n}{2} \sum_{\sigma \in \mathcal{S}_n}\frac{1}{n!}\sigma \cdot \left( (x_1^a-x_2^a)(x_1^b-x_2^b)p_\lambda p_\mu\right)  &= \quad  p_\lambda p_\mu\left( p_{a+b}-\frac{p_{(a,b)}-p_{a+b}}{2(n-1)}\right), \\
&f_3 :=\frac{n}{2} \sum_{\sigma \in \mathcal{S}_n}\frac{1}{n!}\sigma \cdot \left( (x_1^a-x_2^a)(x_1^a-x_2^a)p_\lambda p_\mu\right) &= \quad p_\lambda p_\mu\left(p_{2a}-\frac{p_{(a,a)}-p_{2a}}{n-1}\right). 
\end{align*}
In particular, for any integer $n \geq 2d$ we have by \cite[Theorem~4.15]{blekherman2021symmetric}
\begin{align*}
 f_1-2f_2  + p_{(\mu,\mu)}\left(p_{2b}-\frac{p_{(b,b)}-p_{2b}}{n-1}\right),\quad f_1-f_3+p_{(\mu,\mu)}\left(p_{2a}-\frac{p_{(a,a)}-p_{2a}}{n-1}\right)  \in \Sigma_{n,2d}^{\mathcal{S}}.
\end{align*} 
Thus, $\p_{2a}\p_{(\lambda,\lambda)}-2\p_{a+b}\p_{\lambda}\p_{\mu}+\p_{2b}\p_{(\mu,\mu)}$, $\p_{2a}\p_{(\lambda,\lambda)}-2\p_{2a}\p_{\lambda}\p_{\mu}+\p_{2a}\p_{(\mu,\mu)} \in \S\Sigma_{2d}$. Since also $\p_{(\lambda,\lambda)}$, $\p_{2a}\p_{\lambda}\p_{\lambda} \in \S\Sigma_{2d}$, the cone $\S\Sigma_{2d}$ spans the same linear space as the set (\ref{eq:partition division}) which shows that the cones $\S\Sigma_{2d} \subset \S\P_{2d}$ are full-dimensional.  
\end{proof}
 
It can be seen more directly that the limit cones $\B\Sigma_{2d}$, $\B\P_{2d}$ are full-dimensional in the graded component of even symmetric functions of degree $2d$. Any even power sum is already a sum of squares. Therefore, in the basis of even power sums, the convex cone $\B\Sigma_{2d}$ contains the nonnegative orthant as a  full-dimensional subcone. 

\subsection{Dual cones and the Vandermonde map}
Let $K$ be a cone in a finite-dimensional real vector space $V$. The \emph{dual cone} to $K$ is
\begin{align*}
    K^*:=\{\ell\in V^*:\ell(v)\ge0\text{ for all }v\in K\}
\end{align*}
where $V^*$ is the dual vector space to $V$.

As explained in \cite[Section~4]{acevedo2024power}, \((\P_{n,2d}^{\S})^*\) and \((\P_{n,2d}^{\B})^*\) are the convex hulls of all point evaluations. When working with the power sum basis, the point evaluations can be identified with the sets \(\nu_{2d}(\mathcal{M}_{n,2d})\) and \(\nu_d(\mathcal{N}_{n,d})\), respectively. By homogeneity, we can restrict to the fiber obtained by setting \(p_2 = 1\). Since these are compact sets that do not contain the origin, their convex conical hulls are closed cones. Consequently, we have \((\P_{n,2d}^{\S})^* = \cone (\nu_{2d}(\mathcal{M}_{n,2d}))\) and \((\P_{n,2d}^{\B})^* = \cone (\nu_{d}(\mathcal{N}_{n,d}))\).

\begin{remark}\label{rem:weightprism}
Observe that $\mathcal{M}_d$ and $\mathcal{N}_d$ are \emph{weighted homogeneous sets}, i.e. if $(a_1,\dots,a_d)\in\mathcal{M}_d$ then $(ta_1,t^2a_2,\dots,t^da_d)\in\mathcal{M}_d$ for all $t\in\R$, and similarly for $\mathcal{N}_d$ with $t\in\R_{\ge0}$. Further observe that the first coordinate of $\mathcal{M}_d$ is free (but it is not the case for $\mathcal{N}_d$), which is to say $\mathcal{M}_d$ is a \emph{prism} with respect to the first coordinate, i.e., $\mathcal{M}_d=\mathcal{M}_d+\spn(1,0,\dots,0)$. To see this let $a\in\R$ and consider the sequence $x_n=(\frac{a}{n},\frac{a}{n},\ldots,\frac{a}{n})\in\R^n$, so $$\mathcal{M}_d\ni\lim_{n \rightarrow\infty}(p_1,\ldots,p_d)(x_n)=\lim_{n\rightarrow\infty}(a,\frac{a^2}{n},\ldots,\frac{a^d}{n^{d-1}})=(a,0,\ldots,0)$$ 
which follows since the set $\mathcal{M}_d$ is closed under addition (see Lemma \ref{lem:newton}).
\end{remark}

\begin{lemma}
For any degree $d$ we have $\S\P_{2d}^* = \cone (\nu_{2d}(\M_{2d})) $ and $\B\P_{2d}^* = \cone (\nu_{d}(\mathcal{N}_{d}))$.
\end{lemma}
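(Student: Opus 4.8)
The plan is to show that the dual cone $\S\P_{2d}^*$ equals the closed conical hull of the image of the Vandermonde map at infinity, and analogously in the even case. First I would recall from the discussion preceding the lemma that for each fixed $n \ge 2d$ one has $(\P_{n,2d}^\S)^* = \cone(\nu_{2d}(\M_{n,2d}))$ and $(\P_{n,2d}^\B)^* = \cone(\nu_d(\mathcal{N}_{n,d}))$, where these conical hulls are already closed because after homogeneously normalizing by $p_2 = 1$ one is taking the conical hull of a compact set avoiding the origin. Since $\S\P_{2d} = \bigcap_{n \ge 2d}\P_{n,2d}^\S$ is an intersection of a nested decreasing family of closed convex cones, dualizing turns this into $\S\P_{2d}^* = \clo\bigl(\bigcup_{n \ge 2d}(\P_{n,2d}^\S)^*\bigr)$, the closed convex hull of the increasing union of the dual cones; here I use that $\P_{n,2d}^\S$ is closed (and full-dimensional, hence $(\P_{n,2d}^\S)^{**} = \P_{n,2d}^\S$) together with the standard duality $(\bigcap K_n)^* = \clo(\conv \bigcup K_n^*)$ for closed convex cones.

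Next I would combine the per-$n$ identification with this duality statement:
\begin{align*}
\S\P_{2d}^* = \clo\Bigl(\conv \bigcup_{n \ge 2d}\cone(\nu_{2d}(\M_{n,2d}))\Bigr).
\end{align*}
Because the sets $\M_{n,2d}$ form an ascending chain with union dense in $\M_{2d}$, and $\nu_{2d}$ is continuous, the union $\bigcup_n \nu_{2d}(\M_{n,2d})$ is dense in $\nu_{2d}(\M_{2d})$, so its closed conical (and convex) hull coincides with $\clo(\cone(\nu_{2d}(\M_{2d})))$. It remains to argue that $\cone(\nu_{2d}(\M_{2d}))$ is already closed and convex. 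Convexity is immediate once we know $\M_{2d}$ — hence $\nu_{2d}(\M_{2d})$, viewed in the right coordinates — behaves well under the relevant operations; more carefully, $\cone(\nu_{2d}(\M_{2d}))$ is convex because the dual $\S\P_{2d}$ is full-dimensional and pointed (Theorem \ref{thm:Limitconesare full dimensional}), so the closed convex hull on the right is the closure of the cone over $\nu_{2d}(\M_{2d})$; and closedness follows by the same normalization trick as in finitely many variables: intersect with the hyperplane $p_2 = 1$, note $\nu_{2d}(\M_{2d}) \cap \{p_2 = 1\}$ is compact (it is closed and bounded, boundedness coming from the power-sum inequalities $|p_k| \le p_2^{k/2}$-type bounds, cf.\ Remark \ref{rem:classical ieqs}) and misses the origin, so the cone over it is closed. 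The even case is verbatim the same, replacing $\M$ by $\mathcal{N}$, $\nu_{2d}$ by $\nu_d$, and $\S\P$ by $\B\P$.

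The main obstacle I anticipate is the closedness/convexity of $\cone(\nu_{2d}(\M_{2d}))$ as a cone over a possibly non-compact set: $\M_{2d}$ itself is unbounded (it is a prism in the first coordinate by Remark \ref{rem:weightprism}), so one genuinely needs the homogeneity reduction to the slice $\{p_2 = 1\}$ to land on a compact base, and one must check that slicing does not lose any rays — i.e.\ that every ray of the cone meets $\{p_2 = 1\}$, which holds because $p_2 > 0$ on $\nu_{2d}(\M_{2d}) \setminus \{0\}$ (a nonzero vector of even power sums, or of squares, has positive second coordinate). Once that normalization is in place the argument is routine bookkeeping with Kuratowski limits and bipolar duality; I would present it by first stating the finite-$n$ dual description, then the intersection-duality formula, then passing to the limit using density of $\bigcup_n \M_{n,2d}$ in $\M_{2d}$ and compactness of the normalized slice.
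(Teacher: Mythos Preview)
Your route differs from the paper's in the first half. Rather than dualizing the intersection $\S\P_{2d} = \bigcap_n \P_{n,2d}^\S$ and pushing the per-$n$ identifications through a density argument, the paper observes in one line that $f \in \S\P_{2d}$ if and only if $f$ is nonnegative on $\nu_{2d}(\M_{2d})$ (nonnegativity on every $\M_{n,2d}$ is, by continuity, the same as nonnegativity on the closure of their union), which gives $\cone(\nu_{2d}(\M_{2d}))^* = \S\P_{2d}$ directly; biduality then finishes once $\cone(\nu_{2d}(\M_{2d}))$ is known to be closed. Your detour via $(\bigcap K_n)^* = \clo\bigl(\bigcup K_n^*\bigr)$ is correct but buys nothing over this direct argument. (Also: in the paper $\cone(\cdot)$ already denotes the \emph{convex} conical hull, so there is no separate convexity issue to argue.)

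There is, however, a genuine gap in your closedness step for the symmetric case. You claim the slice of $\nu_{2d}(\M_{2d})$ at $p_2 = 1$ is compact via bounds of the type $|p_k| \le p_2^{k/2}$, but this fails for $k=1$: by Remark~\ref{rem:weightprism} the coordinate $p_1$ is free on $\M_{2d}$, so $p_{(1^{2d})} = p_1^{2d}$ is unbounded on that slice, and the inequalities in Remark~\ref{rem:classical ieqs} (which concern only even power sums) do not help. For $\B\P_{2d}$ your argument is fine since only even power sums occur. The paper covers the symmetric case by invoking Kostov's compactness result \cite[Prop.~4]{kostov2007stably} for the relevant fiber. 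If you prefer a self-contained fix, normalize instead by $p_{(1^{2d})} + p_{(2^d)} = 1$: then $|p_1| \le 1$ and $0 \le p_2 \le 1$, and since for any $x\in\R^n$ one has $|p_k(x)| \le (\max_i |x_i|)^{k-2}\, p_2(x) \le p_2(x)^{k/2}$ for $k \ge 2$, all remaining coordinates are bounded uniformly in $n$ and the slice is compact.
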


\begin{proof}
We restrict to the cone $\S\P_{2d}^*$ since the proof for $\B\P_{2d}^*$ works analogously.
The convex cone $\S\P_{2d}$ is closed by Theorem \ref{thm:Limitconesare full dimensional}, and a symmetric function lies in $\S\P_{2d}$ if and only if it is nonnegative on $\nu_{2d} (\mathcal{M}_{n,2d})$ for all $n \in \N$ which is equivalent to the nonnegativity on the closure of the union of those sets. Therefore, we have $\cone(\nu_{2d}(\mathcal{M}_{2d}))^* = \S\P_{2d}$. Moreover, $\cone(\nu_{2d}(\mathcal{M}_{2d}))$ is closed, since the fiber of $\nu_{2d}(\mathcal{M}_{2d})$ by $\p_2=1$ is a compact set (\cite[Prop.~4]{kostov2007stably}) whose convex hull does not contain the origin, and its convex conical hull is $\cone(\nu_{2d}(\mathcal{M}_{2d}))$. This shows the claim by conical duality. 
\end{proof}

\subsection{Symmetric quartics at infinity}\label{sec:symquartics}
For quartics, we present test sets for verifying nonnegativity and being a sum of squares in Theorem \ref{thm:testset}. This allows us to construct a symmetric nonnegative quartic that is not a sum of squares for any number of variables $n\ge4$ (Theorem \ref{thm:limit quartic}).
\smallskip

\begin{theorem}\label{thm:testset}
Let $f(\p_1,\p_2,\p_3,\p_4)$ be a homogeneous even symmetric function of degree $4$. Then, $f$ is nonnegative in all number of variables if and only if $f$ is nonnegative on the discrete set of parallel lines $$\{(x,1,n^{-1/2},n^{-1})\,\,|\,\,x\in\R,\, n\in\N_{>0}\}.$$ Moreover, $f$ is a sum of squares in all number of variables if and only $f$ is nonnegative on $$\{(x,1,u,u^2)\,\,|\,\,x\in\R,\, 0\le u\le1\}.$$
\end{theorem}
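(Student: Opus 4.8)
The plan is to reduce everything to a one-parameter family by first normalizing, then identifying the relevant geometry of the (even) Vandermonde cell $\mathcal{N}_2$. First I would use homogeneity to reduce $f(\p_1,\p_2,\p_3,\p_4)$, a degree-$4$ even symmetric function, to its behavior on the fiber $\p_2=1$; this is legitimate because $\mathcal{N}_2$ is a weighted-homogeneous set (Remark~\ref{rem:weightprism}) and $f$ is weighted homogeneous of degree $4$ with weights $(1,2,3,4)$ on $(\p_1,\p_2,\p_3,\p_4)$, so nonnegativity of $f$ on $\mathcal{N}_2$ is equivalent to nonnegativity on the slice $\{\p_2=1\}\cap\mathcal{N}_2$ (sign issues in $\p_1,\p_3$ are handled by the $t\mapsto -t$ symmetry available when we lift to $\mathcal{M}_4$, i.e. replace the even power sum map by $\nu_{n,4}$ restricted to $\R^n$, whose image projects onto $\mathcal{N}_{n,2}$). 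Since $\S\P_4 = \cone(\nu_4(\mathcal{M}_4))^*$ and $\B\P_4=\cone(\nu_2(\mathcal{N}_2))^*$ by the preceding lemma, nonnegativity of $f$ in all number of variables is exactly nonnegativity of $f$ on $\mathcal{M}_4$ (resp. $\mathcal{N}_2$), and the whole theorem becomes a statement about which points of these closed sets one must test.

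For the nonnegativity test set: the slice of $\mathcal{N}_{n,2}$ by $p_2=1$ records, for $x\in\R^n$ with $\sum x_i^2=1$, the pair $(\sum x_i^3,\sum x_i^4)=(p_3,p_4)$. I would invoke the description of the Vandermonde-type image at infinity from Kostov \cite{kostov2007stably} (cited already for compactness of the fiber), which tells us precisely that the extreme/boundary behavior of this image is governed by configurations supported on few distinct values; concretely, on the fiber $p_2=1$ the achievable region in the $(p_3,p_4)$-plane, together with the free first coordinate $p_1$, is the tropical/convex object whose extreme points come from vectors with all nonzero coordinates equal. A vector with $n$ equal coordinates $t$ and $\sum t^2=1$ has $t=n^{-1/2}$, giving $(p_3,p_4)=(n^{-1/2}\cdot n \cdot n^{-3/2}? )$— more carefully, $n$ copies of $n^{-1/2}$ give $p_3 = n\cdot n^{-3/2}=n^{-1/2}$ and $p_4 = n\cdot n^{-2}=n^{-1}$, and $p_1=n\cdot n^{-1/2}=n^{1/2}$, which after using the prism property (freeness of the first coordinate, Remark~\ref{rem:weightprism}) lets $p_1$ be arbitrary; this is exactly the claimed family $\{(x,1,n^{-1/2},n^{-1}): x\in\R,\ n\in\N_{>0}\}$. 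So the key step is: a degree-$4$ even symmetric function is nonnegative on all of $\mathcal{N}_2$ iff it is nonnegative at these ``balanced'' points, because every point of $\mathcal{N}_{n,2}$ on the fiber is a convex combination (after the conical/prism reductions) of such balanced points together with limits, and a linear functional nonnegative on the generators of a cone is nonnegative on the cone. I would make this precise by showing $\cone(\nu_4(\mathcal{M}_4))$ is the closed conical hull of the rays through $(x,1,n^{-1/2},n^{-1})$ (this is where the real work is; the $n\to\infty$ limit point $(x,1,0,0)$ must be included, matching $\mathcal{M}_4$ being closed).

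For the sum-of-squares test set: here I would dualize. By the appendix's symmetry-reduction, $\S\Sigma_4^*$ is a spectrahedron, and its extreme rays should be computed explicitly; the claim is that these extreme rays are precisely the point evaluations at $(x,1,u,u^2)$ with $0\le u\le 1$, i.e. at ``two-distinct-value'' configurations scaled so the second power sum is $1$ — geometrically a moment-curve arc. The inclusion ``nonnegative on that arc $\Rightarrow$ SOS in all $n$'' is the substantive direction: I would argue that $\S\Sigma_4$ is cut out by the finitely many PSD constraints coming from the symmetry-adapted decomposition (the isotypic components for $\mathbb{S}^{(n)}$, $\mathbb{S}^{(n-1,1)}$, $\mathbb{S}^{(n-2,2)}$, $\mathbb{S}^{(n-2,1,1)}$), each of which stabilizes for $n\ge 4$, and that membership in each block can be tested on rank-one evaluations supported on at most two distinct values — because the relevant Gram matrices have size governed by partitions of $\le 2$. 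The reverse inclusion (SOS $\Rightarrow$ nonnegative on the arc) is trivial since the arc consists of genuine evaluation points. \emph{The main obstacle} I anticipate is the forward direction of the SOS characterization: showing that nonnegativity on the curve $(x,1,u,u^2)$, $u\in[0,1]$, forces a global SOS certificate. This requires pinning down the extreme rays of $\S\Sigma_4^*$ exactly — verifying that no ray lies ``outside'' the arc — which I would do via the explicit spectrahedral/Gram description for $n=4$ (the first value of $n$ where the cone stabilizes), checking that a $1$-dimensional pencil of rank-one PSD matrices exhausts the extreme rays, and then confirming stabilization so that the $n=4$ computation certifies all $n$. The nonnegativity side's obstacle is more modest: correctly importing Kostov's description of the image at infinity and verifying the balanced points really do generate the cone (including the degenerate boundary rays).
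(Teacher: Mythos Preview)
Your overall strategy matches the paper's: normalize to $\p_2=1$, exploit the prism property in $\p_1$, invoke Kostov's description of the extreme points of the Vandermonde image for the nonnegativity half, and compute the extreme rays of the spectrahedron $\S\Sigma_4^*$ for the sums-of-squares half. That is exactly what the paper does (via Proposition~\ref{sos4xrays} and the explicit translation of Kostov's leaf through Newton's identities).

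There are, however, two issues worth flagging. First, a persistent notational slip: the relevant object throughout is $\mathcal{M}_4$, not $\mathcal{N}_2$. The function $f$ involves $\p_1,\p_3$ and is a \emph{symmetric} (not even symmetric) quartic; the even Vandermonde cell $\mathcal{N}_2\subset\R^2$ only records $(\p_2,\p_4)$ and carries no information about $\p_1,\p_3$. Every occurrence of ``$\mathcal{N}_{n,2}$'' in your argument should read $\mathcal{M}_{n,4}$.

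Second, and more substantively, your framing of the SOS half contains a conceptual error. You write that the extreme rays of $\S\Sigma_4^*$ ``are precisely the point evaluations at $(x,1,u,u^2)$'' and interpret them as evaluations at two-distinct-value configurations. They are not point evaluations in general: a genuine point evaluation on the fiber $\p_2=1$ forces $u=n^{-1/2}$ for some $n\in\N_{>0}$, and the entire content of Corollary~\ref{thm:S4 and P4} is that the continuous arc $u\in[0,1]$ contains extreme rays of $\S\Sigma_4^*$ that are \emph{not} realizable as evaluations (pseudo-moments). If every extreme ray were a point evaluation you would have $\S\Sigma_4^*=\S\P_4^*$, contradicting the theorem you are trying to set up. The paper obtains the arc not from any evaluation interpretation but from a direct rank analysis of the $5\times 5$ block matrix $A\oplus B\oplus C$ defining the spectrahedron (Proposition~\ref{sos4xrays}), and then reparametrizes. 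Your plan to ``compute the extreme rays via the spectrahedral description'' is correct and will succeed, but the heuristic you offer for why the answer should be this arc is wrong and should be dropped. (A minor related point: the isotypic decomposition of $H_{n,2}$ is $2\mathbb{S}^{(n)}\oplus 2\mathbb{S}^{(n-1,1)}\oplus \mathbb{S}^{(n-2,2)}$; the component $\mathbb{S}^{(n-2,1,1)}$ you list does not occur.)
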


\begin{theorem}\label{thm:limit quartic}
The symmetric quartic $ 4p_1^4-5p_2p_1^2-\frac{139}{20}p_3p_1+4p_2^2+4p_4$ is nonnegative in all number of variables. Moreover, these quartic forms are not sums of squares for any number of variables $n\ge4$.
\end{theorem}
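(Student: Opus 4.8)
The plan is to prove the two assertions separately, using Theorem~\ref{thm:testset} as the main tool. For \emph{nonnegativity}, I would write $f = 4p_1^4-5p_2p_1^2-\frac{139}{20}p_3p_1+4p_2^2+4p_4$ and, by Theorem~\ref{thm:testset}, it suffices to check that $f(x,1,n^{-1/2},n^{-1})\ge 0$ for all $x\in\R$ and all $n\in\N_{>0}$. Substituting $p_1=x$, $p_2=1$, $p_3=n^{-1/2}$, $p_4=n^{-1}$ turns this into a one-parameter family (indexed by $n$) of quartic polynomials in $x$: namely $g_n(x) = 4x^4 - 5x^2 - \frac{139}{20}n^{-1/2}x + 4 + 4n^{-1}$. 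I would show each $g_n$ is nonnegative on $\R$. The cleanest route is to bound things uniformly: since $0 < n^{-1/2} \le 1$ and $0 < n^{-1} \le 1$, the worst case for the linear term is when $x$ and $n^{-1/2}$ have opposite signs and $n$ is small, i.e. $n=1$, giving $g_1(x) = 4x^4 - 5x^2 - \frac{139}{20}x + 8$; but one must be careful because increasing $n$ shrinks both the helpful constant $4n^{-1}$ and the harmful term $\frac{139}{20}n^{-1/2}|x|$, so a genuine case analysis in the variable $t = n^{-1/2}\in(0,1]$ is needed. Setting $t=n^{-1/2}$ so $n^{-1}=t^2$, define $G(x,t) = 4x^4 - 5x^2 - \frac{139}{20}tx + 4 + 4t^2$ for $t\in(0,1]$; since $G$ is convex in $t$ (coefficient $4$ of $t^2$) and we only need $t$ of the form $n^{-1/2}$, it suffices to check the boundary behavior and the relevant discrete values. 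I expect that a short interval-arithmetic or resultant computation shows $G(x,t)\ge 0$ on the relevant region, the constant $\frac{139}{20}$ being chosen precisely to make this tight.

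For the \emph{not a sum of squares} assertion, I would again invoke Theorem~\ref{thm:testset}: $f$ is a sum of squares for all $n$ if and only if $f(x,1,u,u^2)\ge 0$ for all $x\in\R$ and all $u\in[0,1]$. So it suffices to exhibit a single pair $(x_0,u_0)$ with $x_0\in\R$, $u_0\in[0,1]$, and $f(x_0,1,u_0,u_0^2) < 0$. Concretely, $h(x,u) := 4x^4 - 5x^2 - \frac{139}{20}u\,x + 4 + 4u^2$, and I would find $(x_0,u_0)$ making $h<0$; given the shape of the problem, the natural place to look is near $u_0 = 1$ (which maximizes the negative linear contribution) and $x_0$ near the minimizer of $4x^4-5x^2-\frac{139}{20}x$, so that $h(x_0,1) = 4x_0^4 - 5x_0^2 - \frac{139}{20}x_0 + 8 < 0$ for a suitable $x_0 \approx 1.3$. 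The coefficient $\frac{139}{20} = 6.95$ is evidently tuned so that $\min_x (4x^4-5x^2+8) = \dots$ exceeds $\frac{139}{20}x$ at the minimizer with a small margin: I would verify $h(x_0,1)<0$ for an explicit rational $x_0$ (e.g.\ $x_0 = 13/10$ or a nearby fraction) by direct arithmetic, and note $u_0=1\in[0,1]$, $x_0\in\R$, so Theorem~\ref{thm:testset} gives the conclusion for all $n\ge 4$.

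Two technical points deserve care. First, Theorem~\ref{thm:testset} is stated for homogeneous even symmetric functions of degree $4$ \emph{in the variables $\p_1,\dots,\p_4$}, and $f$ as written is a genuine such function (each monomial $p_1^4, p_2p_1^2, p_3p_1, p_2^2, p_4$ is weighted-homogeneous of degree $4$), so the hypotheses apply directly; I would remark this explicitly. Second, for the nonnegativity direction the test set is the \emph{discrete} family $\{(x,1,n^{-1/2},n^{-1}): x\in\R, n\in\N_{>0}\}$ — not a continuum — which is what makes the uniform bound feasible: in fact the critical constant $\frac{139}{20}$ is chosen so that $f$ is nonnegative on every line of the discrete set but \emph{fails} on the continuous test set $\{(x,1,u,u^2): 0\le u\le1\}$, which is exactly the gap between $\S\P_4$-membership and $\S\Sigma_4$-membership that separates the two statements of the theorem.

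The main obstacle I anticipate is the nonnegativity verification: proving $G(x,t)\ge 0$ for \emph{all} $x\in\R$ and all admissible $t=n^{-1/2}$ simultaneously, rather than for a single polynomial. The structure helps — $G$ is convex and increasing in $t^2$ for $t$ near the range where the constant dominates, while the linear term is linear in $t$ — but the honest proof requires either (i) showing the discriminant-type condition $\min_x G(x,t)\ge 0$ holds for all $t\in(0,1]$ via a resultant/Sturm computation in one variable, or (ii) a clever completion of squares of $G$ valid uniformly in $t$. I expect approach (i) is what the authors do, reducing to a univariate polynomial inequality in $t$ whose nonnegativity on $[0,1]$ is checked by elementary means (e.g.\ it factors, or has no root in $[0,1]$), with the number $\frac{139}{20}$ being the exact threshold at which this univariate polynomial acquires a double root.
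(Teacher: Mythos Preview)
Your overall strategy via Theorem~\ref{thm:testset} is right, but the not-a-sum-of-squares half has a genuine error: you propose the witness $u_0=1$, yet $u_0=1=1^{-1/2}$ belongs to the \emph{discrete} nonnegativity test set, so once you have established $f\in\S\P_4$ you are forced to have $h(x,1)\ge0$ for every $x$ (and indeed $h(1.3,1)\approx 1.94>0$). A witness must sit strictly between two consecutive values $n^{-1/2}$; the paper takes $(x_0,u_0)=(1,\,0.9)$ with $u_0\in(1/\sqrt2,1)$, giving $h(1,0.9)=-0.015$. There is also a second, smaller gap: Theorem~\ref{thm:testset} only certifies $f\notin\S\Sigma_4$, i.e.\ $f_n\notin\Sigma_{n,4}^{\S}$ for all sufficiently large $n$. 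To obtain the sharp ``for every $n\ge4$'' the paper checks $f_4\notin\Sigma_{4,4}^{\S}$ directly (via Macaulay2) and then invokes the nesting $\Sigma_{n+1,4}^{\S}\subset\Sigma_{n,4}^{\S}$.

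For nonnegativity your plan would work but the paper's route is structurally cleaner and explains both halves at once. Rather than treating $G(x,t)$ as a quartic in $x$ for each $t$, the paper views $g_x(u)=4u^2-6.95\,xu+(4x^4-5x^2+4)$ as a \emph{quadratic in $u$} and shows that whenever its discriminant $\Delta(x)=-64x^4+128.3025\,x^2-64$ is nonnegative, both roots lie in the open interval $(1/\sqrt2,1)$ (or, by the $x\mapsto -x$ symmetry, in $(-1,-1/\sqrt2)$). Since no $n^{-1/2}$ falls in that gap, $g_x(n^{-1/2})\ge0$ for all $n\ge1$ and all $x$. This reduces to two global univariate quartic inequalities in $x$ (not in $t$), namely $64x^4-80x^2-\tfrac{278\sqrt2}{5}x+96\ge0$ and $64x^4-80x^2-\tfrac{556}{5}x+128\ge0$. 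The root-localization viewpoint is what makes the constant $\tfrac{139}{20}$ transparent: it is tuned so the roots of $g_x$ land in $(1/\sqrt2,1)$, simultaneously guaranteeing nonnegativity on the discrete set and failure on the continuous one.
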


From Theorem \ref{thm:limit quartic}, we deduce that $\S\Sigma_4 \subsetneq \S\P_4$.
\smallskip

We denote by $\PSD_k$ the set of $k\times k$ real symmetric matrices that are positive semidefinite.
\begin{definition}\label{def:psdprojection}
    Let $R$ be an $\R$-vector space, $M_i\in R^{r_i\times r_i},i=1,\dots,k$, i.e. $M_i$ is an $r_i\times r_i$ matrix filled with elements of $R$. The \emph{psd projection with respect to} $M_1,\dots,M_k$ is 
    \begin{align*}
        \Pi(M_1,\dots,M_k)=\left\{\sum\Tr(A_iM_i) : A_i\in \PSD_{r_i}\right\}.
    \end{align*}
Note that $\Tr(AM)$ is simply $\sum a_{ij}m_{ij}$ and is thus an element of $R$. Therefore $\Pi(M_1,\dots,M_k)\subseteq R$. For us the vector space $R$ will always be a vector space of functions, so that the product $\Tr(A_iM_i)$ is a function in $R$.
\end{definition}

From Example \ref{ex:sos4} we have that 
$$\S\Sigma_4=\Pi\left( \begin{bmatrix}
    \p_{(1^4)} & \p_{(2,1^2)} \\
    \p_{(2,1^2)} & \p_{(2^2)}
\end{bmatrix}, \begin{bmatrix}
    \p_{(2,1^2)} & \p_{(3,1)} \\
    \p_{(3,1)} & \p_{(4)}
\end{bmatrix},  \p_{(2^2)}-\p_{(4)} \right),$$ 
so the dual cone $\S\Sigma_4^*$ is the spectrahedron containing precisely the vectors $(a,b,c,d,e)\in\R^5$ such that $X=A\oplus B\oplus C\succeq0$ where $A=\begin{bmatrix}a & b\\ b & d\end{bmatrix}$, $B=\begin{bmatrix}b & c\\ c & e\end{bmatrix}$, and $C=d-e$. 
\smallskip

It is well known that the dual of the cone of sums of squares can be identified with a set of positive semidefinite quadratic forms. The second and fourth author show in \cite{blekherman2021symmetric} that an analysis of the extremal rays in the symmetric case can be done similarly to the general case \cite{blekherman2012nonnegative,blekherman2017extreme}. The extremal rays of $\S\Sigma_4^*$ correspond to positive semidefinite quadratic forms with maximal kernel, i.e., to those vectors $(a,b,c,d,e) \in \R^5$ for which $X$ is positive semidefinite and the quadratic form has maximal kernel (\cite[Proposition 4.20]{blekherman2021symmetric}).

\begin{proposition}\label{sos4xrays}
Any extreme ray of $\S\Sigma_4^*$ is spanned by a point in
\begin{align*}
S=\{(1,t^2,st^2,t^4,s^2t^2)\,|\, t\ge0,\, s\in[-t,t]\}\cup\{(0,0,0,1,0)\}\cup\{(0,0,0,1,1)\},
\end{align*}
and every point in $S$ spans an extreme ray of $\S\Sigma_4^*$.
\end{proposition}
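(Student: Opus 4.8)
The plan is to exploit the explicit block-diagonal spectrahedral description of $\S\Sigma_4^*$ given just above the statement: a vector $(a,b,c,d,e)\in\R^5$ lies in $\S\Sigma_4^*$ if and only if $A=\begin{bmatrix}a&b\\ b&d\end{bmatrix}\succeq 0$, $B=\begin{bmatrix}b&c\\ c&e\end{bmatrix}\succeq 0$, and $C=d-e\ge 0$. By \cite[Proposition 4.20]{blekherman2021symmetric} the extreme rays of this spectrahedron are exactly those feasible $(a,b,c,d,e)$ for which the associated psd form $A\oplus B\oplus C$ has maximal kernel among all feasible points; equivalently (since the cone is a spectrahedron of $2+2+1$ block sizes), the kernel of $X=A\oplus B\oplus C$ must be large enough that no one-parameter family of feasible points through the given one exists in either direction. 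The proof is therefore a finite case analysis on the possible ranks of the three blocks subject to the coupling constraints: $b$ appears in both $A$ and $B$, and $d,e$ appear in $A$, $B$, $C$ respectively, so the blocks cannot be made singular independently.

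First I would set up the rank bookkeeping. A nonzero psd $2\times 2$ block has rank $1$ or $2$; on an extreme ray we want the total rank of $X$ as small as possible, so generically we look for $\rank A=\rank B=1$ and $C=0$, i.e. $d=e$, $ad=b^2$, $be=c^2$. Normalizing the first coordinate, the case $a\ne 0$ gives $a=1$; writing $b=t^2\ge 0$ (forced nonnegative since $A\succeq 0$ and $a>0$ forces $d\ge0$, $b^2=d\ge0$ automatically, but the sign of $b$: note $A\succeq0$ with $a=1$ only forces $d\ge b^2$, and rank $1$ forces $d=b^2\ge0$, so $b$ can be any real with $d=b^2$) — here one must be a little careful: the parametrization in $S$ uses $b=t^2$, which presupposes $b\ge 0$. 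This is legitimate because if $b<0$ then $B=\begin{bmatrix}b&c\\ c&e\end{bmatrix}$ has a negative diagonal entry and cannot be psd unless it is zero, i.e. $b=c=e=0$; but then $d=e=0$ from $C=0$ and $d=b^2=0$, contradicting $a=1\ne 0$ being a nonzero ray unless we also allow $b=0$. So for $a=1$ on an extreme ray we must have $b\ge 0$, say $b=t^2$; then $d=b^2=t^4$, and $B\succeq 0$ of rank $\le 1$ with $b=t^2$ gives $e=c^2/b$ when $t>0$, i.e. writing $c=st^2$ we get $e=s^2t^2$, and psd-ness of $B$ (entries $t^2, st^2, s^2t^2$) holds for all real $s$; the constraint $C=d-e=t^4-s^2t^2=t^2(t^2-s^2)\ge 0$ forces $s\in[-t,t]$. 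This recovers the first family $\{(1,t^2,st^2,t^4,s^2t^2): t\ge 0,\ s\in[-t,t]\}$, including the degenerate $t=0$ point $(1,0,0,0,0)$. For the case $a=0$: then $A\succeq 0$ forces $b=0$, hence $B$ has a zero diagonal entry and so $c=0$, leaving $A=0$, $B=\mathrm{diag}(0,e)$, $C=d-e\ge0$, $e\ge0$, $d\ge0$; the extreme rays of the remaining two-dimensional cone $\{(d,e): d\ge e\ge 0\}$ are $(d,e)=(1,0)$ and $(1,1)$, giving exactly $(0,0,0,1,0)$ and $(0,0,0,1,1)$.

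Then I would verify the two directions of the claim. For the forward direction (every extreme ray is spanned by a point of $S$): the analysis above shows that any feasible point that is not a positive multiple of a point of $S$ has $X$ with strictly smaller-rank deformations available in both directions — concretely, if $\rank A=2$ one can perturb $a$ (or the off-diagonal) keeping feasibility, if $\rank B = 2$ similarly, and if $C=d-e>0$ one can perturb $d$; combining, any point not in the list admits a nontrivial line segment inside $\S\Sigma_4^*$ through it, so it is not extreme. For the reverse direction (every point of $S$ spans an extreme ray): I would check that at each listed point the kernel of $X$ is maximal, i.e. that the only feasible perturbations $(a,b,c,d,e)+t(\dot a,\dot b,\dot c,\dot d,\dot e)$ for small $|t|$ of both signs are along the ray itself. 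For the generic point $(1,t^2,st^2,t^4,s^2t^2)$ with $t>0$ and $|s|<t$, $C>0$ so $C$ imposes no kernel condition, and $A,B$ are rank $1$, each contributing a kernel vector; by \cite[Proposition 4.20]{blekherman2021symmetric} this is the maximal-kernel configuration, hence extreme. For boundary points $s=\pm t$ one additionally has $C=0$, and for the two exceptional points the kernel computation is immediate. The main obstacle I anticipate is not any single computation but rather being careful about the coupling: because $b$ is shared between $A$ and $B$ and $d,e$ between $A,B,C$, one cannot treat the three blocks' rank drops independently, and the sign constraints ($b\ge 0$ on the main family, $s\in[-t,t]$ from $C\ge 0$) must be extracted correctly from the interplay of the three psd conditions rather than assumed. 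Verifying that no extreme ray is missed — in particular that mixed-rank configurations (e.g. $\rank A=1$, $\rank B=2$) never give extreme points — is the step requiring the most care, and is handled by exhibiting an explicit feasible line segment through any such point.
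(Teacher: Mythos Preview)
Your proposal is correct and follows essentially the same route as the paper: a rank case analysis on the blocks $A,B,C$ of the spectrahedral description, using the maximal-kernel criterion for extreme rays. Your direct observation that $b\ge 0$ (as the $(1,1)$ entry of the psd matrix $B$) is a slightly cleaner way to force the parametrization $b=t^2$ than the paper's argument, but otherwise the structure and the enumeration of cases coincide.
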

\begin{proof}

We use the above spectrahedral representation of $\S\Sigma_4^*$ and \cite[Corollary 4]{ramana1995some} (see also \cite[Prop.~4.20]{blekherman2012nonnegative}) to find the extreme rays of $\S\Sigma_4^*$.
\smallskip

Let $\ker(a,b,c,d,e)$ denote the kernel of the matrix obtained by evaluating $X$ at the point $(a,b,c,d,e)$. Suppose $(a,b,c,d,e)$ spans an extreme ray of $\S\Sigma_4^*$. We begin with a case distinction on the rank of $X$. 
\medskip

If $\rank A=2$ then $\ker X\subseteq(0,0,*,*,*)$, but $\ker(1,0,0,0,0)=(0,*,*,*,*)$, so $\ker X$ is not maximal.

If $\rank A=0$ then $a=b=d=0$, so $c=0$ because $B\succeq0$, and $e=0$ because $d-e\ge0$ and $e\ge0$ ($B\succeq0$). Hence $\rank A=1$.
\medskip

Case $\rank A=1$ and $\rank B=2$. This is impossible since $\ker X\subsetneq\ker(0,0,1,0,0)=(*,*,0,0,*)$.
\medskip

Case $\rank A=1$ and $\rank B=0$. If $\rank B=0$ then $b=c=e=0$. Hence, since $b=0$ and $\rank A=1$ then either $a=0$ or $d=0$. If $d=0$ we have the extreme ray spanned by $(1,0,0,0,0)$, because $\ker(1,0,0,0,0)=(0,*,*,*,*)$ is maximal. If $a=0$ we have the extreme ray spanned by $(0,0,0,1,0)$. Even though $\ker(0,0,1,0,0)=(*,0,*,*,0)$ is 3-dimensional, one can check that the space of matrices $X$ containing such a kernel is 1-dimensional, implying the maximality of the kernel. For instance, consider a vector $[u,0,v,w,0]^\top$ in the kernel of $X$ with non-zero $u,v,w$, then
\begin{align*}
    \begin{bmatrix}
        a & b & 0 & 0 & 0\\
        b & d & 0 & 0 & 0\\
        0 & 0 & b & c & 0\\
        0 & 0 & c & e & 0\\
        0 & 0 & 0 & 0 & d-e
    \end{bmatrix}
    \begin{bmatrix}
        u\\
        0\\
        v\\
        w\\
        0
    \end{bmatrix}=\begin{bmatrix}
        0\\
        0\\
        0\\
        0\\
        0
    \end{bmatrix}
\end{align*}
so the first two rows of the matrix product give $au=0$ and $bu=0$, so $u\ne0$ gives $a=b=0$. The next two rows give $cw=0$ and $cv+ew=0$, so $vw\ne0$ give $c=e=0$. The last row gives no information. Hence the space of matrices $X$ with such a kernel is spanned by $X(0,0,0,1,0)$, so it is 1-dimensional. Therefore the space of matrices $X$ with a larger kernel is 0-dimensional, which shows $\ker(0,0,0,1,0)$ is maximal.
\medskip

Case $\rank A=1$ and $\rank B=1$. If $b=0$ then from $B\succeq0$ we get $c=0$, and from $A$ either $a=0$ or $d=0$. In the first case then $d\ne0$ and $d=e$ will give a maximal kernel. The maximality of $\ker(0,0,0,1,1)=(*,0,*,0,*)$ follows as in the previous case, so $(0,0,0,1,1)$ spans a extreme ray. In the second case, from $C\succeq0$ we get $e=0$, so $\rank B=0$, impossible.
If $a=0$ then from $A\succeq0$ we get $b=0$ and the same follows. So we can assume from now on that $a$ and $b$ are nonzero.
\medskip

From $\det A=\det B=0$ we obtain $d=b^2/a$ and $e=c^2/b$. So $(a,b,c,d,e)=(a,b,c,\frac{b^2}a,\frac{c^2}b)$, and dividing by $a$ we obtain $(1,t,\frac{c}bt,t^2,\frac{c^2}{b^2}t)$ where $t=\frac{b}a$. Setting $s=\frac{c}b$ we obtain $(1,t,st,t^2,s^2t)$. If $C=0$ then $d=e$ and so $b^3=ac^2$ or $\frac{b}a=\frac{c^2}{b^2}$, i.e., $t=s^2$, and so we obtain the family of extreme rays $(1,s^2,s^3,s^4,s^4)$ for $s\in\R\setminus\{0\}$. To show the maximality of the kernel just observe that $\ker(1,s^2,s^3,s^4,s^4)=\{(s^2u,-u,sv,-v,w):u,v,w\in\R\}$, then select nonzero $u,v,w$ and proceed as before.

If $C>0$ then $d>e$ or $t>s^2$, and we get the family of extreme rays $(1,t,st,t^2,s^2t)$ with $t>0$ and $s\in(-t^{1/2},t^{1/2})$. Again, to show the maximality of the kernel observe that $\ker(1,t,st,t^2,s^2t)=\{(tu,-u,sv,-v,0):u,v\in\R\}$ and select nonzero $u,v$. Matrices $X$ with such a kernel must then satisfy $atu-bu=0$ and $btu-du=0$, so $b=at$ and $d=bt=at^2$. Also $bsv-cv=0$ and $csv-ev=0$, so $c=bs=ast$ and $e=cs=as^2t$. Hence $(a,b,c,d,e)=a(1,t,st,t^2,s^2t)$, and so the space of matrices $X$ with such a kernel is 1-dimensional. We thus have found all the extreme rays of $\S\Sigma_4^*$.
\end{proof}

 Kostov \cite{kostov2007stably} describes the set $\mathcal{M}_4$ but in the coordinates of elementary symmetric polynomials. By $\Pi_d(\infty)$ Kostov denotes the closure of the set $ \bigcup_{n \geq 1}(e_1,e_2,\ldots,e_d)(\R^n)$ called the set of \textit{stably hyperbolic} polynomials of degree $d$. 

 His paper focuses on the parametrization of the boundary of $\Pi_4(\infty) \cap \{x \in \R^4 : x_1 = 0,x_2 =-1\}$. We translate Kostov's $\Pi_4(\infty)$ to our setting with power sums via
 Newton's identities (listed in the proof of Theorem \ref{thm:testset} below), which provide a diffeomorphism $$ \mathcal{M}_4 \cap \{x \in \R^4 : x_1 = 0,x_2 =2\} \cong \Pi_4(\infty) \cap \{x \in \R^4 : x_1 = 0,x_2 =-1\}.$$ 

With Kostov's results on $\Pi_4(\infty)$ and the description of the dual cone $\S\Sigma_4^*$ in Proposition \ref{sos4xrays} we prove Theorem \ref{thm:testset}.

\begin{proof}[Proof of Theorem \ref{thm:testset}]
Let $\mathcal{K}$ be \emph{Kostov's leaf}, i.e. the projection onto the last two coordinates of $\Pi_4(\infty) \cap \{x \in \R^4 : x_1 = 0,x_2 =-1\}$ (\cite[Fig. 2]{kostov2007stably}).
\medskip

Let $\mathcal{K}'$ be the projection of $\mathcal{M}_4\cap\{x\in\R^4:x_2=2\}$ onto the last two coordinates. By Remark \ref{rem:weightprism} $\mathcal{K}'$ is the same as the projection of $\mathcal{M}_4\cap\{x\in\R^4:x_1=0,x_2=2\}$ onto the last two coordinates.
\medskip

We show that $\mathcal{K}'$ is the image of $\mathcal{K}$ under an invertible affine linear map. We have $p_1=e_1$ by definition and by Newton's identities
\begin{align*}
    p_2&=e_1^2-2e_2\\
    p_3&=e_1^3-3e_2e_1+3e_3\\
    p_4&=e_1^4-4e_2e_1^2+4e_3e_1+2e_2^2-4e_4
\end{align*}
and so restricting to $\mathcal{K}$, i.e., $e_1=0$ and $e_2=-1$, we have $p_1=0$ and
\begin{align*}
    p_2&=2\\
    p_3&=3e_3\\
    p_4&=2-4e_4
\end{align*}
therefore the map $(x,y)\mapsto(3x,2-4y)$ sends $\mathcal{K}$ to $\mathcal{K'}$. Since this is an invertible affine linear map then the extreme points of $\conv(\KK)$ are in bijection with the extreme points of $\conv(\KK')$.
\medskip

The extreme points of $\conv(\mathcal{K})$ are precisely the cusps of the arcs of $\mathcal{K}$ (see \cite[Remark 11]{kostov2007stably}). Observe that $\mathcal{K}$ has countably infinitely many cusps, so $\conv(\mathcal{K})$ has countably infinitely many vertices.
\medskip

The extreme points of $\conv(\mathcal{K}')$ are the images of the extreme points of $\conv(\mathcal{K})$ under the above map. By (\cite[p.~102]{kostov2007stably}) the extreme points of $\conv(\mathcal{K})$ are 
\begin{align*}
\left\{\left(\pm \frac{2}{3}\sqrt{\frac{2}{s}},\frac{1}{2}-\frac{1}{s}\right) : s \in \N_{>0}\right\},
\end{align*}
and so the extreme points of $\conv(\mathcal{K}')$ are
\begin{align*}
\mathcal{T'} = \{(\pm 2\sqrt{2}n^{-1/2},4n^{-1}) : n \in \N_{>0}\}.
\end{align*}
Now, let $\LL$ be the projection onto the last two coordinates of $\mathcal{M}_4\cap\{x\in\R^4:x_2=1\}$. Since $\mathcal{M}_4$ is weighted homogeneous, then the extreme points of $\conv(\LL)$ are 
\begin{align*}
\mathcal{T} = \{(\pm n^{-1/2},n^{-1}) : n \in \N_{>0}\}.
\end{align*}
This observation is enough to provide a test set for nonnegativity for limit symmetric quartics.
\medskip

Restricting to $\p_2=1$ by homogeneity, and since the first coordinate of $\mathcal{M}_d$ is free by Remark \ref{rem:weightprism}, a limit symmetric quartic $$f:=c_1 \p_1^4+c_2\p_2\p_1^2+c_3\p_3\p_1+c_4\p_2^2+c_5\p_4$$ is nonnegative if and only if each univariate polynomial in the 
family
\begin{align*}
    \mathscr{F}':=\{c_1x^4+c_2x^2+c_3p_3x+c_4+c_5p_4\,\,|\,\, (p_3,p_4)\in\LL\}
\end{align*}
is nonnegative (the case $\p_2=0$ can be disregarded because $\p_2^2\ge \p_4\ge0$, so $\p_4=0$, and $\p_3=0$ because $\p_2\p_4\ge \p_3^2$ and in such case the nonnegativity of $c_1\p_1^4$ is implied by the nonnegativity of the polynomials in the above family). Now, since any convex combination of nonnegative polynomials is nonnegative then $f$ is nonnegative if and only if each univariate polynomial in the family $\conv(\mathscr{F}')$ is nonnegative. Moreover, since the coefficients of the polynomials in this family depend linearly on $p_3$ and $p_4$ then $f$ is nonnegative if and only if each univariate polynomial in the family
$$\mathscr{F}=\{c_1x^4+c_2x^2+c_3p_3x+c_4+c_5p_4\,\,|\,\, (p_3,p_4)\in \mathcal{T}, p_3 > 0\}$$ is nonnegative.  It is sufficient to restrict to the test set $\{ (n^{-1/2},n^{-1}) : n \in \N_{>0}\}$ because polynomials in $\mathscr{F}$ have only one odd degree term, and a polynomial $g(x)$ is nonnegative if and only if $g(-x)$ is nonnegative.

Finally, a symmetric quartic function is in $\S\Sigma_4$ if and only if its evaluation on all extreme rays of $\S\Sigma_4^*$ is nonnegative. By Proposition \ref{sos4xrays}, the extreme rays of $\S\Sigma_4^*$ are spanned by the points in $\{(1,t^2,st^2,t^4,s^2t^2)\,|\, t\ge0,\, s\in[-t,t]\}\cup\{(0,0,0,1,0)\}\cup\{(0,0,0,1,1)\}$. By continuity of the power sums we can reduce to the partially scaled subset $\{(\frac{1}{t^4},\frac{1}{t^2},\frac{s}{t^2},1,\frac{s^2}{t^2})\,|\, t>0,\, s\in[-t,t]\}\cup\{(0,0,0,1,0)\}\cup\{(0,0,0,1,1)\}$. Setting $x=\frac{1}{t}$ and $u=sx$ we obtain the set $\{(x^4,x^2,ux,1,u^2)\,|\, x>0,\, u\in[-1,1]\}\cup\{(0,0,0,1,0)\}\cup\{(0,0,0,1,1)\}$. Again, by continuity of power sums an equivalent set is $\{(x^4,x^2,ux,1,u^2)\,|\, x \in \R,\, u\in[0,1]\}$ which contains the points $(0,0,0,1,0),(0,0,0,1,1)$. We can deduce the claim on the test set $\p_1=x,\p_2=1,\p_3=u,\p_4=u^2$ by observing that any point $(x^4,x^2,xu,1,u^2)$ is the image of $(x,1,u,u^2)$ under the monomial map $\nu_4$.
\end{proof}

We now prove Theorem \ref{thm:limit quartic}, i.e. $f := 4\p_{(1^4)}-5\p_{(2,1^2)}-\frac{139}{20}\p_{(3,1)}+4\p_{(2^2)}+4\p_{(4)}\in\S\P_4\setminus\S\Sigma_4$. The corresponding forms are nonnegative in any number of variables. Moreover, these forms are never sums of squares for any number of variables $n\ge4$.

\begin{proof}
Suppose $F(p_1,p_2,p_3,p_4)=ap_1^4+bp_2p_1^2+cp_3p_1+dp_2^2+p_4$ is nonnegative but not a sum of squares for any number of variables $n\ge4$. By Theorem \ref{thm:testset}, the quadratic polynomial $g_x(u)$ that results from evaluating $F$ at $(x,1,u,u^2)$, $$g_x(u)=u^2+cxu+q(x)$$ where $q(x)=ax^4+bx^2+d$, must be nonnegative at $u=n^{-1/2}$ for all $n\in\N_{>0}$ for all $x\in\R$, but negative for some pair $(u,x)$ with $u\in[0,1]$. 

We find such a polynomial by restricting our search. Suppose that for each $x\in\R$, $g_x(u)$ has roots either in $[\frac1{\sqrt{2}},1]$ or $[-1,-\frac1{\sqrt{2}}]$, or has no real roots. 

Let $\Delta(x):=c^2x^2-4q(x)^2$ and $S_{\Delta}:=\{x\ge0\,|\,\Delta(x)\ge0\}$. We want $\sqrt{2}\le -cx\pm\sqrt{\Delta(x)}\le 2$ for all $x\in S_{\Delta}$. 

We find that this happens for $(a,b,c,d)=(1,-5/4,-139/80,1)$, or, rescaling, for $(a,b,c,d,e)=(4,-5,-139/20=-6.95,4,4)$ which we verify below. 

Write $f_n$ for $f$ considered as quartic form in $n$ variables. We have
\begin{align*}
g_x(u)=4u^2-6.95xu+4x^4-5x^2+4.
\end{align*}
Observe $g_1(0.9)=-0.015<0$ and so, by Theorem \ref{thm:testset}, $f\not\in \S\Sigma_4$. Thus $f_n\not\in\Sigma_{n,4}^{\mathcal{S}}$ for all $n \geq N$ for some $N\ge4$, but already $f\not\in\Sigma_{4,4}^{\mathcal{S}}$ as verified by the SumsOfSquares package \cite{cifuentes2020sums} in Macaulay2 \cite{M2}. 
\smallskip

We claim that for each $x\in\R$, $g_x(u)$ has roots either in $[\frac1{\sqrt{2}},1]$ or $[-1,-\frac1{\sqrt{2}}]$, or has no real roots. Hence, since the leading coefficient of $g_x(u)$ is positive, each $g_x(u)$ would be nonnegative on $\{n^{-1/2}:n\in\N_{>0}\}$, and Theorem $\ref{thm:testset}$ would imply $f\in\S\P_4$.
\smallskip

It suffices to find, or bound, the real range of the functions
\begin{align*}
    r_1(x)&:=6.95 x-\sqrt{\Delta(x)}\\
    r_2(x)&:=6.95 x+\sqrt{\Delta(x)}
\end{align*}
where $\Delta(x):=-64x^4+128.3025x^2-64$ is the discriminant of $g_x(u)$.
\smallskip

Let $a<b$ be the positive roots of $\Delta(x)$. Since the leading coefficient of $\Delta(x)$ is negative then the set of $x\ge0$ such that $\Delta(x)\ge0$ is the interval $[a,b]$.
\smallskip

We restrict to $x\ge0$, because the roots of $g_x(u)$ are the negatives of the roots of $g_{-x}(u)$, and then prove that $4\sqrt{2}\le r_i(x)\le8$ for $x\in[a,b]$, which is equivalent to our claim.
\smallskip

Suppose $x\in[a,b]$. Observe $[a,b]\subset[0.96,1.04]$ and there we have $4\sqrt{2}-6.95x<0$ and $8-6.95x>0$. Then $4\sqrt{2}-6.95x\le\pm\sqrt{\Delta(x)}\le8-6.95x$ follows from both 
\begin{align*}    
\Delta(x)&\le (4\sqrt{2}-6.95x)^2\\
\Delta(x)&\le (8-6.95x)^2
\end{align*}
which after expanding we obtain the following, which can be verified to be globally nonnegative
\begin{align*}
    64x^4-80x^2-\frac{278\sqrt{2}}5x+96&\ge0\\
    64x^4-80x^2-\frac{556}5x+128&\ge0.
\end{align*}
\end{proof}
So we have the following, in contrast to the case of symmetric means where the cones of symmetric nonnegative quartics and sum of squares quartics are equal \cite[Theorem~2.9]{blekherman2021symmetric}.

\begin{corollary}
    \label{thm:S4 and P4}
$\S\Sigma_4\subsetneq \S\P_4$.
\end{corollary}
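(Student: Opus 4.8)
The plan is to deduce Corollary~\ref{thm:S4 and P4} directly from Theorem~\ref{thm:limit quartic}. The inclusion $\S\Sigma_4\subseteq\S\P_4$ is immediate, since any sum of squares is nonnegative, and it holds termwise in finitely many variables and hence passes to the intersection over all $n$. So the only thing to establish is that the inclusion is strict, i.e. that there exists a symmetric quartic function which is nonnegative in every number of variables but fails to be a sum of squares for every number of variables $n\ge4$.

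First I would invoke Theorem~\ref{thm:limit quartic}, which exhibits exactly such an element: the symmetric quartic $f=4\p_{(1^4)}-5\p_{(2,1^2)}-\frac{139}{20}\p_{(3,1)}+4\p_{(2^2)}+4\p_{(4)}$. By that theorem, the corresponding forms $f_n$ are nonnegative for all $n$, so $f\in\bigcap_{n\ge4}\P_{n,4}^{\mathcal S}=\S\P_4$ (using Definition~\ref{def:limit sets}, and noting degree $4$ means $2d=4$). On the other hand, Theorem~\ref{thm:limit quartic} asserts $f_n$ is not a sum of squares for any $n\ge4$; in particular $f_4\notin\Sigma_{4,4}^{\mathcal S}$, so $f\notin\bigcap_{n\ge4}\Sigma_{n,4}^{\mathcal S}=\S\Sigma_4$. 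Hence $f\in\S\P_4\setminus\S\Sigma_4$, which together with the trivial inclusion yields $\S\Sigma_4\subsetneq\S\P_4$.

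Since all the substantive work — both the nonnegativity argument via the Kostov-derived test set of Theorem~\ref{thm:testset}, and the certification that the quartic is not a sum of squares in $4$ variables (via the extremal-ray description of $\S\Sigma_4^*$ in Proposition~\ref{sos4xrays} and the Macaulay2 verification) — has already been carried out in the proof of Theorem~\ref{thm:limit quartic}, there is essentially no remaining obstacle. The corollary is a one-line consequence. If one wanted to be slightly more self-contained, I would remark that the failure of the reverse inclusion stands in contrast to the term-normalized setting of \cite[Theorem~2.9]{blekherman2021symmetric}, where the analogous cones of symmetric nonnegative and sum of squares quartic means coincide; this is precisely the point of stating the corollary separately. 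Thus the proof is simply: apply Theorem~\ref{thm:limit quartic}.

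\begin{proof}
The inclusion $\S\Sigma_4\subseteq\S\P_4$ is clear since every sum of squares is nonnegative. For strictness, consider the symmetric quartic $f=4\p_{(1^4)}-5\p_{(2,1^2)}-\frac{139}{20}\p_{(3,1)}+4\p_{(2^2)}+4\p_{(4)}$ of Theorem~\ref{thm:limit quartic}. By that theorem $f$ is nonnegative in every number of variables, so $f\in\S\P_4$, while $f$ is not a sum of squares for any $n\ge4$; in particular $f\notin\Sigma_{4,4}^{\mathcal S}$, so $f\notin\S\Sigma_4$. Hence $f\in\S\P_4\setminus\S\Sigma_4$ and the inclusion is strict.
\end{proof}
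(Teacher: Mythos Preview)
Your proposal is correct and matches the paper's approach exactly: the corollary is stated immediately after Theorem~\ref{thm:limit quartic} as a direct consequence, with no separate proof given beyond the sentence ``So we have the following.'' Your argument---invoking the explicit quartic $f$ from Theorem~\ref{thm:limit quartic} to witness strictness---is precisely what the paper intends.
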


\begin{example}
The limit form $4\p_{(1^4)}-5\p_{(2,1^2)}-4\sqrt{3}\p_{(3,1)}+4\p_{(2^2)}+4\p_{(4)}$ is contained in the boundary of $\S\Sigma_4$ but not in the boundary of $\S\P_4$.
To see this we consider \begin{align*}
    g_x(u) &=4u^2-4\sqrt{3}xu+4x^4-5x^2+4  \\
     & = (2u-\sqrt{3}x)^2+4(x^2-1)^2
\end{align*}
and observe $g_x(u) = 0$ if and only if $x = \pm 1$ and $u = \pm \frac{\sqrt{3}}{2}$. However, this shows that $g_x(u)$ is strictly positive for any $u\in \mathcal{T}$, but attains $0$ at $u = \frac{\sqrt{3}}{2} \in \left(\frac{1}{\sqrt{2}},1\right)$.
\end{example}
We present another, real algebraic, reason for the disparity between the limit cones of quartics.
\begin{proposition}
The cone $\S\P_4$ is not semialgebraic. Moreover, for each positive integer $n$ there is an isolated family of extremal rays of $\S\P_4^*$.
\end{proposition}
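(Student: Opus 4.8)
The plan is to use the test set from Theorem~\ref{thm:testset} together with the explicit description of the extreme points of $\conv(\LL)$ computed in the proof of that theorem, namely $\mathcal{T} = \{(\pm n^{-1/2}, n^{-1}) : n \in \N_{>0}\}$. Recall that a limit symmetric quartic $f = c_1\p_1^4 + c_2\p_2\p_1^2 + c_3\p_3\p_1 + c_4\p_2^2 + c_5\p_4$ lies in $\S\P_4$ precisely when every univariate polynomial $c_1x^4 + c_2x^2 + c_3 p_3 x + c_4 + c_5 p_4$ with $(p_3,p_4)$ ranging over $\conv(\LL)$ is nonnegative, and since the coefficients depend affinely on $(p_3,p_4)$ it suffices to test the extreme points $\mathcal{T}$. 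This exhibits $\S\P_4$ as the intersection of a \emph{countably infinite} family of closed half-spaces (one for each $n$, coming from the condition that a certain explicit quartic in $x$ with coefficients depending on $n^{-1/2}, n^{-1}$ be nonnegative — the latter being a semialgebraic condition on the $c_i$ for each fixed $n$). The first step is to argue that this intersection is genuinely infinite, i.e. that no finite subfamily of these constraints suffices; this forces $\S\P_4$ to be non-semialgebraic, since a semialgebraic convex cone with infinitely many facets cannot occur — more precisely, one shows that the dual cone $\S\P_4^*$ has infinitely many extreme rays that are isolated (not limits of other extreme rays), and a semialgebraic convex set has only finitely many such isolated extreme points.

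Concretely, the second step is to identify, for each $n$, an extremal ray of $\S\P_4^*$ that is isolated. By the duality $\S\P_4^* = \cone(\nu_4(\mathcal{M}_4))$ and the fact that extreme points of $\conv(\LL)$ (after rescaling via $\p_2=1$ and lifting via the monomial map) give extreme rays of the dual, the points $\nu_4(x, 1, n^{-1/2}, n^{-1})$ — or rather their preimages/lifts realizing the cusps $\mathcal{T}$ — should span extreme rays. The key point is that the cusps of Kostov's leaf $\mathcal{K}$ (equivalently, the points of $\mathcal{T}$) are \emph{isolated} extreme points of $\conv(\mathcal{K})$: between consecutive cusps the boundary of $\conv(\mathcal{K})$ is a straight segment (the convex hull of an arc with its endpoints the cusps), so each cusp is an exposed vertex that is not a limit of other vertices. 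This is exactly \cite[Remark~11]{kostov2007stably} which we already invoked in the proof of Theorem~\ref{thm:testset}. Transporting this through the affine map $(x,y)\mapsto(3x, 2-4y)$ (which preserves isolatedness of extreme points) and then through the conification and the monomial lift $\nu_4$ gives, for each $n$, an isolated extremal ray of $\S\P_4^*$; since these rays are pairwise distinct and there are countably infinitely many of them, we are done with the "moreover" part, and non-semialgebraicity of $\S\P_4$ follows because the dual of a semialgebraic cone is semialgebraic and a semialgebraic convex cone has only finitely many isolated extreme rays.

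The main obstacle will be the bookkeeping needed to transfer "isolated extreme point of $\conv(\mathcal{K})$" all the way to "isolated extreme ray of $\S\P_4^*$" through three operations: the affine reparametrization $\mathcal{K}\to\mathcal{K}'$, the passage from the slice $\{\p_2=1\}$ back to the full weighted-homogeneous cone (and its dual), and the monomial map $\nu_4$ lifting a point $(p_3,p_4)$ on the leaf to the evaluation functional. Affine maps clearly send isolated extreme points to isolated extreme points, and conification sends extreme points of a compact convex base not containing the origin to extreme rays bijectively, preserving isolation; the only slightly delicate point is that $\nu_4$ is a diffeomorphism onto its image on the relevant region (as used in the proof of Theorem~\ref{thm:testset}), so it too preserves the combinatorial structure. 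Once this is set up, one also needs the standard fact that a semialgebraic convex set in $\R^d$ has only finitely many zero-dimensional faces that are isolated in the set of extreme points — equivalently, the extreme points form a semialgebraic set of dimension $< $ (dimension of the boundary) except for finitely many isolated ones — which is a consequence of the finiteness of connected components of semialgebraic sets; I would cite this rather than prove it. A clean alternative for the non-semialgebraicity conclusion alone: the test set condition displays $\S\P_4$ as $\bigcap_{n} \{c : \phi_n(c) \ge 0\}$ where each $\phi_n$ is a specific quartic-nonnegativity condition, and one checks directly (via the example $g_1(0.9)<0$ of Theorem~\ref{thm:limit quartic} refined) that infinitely many of these constraints are irredundant, which already contradicts being cut out by finitely many polynomial inequalities/equations.
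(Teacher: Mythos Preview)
Your overall strategy agrees with the paper's: exhibit infinitely many isolated (one-parameter families of) extreme rays of $\S\P_4^*$, then use that a semialgebraic set has finitely many connected components (and that duals of semialgebraic sets are semialgebraic). The paper's execution, however, is more direct and fills the one step that in your plan is only a handwave.

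The paper writes $\S\P_4^* = \cone\,\mathcal{X}$ with $\mathcal{X}=\{(x^4,x^2,\tfrac{x}{\sqrt n},1,\tfrac1n):x\in\R,\ n\in\N_{>0}\}$ and shows that any finite subset of $\mathcal{X}$ is in convex position by a \emph{double moment--curve argument}: from a convex combination $\sum_i\lambda_i(x_i^4,x_i^2,\tfrac{x_i}{\sqrt{n_i}},1,\tfrac1{n_i})=(x^4,x^2,\tfrac{x}{\sqrt n},1,\tfrac1n)$ the first two coordinates lie on $s\mapsto(s^2,s)$ with $s=x^2$, forcing all $x_i=\pm x$; then the third and fifth coordinates lie on $t\mapsto(t,t^2)$ with $t=\pm1/\sqrt{n_i}$, forcing all $n_i=n$ (and a common sign). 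Hence every point of $\mathcal{X}$ spans an extreme ray, and the one-parameter families indexed by distinct $n$ give infinitely many isolated components of the extreme-ray set.

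Your proposal has a genuine gap at precisely this point. Knowing that $(\pm n^{-1/2},n^{-1})$ is an isolated extreme point of the two-dimensional set $\conv(\LL)$ does \emph{not} by itself imply that $(x^4,x^2,xn^{-1/2},1,n^{-1})$ is an extreme point of the five-dimensional convex hull: a convex combination could a priori mix different values of $x$, and the projection to the $(p_3,p_4)$-plane does not detect this. Your justification ``$\nu_4$ is a diffeomorphism onto its image, so it preserves the combinatorial structure'' is not valid reasoning---diffeomorphisms do not preserve convex hulls, and the diffeomorphism invoked in the proof of Theorem~\ref{thm:testset} concerns the parametrization of $\mathcal{M}_4$, not the convex structure of the image of $\nu_4$. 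The paper's moment-curve trick is exactly what is needed to close this gap. Your closing ``clean alternative'' is also insufficient as written: a convex semialgebraic set can have infinitely many irredundant supporting half-spaces (e.g.\ a disk), so irredundancy of the constraints for infinitely many $n$ does not by itself contradict semialgebraicity; one is led back to the isolated extreme-ray statement.
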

\begin{proof}
We show that the dual cone $\S\P_4^*$ is not semialgebraic. Consequently, by Lemma \ref{lem:dual cone is semialgebraic}, the cone $\S\P_4$ cannot be semialgebraic.

Let $\mathcal{X}:=\{ (x^4,x^2,\frac{x}{\sqrt{n}},1,\frac{1}{n}) \, \, \mid \, \, x \in \R, n \in \N_{>0}\} \subset \R^5$. By Theorem \ref{thm:testset}, 
$$\S\P_4^*=\operatorname{cone} \operatorname{cl} \{(p_1^4,p_2p_1^2,p_3p_1,p_2^2,p_4)(x) \, \, \mid \, \, x \in \R^n, n \in \N_{>0}\} = \cone \mathcal{X}.$$
Thus the infinitely many points in $\mathcal{X}$ must contain a generator of any extremal ray of the cone $\S\P_4^*$. 
We claim that any subset consisting of finitely many points in $\mathcal{X}$ is in convex position. This implies that any element in $\mathcal{X}$ must span an extremal ray.
To prove the claim we assume that there exists a convex combination
\[\sum_{i=1}^N\lambda_i \left(x_i^4,x_i^2,\frac{x_i}{\sqrt{n}_i},1,\frac{1}{n_i}\right) = \left(x^4,x^2,\frac{x}{\sqrt{n}},1,\frac{1}{n}\right)\]
of elements in $\mathcal{X}$. Recall that any subset of distinct points on the real moment curve $t \mapsto (t,t^2)$ is in convex position (\cite[Prop.~0.7]{ziegler2012lectures}). Looking at the first two coefficients of the elements of any point we obtain that all $x_i$ are equal up to sign. 
Then we consider the third and last coordinates in the convex combination and conclude $\sum_{i=1}^N \lambda_i (\frac{1}{\epsilon_i\sqrt{n}_i},\frac{1}{n_i})=(\frac{1}{\sqrt{n}},\frac{1}{n})$, where $\epsilon_i :=1$ if $x_i = x$, and $\epsilon_i := -1$ otherwise. Again, these are points on the moment curve $(t,t^2)$. 
This implies $n_i = n$ for all $1 \leq i \leq n$ and either all $\epsilon_i$ are $1$ or $-1$. 

Therefore, the set $\S\P^*$ has infinitely many isolated extremal rays. 
Thus the set $\S\P_4^*$ cannot be semialgebraic (see \cite[Corollary 2.20]{acevedo2023wonderful}).
\end{proof}

\subsection{Limit cones in higher degrees} \label{sec:non semialgebraic}
 We show that for all even degree $2d \geq 6$ the limit sets of nonnegative (even) symmetric forms are not semialgebraic and conclude strict inclusion between the limit sets of (even) symmetric sums of squares and nonnegative forms of degree $2d$. \smallskip

\begin{theorem} \label{thm:Limitcones not equal}
For all $2d \geq 4$ we have $\S\Sigma_{2d}\subsetneq \S\P_{2d}$ and for all $2d \geq 6$ we have $\B\Sigma_{2d} \subsetneq \B\P_{2d}$. Moreover, the sets $\S\P_{2d}$ and $\B\P_{2d}$ are non-semialgebraic in all these cases, and the sets $\S\Sigma_{2d}$ and $\B\Sigma_{2d}$ are semialgebraic.
\end{theorem}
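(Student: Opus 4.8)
The plan is to handle the two assertions—strict inclusion of the sum of squares cone and non-semialgebraicity of the nonnegative cone—somewhat separately, and to reduce both higher-degree statements to the degree $4$ and degree $6$ base cases already treated (Theorem~\ref{thm:limit quartic}, Proposition~\ref{prop:deg 10 nonnegative but not sos}) by a \emph{padding} or \emph{multiplication} argument. For the strict inclusions $\S\Sigma_{2d}\subsetneq\S\P_{2d}$ ($d\ge2$) and $\B\Sigma_{2d}\subsetneq\B\P_{2d}$ ($d\ge3$), the key observation is that if $f$ is an even symmetric nonnegative function of degree $2d_0$ that is not a sum of squares for large $n$, then $\p_2^{\,d-d_0}f$ (more generally, multiplication by an even power sum that is itself a sum of squares) is again nonnegative in every number of variables, while remaining \emph{not} a sum of squares for large $n$. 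The first part is immediate from $\p_2\ge0$; the second requires an argument that multiplying by $\p_2$ cannot turn a non-sos into an sos for all large $n$ simultaneously. Here I would localize at a point where $f$ is strictly negative on the Vandermonde image (which exists since $f\notin\S\P$ would contradict nonnegativity—rather, I use that $f$ is on the boundary-or-exterior of the sos cone but still in the nonnegative cone, detected via a dual functional); more precisely, I would use a dual separating functional $\ell\in\Sigma_{n,2d_0}^{\S,*}$ with $\ell(f)<0$ and lift it to degree $2d$, or appeal directly to the fact that the degree-$4$ counterexample and its $\p_2$-multiples already fail to be sos in $n=4$ variables (which can be checked, or bootstrapped from the known certificate-free examples in \cite{blekherman2021symmetric,choi1987even}).

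For the even symmetric case in degree $6$, I would either exhibit an explicit $\B\P_6\setminus\B\Sigma_6$ element directly—the natural candidate being obtained from the symmetric quartic of Theorem~\ref{thm:limit quartic} by the substitution $X_i\mapsto X_i^2$, i.e. working with $4\p_{(2^3)}-\dots$ style expressions, or from a degree-$6$ analogue of Motzkin-type constructions adapted to the even symmetric setting—and verify nonnegativity via a test-set argument analogous to Theorem~\ref{thm:testset} while verifying non-sos by a Macaulay2/SumsOfSquares computation in a fixed small number of variables together with the nestedness $\Sigma_{n+1}^{\B}\subset\Sigma_n^{\B}$. Then all higher even degrees $2d\ge6$ follow by multiplying by $\p_2^{\,d-3}$, and all symmetric degrees $2d\ge4$ by multiplying the quartic example by $\p_2^{\,d-2}$.

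For non-semialgebraicity of $\S\P_{2d}$ and $\B\P_{2d}$: the strategy mirrors the degree-$4$ argument given above for $\S\P_4$. I would show the dual cones $\S\P_{2d}^*=\cone(\nu_{2d}(\mathcal{M}_{2d}))$ and $\B\P_{2d}^*=\cone(\nu_d(\mathcal{N}_d))$ possess infinitely many isolated extreme rays, then invoke \cite[Corollary 2.20]{acevedo2023wonderful} (a semialgebraic convex cone cannot have infinitely many isolated extreme rays) and Lemma~\ref{lem:dual cone is semialgebraic} (semialgebraicity passes to the dual). The isolated extreme rays should come from the ``discrete parallel lines'' phenomenon: the sequence of points $(p_1,\dots,p_{2d})$ evaluated at the near-constant vectors $(x,\dots,x,\epsilon)$-type configurations, whose moment-curve convex-position argument generalizes from the pairs $(n^{-1/2},n^{-1})$ of Theorem~\ref{thm:testset} to appropriate coordinate pairs in higher degree. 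The cleanest route is probably to note that $\mathcal{M}_{2d}$, being a prism over its projection and weighted-homogeneous, contains an affine image of $\mathcal{M}_4$ inside a suitable coordinate subspace (via the coordinates $p_1,p_2,p_3,p_4$), so the isolated extreme rays of $\S\P_4^*$ pull back to isolated extreme rays of $\S\P_{2d}^*$; the even case uses $\mathcal{N}_d$ similarly from the degree-$3$ (i.e. $2d=6$) case. Finally, semialgebraicity of $\S\Sigma_{2d}$ and $\B\Sigma_{2d}$ is the content of the appendix: partial symmetry reduction (as in \cite{acevedo2024power,blekherman2021symmetric}) presents each as a finite psd projection $\Pi(M_1,\dots,M_k)$ with matrices of fixed size independent of $n$, hence as a linear projection of a spectrahedron, which is semialgebraic by Tarski–Seidenberg.

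\textbf{Main obstacle.} The delicate point is the ``multiplication by $\p_2$ preserves non-sos for all large $n$'' step: a priori, for each individual $n$ the product could become sos, with the required $n$ growing, so that the product lies in $\bigcap_n\Sigma_{n,2d}^{\S}$ even though $f\notin\bigcap_n\Sigma_{n,2d_0}^{\S}$. Ruling this out cleanly—rather than by an ad hoc computation in fixed small $n$—requires either a stable dual functional argument (a single $\ell$ witnessing $\ell(\p_2^{\,d-d_0}f)<0$ for all large $n$, built by tensoring the degree-$2d_0$ witness with the $n$-independent identity-type functional dual to $\p_2$ being sos) or a direct structural argument that the quotient of an sos by $\p_2$ is ``almost sos'' in a controlled way. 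I expect the functional-lifting approach to work but to need the explicit spectrahedral descriptions from the appendix to make the lift genuinely $n$-uniform.
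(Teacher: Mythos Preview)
Your proposal takes a substantially harder route than the paper, and the ``main obstacle'' you correctly identify is in fact completely avoidable.

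The paper's argument is much shorter: once you establish that $\S\Sigma_{2d}$ and $\B\Sigma_{2d}$ are semialgebraic (via the spectrahedral representation of their duals, as you correctly outline) and that $\S\P_{2d}$ and $\B\P_{2d}$ are \emph{not} semialgebraic for $d\ge3$, the strict inclusions follow immediately, since a semialgebraic set cannot equal a non-semialgebraic one. There is no need to pad a low-degree counterexample by $\p_2^{d-d_0}$, and hence no need to prove that multiplication by $\p_2$ preserves non-sos-ness uniformly in $n$. The single remaining case $\S\Sigma_4\subsetneq\S\P_4$ is handled by the explicit quartic of Theorem~\ref{thm:limit quartic} (Corollary~\ref{thm:S4 and P4}).

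For the non-semialgebraicity of $\B\P_{2d}$ (and hence $\S\P_{2d}$) in higher degree, the paper also proceeds by reduction to degree $6$, but more cleanly than your extreme-ray pullback: it intersects $\B\P_{2d}$ with the subspace $\langle \p_2^d,\p_2^{d-2}\p_4,\p_2^{d-3}\p_6\rangle$, observes that after setting $\p_2=1$ the resulting dual cone is exactly $\B\P_6^*$, and then uses that $\B\P_6^*=\cone\{(t^3,t^2,t):t\in\Z_{>0}\}$ has countably many isolated extreme rays. Since an intersection with a linear subspace preserves semialgebraicity, this gives the contradiction.

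A couple of smaller issues in your plan: the substitution $X_i\mapsto X_i^2$ applied to the quartic of Theorem~\ref{thm:limit quartic} sends $p_1,p_2,p_3,p_4$ to $p_2,p_4,p_6,p_8$, yielding a degree-$8$ form, not degree $6$; and Proposition~\ref{prop:deg 10 nonnegative but not sos} is a degree-$10$ example, not degree $6$, so it cannot serve as your even-symmetric base case. These would force you to construct a genuinely new even-symmetric sextic counterexample, which is again work the paper's approach sidesteps entirely.
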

For any other degree we have equality by Hilbert's theorem from 1888 and since  $\Sigma_{n,4}^{\mathcal{B}} = \P_{n,4}^{\mathcal{B}}$ for all $n$ \cite{harris1999real}.
In the proof of Theorem \ref{thm:Limitcones not equal} we use the non-semialgebraicness of the Vandermonde cell $\mathcal{N}_{d}$ for all $d \geq 3$ (\cite{acevedo2023wonderful}, Corollary 2.20) and we present the proof at the end of this subsection.

\begin{lemma} \label{lem:dual cone is semialgebraic}
The dual cone of a semialgebraic set $S \subset \R^n$ is semialgebraic, i.e., the set $S^* = \{ a \in \R^n : \sum_{i=1}^n a_i w_i  \geq 0, \forall w \in S\}$ is semialgebraic.
\end{lemma}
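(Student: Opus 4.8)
The statement to prove is Lemma~\ref{lem:dual cone is semialgebraic}: the dual cone $S^* = \{a \in \R^n : \sum_i a_i w_i \geq 0 \text{ for all } w \in S\}$ of a semialgebraic set $S \subset \R^n$ is semialgebraic.

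The plan is to invoke the Tarski--Seidenberg theorem (quantifier elimination for real closed fields / stability of semialgebraic sets under projection). First I would write $S^*$ in first-order logic over the ordered field $\R$: a point $a \in \R^n$ lies in $S^*$ precisely when $\forall w \in \R^n\, (w \in S \Rightarrow \langle a, w\rangle \geq 0)$. Since $S$ is semialgebraic, the condition ``$w \in S$'' is defined by a finite Boolean combination of polynomial equalities and inequalities in the coordinates of $w$, and ``$\langle a, w \rangle \geq 0$'' is a single polynomial inequality in the coordinates of $a$ and $w$ jointly. Hence $S^*$ is defined by a first-order formula in the language of ordered rings with one universally quantified block of variables $w$ and free variables $a$.

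The key step is then to apply the Tarski--Seidenberg principle: the class of semialgebraic sets is closed under taking projections, equivalently, first-order definable sets over $\R$ are exactly the semialgebraic sets. Equivalently one can phrase it as: the complement of $S^*$ is the projection onto the $a$-coordinates of the semialgebraic set $\{(a,w) : w \in S \text{ and } \langle a,w\rangle < 0\} \subset \R^n \times \R^n$, which is semialgebraic as an intersection of semialgebraic sets; its projection is semialgebraic, and the complement of a semialgebraic set is semialgebraic, so $S^*$ is semialgebraic.

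There is essentially no obstacle here --- this is a textbook application of Tarski--Seidenberg --- so the ``hard part'' is merely citing it cleanly; I would reference a standard source such as Bochnak--Coste--Roy. A concrete write-up:

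\begin{proof}
Since $S$ is semialgebraic, the set
\[
A := \{(a,w) \in \R^n \times \R^n : w \in S \text{ and } \textstyle\sum_{i=1}^n a_i w_i < 0\}
\]
is semialgebraic, being the intersection of the semialgebraic set $\R^n \times S$ with the set defined by the single polynomial inequality $\sum_i a_i w_i < 0$. By the Tarski--Seidenberg theorem, the image $\pi(A)$ of $A$ under the projection $\pi \colon \R^n \times \R^n \to \R^n$, $(a,w) \mapsto a$, is semialgebraic. By definition, $a \in \pi(A)$ if and only if there exists $w \in S$ with $\sum_i a_i w_i < 0$, i.e. if and only if $a \notin S^*$. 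Hence $S^* = \R^n \setminus \pi(A)$ is the complement of a semialgebraic set, and is therefore semialgebraic.
\end{proof}
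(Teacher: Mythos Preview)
Your proof is correct and follows essentially the same approach as the paper: both express $S^*$ as the complement of the projection onto the first factor of the semialgebraic set $\{(a,w)\in\R^n\times\R^n : w\in S,\ \sum_i a_i w_i<0\}$, and then invoke the Tarski--Seidenberg theorem.
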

\begin{proof}
We observe \begin{align*}
    S^* & = \{ a \in \R^n : \exists w \in S, \sum_{i=1}^n a_i w_i < 0\}^c  = \pi \left(\{ (a,w) \in \R^n \times \R^n : w \in S, \sum_{i=1}^n a_i w_i <0 \} \right)^c,
\end{align*}
where $\pi : \R^n \times \R^n \rightarrow \R^n$ denotes the projection onto the first $n$ coordinates. The semialgebraicness of $S^*$ follows from the Tarski-Seidenberg theorem.
\end{proof}

\begin{proposition}\label{prop:even symmetric non-semialgebraic}
For any even degree $2d \geq 6$ the cones $\S\P_{2d}$ and $\B\P_{2d}$ are not semialgebraic.
\end{proposition}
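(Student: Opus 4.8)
The strategy is to reduce the non-semialgebraicness of $\S\P_{2d}$ and $\B\P_{2d}$ for $2d\ge 6$ to the known non-semialgebraicness of the Vandermonde cell $\mathcal{N}_{d'}$ for $d'\ge 3$ (\cite{acevedo2023wonderful}, Corollary 2.20), via the dual cone description $\B\P_{2d}^*=\cone(\nu_d(\mathcal{N}_d))$ and $\S\P_{2d}^*=\cone(\nu_{2d}(\M_{2d}))$ established earlier, together with Lemma \ref{lem:dual cone is semialgebraic}. The point is that if $\S\P_{2d}$ (resp.\ $\B\P_{2d}$) were semialgebraic, then by Lemma \ref{lem:dual cone is semialgebraic} its dual cone would be semialgebraic too, and it suffices to exhibit inside that dual cone a semialgebraic ``slice'' or coordinate projection that recovers a set known to be non-semialgebraic.

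First I would treat the even symmetric case. Here $\B\P_{2d}^*=\cone(\nu_d(\mathcal{N}_d))$, where $\nu_d$ is the degree-$d$ monomial (Vandermonde) map applied coordinatewise; since $\nu_d$ is a polynomial map whose image of a semialgebraic set is semialgebraic, and since one can invert the relevant coordinates (the $\p_2,\p_4,\dots$ power sums appear linearly among the coordinates of $\nu_d(\mathcal{N}_d)$ through the weighted-homogeneous structure), semialgebraicness of $\B\P_{2d}^*$ would force semialgebraicness of $\mathcal{N}_d$ itself after intersecting with an affine slice (e.g.\ $p_2=1$) and projecting onto the first $d$ coordinates. Since $d\ge 3$, this contradicts \cite{acevedo2023wonderful}, Corollary 2.20. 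The bookkeeping to make ``recover $\mathcal{N}_d$ as a semialgebraic image/slice of $\B\P_{2d}^*$'' precise — identifying which coordinates of $\nu_d(\mathcal{N}_d)$ are the original $p_1,\dots,p_d$ and checking the affine slice is the right one — is the one genuinely fiddly point, but it is routine once the weighted homogeneity (Remark \ref{rem:weightprism}) is invoked.

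For the symmetric case, $\S\P_{2d}^*=\cone(\nu_{2d}(\M_{2d}))$. I would restrict attention to evaluations coming only from points of $\R^n$ lying in the nonnegative orthant, which exactly produces $\mathcal{N}_{2d}$ inside $\M_{2d}$ (as noted in Definition \ref{def:Vandermonde map}, $\mathcal{N}_{n,d}$ is the image of $\nu_{n,d}$ on the nonnegative orthant); intersecting $\S\P_{2d}^*$ with the linear subspace cut out by the ``even'' coordinates and projecting gives a set containing a copy of $\cone(\nu_{2d}(\mathcal{N}_d))$ (or directly $\mathcal{N}_d$ after slicing), again non-semialgebraic for $d\ge 3$, i.e.\ $2d\ge 6$. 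If $\S\P_{2d}$ were semialgebraic, so would be this slice, a contradiction. Finally, since $\B\Sigma_{2d},\S\Sigma_{2d}$ are spectrahedra (hence semialgebraic, by the spectrahedral representations in the Appendix), the strict inclusions $\B\Sigma_{2d}\subsetneq\B\P_{2d}$ and $\S\Sigma_{2d}\subsetneq\S\P_{2d}$ follow immediately from the non-semialgebraicness just established; the low-degree cases $2d=4$ for the symmetric cone are covered separately by Theorem \ref{thm:limit quartic}. The main obstacle is purely the coordinate-identification step linking $\cone(\nu_d(\mathcal{N}_d))$ back to $\mathcal{N}_d$; the rest is formal use of Tarski--Seidenberg.
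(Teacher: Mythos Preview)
Your plan has a genuine gap at exactly the point you flag as ``routine bookkeeping.'' From $\B\P_{2d}^*=\cone(\nu_d(\mathcal{N}_d))$ you propose to slice at $p_2=1$ and project onto the coordinates indexed by $(k,1^{d-k})$. But this does \emph{not} recover $\mathcal{N}_d\cap\{p_2=1\}$: the slice of a convex conical hull is the \emph{convex hull} of the slice, so what you obtain is $\conv\bigl(\mathcal{N}_d\cap\{p_2=1\}\bigr)$ (after projection), not $\mathcal{N}_d$ itself. Already for $d=3$ this distinction is real: $\nu_3$ restricted to $\{y_1=1\}$ is essentially the identity, so intersecting with the Veronese image does not help, and $\B\P_6^*\cap\{y_{(1^3)}=1\}$ is the convex hull of the Kostov leaf, which strictly contains the leaf. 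Thus there exist $a\notin\mathcal{N}_3$ with $\nu_3(a)\in\B\P_6^*$, and your implication ``semialgebraicness of $\B\P_{2d}^*$ forces semialgebraicness of $\mathcal{N}_d$'' fails. The non-semialgebraicness of $\mathcal{N}_d$ from \cite{acevedo2023wonderful} is therefore not the right input: you would need non-semialgebraicness of the convex hull of a slice, which is a different (and not formally implied) statement.

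The paper avoids this by working on the primal side and reducing to degree~$6$. It intersects $\B\P_{2d}$ with the $3$-dimensional subspace $\langle \p_2^d,\,\p_2^{d-2}\p_4,\,\p_2^{d-3}\p_6\rangle$; since $p_2^{d-3}\ge 0$, a form $a\p_2^d+b\p_2^{d-2}\p_4+c\p_2^{d-3}\p_6$ is nonnegative for all $n$ iff $a\p_2^3+b\p_2\p_4+c\p_6$ is, so this intersection is isomorphic to $\B\P_6$. For $\B\P_6$ the paper uses the Choi--Lam--Reznick test set \cite[Theorem~3.7]{choi1987even} to show that $\B\P_6^*=\cone\{(t^3,t^2,t):t\in\Z_{>0}\}$ has countably many isolated extreme rays and hence is not semialgebraic; then $\B\P_6$ is not semialgebraic by Lemma~\ref{lem:dual cone is semialgebraic}. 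Finally, $\S\P_{2d}$ is not semialgebraic because $\B\P_{2d}$ is its intersection with a linear subspace. Your outline for $\S\P_{2d}$ (intersecting the dual with an ``even'' subspace) runs into the same convex-hull obstruction and is in any case superseded by this last one-line reduction.
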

\begin{proof}
By \cite[Theorem 3.7]{choi1987even} an $n$-variate even symmetric sextic $f(p_2,p_4,p_6)=ap_2^3+bp_2p_4+cp_6$ is nonnegative if and only if $f(t,t,t)\ge0$ for all $t\in[n]$. Equivalently, $f\in\P_{n,6}^{\mathcal{B}}$ if and only if \linebreak $(a,b,c)\cdot(t^3,t^2,t)\ge0$ for all $t\in[n]$, and so $\P_{n,6}^{\mathcal{B},*}=\cone\{(t^3,t^2,t):t\in[n]\}$. Hence $\B\P_6^*=\cone\{(t^3,t^2,t)\,\,\mid\,\,t\in\Z_{>0}\}$, but this cone has countably many isolated extreme rays $\{(t^3,t^2,t):t\in\Z_{>0}\}$, and so it cannot be semialgebraic. Note $\B\P_6^*=\cone(p_2^3,p_2p_4,p_6)=\cone(1,p_4,p_6)$.
\medskip

Now let $d\ge4$. If $\B\P_{2d}$ is semialgebraic then also its intersection with the linear subspace $\langle p_2^d, p_2^{d-2}p_4, p_2^{d-3}p_6 \rangle$ would be semialgebraic.

Letting $p_2=1$, $\cone \clo \{(1,p_4,p_6)(x) \, \, \mid \, \, x \in \bigcup_{n \geq 1}\R^n, p_2(x) = 1\}$ would be semialgebraic, but this is precisely $\B\P_6^*$, which is not semialgebraic, a contradiction. 
\medskip

Finally, since $\B\P_{2d}$ is not semialgebraic then neither is $\S\P_{2d}$, since the intersection of a semialgebraic set with a linear subspace is semialgebraic.
\end{proof}

We obtain strict inclusion between (even) symmetric sums of squares and nonnegative limit forms for degree $2d \geq 6$.

\begin{proof}[Proof of Theorem \ref{thm:Limitcones not equal}]
For any degree $2d$ the sets $\S\Sigma_{2d}^*$ and $\B\Sigma_{2d}^*$ are spectrahedral cones (see Proposition \ref{prop:spectrahedral cones}). Thus their duals, $\S\Sigma_{2d}$ and $\B\Sigma_{2d}$, are semialgebraic by Lemma \ref{lem:dual cone is semialgebraic}. However, $\S\P_{2d}$ and $\B\P_{2d}$ are not semialgebraic for any $d\ge3$ by Proposition \ref{prop:even symmetric non-semialgebraic} which proves the claim.
\end{proof}

\section{Tropicalization of $\mathcal{N}_d$ and proof of Theorem \ref{thm: power sums order}} \label{sec:tropicalization}

Our main result in this section is a full description of the tropicalization of the Vandermonde cell $\mathcal{N}_d$ (Theorem \ref{thm:trop(Nd)}). 
Utilizing the polyhedral representation of $\trop(\mathcal{N}_d)$, we then prove Theorem \ref{thm: power sums order}, i.e. that valid pure binomial inequalities in power sums on the nonnegative orthant are captured by the superdominance order. \medskip

The \emph{tropicalization}, or \emph{logarithmic limit}, of a set $S\subset\R^s$ has often been called a \emph{combinatorial shadow} of it, given some of the structural properties it retains from $S$ while being an apparently simpler geometrical object. \smallskip

For a scalar $a >0$ we consider the \emph{logarithm map} \[\log_a : \R_{>0}^s \to \R^s, {(x_1,\ldots,x_s)} \mapsto {\left(\log_a(x_1),\ldots,\log_a(x_s) \right)}.\] We write $\log := \log_e$ and $\trop : \R_{>0}^s \rightarrow \R^s$ denotes the \emph{tropicalization} which sends $S\subseteq\R_{>0}^s$ to the limit set $\lim_{t \rightarrow \infty} \log_t(S)$. We extend this definition for sets $S\subset \R_{\geq 0}^s$ where $\log \left( S \right) := \log \left( S \cap \R_{>0}^s\right)$ and $\trop (S) := \trop (S \cap \R_{>0}^s)$. The set $\trop (S)$ is a closed cone for any set $S \subset \R_{\geq 0}^s$ and $y \in \trop (S)$ if and only if there exist sequences $(x^{(n)}) \in S \cap \R_{> 0}^s$ and $(\tau_n) \in (0,1)$ with $\tau_n \to 0$ for $n \to \infty$ such that $y = \lim_{n\to \infty}\log_{\frac{1}{\tau_n}}(x^{(n)})$ (\cite[Proposition 2.2]{alessandrini2013logarithmic}). \smallskip 

\begin{definition}
A set $S\subseteq\R^n$ has \emph{the Hadamard property} if whenever $u$ and $v$ belong to $S$ then their Hadamard product $u\circ v:=(u_1v_1,\dots,u_nv_n)$ also lies in $S$.
\end{definition}

In other words, a set has the Hadamard property if it is closed under coordinatewise multiplication. 
\smallskip

Given its logarithmic nature, tropicalization captures \emph{pure binomial inequalities} valid on subsets of the nonnegative orthant, i.e. if $$x^\alpha=\prod_{i=1}^s x_i^{\alpha_i}\ge x^\beta=\prod_{i=1}^s x_i^{\beta_i}$$ is valid on $S\subset\R_{\ge0}^s$, then the linear inequality $$\alpha\cdot y=\sum_{i=1}^s\alpha_i y_i\ge \beta\cdot y=\sum_{i=1}^s\beta_i y_i$$ is valid on $\trop(S)$. Furthermore, when $S$ has the Hadamard property, $\trop(S)$ is a closed convex cone that is determined by the pure binomial inequalities valid on $S$ \cite[Lemma 2.2]{blekherman2022tropicalization}. More precisely, the extreme rays of the dual cone $\trop(S)^*$ generate all of the pure binomial inequalities valid on $S$ \cite{blekherman2022path}. This idea was recently fruitfully applied in extremal graph theory \cite{blekherman2022tropicalization, blekherman2022path} where it was used to understand pure binomial inequalities in homomorphism numbers and densities from the tropicalizations of graph homomorphism profiles. It was also used in real algebraic geometry to study the cones of nonnegative and sums of squares polynomials \cite{blekherman2022moments, acevedo2024power}.
\medskip

The set $\mathcal{N}_d$ is closed under addition and has the Hadamard property (Lemma \ref{lem:newton}). A compact level set of it has the structure of a $d$-dimensional \emph{infinite curvy cyclic polytope} (see \cite{acevedo2023wonderful} for context). However, its tropicalization turns out to have a much simpler description. 

\begin{remark}\label{rem:classical ieqs}
We show in Theorem \ref{thm:trop(Nd)} that all binomial inequalities valid on $\mathcal{N}_d$ come from two families of classical inequalities on power sums, namely
\begin{enumerate}
    \item $p_{2k}^{k+1}\ge p_{2k+2}^k$ and
    \item $p_{2k}\cdot p_{2k+4}\ge p_{2k+2}^2$ for all positive integer $k$.
\end{enumerate}
The former comes from the monotonicity of $\ell^p$-norms and the latter from Lyapunov's inequality (\cite[Sections 2.9 and 2.10]{hardy1952inequalities}).
\end{remark}

Denote by $\oplus$ the \textit{tropical addition}, i.e. for $x,y \in \R^n $, \[x \oplus y := \left(\max{(x_1,y_1)}, \ldots,\max{(x_n,y_n)}\right),\]
and by $\odot$ the \emph{tropical scalar multiplication}, i.e. for $a \in \R$ and $x \in \R^n$ \[a \odot x :=(a+x_1,\ldots,a+x_n).\]

\begin{definition}
A set $M \subset \R^n$ is called \emph{max-closed} if $x \oplus y \in M$ for all $x,y \in M$. 
The \emph{max-closure} $\overline{M}$ of $M$ is the smallest max-closed set containing $M$. 
\end{definition}

Observe that the max-closure of a set $M \subset \R^n$ is the intersection of all max-closed subsets of $\R^n$ containing $M$. Since $\R^n$ is a max-closed set containing $M$ the max-closure $\overline{M}$ is well-defined.

\begin{definition}
Let $S \subset \R^n$. The \emph{double hull} of $S$ is defined as the smallest closed and max-closed convex cone containing $S$. We write $\dhu (S)$ for the double hull of $S$.
\end{definition}

\begin{theorem} \label{thm:trop(Nd)}
The set $\trop(\mathcal{N}_d)$ is the double hull of the vectors $(1,\dots,1)$, $(1,2,\dots,d)$ and $-(1,2,\dots,d)$. It is a rational polyhedral cone with the facet-defining inequalities: 
\begin{align*}
    \trop \left( \mathcal{N}_d \right) = \left\{ \left. y\in\R^d : \begin{cases}
    y_k + y_{k+2}\geq 2y_{k+1}\; ,\quad k=1,\ldots,d-2 \\
    dy_{d-1}\geq (d-1)y_d\; .
    \end{cases} \right\}  \right\}.
\end{align*}
Moreover, $\trop (\mathcal{N}_d)$ is the Minkowski sum of the linear span of $(1,2,\ldots,d)$ and the convex cone with the $d-1$ extremal rays $(1,0,\ldots,0),(2,1,0,\ldots,0),\ldots,(d-2,d-3,\ldots,1,0,0)$ and $(1,\ldots,1)$.
\end{theorem}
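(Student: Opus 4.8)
The plan is to prove the three descriptions of $\trop(\mathcal{N}_d)$ — as a double hull, as an $H$-polyhedron, and as a $V$-polyhedron — in a cycle of inclusions, using the structural properties of $\mathcal{N}_d$ established in Lemma \ref{lem:newton} (closure under addition and the Hadamard property) together with the classical power-sum inequalities of Remark \ref{rem:classical ieqs}. First I would record the easy containments. Since $\mathcal{N}_d$ is closed under Hadamard multiplication, $\trop(\mathcal{N}_d)$ is a closed convex cone, and since $\mathcal{N}_d$ is also closed under addition, $\trop(\mathcal{N}_d)$ is max-closed (this is exactly the phenomenon exploited in \cite{blekherman2022moments, acevedo2024power} — tropicalizing a sumset gives a max-closed set). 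Next, evaluating the even power sums on the single point $x=(1,1,\dots,1)\in\R^n$ and letting $n\to\infty$ along $\log_n$ gives the ray $(1,1,\dots,1)$ in $\trop(\mathcal{N}_d)$; evaluating at $x=(t,0,\dots,0)$ and scaling gives both $(1,2,\dots,d)$ and, since $t$ ranges over all of $\R_{>0}$ and we may also send $t\to 0$, the opposite ray $-(1,2,\dots,d)$. Hence $\dhu\{(1,\dots,1),\pm(1,2,\dots,d)\}\subseteq\trop(\mathcal{N}_d)$.

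For the reverse containment I would use the binomial inequalities. By Remark \ref{rem:classical ieqs}, the inequalities $p_{2k}^{k+1}\ge p_{2k+2}^k$ and $p_{2k}p_{2k+4}\ge p_{2k+2}^2$ hold on all of $\R^n$, hence on $\mathcal{N}_{n,d}$ for every $n$, hence on $\mathcal{N}_d$; passing to $\trop$ these become the linear inequalities $(k+1)y_k\ge k\,y_{k+1}$ and $y_k+y_{k+2}\ge 2y_{k+1}$. The first family, specialized to $k=d-1$ (the only index where $p_{2k+2}$ still has index $\le 2d$), gives $d\,y_{d-1}\ge (d-1)y_d$; wait — more care is needed, since for $k<d-1$ the inequality $(k+1)y_k\ge k y_{k+1}$ is actually implied by the convexity inequalities $y_j+y_{j+2}\ge 2y_{j+1}$ together with $y_1\ge 0$... no: these are homogeneous and there is no sign constraint, so I should check directly that $(k+1)y_k\ge k y_{k+1}$ for $k<d-1$ follows from the convexity chain and the single inequality $d y_{d-1}\ge(d-1)y_d$. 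The cleanest route is: the convexity inequalities say the sequence $y_1,\dots,y_d$ has nondecreasing successive differences, i.e. $(y_k)$ is a convex sequence; a convex sequence satisfies $(k+1)y_k-ky_{k+1}\ge (j+1)y_j - j y_{j+1}$ read appropriately... I would instead simply verify by a short induction that the polyhedral cone $P$ defined by the $d-1$ displayed inequalities is contained in $\{(k+1)y_k\ge ky_{k+1}: 1\le k\le d-1\}$, so that $\trop(\mathcal{N}_d)\subseteq P$ already from the displayed inequalities. This shows $\trop(\mathcal{N}_d)\subseteq P$.

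To close the cycle I would show $P\subseteq \dhu\{(1,\dots,1),\pm(1,2,\dots,d)\}$ by computing the extreme rays of $P$ explicitly and checking each lies in the double hull. Here the claimed $V$-description is the target: $P$ is the Minkowski sum of the line $\R\cdot(1,2,\dots,d)$ with the cone generated by $v_j:=(j,j-1,\dots,1,0,\dots,0)$ for $j=1,\dots,d-2$ and $v_{d-1}:=(1,1,\dots,1)$. I would verify this by a direct Fourier–Motzkin / rank computation: each generator satisfies all $d-1$ inequalities with the expected subset tight (for $v_j$ the convexity inequalities are tight except at position $j,j+1,j+2$, giving a $d-1$ dimensional face after accounting for the lineality line), and conversely any $y\in P$ decomposes by first subtracting a multiple of $(1,2,\dots,d)$ to normalize, then peeling off $v_j$'s greedily using the convex-sequence structure. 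Finally, since each $v_j$ is a max (coordinatewise) combination built from $(1,\dots,1)$ and $\pm(1,2,\dots,d)$ — indeed $v_j = $ the coordinatewise max/truncation operations applied to those three vectors, which is why the double hull and not just the convex-conic hull is needed — we get $P\subseteq\dhu\{(1,\dots,1),\pm(1,2,\dots,d)\}$, completing the cycle and proving all three descriptions simultaneously; rationality and polyhedrality are then immediate from the explicit $H$- and $V$-descriptions.

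The main obstacle I anticipate is the last step: verifying that the polyhedral cone $P$ is \emph{exactly} the double hull (not merely the convex conic hull) of the three named vectors, i.e. correctly identifying the extreme rays $v_1,\dots,v_{d-2}$ and checking that the max-closure operation genuinely produces them from $(1,\dots,1)$ and $\pm(1,2,\dots,d)$ and nothing more. Getting the combinatorics of which inequalities are tight at each $v_j$ right — and confirming there are no further extreme rays hiding — is the delicate bookkeeping; everything else (the easy containments, the two families of classical inequalities, reducing the $\ell^p$-monotonicity family to the single inequality $d y_{d-1}\ge(d-1)y_d$) is routine.
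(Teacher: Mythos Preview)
Your overall strategy---the cycle $\dhu \subseteq \trop(\mathcal{N}_d) \subseteq P \subseteq \dhu$---matches the paper's exactly (their Claims~1--3). The genuine difference is in how you close the last inclusion $P \subseteq \dhu(\cone(\mathbf{1}, \pm\balpha))$. The paper argues \emph{dually}: using the tropical Farkas lemma (Lemma~\ref{le:max-closure}, Corollary~\ref{cor:dual of max-closure}) it writes $\dhu(\mathcal{S})^* = \bigoplus_i \mathcal{S}^* \cap \mathcal{Q}_i$, enumerates the extreme rays of each $\mathcal{S}^*\cap\mathcal{Q}_i$ by a four-case analysis, and then checks that every resulting inequality is a conic combination of $P$'s facet inequalities. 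You instead propose the \emph{primal} route: compute $P$'s extreme rays $v_j$ directly and verify each lies in the double hull. Your route is in fact shorter here than you fear---since $P$ has $d-1$ linearly independent facet normals in $\R^d$ and one-dimensional lineality $\R\balpha$, the quotient $P/\text{lineality}$ is simplicial, so the $v_j$ are immediately the only extreme rays; and for $j\le d-2$ one has $v_j = \bigl((j{+}1)\mathbf{1} - \balpha\bigr) \oplus \mathbf{0}$ with both terms in $\cone(\mathbf{1}, \pm\balpha)$, so the ``main obstacle'' you flag dissolves in one line. The paper's dual approach carries more machinery (and that machinery is reused elsewhere, e.g.\ Corollary~\ref{cor:trop N_d*}), while your primal approach is more direct for this particular cone but does not by itself yield the structural statement that every facet normal of the double hull has exactly one negative entry.
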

Before proving the Theorem we study some general properties of tropicalizations.

\subsection{Properties of max-closed sets}\label{sec:prop of trops of max-closed sets}

We prove some general properties of max-closed sets, and in particular, examine the structure of extremal rays of convex cones that are max-closed.  
\smallskip

In the following lemma, we provide a description of the max-closure of sets $M \subset \R^n$ which contain a vector $v \in \R_{> 0}^n$ in their \textit{lineality space}, i.e., $x + \lambda v \in M$ for all $\lambda \in \R$ and all $x \in M$. We denote by $\mathcal{Q}_i \subset \R^n$ the orthant where the $i$-th coordinate is non-positive and any other nonnegative, i.e., $$\mathcal{Q}_i := \{x \in \R^n \, : \, x_i \leq 0, x_j \geq 0\, ~\forall j \in [n]\setminus \{i\}\}.$$ 

The following is a variant of what has been called the \emph{tropical Farkas lemma} \cite[Lemma 2.6]{blekherman2022path}. 

\begin{lemma}\label{le:max-closure}
Let $M \subset \R^n$ and let $v \in \R_{> 0}^n$ be contained in the lineality space of $M$. Then, the max-closure of $M$ equals $ \bigcap_{i=1}^n (M+\mathcal{Q}_i)$.
\end{lemma}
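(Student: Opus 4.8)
The plan is to prove the two inclusions separately. For $\overline{M} \subseteq \bigcap_{i=1}^n (M + \mathcal{Q}_i)$, it suffices to show that $N := \bigcap_{i=1}^n (M + \mathcal{Q}_i)$ is itself a max-closed set containing $M$, since $\overline{M}$ is by definition the smallest such set. Containment $M \subseteq N$ is clear because $0 \in \mathcal{Q}_i$ for every $i$. For max-closedness, take $x, y \in N$; I must show $x \oplus y \in M + \mathcal{Q}_i$ for each fixed $i$. Write $x = m_x + q_x$ and $y = m_y + q_y$ with $m_x, m_y \in M$ and $q_x, q_y \in \mathcal{Q}_i$. The idea is that in each coordinate $j$, $(x \oplus y)_j = \max(x_j, y_j)$ is achieved by one of the two points, and one picks $m \in \{m_x, m_y\}$ cleverly; but since a single coordinate-wise choice need not land in $M$, this is where the lineality vector $v \in \R_{>0}^n$ enters. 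Here is the mechanism: say $x_i \le y_i$ (the symmetric case is analogous). Then $(x \oplus y)_i = y_i$. Consider $m_y + \lambda v$ for a large scalar $\lambda \ge 0$; this lies in $M$ by the lineality hypothesis. I claim that for $\lambda$ chosen appropriately, $x \oplus y = (m_y + \lambda v) + q$ for some $q \in \mathcal{Q}_i$, i.e. $q := x\oplus y - m_y - \lambda v$ has nonpositive $i$-th coordinate and nonnegative $j$-th coordinates for $j \ne i$. The $i$-th coordinate of $q$ is $y_i - (m_y)_i - \lambda v_i = (q_y)_i - \lambda v_i \le 0$ automatically since $(q_y)_i \le 0$ and $\lambda v_i \ge 0$. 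For the remaining coordinates I need $\max(x_j,y_j) - (m_y)_j - \lambda v_j \ge 0$; since $\max(x_j,y_j) \ge y_j = (m_y)_j + (q_y)_j \ge (m_y)_j$ (as $(q_y)_j \ge 0$), we get $\max(x_j,y_j) - (m_y)_j \ge 0$, but subtracting $\lambda v_j$ could break this. So in fact I want $\lambda$ \emph{small}, namely $\lambda = 0$ works for coordinate $i$ being handled, but then I have not used lineality at all --- let me reconsider: with $\lambda = 0$, $q = x\oplus y - m_y$ has $q_i = y_i - (m_y)_i = (q_y)_i \le 0$ and $q_j = \max(x_j,y_j) - (m_y)_j \ge 0$ for $j\ne i$, so indeed $x\oplus y \in M + \mathcal{Q}_i$ directly, choosing $m_y$ or $m_x$ according to whichever has the larger $i$-th coordinate. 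Thus this inclusion does not even need $v$; the lineality vector is needed only for the reverse inclusion to control the \emph{convex} structure, or rather to ensure the description is tight.

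For the reverse inclusion $\bigcap_{i=1}^n (M + \mathcal{Q}_i) \subseteq \overline{M}$, take $z \in \bigcap_i (M + \mathcal{Q}_i)$. For each $i$ write $z = m^{(i)} + q^{(i)}$ with $m^{(i)} \in M$, $q^{(i)} \in \mathcal{Q}_i$, so $z_i \le (m^{(i)})_i$ and $z_j \ge (m^{(i)})_j$ for $j \ne i$. I then claim $z = \bigoplus_{i=1}^n m^{(i)}$, which lies in $\overline{M}$ since $\overline{M}$ is max-closed and contains each $m^{(i)}$. Indeed, fix a coordinate $k$. For each $i \ne k$ we have $(m^{(i)})_k \le z_k$, and for $i = k$ we have $(m^{(k)})_k \ge z_k$; hence $\max_i (m^{(i)})_k = (m^{(k)})_k \ge z_k$, and we need the reverse. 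Here is the subtlety: we only get $(m^{(k)})_k \ge z_k$, not equality, so $\bigoplus_i m^{(i)}$ might overshoot $z$ in coordinate $k$. This is precisely where the positive lineality vector $v$ is used: replace $m^{(k)}$ by $m^{(k)} - \mu_k v \in M$ for a suitable $\mu_k \ge 0$ to bring its $k$-th coordinate down to exactly $z_k$; but lowering it by $\mu_k v$ also lowers all other coordinates, so I must verify this does not disturb the max in those coordinates --- it does not, because the max in coordinate $j$ is attained (or matched) by $m^{(j)}$, whose $j$-th coordinate we are \emph{also} adjusting. The clean way is to adjust \emph{all} the $m^{(i)}$ simultaneously: set $\tilde m^{(i)} := m^{(i)} - \mu_i v$ where $\mu_i \ge 0$ is chosen so that $(\tilde m^{(i)})_i = z_i$, i.e. $\mu_i = ((m^{(i)})_i - z_i)/v_i \ge 0$. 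Each $\tilde m^{(i)} \in M$. Now for coordinate $k$: $(\tilde m^{(k)})_k = z_k$ by construction, and for $i \ne k$, $(\tilde m^{(i)})_k = (m^{(i)})_k - \mu_i v_k \le (m^{(i)})_k \le z_k$. Hence $\bigoplus_{i=1}^n \tilde m^{(i)}$ has $k$-th coordinate exactly $z_k$ for every $k$, so it equals $z$, and it lies in $\overline{M}$.

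I expect the main obstacle to be getting the bookkeeping in the reverse inclusion exactly right: ensuring the scalars $\mu_i$ are well-defined and nonnegative (which uses $v \in \R_{>0}^n$ crucially, so that division by $v_i$ is legitimate and sign-preserving), and checking that after the simultaneous downward shift the tropical sum still dominates $z$ in every coordinate rather than falling below it. One should also remark at the start that $M + \mathcal{Q}_i$ and hence the intersection $N$ are well-defined subsets of $\R^n$ with no closedness or convexity assumed, so the statement is purely set-theoretic and the lineality hypothesis is the only structural input needed. Finally, I would note that this lemma is the engine behind Theorem~\ref{thm:trop(Nd)}: taking $M = \log(\mathcal{N}_d \cap \R_{>0}^d)$, whose closure contains the line $\R(1,2,\ldots,d)$ in its lineality space by the weighted-homogeneity of $\mathcal{N}_d$, one obtains the max-closure (and then, combined with convexity and the classical power-sum inequalities of Remark~\ref{rem:classical ieqs}, the double hull) description of $\trop(\mathcal{N}_d)$.
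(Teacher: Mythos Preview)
Your proof is correct and follows essentially the same approach as the paper. The paper proves the identity by showing that $\widehat{M}:=\bigcap_i(M+\mathcal{Q}_i)$ is exactly the set of finite tropical sums of elements of $M$ (using the lineality vector for one inclusion, just as you do), and then deduces max-closedness from that characterization; you instead verify max-closedness of $\widehat{M}$ directly by choosing $m_y$ or $m_x$ according to which has the larger $i$-th coordinate, which is slightly cleaner and, as you correctly note, does not require the lineality hypothesis for that direction. The reverse inclusion---adjusting each $m^{(i)}$ down along $v$ so that its $i$-th coordinate matches $z_i$ exactly---is identical to the paper's argument.
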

We write $\widehat{M} := \bigcap_{i=1}^n (M+\mathcal{Q}_i)$. 
\begin{proof}
First, we prove that any $x \in \widehat{M}$ is the tropical sum of elements in $M$, i.e. $x = y_1 \oplus y_2 \oplus \ldots \oplus y_m$ for some $y_1,\ldots,y_m \in M$. Since $x \in \widehat{M}$, for $1 \leq i \leq n$ there exist $y_i=(y_{i1},\ldots,y_{in}) \in M$ and $g_i = (g_{i1},\ldots,g_{in})\in \Q_i$ such that $x = y_i + g_i$. Since $g_{ii} \leq 0, g_{ij} \geq 0$ for $j \neq i$ we have $y_1 \oplus \ldots \oplus y_n = (y_{11},\ldots,y_{nn})$ and $x_i \leq y_{ii}$. Suppose that for some $i \in [n]$ we actually have $y_{ii} > x_i$, and let $\lambda>0$ be such that $x_i = y_{ii} - \lambda v_i$. By assumption $\widetilde{y}_i := y_i - \lambda v \in M$ and $x = \widetilde{y}_i + g_i + \lambda v \in M + \Q_i$ since $g_{ii} + \lambda v_i = 0$ and any other coordinate is positive. Recursively, we obtain the existence of $y_1,\ldots,y_n \in M$ with $x= y_1\oplus\ldots \oplus y_n$. \\
Second, we prove that any tropical sum of elements in $M$ is contained in $\widehat{M}$. Let $y_1,\ldots,y_k \in M$ and $x:= y_1\oplus\ldots \oplus y_k$. For any $j \in [n]$ let $i_j \in [k]$ be such that $x_j = y_{i_jj}$. Since $x_k \geq y_{i_j k}$ for any $k \in [n]$ we have $x \in y_{i_j} + \Q_j$. In particular, $x \in \bigcap_{j=1}^n (M + \Q_j)$. \\
Finally, we show that $\widehat{M}$ is max-closed. For vectors $a,b \in \widehat{M}$ we have already seen that these vectors can be written as tropical sums of elements of $M$. Then $a \oplus b$ is the tropical sum of all those elements and lies in $\widehat{M}$, since $\widehat{M}$ is closed under tropical summation with elements in $M$. \\
Thus, $\widehat{M}$ is max-closed and the elements in $\widehat{M}$ are precisely the tropical sums of elements in $M$ which proves the claim.
\end{proof}

\begin{corollary}
Let $S \subset \R^n$ be a convex cone containing some vector $v \in \R_{>0}^n$ in its lineality space. Then, the max-closure of $S$ is a convex cone.
\end{corollary}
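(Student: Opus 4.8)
The plan is to reduce the statement directly to Lemma~\ref{le:max-closure}. Since $S$ is a convex cone containing $v \in \R_{>0}^n$ in its lineality space, the hypothesis of Lemma~\ref{le:max-closure} is satisfied with $M = S$, and the lemma identifies the max-closure $\overline{S}$ with $\widehat{S} = \bigcap_{i=1}^n (S + \mathcal{Q}_i)$. So it suffices to check that this explicit description is a convex cone.

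First I would observe that each orthant $\mathcal{Q}_i$ is itself a convex cone: it is cut out by the homogeneous linear inequalities $x_i \le 0$ and $x_j \ge 0$ for $j \ne i$, hence is closed under addition and under multiplication by nonnegative scalars. Next, the Minkowski sum $S + \mathcal{Q}_i$ of two convex cones is again a convex cone: it is convex as a sum of convex sets, and it is closed under nonnegative scaling since $\lambda(s + q) = (\lambda s) + (\lambda q) \in S + \mathcal{Q}_i$ for $\lambda \ge 0$, using that both $S$ and $\mathcal{Q}_i$ are cones. Finally, an arbitrary intersection of convex cones is a convex cone, so $\widehat{S} = \bigcap_{i=1}^n (S + \mathcal{Q}_i)$ is a convex cone, and by Lemma~\ref{le:max-closure} this is precisely $\overline{S}$.

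There is essentially no obstacle here: the work has already been done in Lemma~\ref{le:max-closure}, and what remains are the elementary and standard facts that orthants are convex cones, Minkowski sums of convex cones are convex cones, and intersections of convex cones are convex cones. The only point worth stating carefully is that the lineality-space hypothesis on $S$ is exactly what lets us invoke Lemma~\ref{le:max-closure} in the first place; without it the max-closure need not even be convex.
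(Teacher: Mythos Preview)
Your proposal is correct and follows essentially the same approach as the paper's proof: both invoke Lemma~\ref{le:max-closure} to write the max-closure as $\bigcap_{i=1}^n (S+\mathcal{Q}_i)$ and then observe this is an intersection of convex cones. You simply spell out in more detail the elementary facts (that each $\mathcal{Q}_i$ is a convex cone, that Minkowski sums and intersections of convex cones are convex cones) which the paper leaves implicit.
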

\begin{proof} 
The max-closure of $S$ is the intersection of convex cones by Lemma \ref{le:max-closure}. Thus the max-closure is again a convex cone.   
\end{proof}

We denote the all ones vector $(1,\ldots,1) \in \R^n$ by $\mathbf{1}$.
\begin{definition}\label{def:tconv}
Let $M \subset \R^n$ be a set. We define the \emph{tropical conical hull} of $M$ as the set of all tropical linear combinations of points in $M$. We write $\tcone(M)$ for the tropical conical hull of $M$, i.e., $$\tcone (M) = \{(a_1 \odot x_1) \oplus \ldots \oplus (a_l \odot x_l) \, \, \mid \, \, l \in \N, a_1,\ldots,a_l \in \R, x_1,\ldots,x_m \in M\}.$$ 
$M$ is called \emph{tropically convex} if $M$ equals its tropical conical hull.
\end{definition}

We refer to (\cite[Proposition 4]{develin2004tropical}) for a proof of the set theoretical description of $\tcone(M)$ in Definition \ref{def:tconv}.
The set $\tcone(M)$ is the max-closure of $M+\R \cdot \mathbf{1}$. This follows from the fact that  $Z:=M+ \R \cdot \mathbf{1}$ contains the vector $\mathbf{1}$ in its lineality space and $Z$ is the closure of $M$ with respect to tropical scalar multiplication. In particular, we have 
\begin{align}
    \tcone (M) = \bigcap_{j=1}^n (M+ \R \cdot \mathbf{1}  + \Q_j) \label{eq:k}
\end{align}

which follows from Lemma \ref{le:max-closure}.

\begin{corollary} \label{cor:dual of max-closure}
Let $M\subset \R^n$ be a closed convex cone containing a point from $\R_{>0}^n$ in its lineality space. Then, the linear inequalities characterizing the max-closure of $M$ are the linear inequalities valid on $M$ with exactly one negative coordinate.
\end{corollary}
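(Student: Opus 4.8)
The plan is to combine Lemma~\ref{le:max-closure} with conical duality, the only real input being the self-duality of the orthants $\mathcal{Q}_i$. First, since the strictly positive vector $v$ lies in the lineality space of $M$, Lemma~\ref{le:max-closure} identifies the max-closure as $\overline{M}=\widehat{M}=\bigcap_{i=1}^n(M+\mathcal{Q}_i)$. Next I would record that $\mathcal{Q}_i^*=\mathcal{Q}_i$: indeed $\mathcal{Q}_i=\cone\bigl(-e_i,\{e_j\}_{j\neq i}\bigr)$, so a functional $a$ is nonnegative on $\mathcal{Q}_i$ precisely when $a_i\leq 0$ and $a_j\geq 0$ for all $j\neq i$. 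Consequently $(M+\mathcal{Q}_i)^*=M^*\cap\mathcal{Q}_i$, and the elements of $M^*\cap\mathcal{Q}_i$ are exactly the linear inequalities valid on $M$ whose only negative coordinate, if present, is the $i$-th. Writing $\mathcal{A}:=\bigcup_{i=1}^n(M^*\cap\mathcal{Q}_i)$ for the set of inequalities valid on $M$ with at most one negative coordinate, the statement to prove is $\overline{M}^*=\clo\bigl(\cone(\mathcal{A})\bigr)$.

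The second ingredient I would establish is the closure identity $\bigcap_{i=1}^n\clo(M+\mathcal{Q}_i)=\clo(\overline{M})$; the inclusion $\supseteq$ is immediate, and $\subseteq$ is where the positivity of $v$ does the work. Given $y$ in every $\clo(M+\mathcal{Q}_i)$, pick $m_i^{(k)}\in M$, $q_i^{(k)}\in\mathcal{Q}_i$ with $m_i^{(k)}+q_i^{(k)}\to y$; from the sign pattern of $q_i^{(k)}$ one gets $(m_i^{(k)})_i\geq y_i-o(1)$ and $(m_i^{(k)})_j\leq y_j+o(1)$ for $j\neq i$. Using $\pm v\in M$ I would slide: replace $m_i^{(k)}$ by $\widetilde{m}_i^{(k)}:=m_i^{(k)}+s_i^{(k)}v\in M$ with $s_i^{(k)}:=(y_i-(m_i^{(k)})_i)/v_i$, so that coordinate $i$ of $\widetilde{m}_i^{(k)}$ equals $y_i$ exactly; since $s_i^{(k)}\leq o(1)$, the remaining coordinates only get smaller up to $o(1)$ and stay $\leq y_j+o(1)$. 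Then $\widetilde{m}_1^{(k)}\oplus\cdots\oplus\widetilde{m}_n^{(k)}\in\overline{M}$ converges coordinatewise to $y$, so $y\in\clo(\overline{M})$.

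With these in hand the rest is a formal manipulation: $\overline{M}^*=\clo(\overline{M})^*=\bigl(\bigcap_{i=1}^n\clo(M+\mathcal{Q}_i)\bigr)^*=\clo\bigl(\sum_{i=1}^n(M^*\cap\mathcal{Q}_i)\bigr)=\clo\bigl(\cone(\mathcal{A})\bigr)$, where I use that the dual of a finite intersection of closed convex cones is the closure of the sum of the duals, together with $(M+\mathcal{Q}_i)^*=M^*\cap\mathcal{Q}_i$ and the closure identity of the previous paragraph. This says exactly that the inequalities characterizing $\overline{M}$ form the closed conical hull of those valid on $M$ with at most one negative coordinate. Finally I would upgrade ``at most one'' to ``exactly one'': if $a\in M^*$ were nonzero with all coordinates nonnegative, then $-v\in M$ would force $0\leq a\cdot(-v)=-a\cdot v<0$ since $v>0$, which is absurd; hence every nonzero element of $\mathcal{A}$ has exactly one negative coordinate.

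The step I expect to be the main obstacle is the closure identity in the second paragraph: $M+\mathcal{Q}_i$ is the sum of a closed convex cone and a polyhedral cone and need not be closed in general, so I cannot simply assert closedness and pass to duals. The positive lineality vector $v$ is precisely what repairs this, by letting me slide the approximating elements of $M$ so that their tropical sum genuinely converges to $y$. Once that is pinned down, everything else is bookkeeping built on the self-duality $\mathcal{Q}_i^*=\mathcal{Q}_i$.
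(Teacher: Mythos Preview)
Your proof is correct and follows essentially the paper's approach: Lemma~\ref{le:max-closure} gives $\overline{M}=\bigcap_i(M+\Q_i)$, then $\Q_i^*=\Q_i$ and conical duality yield $\overline{M}^*=\clo\bigl(\sum_i M^*\cap\Q_i\bigr)$, and the positive lineality vector forces each nonzero such inequality to have exactly one negative entry. You are in fact more careful than the paper, which applies the formula $(\bigcap A_i)^*=\clo(\sum A_i^*)$ directly without checking that the $M+\Q_i$ are closed; your closure identity $\clo(\overline{M})=\bigcap_i\clo(M+\Q_i)$, proved by the same sliding trick as in Lemma~\ref{le:max-closure}, is precisely what is needed to justify that step in full generality.
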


\begin{proof}
By Lemma \ref{le:max-closure} we have \begin{align*}
    (\overline{M})^\ast  & = \left( \bigcap_{i=1}^n (M+\Q_i)\right)^\ast = \clo \left( \bigoplus_{i=1}^n M^\ast \cap \Q_i^\ast \right),
\end{align*}
where we use $(A+B)^*=A^* \cap B^*$ and $(A\cap B)^* = \clo (A^* +B^*)$ for closed convex cones $A,B \subset \R^n$ \cite[Corollary~p.~11]{fenchel1953convex} and $\Q_i^\ast = \Q_i.$ Finally, we use that $M$ contains a vector from $\R_{>0}^n$ in its lineality space to see that the defining linear equalities of $\overline{M}$ are those with exactly one negative coordinate.
\end{proof}
We will apply Corollary \ref{cor:dual of max-closure} to show that all extremal rays of $\trop (\mathcal{N}_d)^*$ have exactly one nonnegative coefficient in Corollary \ref{cor:trop N_d*}.
\begin{remark}
Analogous results for the min-closure and the tropical convex hull can be obtained by using min convention instead and defining tropical addition as coordinate-wise minimum.
\end{remark}

\subsection{The tropical Vandermonde cell} \label{section: Tropicalization of the even Vandermonde map at the limit}
We prove in Theorem \ref{thm:trop(Nd)} a uniform description of $\trop(\mathcal{N}_d)$ as a rational polyhedral cone and present a motivation for the defining linear inequalities. The proof uses methods from (\cite[Sec.~2.1]{blekherman2022path}). 
\medskip

We present some properties of $\mathcal{N}_d$ which allow us to apply the results of Subsection \ref{sec:prop of trops of max-closed sets}.

\begin{lemma}\label{lem:newton}
The sets $\mathcal{M}_d$ and $\mathcal{N}_d$ are closed under addition, and have the Hadamard property.
\end{lemma}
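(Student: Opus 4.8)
The statement asserts two closure properties of $\M_d$ and $\N_d$: closure under addition and the Hadamard property. The plan is to prove each property first at the finite level (for the sets $\M_{n,d}$ or $\N_{n,d}$, possibly after enlarging the number of variables) and then pass to the limit, using that $\M_d$ and $\N_d$ are defined as closures of ascending unions, and that the closure of a set closed under a continuous binary operation is again closed under that operation.

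For closure under addition: given $a = \nu_{n,d}(x)$ and $b = \nu_{m,d}(y)$ with $x \in \R^n$, $y \in \R^m$, form the concatenated vector $(x,y) \in \R^{n+m}$. Since $p_k(x_1,\dots,x_n,y_1,\dots,y_m) = p_k(x) + p_k(y)$ for every $k$, we get $\nu_{n+m,d}(x,y) = a + b$, so $\M_{n+m,d} \supseteq \M_{n,d} + \M_{m,d}$; hence $\bigcup_n \M_{n,d}$ is closed under addition, and therefore so is its closure $\M_d$. The even case $\N_d$ is identical using $\nu^\varepsilon$ and the fact that the even power sums are also additive under concatenation of variable vectors.

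For the Hadamard property: here the key identity is that for $a = \nu_{n,d}(x)$ and $b = \nu_{m,d}(y)$, the coordinatewise product $a \circ b$ has $k$-th coordinate $p_k(x)\,p_k(y) = \left(\sum_i x_i^k\right)\left(\sum_j y_j^k\right) = \sum_{i,j} (x_i y_j)^k = p_k(\,(x_i y_j)_{i,j}\,)$. Thus $a \circ b = \nu_{nm,d}(z)$ where $z \in \R^{nm}$ is the vector of all products $x_i y_j$; so $a \circ b \in \M_{nm,d} \subseteq \M_d$. Again the even case works the same way, noting that if $x_i y_j$ ranges over all products then the even power sums behave identically, and that $\N_{n,d}$ is the image of the nonnegative orthant so products of nonnegative entries stay nonnegative. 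Finally one checks that taking closures preserves both properties: if $u^{(n)} \to u$ and $v^{(n)} \to v$ with $u^{(n)}, v^{(n)}$ in the union, then $u^{(n)} + v^{(n)} \to u + v$ and $u^{(n)} \circ v^{(n)} \to u \circ v$ by continuity, so the limits lie in $\M_d$ (resp. $\N_d$).

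I do not expect any serious obstacle here; the only mild subtlety is the bookkeeping needed to see that the relevant finite-level sets are indexed by a number of variables that still sits inside the ascending chain (which is automatic since $\M_{n,d} \subseteq \M_{n+1,d}$), and the routine remark that closures of sets stable under a jointly continuous operation remain stable. Everything else is the two displayed power-sum identities.
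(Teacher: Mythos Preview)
Your proposal is correct and essentially identical to the paper's proof: the paper also uses concatenation $(x,y)\in\R^{m+n}$ for the additive closure and the vector of all products $x_iy_j$ (which it calls the Kronecker product $x\otimes y\in\R^{mn}$) for the Hadamard property, then passes to the closure by continuity of $+$ and $\circ$.
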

\begin{proof}
We show that the set $\mathcal{M}_d$ satisfies both properties and analogous arguments work for $\mathcal{N}_d$.

Let $u,v\in\bigcup_{n = 1}^\infty \nu_{n,d}(\R^n)$, so $u=\nu_{m,d}(x)$ and $v=\nu_{n,d}(y)$ for some $x\in\R^m$ and $y\in\R^n$.

Hence $u+v=\nu_{m+n,d}(x,y)\in\mathcal{M}_d$ where $(x,y)\in\R^{m+n}$ is the Cartesian product of $x$ and $y$. Hence the set $\bigcup_{n = 1}^\infty \nu_{n,d}(\R^n)$ is closed under addition, and, by continuity of the sum, also its closure $\mathcal{M}_d$.

Also $u\circ v=\nu_{mn,d}(x\otimes y)\in\mathcal{M}_d$, where $x\otimes y:=(x_1y_1,x_1y_2,\dots,x_1y_n,x_2y_1,\dots,x_my_n)\in\R^{mn}$ is the \emph{Kronecker product} of the row vectors $x$ and $y$. Hence the set $\bigcup_{k = 1}^\infty \nu_{k,d}(\R^k)$ has the Hadamard property, and, by continuity of the coordinatewise product, also its closure $\mathcal{M}_d$.
\end{proof}

\begin{lemma}\label{le:trop(Nd) is max-closed}
The set $\trop(\mathcal{N}_d)$ is a max-closed closed convex cone containing the line spanned by $(1,2,3,\ldots,d)$ and the ray spanned by the all ones vector $\mathbf{1}$. 
\end{lemma}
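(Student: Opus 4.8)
The statement to prove is Lemma~\ref{le:trop(Nd) is max-closed}: that $\trop(\mathcal{N}_d)$ is a max-closed closed convex cone containing the line spanned by $(1,2,\ldots,d)$ and the ray spanned by $\mathbf{1}$. The plan is to extract each property from a corresponding structural property of $\mathcal{N}_d$ established in Lemma~\ref{lem:newton}, using the general behaviour of tropicalization.

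First I would recall that for any $S\subset\R_{\geq 0}^s$ the set $\trop(S)$ is a closed cone (stated in the paragraph following the definition of $\trop$). For convexity, I would use that $\mathcal{N}_d$ is closed under addition (Lemma~\ref{lem:newton}): tropicalization turns ordinary addition into tropical addition in the sense that if $u,v\in\mathcal{N}_d\cap\R_{>0}^d$ then $u+v\in\mathcal{N}_d$, and $\log_t(u+v)$ is squeezed between $\log_t(\max(u,v))$ and $\log_t(2\max(u,v))=\log_t 2+\log_t(\max(u,v))$; passing to the limit $t\to\infty$ gives that $\trop(\mathcal{N}_d)$ is closed under coordinatewise max, i.e.\ max-closed. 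This simultaneously yields the max-closedness claim. Convexity then follows from the standard fact that the logarithmic limit of a set closed under addition and coordinatewise (Hadamard) multiplication is a convex cone --- here I would cite \cite[Lemma 2.2]{blekherman2022tropicalization}, whose hypotheses are exactly the two properties furnished by Lemma~\ref{lem:newton}; alternatively one argues directly that the Hadamard property gives $\trop(\mathcal{N}_d)+\trop(\mathcal{N}_d)\subseteq\trop(\mathcal{N}_d)$ (since $\log(u\circ v)=\log u+\log v$) and combines this with the cone and max-closed properties.

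For the containment claims, I would exhibit explicit sequences in $\mathcal{N}_d$ whose logarithmic limits are the desired rays. Taking $x=(1,1,\ldots,1)\in\R^n$, we have $\nu_{n,d}^\varepsilon(x)=(n,n,\ldots,n)$, and $\log_t(n,\ldots,n)\to(1,1,\ldots,1)$ along a suitable choice of base $t=t_n\to\infty$ with $\log_{t_n} n\to 1$; since $\trop(\mathcal{N}_d)$ is a closed cone this gives the whole ray $\R_{\geq 0}\cdot\mathbf{1}$. For the line through $(1,2,\ldots,d)$, taking $x=(t,0,\ldots,0)$ gives $\nu_{n,d}^\varepsilon = (t^2,t^4,\ldots,t^{2d})$, whose logarithm (base $t^2$) is exactly $(1,2,\ldots,d)$; letting $t\to\infty$ (resp.\ $t\to 0$, which is legitimate since $\mathcal{N}_d$ is the image of all of $\R^n$ and is weighted homogeneous with $t\in\R_{\geq 0}$, cf.\ Remark~\ref{rem:weightprism}) together with the cone property yields both the positive and negative rays, hence the whole line; alternatively, once $(1,2,\ldots,d)\in\trop(\mathcal{N}_d)$ is known, max-closedness applied to $(1,2,\ldots,d)$ and its negative, combined with a scaling argument, recovers the line. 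I expect no serious obstacle here; the only point requiring a little care is being careful about the choice of logarithm base and the interplay between the two limit regimes $t\to\infty$ and $t\to 0$, which is exactly the content of the ``weighted homogeneous'' observation in Remark~\ref{rem:weightprism} and the characterization of $\trop(S)$ via sequences $\log_{1/\tau_n}(x^{(n)})$ recalled after the definition of tropicalization.
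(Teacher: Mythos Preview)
Your proposal is correct and follows essentially the same approach as the paper: both derive convexity from the Hadamard property via \cite[Lemma~2.2]{blekherman2022tropicalization}, max-closedness from closure under addition (you sketch the squeeze argument that the paper outsources to \cite[Lemma~2.2]{blekherman2022path}), and both exhibit the line and the ray via the explicit points $(s,0,\ldots,0)$ and $(1,\ldots,1)$. One small cleanup: for the negative half of the line, keep the logarithm base going to $\infty$ and instead send the \emph{point} $s\to 0$ (equivalently, take $s=1/\tau_m^{\lambda}$ with $\lambda<0$ as the paper does); your phrasing ``$t\to 0$'' conflates the base with the parameter, and the ``alternatively'' via max-closedness does not by itself produce $-(1,2,\ldots,d)$.
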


\begin{proof}
The set $\mathcal{N}_d$ has the Hadamard property and is closed under addition by Lemma \ref{lem:newton}. Since $\mathcal{N}_d \subset \R_{\geq 0}^d$ has Hadamard property, $\trop( \mathcal{N}_d)$ is a closed convex cone by (\cite[Lemma 2.2 (2)]{blekherman2022tropicalization}). Moreover, since $\mathcal{N}_d$ is closed under addition the set $\trop(\mathcal{N}_d)$ is max-closed by (\cite[Lemma 2.2]{blekherman2022path}). For $\lambda \in \R$ we have $$2\lambda (1,2,\ldots,d)= \lim_{m \rightarrow \infty} \log_\frac{1}{\tau_m} \left(\left(\frac{1}{\tau_m^{\lambda}}\right)^2,\left(\frac{1}{\tau_m^{\lambda}}\right)^4,\ldots,\left(\frac{1}{\tau_m^{\lambda}}\right)^{2d}\right)$$ for any sequence $(\tau_m)_m \subset \R_{>0}$ converging to $0$, and $$\mathbf{1}=\log_m (m,\ldots,m)=\log_m ((1^2+\ldots+1^2),(1^4+\ldots+1^4),\ldots,(1^{2d}+\ldots+1^{2d}))$$ for all $m \in \N$. Thus, we have inclusions $\R \cdot (1,2,3,\ldots,d)$ and $\R_{\geq 0} \cdot \mathbf{1} \subset \trop (\mathcal{N}_d)$.
\end{proof}
The following corollary will be useful in our proof of the characterization of $\trop (\mathcal{N}_d)$.
\begin{corollary} \label{cor:trop N_d*}
Every extreme ray of $\trop (\mathcal{N}_d)^\ast$ is spanned by a vector with exactly one negative coordinate. 
\end{corollary}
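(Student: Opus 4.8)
The plan is to deduce the corollary from Lemma~\ref{le:trop(Nd) is max-closed} and Corollary~\ref{cor:dual of max-closure}, together with one short compactness argument to pass from \emph{generators} of the dual cone to its \emph{extreme rays}. Throughout write $M := \trop(\mathcal{N}_d)$ and $v := (1,2,\dots,d) \in \R_{>0}^d$. By Lemma~\ref{le:trop(Nd) is max-closed}, $M$ is a closed, max-closed convex cone whose lineality space contains $v$ (and also $\mathbf 1$). I will also use that $M$ is full-dimensional, so that $M^\ast$ is a \emph{pointed} closed convex cone; this is easily verified directly and is independent of Theorem~\ref{thm:trop(Nd)} (and in any case, were $M$ not full-dimensional, $M^\ast$ would contain a line and possess no extreme rays, making the statement vacuous).

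The first step is to describe $M^\ast$. Since $M$ is max-closed it equals its own max-closure, so Corollary~\ref{cor:dual of max-closure} applies to $M$ itself: $M$ is cut out by linear inequalities $\langle a, y\rangle \ge 0$ whose coefficient vectors $a$ each have exactly one negative coordinate. Letting $\mathcal I$ be the set of all such valid inequalities, we have $M = (\cone \mathcal I)^\ast$, hence by the bipolar theorem $M^\ast = \clo(\cone \mathcal I)$. Choosing a linear functional strictly positive on $M^\ast \setminus \{0\}$ (possible as $M^\ast$ is pointed) and normalizing, we may assume $\mathcal I$ lies in a compact affine slice, so $\clo(\mathcal I)$ is compact and $M^\ast = \cone(\clo(\mathcal I))$. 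Since $\clo\{a : a \text{ has exactly one negative coordinate}\} = \bigcup_{i=1}^d \mathcal{Q}_i$, every element of $\clo(\mathcal I)$ has \emph{at most} one negative coordinate.

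The second step handles the extreme rays. Let $a$ span an extreme ray of $M^\ast$. Because $M^\ast = \cone(\clo(\mathcal I))$ with $\clo(\mathcal I)$ compact, a compact base of $M^\ast$ is the convex hull of a normalization of $\clo(\mathcal I)$, and an extreme point of the convex hull of a compact set lies in that set; hence $a$ is a positive multiple of an element of $\clo(\mathcal I)$ and has at most one negative coordinate. To upgrade to \emph{exactly} one: since $v, -v \in M$ and $a \in M^\ast$ we get $\langle a, v\rangle = 0$, and a nonnegative vector $a$ with $\langle a, v\rangle = 0$ for the strictly positive $v$ must vanish, contradicting $a \ne 0$. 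Therefore $a$ has exactly one negative coordinate.

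The one step that deserves care — and the main obstacle to a one-line proof — is the passage from ``$M^\ast$ is generated by vectors with at most one negative coordinate'' to ``the extreme rays of $M^\ast$ have at most one negative coordinate'': a priori $\mathcal I$ is infinite and $M^\ast$ is not yet known to be polyhedral (indeed this corollary is an input to Theorem~\ref{thm:trop(Nd)}), so one cannot simply say the extreme rays are among the generators. The compactness argument above supplies the fix, using only that $M^\ast$ is a pointed closed cone and $\clo(\mathcal I)$ is compact; everything else is bookkeeping with the orthants $\mathcal{Q}_i$ and the identity $\langle a, v\rangle = 0$.
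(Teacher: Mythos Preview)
Your proof is correct and follows the same approach as the paper's: apply Lemma~\ref{le:trop(Nd) is max-closed} to see that $M=\trop(\mathcal{N}_d)$ is a max-closed closed convex cone with $(1,2,\dots,d)$ in its lineality space, then invoke Corollary~\ref{cor:dual of max-closure}. The paper's proof is literally two sentences citing these two results; you add an explicit compactness/Milman step to pass from ``$M^\ast$ is generated by vectors in $\bigcup_i\mathcal Q_i$'' to ``the extreme rays of $M^\ast$ lie in $\bigcup_i\mathcal Q_i$'', which the paper leaves implicit. Your identification of this as the only nontrivial step is exactly right, and your argument (normalize to a compact base, use that extreme points of the convex hull of a compact set lie in that set) is valid.

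One slightly cleaner alternative to your compactness argument: once you know $M^\ast$ is pointed, the Minkowski sum $\sum_i(M^\ast\cap\mathcal Q_i)$ is automatically closed (a finite sum of closed cones inside a common pointed closed cone is closed, by a standard boundedness argument on a base), so the closure in $(\overline M)^\ast=\clo\bigl(\sum_i M^\ast\cap\mathcal Q_i\bigr)$ from the proof of Corollary~\ref{cor:dual of max-closure} is superfluous. Then any generator $a$ of an extreme ray of $M^\ast=\sum_i(M^\ast\cap\mathcal Q_i)$ decomposes as $a=\sum_i c_i$ with $c_i\in M^\ast\cap\mathcal Q_i$; extremality forces each $c_i$ to be a nonnegative multiple of $a$, so $a$ lies in some $\mathcal Q_i$. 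This avoids Milman's theorem entirely.
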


\begin{proof}
This follows directly from Corollary \ref{cor:dual of max-closure}. The set $\trop (\mathcal{N}_d)$ is a max-closed, closed convex cone which contains the line $\R \cdot (1,2,\ldots,d)$ by Lemma \ref{le:trop(Nd) is max-closed}. Since $\trop (\mathcal{N}_d)$ is max-closed it contains $(1,2,\ldots,d)$ in its lineality space. 
\end{proof}

We now present our proof of Theorem \ref{thm:trop(Nd)}. The case distinction in the proof goes analogously to (\cite[Theorem 2.15]{blekherman2022path}) which proves the tropicalization of graph profiles of sets of even cycles.
\begin{proof}[Proof of Theorem \ref{thm:trop(Nd)}]
Let $\mathcal{Q}$ denote the polyhedron on the right-hand side in the theorem. We first prove $\trop (\mathcal{N}_d) = \mathcal{Q}$ and divide its proof into three steps. \medskip 

\textbf{Claim 1:} $\trop \left( \mathcal{N}_d \right) \subset \mathcal{Q}$. \\
By (\cite[Lemma 2.2 (2)]{blekherman2022tropicalization}) we have $\trop (\mathcal{N}_d) = \clo \left( \mbox{conv} (\log (\mathcal{N}_d))\right)$ since $\mathcal{N}_d$ has Hadamard property. Thus, by taking logarithms the families of binomial inequalities in power sums 
\[ p_{2k}\cdot p_{2k+4}\geq p_{2k+2}^2 \mbox{ and } p_{2k}^{k+1} \geq p_{2k+2}^{k}\]
from Remark \ref{rem:classical ieqs} transfer to valid linear inequalities on $\log (\mathcal{N}_d)$ which are preserved under taking the convex hull. We have \[(k+1)\cdot (y_{k}-2y_{k+1}+y_{k+2})+((k+2)y_{k+1}-(k+1)y_{k+2})=(k+1)y_{k}-ky_{k+1}\] for all $k=1,\ldots,d-2$ which shows that $\Q$ is the polyhedron defined by the linearization of the binomial inequalities above. Thus, we have $\trop (\mathcal{N}_d) \subset \Q$. \medskip

\textbf{Claim 2:} The rays $\mathbf{1}$, $\balpha:=(1,2,\ldots,d)$, $-\balpha$ are contained in $\trop \left( \mathcal{N}_d \right)$. \\ This was shown in Lemma \ref{le:trop(Nd) is max-closed}. \medskip

\textbf{Claim 3:} We have $\mathcal{Q} \subset \mbox{dh} \left( \cone (\mathbf{1},\balpha,-\balpha)\right)$, and so $\mathcal{Q} \subset \trop \left( \mathcal{N}_d \right)$. \\
Let $\mathcal{S}:=\cone(\mathbf{1},\balpha,-\balpha)$ and let $\mathcal{D}:= \mbox{dh} (\mathcal{S})$ denote its double hull. From Corollary \ref{cor:dual of max-closure} we deduce $\mathcal{D}^\ast =\bigoplus_{i=1}^d \mathcal{S}^\ast \cap \Q_i$, where $\Q_i$ is the orthant of $\R^d$ in which the $i$-th coordinate is non-positive and any other coordinate is nonnegative. 
Therefore, the set of extreme rays of $\mathcal{D}^\ast$ is contained in the union of extreme rays of $\mathcal{S}^\ast \cap \Q_i$ for $i=1,\ldots,d$. 
For any $i$ the polyhedral cone $\mathcal{S}^\ast \cap \Q_i$ is defined by $(d+3)$-many inequalities, since $$\mathcal{S}^\ast = \{ (a_1,\ldots,a_d) \in \R^d : a_1+\ldots+a_d \geq 0, a_1+2a_2+\ldots+da_d = 0\}.$$ Then $\mathcal{S}^\ast$ is not a full-dimensional polyhedral cone. At least $d-1$ many linear independent inequalities of $\mathcal{S}^\ast \cap \Q_i$ must be tight to form an extreme ray. One of them must be $a_1+2a_2+\ldots+da_d \geq 0$. In particular, $d-2$ of the inequalities in $\mathcal{A}_i := \{a_1+\ldots+a_d \geq0, a_i \leq 0, a_j \geq 0 \, \, \mid \, \, j \in [d] \setminus \{i\}\}$ are tight. In the following, we examine the various combinations for which $d-2$ of these inequalities are tight.  \\
We consider a ray $\mathbf{r} := (r_1,\ldots,r_d)$ of $\mathcal{S}^\ast \cap \Q_i$ where $d-2$ many inequalities from $\mathcal{A}_i$ are tight. 
\begin{itemize}
    \item[(1)] Let $k, l$ be distinct integers in $[d]\setminus\{i\}$. Let $r_\rho = 0$ be the tight inequalities for $\rho \in [d] \setminus \{k,l\}$ together with $kr_k+lr_l = 0$. Then we have $r_k + r_l>0$, $r_k> 0$ and $r_l > 0$. This gives a contradiction.
    \item[(2)] Let $k \in [d] \setminus \{i\}$ and let $r_\rho =0$ for $\rho \in [d]\setminus \{k,i\}$ and $kr_k+ir_i = 0$ be the tight inequalities. We have $r_k + r_i > 0, r_k > 0, r_i < 0$. We can assume that $r_i = -k$ which implies $r_k = i$. Then $0 < r_k+r_i = i-k$ implies $i > k$. Thus $\mathbf{r}=(0,\ldots,0,i,0,\ldots,0,-k,0,\ldots,0)$ where $i$ is the $k$-th and $-k$ the $i$-th coordinate.
    \item[(3)] Let $k, l,m$ be distinct integers in $[d]\setminus\{i\}$. Let $r_{1}+\ldots+r_d = 0, r_{1}+2r_2+\ldots+d \cdot r_d = 0, r_\rho = 0$ for $ \rho \in [d] \setminus \{k,l,m\}$ be the tight inequalities. Then $r_k> 0, r_l> 0$ and $r_m > 0$ which gives a contradiction.
    \item[(4)] Let $k, l$ be distinct integers in $[d]\setminus\{i\}$. Let $r_1 + \ldots + r_d = 0$, $r_1 + 2r_2+\ldots + d \cdot r_d = 0, r_\rho = 0$, for $\rho \in [d] \setminus \{k,l,i\}$ be the tight inequalities. We have $r_k>0, r_l >0$ and $r_i < 0$. Without loss of generality be $l > k$ and $r_i = k-l <0$. Then $r_k = l-i$ and $r_l = i-k$ and since $r_k$ and $r_l$ are positive we have $k < i < l$. Thus, $\mathbf{r}=(0,\ldots,l-i,0,\ldots,k-l,\ldots,i-k,0,\ldots,0)$, where $l-i$ is the $k$-th, $k-l$ the $i$-th and $i-k$ the $l$-th coordinate. 
\end{itemize}
Therefore, \[\mathcal{D} = \{y\in \R^d \, \, \mid \, \, iy_k-ky_i \geq 0, \mbox{ for } i > k, \mbox{ and } (l-i)y_k-(l-k)y_i+(i-k)y_k \geq 0, \mbox{ for } k < i < l\}.\]
Observe that the inequalities $iy_k-ky_i \geq 0$ for $i > k$ are conical combinations of the inequalities $(k+1)y_k - k y_{k+1} \geq 0$. This can be seen by induction, since \[iy_k-ky_i = \frac{i}{i-1}((i-1)y_k-ky_{i-1})+\frac{k}{i-1}(iy_{i-1}-(i-1)y_i)\] for all $1 \leq k < i \leq d$. Moreover, for $1 \leq k < i < l \leq d$ we have 
\[ (l-i)y_k-(l-k)y_i+(i-k)y_l = \sum_{u=k}^{i-1}(l-i)(u-k+1)(y_u-2y_{u+1}+y_{u+2})+\sum_{u=i}^{l-2}(i-k)(l-1-u)(y_u-2y_{u+1}+y_{u+2}).\]
This identity can be found in the proof of Claim 3 in (\cite[Theorem 2.15]{blekherman2022path}). In particular, many of the defining inequalities of $\mathcal{D}$ are redundant as conical combinations of the inequalities defining $\mathcal{Q}$. \\
Thus $\mathcal{Q} =  \mathcal{D}$ and we have $\mathcal{D}  \subset \trop \left( \mathcal{N}_d \right)$ because $\trop(\mathcal{N}_d)$ is a convex cone (Lemma \ref{le:trop(Nd) is max-closed}) containing the line $\R \cdot \balpha$ and the ray $\R_{\geq0} \cdot \mathbf{1}$ by Claim 1, and since $\trop(\mathcal{N}_d)$ is a max-closed set. Thus, we obtain $\mathcal{Q} \subset \trop (\mathcal{N}_d)$. \smallskip

Note that the $d-1$ vectors $(1,-2,1,0,\ldots,0),\ldots,(0,\ldots,0,1,-2,1),(0,\ldots,0,d,-(d-1))$ are linearly independent. Therefore, each linear inequality is indeed facet-defining. \smallskip

The vector $(1,2,\ldots,d)$ is contained in the lineality space of $\trop (\mathcal{N}_d)$. We examine the extremal rays described by $y_1 = c$, for some $c \in \R$, and $d-2$ of the facet defining inequalities of $\trop (\mathcal{N}_d)$. For any $1 \leq k \leq d-2$, the vector $(k,k-1,\ldots,1,0,\ldots,0)$ evaluates to $1$ if evaluated on $y_k-2y_{k+1}+y_{k+2}$, and to $0$ when evaluated on $y_i-2y_{i+1}+y_{i+2}$ for $i \neq k$, or $dy_{d-1}-(d-1)y_d$. On the other hand, we have $y_i-2y_{i+2}+y_{i+2} $ evaluates to $0$ at $(1,\ldots,1)$, but $dy_{d-1}-(d-1)y_d$ evaluates to $1$ at the same point. The $d-1$-vectors are linearly independent and therefore generate the extremal rays of $\trop (\mathcal{N}_d)$.
\end{proof}

\begin{example} \label{ex:tropfor345}
\begin{align*}
    \trop(\mathcal{N}_3) &= \{ y\in\R^3 :\,\, y_1 + y_3 \geq 2y_2,\,\,\, 3y_2 \geq 2y_3\},\\
     \trop(\mathcal{N}_4) &= \{ y\in\R^4 :\,\, y_1 + y_3 \geq 2y_2,\,\,\, y_2+y_4 \geq 2 y_3,\,\,\, 4y_3 \geq 3y_4\},\\
     \trop(\mathcal{N}_5) &= \{ y\in\R^5 :\,\, y_1 + y_3 \geq 2y_2,\,\,\, y_2+y_4 \geq 2 y_3,\,\,\, y_3+y_5 \geq 2 y_4,\,\,\, 5y_4 \geq 4y_5\}.
\end{align*}
\end{example}

For a partition $\la$ let $\la(k)$ denote the number of times $k$ appears as a part of $\la$. If $\la\vdash d$ we alternatively write $$\la=(1^{\la(1)},\dots,d^{\la(d)})$$ usually omitting writing $j^{\la(j)}$ whenever $\la(j)=0$.

\begin{proof}[Proof of Theorem \ref{thm: power sums order}]

That $\lambda \succeq \mu$ implies $p_\la \geq p_\mu$ follows by Corollary \ref{cor:sd->sos}. We will now prove the other implication. 

Let $\Delta_i:=\la(i)-\mu(i)$. The inequality $p_\la\ge p_\mu$ can be rewritten as $\prod_i p_i^{\la(i)}\ge\prod_i p_i^{\mu(i)}$ or $\prod_i p_i^{\Delta_i}\ge1$, therefore $\sum_i\Delta_i\log(p_i)\ge0$, and so $\sum_i\Delta_iy_i\ge0$ is valid on $\trop(\mathcal{N}_d)$.
\smallskip

By Farkas' lemma, the valid linear inequalities on a polyhedral cone are precisely the conical combinations of its facets.

The case $d=4$ is illustrative. Take a conical combination of the facets of $\trop(\mathcal{N}_4)$ (see Theorem \ref{thm:trop(Nd)}) that gives an inequality $\sum_{i=1}^4c_iy_i\ge0$ with $c_i\in\Z$,
\begin{align*}
a_1(y_1-2y_2+y_3)+a_2(y_2-2y_3+y_4)+b(4y_3-3y_4)&\ge0\\
\Rightarrow a_1y_1+(a_2-2a_1)y_2+(a_1-2a_2+4b)y_3+(a_2-3b)y_4&\ge0.
\end{align*}
Since the obtained coefficients are integers, then one can show $a_1,a_2,b$ are natural numbers. Taking the negatives to right-hand side,
\begin{align*}
a_1y_1+a_2y_2+(a_1+4b)y_3+a_2y_4\ge2a_1y_2+2a_2y_3+3by_4
\end{align*}
and we claim $(1^{a_1},2^{a_2},3^{a_1+4b},4^{a_2})\succeq(2^{2a_1},3^{2a_2},4^{3b})$. Then $\lambda$ and $\mu$ are obtained from these partitions by fusing with the same partition $\omega$ on both sides and we have $\la \succeq \mu$ by Lemma \ref{lem:fusion}.

In the general case, one can prove inductively that if a conical combination ends up with integer coefficients then it came from nonnegative integer $a_i$'s and $b$. Then take the negative part to the right-hand side and prove that
\begin{align*}\label{ieq}
    L\succeq M
\end{align*}
where
\begin{align*}
L&=(1^{a_1},2^{a_2},3^{a_1+a_3},4^{a_2+a_4},\dots,(d-2)^{a_{d-4}+a_{d-2}},(d-1)^{a_{d-3}+db},d^{a_{d-2}}),\\
M&=(2^{2a_1},3^{2a_2},4^{2a_3},\dots,(d-1)^{2a_{d-2}},d^{(d-1)b})
\end{align*}
This follows from fusing
\begin{align*}
    (1^{a_1},3^{a_1})&\succeq(2^{2a_1})\\
    (2^{a_2},4^{a_2})&\succeq(3^{2a_2})\\
    &\vdots\\
    ((d-3)^{a_{d-3}},(d-1)^{a_{d-3}})&\succeq((d-2)^{2a_{d-3}})\\
    ((d-2)^{a_{d-2}},d^{a_{d-2}})&\succeq((d-1)^{2a_{d-2}})\\
    ((d-1)^{db})&\succeq(d^{(d-1)b})
\end{align*}
by using Lemma \ref{lem:fusion}. Now observe,
\begin{equation*}
    \begin{aligned}[c]
    a_1&=\Delta_1\\
    -2a_1+a_2&=\Delta_2\\
    a_1-2a_2+a_3&=\Delta_3\\
    &\vdots\\
    a_{d-4}-2a_{d-3}+a_{d-2}&=\Delta_{d-2}\\    a_{d-3}-2a_{d-2}+db&=\Delta_{d-1}\\
    a_{d-2}-(d-1)b&=\Delta_d
    \end{aligned}
    \qquad\Rightarrow\qquad
    \begin{aligned}[c]
a_1&=\Delta_1\\
a_2&=\Delta_2+2a_1\\
a_1+a_3&=\Delta_3+2a_2\\
&\vdots\\
a_{d-4}+a_{d-2}&=\Delta_{d-2}+2a_{d-3}\\
a_{d-3}+db&=\Delta_{d-1}+2a_{d-2}\\
a_{d-2}&=\Delta_d+(d-1)b
    \end{aligned}
\end{equation*}

\begin{align*}
   \Rightarrow L=(1^{\Delta_1},2^{\Delta_2+2a_1},\dots,d^{\Delta_d+(d-1)b})
\end{align*}
So we can fuse, by Lemma \ref{lem:fusion}, both sides of $L\succeq M$ with $\mu$ and obtain $L\mu\succeq M\mu$. Now observe that $L\mu=\la M$, so $\la M\succeq M\mu$, therefore, by Lemma \ref{lem:arquimedean}, $\la\succeq\mu$.
\end{proof}

\section{The superdominance order, power sums and tropicalization of the limit cones}\label{sec:trop/superdominance/power sums/ limit cones}
In this section we tropicalize the cones $\B\P_{2d}^*$ and $\B\Sigma_{2d}^*$ which turn out to be rational polyhedra. 
Additionally, we characterize $\trop(\B\P_{2d}^*)$ in terms of the superdominance order and conjecture that $\trop(\B\Sigma_{2d}^*)$ can also be described by the superdominance order. We provide supporting evidence for this conjecture for small degrees.\smallskip

In Subsection \ref{sec:trop BP_2d*} we characterize $\trop(\B\P_{2d}^*)$ using tropical convexity and our understanding of $\trop (\mathcal{N}_d)$. Then, in Subsection \ref{sec:trop psd superdominance} we define a decreasing chain of polyhedra based on the superdominance order and show that the sequence eventually stabilizes and equals $\trop(\B\P_{2d}^*)$. 
Subsection \ref{sec:trop BSigma_2d} addresses the tropicalization of $\B\Sigma_{2d}^*$, where we compute $\trop(\B\Sigma_{2d}^*)$ using a technical lemma on the tropicalization of spectrahedra, partial symmetry reduction and insights from the superdominance order.  We observe that the tropicalizations of the cones differ for any even degree of at least $10$. Finally, in Subsection \ref{sec:examples} we provide examples of the tropicalizations for degrees $4,6,8,$ and $10$, and construct a nonnegative limit form of degree $10$ that is not a limit sum of squares.

\subsection{The tropicalization of $\mathcal{B}\mathcal{P}_{2d}^*$} \label{sec:trop BP_2d*}

We determine $\trop (\B\P_{2d}^*)$ (Proposition \ref{prop:trop dual to psd} and Corollary \ref{cor:linspace tropmom}) based on our understanding of the rational polyhedron $\trop(\mathcal{N}_d)$ in Theorem \ref{thm:trop(Nd)}. We follow the strategy applied in \cite[Proposition~5.4]{acevedo2024power} where a \emph{commutativity property} of tropicalizations and conical hulls of semialgebraic sets (see \cite[Proposition 2.3]{blekherman2022moments} and \cite[Lemma 8]{allamigeon2019tropical}) was used to compute the tropicalization of the dual cone to nonnegative even symmetric means via tropical convexity. However, from Proposition \ref{prop:even symmetric non-semialgebraic} it follows that $\nu(\mathcal{N}_d)$ is not semialgebraic for any $d\ge3$, but instead has the Hadamard property (Lemma \ref{lem:newton}). Fortunately, the commutativity property also applies to sets possessing the Hadamard property. (Proposition \ref{prop:tropcommutes}). \medskip

The map \[\tilde{\nu}_d : \R^d \to \R^{\pi (d)}, (x_1,\ldots,x_d) \mapsto (dx_1,(d-2)x_1+x_2,\ldots,x_d)\] denotes the tropicalization of the monomial map $\nu_d$. The coordinate-functions of $\tilde{\nu}_d$ have the form $\sum_{i=1}^d \alpha_i x_i$ where $\alpha_i \in \N_0$ and $\sum_{i=1}^d i \alpha_i = d.$ Our first goal is to show that the diagram (\ref{diag:commutative diagram}) commutes for sets with Hadamard property.
\begin{equation}
  \label{diag:commutative diagram}
     \begin{tikzcd}
    \R^d_{\geq 0} \arrow{r}{\nu_d} \arrow{d}{\trop}  & \R_{\geq 0}^{\pi (d)}  \arrow{r}{\conv} & \R_{\geq 0}^{\pi (d)} \arrow{d}{\trop} \\
    \R^{d} \arrow{r} {\tilde{\nu}_d}   & \R^{\pi (d)} \arrow{r} {\tcone} & \R^{\pi (d)}
  \end{tikzcd} 
\end{equation}

\begin{proposition} \label{prop:tropcommutes}
Suppose $S \subset \R_{\geq 0}^n$ has the Hadamard property. Then, $$\trop(\cone( S))= \tcone (\trop (S)).$$
\end{proposition}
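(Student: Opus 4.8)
The plan is to prove the two inclusions $\trop(\cone(S)) \subseteq \tcone(\trop(S))$ and $\tcone(\trop(S)) \subseteq \trop(\cone(S))$ separately, using the characterization of tropical conical hulls as max-closures recorded in \eqref{eq:k}.

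For the inclusion $\tcone(\trop(S)) \subseteq \trop(\cone(S))$, I would start from the observation that $\trop(\cone(S))$ is a closed convex cone (since $\cone(S) \subset \R_{\geq 0}^n$ has the Hadamard property — this is inherited from $S$ — so \cite[Lemma 2.2 (2)]{blekherman2022tropicalization} applies) and that it is moreover max-closed: $\cone(S)$ is closed under addition, so by \cite[Lemma 2.2]{blekherman2022path} its tropicalization is max-closed. Since $\trop(S) \subseteq \trop(\cone(S))$ trivially, and since $\trop(\cone(S))$ is a cone it is closed under tropical scalar multiplication (adding $a \cdot \mathbf{1}$, which corresponds to classical scaling by $e^{a}$), and being max-closed it is closed under tropical addition. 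Hence it contains all tropical linear combinations of points of $\trop(S)$, i.e. $\tcone(\trop(S)) \subseteq \trop(\cone(S))$.

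For the reverse inclusion $\trop(\cone(S)) \subseteq \tcone(\trop(S))$, the point is that every element of $\cone(S)$ is a finite nonnegative combination $\sum_j c_j s_j$ with $s_j \in S$. Taking logarithms and passing to the limit defining $\trop$, one needs to see that $\trop$ of a finite sum of scaled points is dominated by the tropical sum of the tropicalizations: for sequences $x^{(m)} = \sum_j c_j s_j^{(m)}$ with $\log_{1/\tau_m}(s_j^{(m)}) \to y_j$ (after passing to subsequences so each term converges, possibly to $\pm\infty$ in some coordinates, which one handles by a standard truncation/compactness argument), the coordinatewise inequality $\max_j (s_j^{(m)})_k \leq x^{(m)}_k \leq (\#\,\text{terms}) \cdot \max_j (s_j^{(m)})_k$ forces $\lim \log_{1/\tau_m}(x^{(m)}) = \max_j y_j$ in each coordinate, up to the scalar shifts coming from the constants $c_j$ (which become tropical scalars). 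This is essentially the argument behind \cite[Lemma 2.2]{blekherman2022path}, and I would cite it for the max-closure statement and then only need the scalar-multiplication bookkeeping, which is routine. One subtlety: the number of terms in the conical combination is not bounded a priori; but tropical convexity (\cite[Proposition 4]{develin2004tropical}) guarantees that $\tcone$ of a set in $\R^n$ is already obtained using boundedly many terms, so after reducing each $x^{(m)}$ to a combination of at most $n$ of the $s_j$ one gets a uniform bound on the multiplicative constant, and the squeeze works.

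The main obstacle I anticipate is the bookkeeping in the second inclusion when some coordinates of the intermediate points tend to $0$ (i.e. $\log \to -\infty$): one must be careful that such degenerate terms do not spoil the limit. The clean way around this is to phrase everything through the max-closure description $\tcone(\trop(S)) = \bigcap_{j=1}^n \big(\trop(S) + \R\cdot\mathbf{1} + \Q_j\big)$ from \eqref{eq:k} together with Lemma~\ref{le:max-closure}, rather than manipulating sequences by hand — this reduces the whole statement to (i) $\trop(\cone(S))$ is closed, convex, max-closed, and closed under tropical scaling, and (ii) it is the \emph{smallest} such set containing $\trop(S)$, the latter following because any max-closed cone containing $\trop(S)$ must, by the squeeze estimate above applied only to finite sums, contain $\trop$ of every finite conical combination and hence all of $\trop(\cone(S))$ by continuity and closedness. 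I expect the write-up to be short once the two references \cite[Lemma 2.2]{blekherman2022tropicalization} and \cite[Lemma 2.2]{blekherman2022path} are invoked for the structural properties of tropicalizations of Hadamard/additively-closed sets.
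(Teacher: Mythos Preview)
Your argument for the inclusion $\tcone(\trop(S)) \subseteq \trop(\cone(S))$ is correct and is essentially the paper's argument: $\trop(\cone(S))$ is a closed convex cone (Hadamard property, \cite[Lemma~2.2(2)]{blekherman2022tropicalization}), is max-closed ($\cone(S)$ is closed under addition, \cite[Lemma~2.2]{blekherman2022path}), contains $\trop(S)$, and is closed under tropical scalar multiplication; hence it contains $\tcone(\trop(S))$.

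The reverse inclusion, however, has a genuine gap. Given $y=\lim_m \log_{1/\tau_m}(x^{(m)})$ with $x^{(m)}=\sum_{j} c_j^{(m)} s_j^{(m)}\in\cone(S)$, you need the individual terms $s_j^{(m)}$ (and the scalars $c_j^{(m)}$) to have tropical limits \emph{in $\R^n$}, i.e.\ $\log_{1/\tau_m}(s_j^{(m)})\to v_j\in\trop(S)$ and $\log_{1/\tau_m}(c_j^{(m)})\to a_j\in\R$. Nothing forces this: after Carath\'eodory and passing to subsequences, the limits may well land in $[-\infty,\infty]^n\setminus\R^n$, and there is no mechanism to replace such a degenerate $v_j$ by a bona fide point of $\trop(S)$ that still witnesses the correct max in each coordinate. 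Your appeal to ``standard truncation/compactness'' does not address this, and your invocation of tropical Carath\'eodory (\cite[Proposition~4]{develin2004tropical}) to bound the number of terms in a \emph{classical} conical combination is a category error (classical Carath\'eodory would be the right tool, but it does not cure the divergence issue either). Your ``clean'' alternative via the max-closure description \eqref{eq:k} is circular: showing that $\trop(\cone(S))$ is the \emph{smallest} max-closed cone containing $\trop(S)$ is exactly the inclusion you are trying to prove, and you again fall back on the same squeeze estimate.

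The paper avoids all of this by dualizing. Both sides are closed convex cones, so it suffices to show $\tcone(\trop(S))^\ast\subseteq\trop(\cone(S))^\ast$. Using Corollary~\ref{cor:dual of max-closure} and \eqref{eq:k}, every extreme ray of $\tcone(\trop(S))^\ast$ has exactly one negative coordinate and has coordinate sum zero; by \cite[Proposition~2.4]{blekherman2022path} it corresponds to a valid \emph{homogeneous} binomial inequality $x^{\alpha}\ge x^{\beta}$ on $S$ with $\beta$ supported in a single coordinate. The key point---which your proposal does not contain---is Lemma~\ref{le:inequality defines convex set}: the solution set in $\R_{\ge0}^n$ of such an inequality is itself a convex cone, so the inequality automatically extends from $S$ to $\cone(S)$, and hence the corresponding linear inequality is valid on $\trop(\cone(S))$. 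This is the idea that makes the hard inclusion go through.
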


Before presenting the proof we show a preliminary lemma.

\begin{lemma}\label{le:inequality defines convex set}
Let $I \subsetneq [n]$ and $m \in [n] \setminus I$. Then, the set of all points $x \in \R_{\geq 0}^n$ satisfying the binomial inequality $$\prod_{i \in I} x_i^{c_i} \geq x_m^d,$$ where $c_i\in \R_{>0}$ and $d =\sum_{i \in I} c_i$, is a convex cone.
\end{lemma}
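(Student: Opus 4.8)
The plan is to rewrite the defining inequality as the superlevel set of a concave function. First I would dispose of the degenerate case $I=\emptyset$: then $d=\sum_{i\in I}c_i=0$, the inequality reads $1\geq x_m^0=1$, and the set is all of $\R_{\geq 0}^n$, which is trivially a convex cone. So assume $I\neq\emptyset$, whence $d>0$. Setting $w_i:=c_i/d$ for $i\in I$ we have $w_i>0$ and $\sum_{i\in I}w_i=1$; since $t\mapsto t^{1/d}$ is an increasing bijection of $\R_{\geq 0}$, the inequality $\prod_{i\in I}x_i^{c_i}\geq x_m^d$ is equivalent to $g(x)\geq x_m$, where $g(x):=\prod_{i\in I}x_i^{w_i}$ is the weighted geometric mean (a function on $\R_{\geq 0}^n$ depending only on the coordinates indexed by $I$). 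Thus the set in question is $C:=\{x\in\R_{\geq 0}^n:g(x)-x_m\geq 0\}$.

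The key step is the classical fact that the weighted geometric mean $g$ is concave on $\R_{\geq 0}^n$. I would prove this by establishing that $g$ is superadditive, i.e. $g(x+y)\geq g(x)+g(y)$: after noting the inequality is obvious when $g(x+y)=0$, divide through by $g(x+y)$ and apply the weighted AM--GM inequality to $\bigl(x_i/(x_i+y_i)\bigr)_{i\in I}$ and to $\bigl(y_i/(x_i+y_i)\bigr)_{i\in I}$; adding the two resulting inequalities gives $\bigl(g(x)+g(y)\bigr)/g(x+y)\leq\sum_{i\in I}w_i=1$. Combined with the evident positive homogeneity $g(tx)=t\,g(x)$ for $t\geq 0$, superadditivity yields concavity via $g(\lambda x+(1-\lambda)y)\geq g(\lambda x)+g((1-\lambda)y)=\lambda g(x)+(1-\lambda)g(y)$. (Alternatively one may simply cite concavity of the weighted geometric mean.) This is the only nontrivial ingredient, and even it is entirely standard; the remainder is bookkeeping, so I do not expect a genuine obstacle here.

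Finally I would conclude: $x\mapsto g(x)-x_m$ is a concave function on the convex set $\R_{\geq 0}^n$ (concave minus linear), so its superlevel set $C$ is convex; and $C$ is a cone because $g(tx)-(tx)_m=t\bigl(g(x)-x_m\bigr)\geq 0$ whenever $x\in C$ and $t\geq 0$. Hence $C$ is a convex cone. Boundary behaviour causes no trouble: when $x_m=0$ the inequality holds automatically, and when some coordinate $x_i$ with $i\in I$ vanishes the left-hand side vanishes too, all consistent with the continuous extension of $g$ to $\R_{\geq 0}^n$ used above.
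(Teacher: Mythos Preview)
Your proposal is correct and takes essentially the same approach as the paper: both arguments establish superadditivity of the weighted geometric mean via the weighted AM--GM inequality (your key step is literally the paper's computation), then combine this with positive homogeneity. You frame the conclusion via concavity and superlevel sets while the paper argues directly that $C$ is closed under addition, but the mathematical content is identical; your treatment of the boundary and the degenerate case $I=\emptyset$ is in fact slightly more careful than the paper's.
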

\begin{proof}
Let $C:=\{x\in\R_{\ge0}^n:\prod_{i \in I} x_i^{c_i}\ge x_m^d\}$. By homogeneity it suffices to show that $C$ is closed under addition, so let $y,z\in C\cap\R_{>0}^n$.
\medskip

By the weighted AM-GM inequality, taking sums and products over $i\in I$,
\begin{align*}
1=\frac{\sum c_i\frac{y_i}{y_i+z_i}}{\sum c_i}+\frac{\sum c_i\frac{z_i}{y_i+z_i}}{\sum c_i}&\ge\left(\prod\left(\frac{y_i}{y_i+z_i}\right)^{c_i}\right)^{1/d}+\left(\prod\left(\frac{z_i}{y_i+z_i}\right)^{c_i}\right)^{1/d}\\
\therefore\qquad \left(\prod (y_i+z_i)^{c_i}\right)^{1/d}&\ge \left(\prod y_i^{c_i}\right)^{1/d}+\left(\prod z_i^{c_i}\right)^{1/d}\ge y_m+z_m
\end{align*}
and therefore $y+z\in C$.
\end{proof}

\begin{proof}[Proof of Proposition \ref{prop:tropcommutes}]
We suppose $z \in \tcone (\trop (S))$, i.e., we have $z = a_1 \odot v_1 \oplus \ldots \oplus a_k \odot v_k$ for some $a_i \in \R$ and $v_i \in \trop (S)$. 

First, we show that $a_i \odot v_i \in \trop (\cone (S))$ for all $1 \leq i \leq k$. Since $v_i \in \trop(S)$ we have $$v_i = \lim_{m \rightarrow \infty} \log_{\frac{1}{\tau_m}}(w^{(m)}_i)$$ for a sequence $(w^{(m)}_i)_m 
\subset S \cap \R_{>0}^s$ and a sequence $(\tau_m)_m \subset (0,\epsilon)$ converging to $0$ (\cite[Proposition 2.1]{alessandrini2013logarithmic}). For all $m$ we have $\frac{1}{\tau_m^{a_i}}w_i^{(m)}\in \cone (S)$ and 
 \begin{align*}
     a_i \odot v_i & = a_i \odot \lim_{m \rightarrow \infty} \log_{\frac{1}{\tau_m}}(w_i^{(m)}) \\ &= \lim_{m \rightarrow \infty} \log_{\frac{1}{\tau_m}}\left( \frac{1}{\tau_m^{a_i}} \right) \odot \log_{\frac{1}{\tau_m}} (w_i^{(m)}) \\ &= \lim_{m \rightarrow \infty} \log_{\frac{1}{\tau_m}}\left( \frac{1}{\tau_m^{a_i}}w_i^{(m)}\right) 
\end{align*} 
which shows $a_i \odot v_i \in \trop(\cone(S))$.
 Since $S$ has Hadamard property the set $\cone(S)$ has also the Hadamard property. We have that the set $\trop( \cone(S))$ is max-closed by \cite[Lemma~2.2.]{blekherman2022path}. Thus, $z \in \trop ( \cone (S)))$ because $z$ is a tropical sum of the $a_i \odot v_i$'s. \\
To prove the remaining inclusion we first note that $\trop (\cone (S))$ and $\tcone (\trop (S))$ are closed convex cones in $\R^n$ by (\cite[Lemma 2.2 (2)]{blekherman2022tropicalization}). More precisely, this follows since $\cone (S)$ has the Hadamard property and because $\trop (S)$ is a closed convex cone the set $\tcone (\trop (S))$ is the intersection of closed convex cones by (\ref{eq:k}). We show instead the equivalent formulation 
$$ \tcone (\trop (S)) ^\ast \subseteq  \trop (\cone (S)) ^\ast.$$ Corollary \ref{cor:dual of max-closure} implies \begin{align*}
    \tcone (\trop (S))^\ast & = \left( \bigcap_{k=1}^n (\trop (S) + \R \cdot \mathbf{1} + \Q_k) \right)^\ast \\ 
    & = \bigoplus_{k=1}^n( \trop (S)^\ast \cap \R \cdot \mathbf{1}^\ast \cap \Q_k^\ast) \\ 
    & =  \bigoplus_{k=1}^n (\trop (S)^\ast \cap \{ z \in \R^n : z_i \geq 0, i \neq k, z_k \leq 0, \sum_{i \neq k} z_i = -z_k \}).
\end{align*}
Therefore, any extremal ray in $\tcone (\trop (S))^\ast $ has precisely one negative coefficient and the sum over all positive coefficients equals the absolute value of the negative coefficient. By (\cite[Proposition 2.4]{blekherman2022path}) any $\mathbf{\alpha} = \mathbf{\alpha}_+-\mathbf{\alpha}_-  \in \trop (S)^*$ with $\mathbf{\alpha}_+,\mathbf{\alpha}_- \in \R_{\geq 0}^n$ and $\mathbf{\alpha}_- \neq 0$ transfers to a valid binomial inequality $x^{\mathbf{\alpha}_+} \geq x^{\mathbf{\alpha}_-}$ on $S$.
Thus, any extremal ray in $\tcone (\trop(S))^\ast$ gives rise to a valid homogeneous binomial inequality $x^\alpha \geq x^\beta $, and $\beta$ has precisely one non-zero entry and $\sum_{i=1}^n \alpha_i = \sum_{i=1}^n \beta_i$. We saw in Lemma \ref{le:inequality defines convex set} that the set of all solutions in $\R_{\geq0}^s$ of the real homogeneous binomial inequality forms a convex cone containing $S$. Thus, it is a valid binomial inequality on the set $\cone (S)$. We deduce that any extremal ray in $\tcone( \trop (S))^*$ is contained in $\trop ( \cone (S))^*$.
\end{proof}

\begin{lemma}
\label{lem:nuN Hadamard}
The set $\nu_d (\mathcal{N}_d)$ is a cone and has Hadamard property. Moreover, the set $\B\P_{2d}^*$ is a convex cone which has Hadamard property.
\end{lemma}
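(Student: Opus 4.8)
The plan is to verify each assertion in Lemma~\ref{lem:nuN Hadamard} directly from properties already established for $\mathcal{N}_d$ and from the shape of the tropical monomial map $\tilde\nu_d$. Recall that $\nu_d$ is the (non-tropical) even power sum map $x\mapsto(p_1^d,p_1^{d-2}p_2,\dots,p_d)$ applied coordinatewise to monomials $\prod p_i^{\alpha_i}$ with $\sum i\alpha_i=d$, so on the $\la$-th coordinate $\nu_d(a)_\la=a^{\omega(\la)}$ where $\omega(\la)\in\N_0^d$ records the exponent of $p_i$ and satisfies $\sum_i i\,\omega(\la)_i=d$. Here $a=(p_1,\dots,p_d)$ ranges over $\mathcal{N}_d$.

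First I would show $\nu_d(\mathcal{N}_d)$ is a cone: by Remark~\ref{rem:weightprism} the set $\mathcal{N}_d$ is weighted homogeneous, i.e.\ $(t p_1,t^2p_2,\dots,t^dp_d)\in\mathcal{N}_d$ for all $t\ge0$ whenever $(p_1,\dots,p_d)\in\mathcal{N}_d$; applying a monomial of weight $d$ to the scaled point multiplies the $\la$-th coordinate by $\prod_i (t^i)^{\omega(\la)_i}=t^{\sum_i i\,\omega(\la)_i}=t^d$, so $\nu_d(\mathcal{N}_d)$ is closed under multiplication by $t^d\ge0$, hence under all nonnegative scalars, hence is a cone. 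Next, the Hadamard property of $\nu_d(\mathcal{N}_d)$: by Lemma~\ref{lem:newton} the set $\mathcal{N}_d$ has the Hadamard property, and $\nu_d$ is a monomial map applied coordinatewise, so $\nu_d(a)\circ\nu_d(b)=\bigl((a\circ b)^{\omega(\la)}\bigr)_\la=\nu_d(a\circ b)$, and $a\circ b\in\mathcal{N}_d$ gives $\nu_d(a)\circ\nu_d(b)\in\nu_d(\mathcal{N}_d)$. (Equivalently: a monomial of the pointwise product is the pointwise product of the monomials.)

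For the statement about $\B\P_{2d}^*$: we have established earlier that $\B\P_{2d}^*=\cone(\nu_d(\mathcal{N}_d))$ (the Lemma preceding Subsection~\ref{sec:symquartics}), and since $\nu_d(\mathcal{N}_d)$ is already a cone this says $\B\P_{2d}^*=\nu_d(\mathcal{N}_d)$ as sets; in particular it is a convex cone (it is convex because it is a dual cone, or because $\mathcal{N}_d$ is closed under addition and $\nu_d$ is ``super-additive'' in the sense that $\nu_d(a)+\nu_d(b)$ lies in the convex cone generated by $\nu_d(a+b)$ via the binomial inequalities, but the cleanest route is simply to invoke that dual cones are always convex cones). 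The Hadamard property of $\B\P_{2d}^*$ then follows immediately from that of $\nu_d(\mathcal{N}_d)$, which we just proved, together with the equality $\B\P_{2d}^*=\cone(\nu_d(\mathcal{N}_d))=\nu_d(\mathcal{N}_d)$; alternatively one notes that $\cone$ of a set with the Hadamard property again has the Hadamard property (a conical combination argument using bilinearity of $\circ$ up to scalars), which is the phrasing used in the proof of Proposition~\ref{prop:tropcommutes}.

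I do not expect a serious obstacle here: every ingredient (weighted homogeneity of $\mathcal{N}_d$, the Hadamard property of $\mathcal{N}_d$, the description $\B\P_{2d}^*=\cone(\nu_d(\mathcal{N}_d))$) is already in hand, and the only thing to be careful about is bookkeeping with the weight-$d$ exponent vectors $\omega(\la)$ so that scaling and coordinatewise multiplication interact correctly with the monomial map. The mildest subtlety is making explicit that ``$\cone$ preserves the Hadamard property'' — for $u=\sum\lambda_i u_i$ and $v=\sum\mu_j v_j$ with $u_i,v_j$ in a Hadamard-closed cone $C$ and $\lambda_i,\mu_j\ge0$, one has $u\circ v=\sum_{i,j}\lambda_i\mu_j\,(u_i\circ v_j)$ with each $u_i\circ v_j\in C$, hence $u\circ v\in\cone(C)=C$ when $C$ is already a cone — but this is a one-line observation.
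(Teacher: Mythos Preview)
Your approach matches the paper's almost exactly: weighted homogeneity gives the cone property, the monomial-map identity $\nu_d(a)\circ\nu_d(b)=\nu_d(a\circ b)$ together with Lemma~\ref{lem:newton} gives the Hadamard property of $\nu_d(\mathcal{N}_d)$, and the bilinear expansion $u\circ v=\sum_{i,j}\lambda_i\mu_j(u_i\circ v_j)$ pushes the Hadamard property through to $\cone(\nu_d(\mathcal{N}_d))=\B\P_{2d}^*$.

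There is one genuine misstep to flag. You write that since $\nu_d(\mathcal{N}_d)$ is already a cone, $\B\P_{2d}^*=\cone(\nu_d(\mathcal{N}_d))=\nu_d(\mathcal{N}_d)$ as sets. This is false: in this paper $\cone(\cdot)$ denotes the \emph{convex} conical hull, and $\nu_d(\mathcal{N}_d)$ is not convex (it is the image of a monomial map, and already for $d=3$ a compact slice of it is a nonconvex ``leaf''-shaped region). So the route ``Hadamard for $\B\P_{2d}^*$ follows immediately from Hadamard for $\nu_d(\mathcal{N}_d)$ via equality of the two sets'' does not work. Fortunately your alternative route --- the convex-combination argument you spell out in the last paragraph --- is exactly what the paper does and is correct; just drop the equality claim and rely on that argument alone.
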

\begin{proof}
Since any $z \in \mathcal{N}_d$ is the limit of a sequence in $\bigcup_{n=1}^\infty \nu_{d,n}^e (\R^n)$ we can use that the map $\nu_d \circ (p_2,\ldots,p_{2d})$ is homogeneous and obtain that $\nu_d (\mathcal{N}_d)$ is a cone. For $x=\nu_d (a),y=\nu_d (b) \in \nu_d (\mathcal{N}_d)$ we have $$(x_1y_1,\ldots,x_sy_s) = (a_1^db_1^d,a_1^{d-2}a_2b_1^{d_2}b_2,\ldots,a_db_d) \in \nu_d(\mathcal{N}_d)$$ since $\mathcal{N}_d$ has Hadamard property by Lemma \ref{lem:newton}. \\
The set $\B\P_{2d}^* = \cone (\nu_d (\mathcal{N}_d))$ has Hadamard property because Hadamard multiplication of convex combinations of elements in $\nu_d( \mathcal{N}_d)$ gives again a convex combination of elements in $\nu_d( \mathcal{N}_d)$.
\end{proof}

\begin{proposition} \label{prop:trop dual to psd}
$\trop (\B\P_{2d}^*) = \tcone \left( \tilde{\nu}_d \left( \trop \left( \mathcal{N}_d \right) \right) \right)$.
\end{proposition}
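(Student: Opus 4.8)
The plan is to apply the commutativity result of Proposition~\ref{prop:tropcommutes} to the set $S := \nu_d(\mathcal{N}_d)$. By Lemma~\ref{lem:nuN Hadamard}, this set lies in $\R_{\geq 0}^{\pi(d)}$ and has the Hadamard property, so Proposition~\ref{prop:tropcommutes} yields
\begin{align*}
\trop(\cone(\nu_d(\mathcal{N}_d))) = \tcone(\trop(\nu_d(\mathcal{N}_d))).
\end{align*}
Since $\B\P_{2d}^* = \cone(\nu_d(\mathcal{N}_d))$ (this identification was recorded just before the statement, using that the fiber of $\nu_d(\mathcal{N}_d)$ over $p_2=1$ is compact and its convex hull avoids the origin), the left-hand side is exactly $\trop(\B\P_{2d}^*)$. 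It therefore remains to identify $\trop(\nu_d(\mathcal{N}_d))$ with $\tilde{\nu}_d(\trop(\mathcal{N}_d))$.

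The key step is thus the claim that tropicalization intertwines the monomial map $\nu_d$ with its tropical counterpart $\tilde{\nu}_d$, i.e. $\trop(\nu_d(\mathcal{N}_d)) = \tilde{\nu}_d(\trop(\mathcal{N}_d))$. For the inclusion ``$\subseteq$'', take $z \in \trop(\nu_d(\mathcal{N}_d))$, so $z = \lim_{m\to\infty}\log_{1/\tau_m}(\nu_d(w^{(m)}))$ for some $w^{(m)} \in \mathcal{N}_d \cap \R_{>0}^d$ and $\tau_m \downarrow 0$. Since each coordinate of $\nu_d$ is a monomial $\prod_i X_i^{\alpha_i}$ with $\sum_i i\alpha_i = d$, we have $\log_{1/\tau_m}(\nu_d(w^{(m)}))_j = \sum_i \alpha_i^{(j)}\log_{1/\tau_m}(w^{(m)}_i)$, which is precisely the $j$-th coordinate-function of $\tilde{\nu}_d$ applied to $\log_{1/\tau_m}(w^{(m)})$. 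Passing to a subsequence so that $\log_{1/\tau_m}(w^{(m)})$ converges to some $y \in \trop(\mathcal{N}_d)$ (convergence of the images forces convergence of the preimages here, since $\tilde\nu_d$ is injective — its first two coordinates recover $x_1,x_2$, and then each subsequent coordinate recovers $x_k$ inductively), continuity of the linear map $\tilde\nu_d$ gives $z = \tilde\nu_d(y)$. Conversely, given $y \in \trop(\mathcal{N}_d)$ realized by $w^{(m)} \in \mathcal{N}_d \cap \R_{>0}^d$ and $\tau_m$, the same computation shows $\tilde\nu_d(y) = \lim_m \log_{1/\tau_m}(\nu_d(w^{(m)}))$, and $\nu_d(w^{(m)}) \in \nu_d(\mathcal{N}_d) \cap \R_{>0}^{\pi(d)}$, so $\tilde\nu_d(y) \in \trop(\nu_d(\mathcal{N}_d))$.

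The main obstacle is the bookkeeping needed to justify that convergence of $\log_{1/\tau_m}(\nu_d(w^{(m)}))$ can be upgraded to (subsequential) convergence of $\log_{1/\tau_m}(w^{(m)})$ to a point of $\trop(\mathcal{N}_d)$; this requires using that $\tilde\nu_d$ is a linear injection with a left inverse that is continuous, so that boundedness of the images gives boundedness of the preimages and any limit point lies in the closed set $\trop(\mathcal{N}_d)$. Once this is in place the argument is a routine unwinding of definitions together with the already-established Proposition~\ref{prop:tropcommutes} and Lemma~\ref{lem:nuN Hadamard}. Finally, substituting $\trop(\nu_d(\mathcal{N}_d)) = \tilde\nu_d(\trop(\mathcal{N}_d))$ into $\trop(\B\P_{2d}^*) = \tcone(\trop(\nu_d(\mathcal{N}_d)))$ yields $\trop(\B\P_{2d}^*) = \tcone(\tilde\nu_d(\trop(\mathcal{N}_d)))$, which is the claimed identity and makes the diagram~(\ref{diag:commutative diagram}) commute.
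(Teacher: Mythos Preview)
Your proof is correct and follows the same two-step decomposition as the paper: first apply Proposition~\ref{prop:tropcommutes} to $S = \nu_d(\mathcal{N}_d)$ (using Lemma~\ref{lem:nuN Hadamard}), then identify $\trop(\nu_d(\mathcal{N}_d))$ with $\tilde{\nu}_d(\trop(\mathcal{N}_d))$. The only difference lies in how that second identification is established. The paper exploits the Hadamard property of both $\mathcal{N}_d$ and $\nu_d(\mathcal{N}_d)$ to rewrite each tropicalization as $\clo(\conv(\log(\cdot)))$ via \cite[Lemma 2.2 (2)]{blekherman2022tropicalization}, and then observes that the linear map $\tilde{\nu}_d$ commutes with $\log$ (because $\nu_d$ is monomial), with $\conv$ (because $\tilde{\nu}_d$ is linear), and with $\clo$ (because an injective linear map is closed). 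You instead argue directly from the sequential characterization of $\trop$, using injectivity of $\tilde{\nu}_d$ to pull convergence back. Both routes are valid; the paper's is a bit shorter and avoids the subsequence bookkeeping, while yours does not need to invoke the Hadamard characterization a second time on $\mathcal{N}_d$. One small simplification of your argument: since $\tilde{\nu}_d$ is an injective linear map between finite-dimensional spaces it is a homeomorphism onto its (closed) image, so convergence of $\tilde{\nu}_d(u_m)$ already forces convergence of $u_m$ itself, and no passage to a subsequence is required.
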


Claim 1 in the proof below was already observed in \cite[Remark~5.8]{acevedo2024power}.

\begin{proof}
We recall $\B\P_{2d}^* =  \cone (\nu_d \left( \mathcal{N}_d) \right)$ and divide the proof into two parts.\smallskip

\textbf{Claim 1:} $\tilde{\nu}_d \left( \trop ( \mathcal{N}_d) \right) = \trop \left( \nu_d (\mathcal{N}_d ) \right)$. \\
The sets $\mathcal{N}_d$ and $\nu_d(\mathcal{N}_d)$ contain the all ones vector $\mathbf{1}$ and have the Hadamard property by Lemma \ref{lem:newton} and Lemma \ref{lem:nuN Hadamard}. Thus, by (\cite[Lemma 2.2 (2)]{blekherman2022tropicalization}) Claim 1 is equivalent to \begin{align*}
    \tilde{\nu}_d \left( \clo (\conv(\log(\mathcal{N}_d)))\right) = \clo \left( \conv( \log (\nu_d (\mathcal{N}_d)))\right).
\end{align*}
However, \begin{align*}
   \clo(\conv (\log( \nu_d (\mathcal{N}_d)))) = \clo (\conv (\tilde{\nu}_d(\log (\mathcal{N}_d)))) = \clo (\tilde{\nu}_d (\conv(\log(\mathcal{N}_d)))),
\end{align*}
where the first equality follows from the definition of $\log$ and the second equality follows because taking convex hull and applying a linear map commute.
Moreover, since $\tilde{\nu}_d$ is injective and linear we have $\tilde{\nu}_d (\clo (A)) = \clo (\tilde{\nu}_d(A))$ for all sets $A \subset \R^{\pi (d)}$. Therefore, Claim 1 follows. \smallskip

\textbf{Claim 2:} $\trop(\cone (S)) = \tcone (\trop (S))$ for $S= \nu_d (\mathcal{N}_d).$ \\
This follows from Proposition \ref{prop:tropcommutes} since the set $S$ has Hadamard property by Lemma \ref{lem:nuN Hadamard}. \smallskip

Finally, by combining the assertions from Claim 1 and Claim 2 we have \[\trop(\B\P_{2d}^*) = \trop (\cone (\nu_d (\mathcal{N}_d))) = \tcone (\trop(\nu_d(\mathcal{N}_d))) = \tcone (\tilde{\nu}_d (\trop (\mathcal{N}_d))).\] 
\end{proof}

\begin{corollary}\label{cor:linspace tropmom}
The set $\trop(\B\P_{2d}^*)$ is a rational polyhedral cone and tropically convex with lineality space spanned by $\mathbf{1}\in\R^{\pi(d)}$. Moreover, any extreme ray of $\trop(\B\P_{2d}^*)^*$ has exactly one negative coordinate.
\end{corollary}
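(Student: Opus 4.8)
The plan is to derive Corollary~\ref{cor:linspace tropmom} as a direct consequence of Proposition~\ref{prop:trop dual to psd}, Theorem~\ref{thm:trop(Nd)}, and the structural results on max-closed (equivalently, tropically convex) cones from Subsection~\ref{sec:prop of trops of max-closed sets}. First I would invoke Proposition~\ref{prop:trop dual to psd} to write $\trop(\B\P_{2d}^*) = \tcone(\tilde{\nu}_d(\trop(\mathcal{N}_d)))$. By Theorem~\ref{thm:trop(Nd)}, the set $\trop(\mathcal{N}_d)$ is a rational polyhedral cone; since $\tilde{\nu}_d$ is a linear map with rational (in fact integer) coefficients, its image $\tilde{\nu}_d(\trop(\mathcal{N}_d))$ is again a rational polyhedral cone. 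Finally, $\tcone(\cdot)$ of a set $M$ equals $\bigcap_{j=1}^{\pi(d)}(M + \R\cdot\mathbf{1} + \Q_j)$ by equation~(\ref{eq:k}); intersecting a rational polyhedral cone with the finitely many rational polyhedral cones $M + \R\cdot\mathbf{1} + \Q_j$ produces a rational polyhedral cone, so $\trop(\B\P_{2d}^*)$ is a rational polyhedron. Tropical convexity of $\trop(\B\P_{2d}^*)$ is immediate since it equals a tropical conical hull, which is tropically convex by Definition~\ref{def:tconv} and the cited \cite[Proposition~4]{develin2004tropical}.

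Next I would pin down the lineality space. Since $\tcone(M) = \overline{M + \R\cdot\mathbf{1}}$ (the max-closure), the vector $\mathbf{1} \in \R^{\pi(d)}$ lies in the lineality space of $\trop(\B\P_{2d}^*)$. For the reverse inclusion — that the lineality space is \emph{exactly} $\R\cdot\mathbf{1}$ — I would use that $\trop(\B\P_{2d}^*)$ is contained in $\R_{\geq 0}^{\pi(d)}$ up to the lineality direction: more precisely, all coordinate functions of $\tilde{\nu}_d$ are nonnegative combinations of the $y_i$, and one can exhibit valid binomial inequalities among the $p_\la$ (for instance $p_{(2^d)} \ge$ any other $p_\la$ with $\la \vDash 2d$, which holds on $\R_{\ge0}^n$ for all $n$ by Theorem~\ref{thm: power sums order} since $(2^d) \succeq \la$ for every $\la \vDash 2d$) whose linearizations, taken together over a spanning set of such pairs, force any lineality vector $z$ to satisfy $z_\la = z_\mu$ for all $\la,\mu$, i.e. $z \in \R\cdot\mathbf{1}$. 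Alternatively, and more cleanly, one notes that $\trop(\B\P_{2d}^*)^*$ spans a hyperplane complement only in the direction $\mathbf{1}$: since $\B\P_{2d}^*$ is a full-dimensional cone whose dual $\B\P_{2d}$ is pointed (Theorem~\ref{thm:Limitconesare full dimensional}), and tropicalization of a pointed full-dimensional cone with the Hadamard property has lineality exactly $\R\cdot\mathbf{1}$ after passing through the log map, the claim follows.

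For the last assertion, that every extreme ray of $\trop(\B\P_{2d}^*)^*$ has exactly one negative coordinate, I would apply Corollary~\ref{cor:dual of max-closure}: $\trop(\B\P_{2d}^*)$ is a closed convex cone that is max-closed and contains $\mathbf{1} \in \R_{>0}^{\pi(d)}$ in its lineality space, so by that corollary the linear inequalities characterizing it (equivalently, the extreme rays of its dual) have exactly one negative coordinate. One small point to address: Corollary~\ref{cor:dual of max-closure} as stated yields that the defining inequalities have \emph{at most} one negative coordinate, and since $\mathbf{1}$ is in the lineality space, an inequality with all coefficients of one sign would be either trivial or violated, so in fact each extreme ray has \emph{exactly} one negative coordinate. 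I expect the main (though still modest) obstacle to be making the lineality-space computation fully rigorous — specifically, verifying that no extra lineality directions sneak in beyond $\R\cdot\mathbf{1}$; this is where one genuinely uses that $\trop(\B\P_{2d}^*)$ sits inside the tropicalization of a subset of the nonnegative orthant whose only "free" logarithmic direction is the scaling $\mathbf{1}$, together with the existence of enough valid binomial inequalities (guaranteed by the superdominance characterization of Theorem~\ref{thm: power sums order}) to cut the lineality down to a line.
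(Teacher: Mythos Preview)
Your argument is correct and largely parallel to the paper's. Rationality and tropical convexity are deduced exactly as the paper does (via Proposition~\ref{prop:trop dual to psd}, Theorem~\ref{thm:trop(Nd)}, and equation~(\ref{eq:k})), and your derivation of ``exactly one negative coordinate'' from Corollary~\ref{cor:dual of max-closure} together with the lineality direction $\mathbf{1}$ is the same as the paper's.

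The one genuine difference is in pinning down the lineality space. The paper computes it by pushforward: $\trop(\mathcal{N}_d)$ has one-dimensional lineality $\spn(1,2,\dots,d)$ (since its $d-1$ facet inequalities are linearly independent), $\tilde{\nu}_d$ is injective and sends this line to $\spn\mathbf{1}$, and taking $\tcone$ does not enlarge the lineality. Your first approach instead uses valid binomial inequalities directly: since $(2^d)\succeq\la$ for every $\la\vDash 2d$, each linear inequality $y_{(2^d)}\ge y_\la$ holds on $\trop(\B\P_{2d}^*)$, and applying these to both $z$ and $-z$ for a lineality vector $z$ forces $z_\la=z_{(2^d)}$ for all $\la$. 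This is a clean and self-contained alternative; you should state it with confidence rather than hedging. Your second proposed route (``tropicalization of a pointed full-dimensional cone with the Hadamard property has lineality exactly $\R\cdot\mathbf{1}$'') is not a general fact and should be dropped.
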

\begin{proof}
    It follows from Theorem \ref{thm:trop(Nd)} and Proposition \ref{prop:trop dual to psd} that the cone $\trop(\B\P_{2d}^*)$ is rational polyhedral and tropically convex. 
   
   Also observe that the $d-1$ facet-defining inequalities of $\trop(\mathcal{N}_d)\subset\R^d$, given by Theorem \ref{thm:trop(Nd)}, are linearly independent, so the lineality space of $\trop(\mathcal{N}_d)$ is one-dimensional, and one can see it is the line $\spn(1,2,\dots,d)$ (because there all inequalities are tight). This implies that the lineality space of $\tilde{\nu}(\trop(\mathcal{N}_d))$ is the line $\spn(1,\dots,1)=\tilde{\nu}(\spn(1,2,\dots,d))$ in $\R^{\pi(d)}$, because $\tilde{\nu}$ is injective. Now, $\trop(\B\P_{2d}^*)$ is the tropical conical hull of $\tilde{\nu}(\trop(\mathcal{N}_d))$ (Proposition \ref{prop:trop dual to psd}), which preserves the lineality space.
   
   Since the lineality space of $\trop(\B\P_{2d}^*)$ is the line spanned by $\mathbf{1}$ then $\trop(\B\P_{2d}^*)^*$ is contained in the orthogonal complement of $\spn\mathbf{1}$, which is the subspace where the coordinates add up to zero. But, by Lemma \ref{le:max-closure}, the extreme rays of $\trop(\B\P_{2d}^*)^*$ have at most one negative coordinate, and then it must be exactly one negative coordinate.
\end{proof}

\subsection{Characterization of $\operatorname{trop} (\mathcal{B}\mathcal{P}_{2d}^*)$ by the superdominance order}\label{sec:trop psd superdominance}
Using the superdominance order, we define a decreasing chain of polyhedra $T_{2d}^{(k)}$ in $\R^{\pi(d)}$. We prove that this sequence stabilizes and that it does so at the tropicalization of $\B\P_{2d}^*$ (see Theorem \ref{thm:tropmom}). 

\begin{definition} \label{def:sequence T2dk}
    For a positive integer $k$ let $T_{2d}^{(k)}\subset\R^{\pi(d)}$ denote the cone given by the intersection of the half-spaces $$y_{\la^1}+\dots+y_{\la^k}\ge ky_\mu$$ where $\la^1,\dots,\la^k,\mu\in\Lambda_{2d}^\varepsilon$ and $\la^1\cdots\la^k\succeq\mu^{\circ k}$. 
\end{definition}

\begin{example}
    We have $T_6^{(1)}=\{(y_{(2^3)},y_{(4,2)},y_{(6)})\in\R^3\,\,\mid\,\,y_{(2^3)}\ge y_{(4,2)}\ge y_{(6)}\}$ since $(2^3)\succ(4,2)\succ(6)$. Furthermore, we find that $T_6^{(2)}=\left\{  (y_{(2^3)},y_{(4,2)},y_{(6)}) \in \R^3 \,\,\mid\,\,  y_{(2^3)}+y_{(6)} \geq 2y_{(4,2)},\, y_{(4,2)} \geq y_{(6)} \right\}$ is equal to $\trop(\B\Sigma_6^*)$ (see Example \ref{ex:trop(S6)}).
\end{example}

By the fusion lemma (Lemma \ref{lem:arquimedean}), the sequence $(T_{2d}^{(k)})_{k\ge1}$ is nested non-increasing, i.e., $T_{2d}^{(k)}\supseteq T_{2d}^{(k+1)}$ for each positive integer $k$. Now we show that this sequence stabilizes, and the stabilization is $\trop(\B\P_{2d}^*)$. 

\begin{lemma}\label{lemma:momentsbringpower}
    Let $\la^1,\dots,\la^k,\mu^1,\dots,\mu^k\in\Lambda_{2d}^\varepsilon$. If the inequality $y_{\la^1}+\dots+y_{\la^k}\ge y_{\mu^1}+\dots+y_{\mu^k}$ is valid on $\trop(\B\P_{2d}^*)$, then $\la^1\cdots\la^k\succeq\mu^1\cdots\mu^k$.
\end{lemma}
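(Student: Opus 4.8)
The plan is to translate the hypothesis into a statement about the tropical Vandermonde cell and then invoke the combinatorial machinery already developed. Recall from Proposition~\ref{prop:trop dual to psd} that $\trop(\B\P_{2d}^*) = \tcone(\tilde\nu_d(\trop(\mathcal{N}_d)))$, and that $\tilde\nu_d$ is the injective linear map whose $\omega$-th coordinate (for $\omega\vDash 2d$) is the linear form $\sum_i \omega(i)\,x_i$ obtained by tropicalizing the monomial $p_\omega$. So a linear inequality $\sum_{\omega} c_\omega y_\omega \ge 0$ is valid on $\trop(\B\P_{2d}^*)$ if and only if the pulled-back inequality $\sum_\omega c_\omega\,(\tilde\nu_d)_\omega(x) \ge 0$ is valid on the tropical cone generated by $\trop(\mathcal{N}_d)$ together with $\mathbf 1$ and its negative; but since $\tilde\nu_d(\mathbf 1\in\R^d)$ is a positive multiple of $\mathbf 1\in\R^{\pi(d)}$ and $\mathbf 1$ lies in the lineality space, and since the extreme rays of the dual have exactly one negative coordinate (Corollary~\ref{cor:linspace tropmom}), for our particular inequality $y_{\la^1}+\dots+y_{\la^k}\ge y_{\mu^1}+\dots+y_{\mu^k}$ (which already has coordinate sum zero) validity on $\trop(\B\P_{2d}^*)$ forces the pulled-back linear form to be nonnegative on $\trop(\mathcal{N}_d)$ itself.

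First I would carry out this reduction carefully: set $\Delta_i := \sum_{t=1}^k \la^t(i) - \sum_{t=1}^k \mu^t(i)$, so that the pulled-back inequality reads $\sum_{i=1}^d \Delta_i\, x_i \ge 0$ on $\trop(\mathcal{N}_d)$. By Farkas' lemma applied to the explicit $H$-representation of $\trop(\mathcal{N}_d)$ from Theorem~\ref{thm:trop(Nd)}, this linear form is a conical combination of the facet inequalities $x_k + x_{k+2} - 2x_{k+1}\ge 0$ ($k=1,\dots,d-2$) and $d x_{d-1} - (d-1)x_d \ge 0$. This is exactly the situation analyzed in the proof of Theorem~\ref{thm: power sums order} (the ``$\Rightarrow$'' direction): there it is shown that whenever $\sum_i \Delta_i x_i\ge 0$ arises as such a conical combination with integer coefficients, one can extract nonnegative integers $a_1,\dots,a_{d-2},b$ and conclude a superdominance relation between two explicit partitions built from the $a_i$'s and $b$, and then, after fusing with a common partition, that the ``numerator'' partition superdominates the ``denominator'' partition.

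The one new wrinkle here is bookkeeping: instead of two single partitions $\la,\mu$ we have the fusions $\Lambda := \la^1\cdots\la^k$ and $M := \mu^1\cdots\mu^k$, each a partition of $2dk$ with multiplicity function $i\mapsto \sum_t \la^t(i)$ resp.\ $\sum_t \mu^t(i)$, and $\Delta$ is precisely $\Lambda(i) - M(i)$. Since the argument in the proof of Theorem~\ref{thm: power sums order} only uses the multiplicity vector $(\Delta_i)$ and not that it came from two individual partitions, it applies verbatim to give $\Lambda \succeq M$, i.e.\ $\la^1\cdots\la^k\succeq\mu^1\cdots\mu^k$. I expect the main obstacle to be purely expository: making the reduction from ``valid on $\trop(\B\P_{2d}^*)$'' to ``$\sum_i\Delta_i x_i\ge 0$ valid on $\trop(\mathcal{N}_d)$'' airtight (in particular justifying that no genuinely new inequalities appear when passing through $\tcone$ and $\tilde\nu_d$, which is where Corollary~\ref{cor:linspace tropmom} and the fact that the inequality has zero coordinate sum are used), and then citing rather than re-deriving the Farkas/superdominance computation from the proof of Theorem~\ref{thm: power sums order}.
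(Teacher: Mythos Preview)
Your approach is correct but more elaborate than necessary, and the paper's proof is shorter in two respects.

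First, the reduction step: you invoke the full description $\trop(\B\P_{2d}^*)=\tcone(\tilde\nu_d(\trop(\mathcal{N}_d)))$ from Proposition~\ref{prop:trop dual to psd} and then worry about whether passing through $\tcone$ could introduce spurious inequalities, citing Corollary~\ref{cor:linspace tropmom}. None of this is needed. The paper simply observes the containment $\nu_d(\mathcal{N}_d)\subset\B\P_{2d}^*$, hence $\trop(\nu_d(\mathcal{N}_d))\subset\trop(\B\P_{2d}^*)$, so any linear inequality valid on the larger set is valid on the smaller. In your language: $\tilde\nu_d(\trop(\mathcal{N}_d))\subset\tcone(\tilde\nu_d(\trop(\mathcal{N}_d)))$ is a trivial containment, and that is all you use; the structure of the extra points added by $\tcone$ is irrelevant for this direction.

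Second, after reducing to validity of $\sum_i\Delta_i x_i\ge0$ on $\trop(\mathcal{N}_d)$, you propose to re-run the Farkas/fusion argument from the proof of Theorem~\ref{thm: power sums order}. The paper instead stays one level higher: from validity on $\log(\nu_d(\mathcal{N}_d))$ (using the Hadamard property) it reads off directly that $p_{\la^1}\cdots p_{\la^k}\ge p_{\mu^1}\cdots p_{\mu^k}$ on every $\R_{\ge0}^n$, i.e.\ $p_{\la^1\cdots\la^k}\ge p_{\mu^1\cdots\mu^k}$, and then cites Theorem~\ref{thm: power sums order} as a black box to conclude $\la^1\cdots\la^k\succeq\mu^1\cdots\mu^k$. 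Your route and the paper's are logically equivalent (both ultimately rest on the Farkas computation for $\trop(\mathcal{N}_d)$), but the paper avoids reopening the combinatorics by packaging it as an application of the already-proved theorem.
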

\begin{proof}
Observe that $\B\P_{2d}^*\supset\nu(\mathcal{N}_d)$, so $\trop(\B\P_{2d}^*)\supset\trop(\nu(\mathcal{N}_d))$. Hence \[(*)\,\, y_{\la^1}+\dots+y_{\la^k}\ge y_{\mu^1}+\dots+y_{\mu^k}\] valid on $\trop(\B\P_{2d}^*)$ implies that ($*$) valid on $\trop(\nu(\mathcal{N}_d))$. Now $\nu(\mathcal{N}_d)\subset\R_{\ge0}^{\pi(d)}$ has the Hadamard property, so Lemma 2.2 of \cite{blekherman2022tropicalization} implies $\trop(\nu(\mathcal{N}_d))=\clo(\cone(\log(\nu(\mathcal{N}_d))))$. Therefore ($*$) is valid on $\log(\nu(\mathcal{N}_d))$. This means that $\log(p_{\la^1})+\dots+\log(p_{\la^k})\ge \log(p_{\mu^1})+\dots+\log(p_{\mu^k})$, hence $p_{\la^1}\cdots p_{\la^k}\ge p_{\mu^1}\cdots p_{\mu^k}$, i.e. $p_{\la^1\cdots\la^k}\ge p_{\mu^1\cdots\mu^k}$ by multiplicativeness of power sums, and so by Theorem \ref{thm: power sums order} we obtain $\la^1\cdots\la^k\succeq\mu^1\cdots\mu^k$.
\end{proof}

\begin{theorem}\label{thm:tropmom}
    For each positive integer $d\ge3$ there exists a positive integer $\tau_d\ge2$ such that $T_{2d}^{(k)}=T_{2d}^{(\tau_d)}$ for all $k\ge\tau_d$, $T_{2d}^{(\tau_d-1)}\supsetneq T_{2d}^{(\tau_d)}$, and $\trop(\B\P_{2d}^*)=T_{2d}^{(\tau_d)}$.
\end{theorem}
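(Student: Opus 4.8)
The plan is to show the three assertions in turn: that $\trop(\B\P_{2d}^*)$ equals $\bigcap_{k\ge 1}T_{2d}^{(k)}$, that this intersection is attained at a finite stage, and that the stabilization index $\tau_d$ is at least $2$. First I would establish that $\trop(\B\P_{2d}^*)\subseteq T_{2d}^{(k)}$ for every $k$. By Corollary \ref{cor:linspace tropmom}, $\trop(\B\P_{2d}^*)$ is a rational polyhedral cone whose facets are the extreme rays of its dual, and each such extreme ray has exactly one negative coordinate; together with tropical convexity (lineality spanned by $\mathbf 1$) this means the facet inequalities have the form $y_{\la^1}+\dots+y_{\la^m}\ge m\,y_\mu$ after clearing denominators, i.e. all defining inequalities of $\trop(\B\P_{2d}^*)$ are of the type $\sum y_{\la^i}\ge \sum y_{\mu^j}$ with equal numbers of terms on each side (using the lineality to balance). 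Applying Lemma \ref{lemma:momentsbringpower} to each such valid inequality gives $\la^1\cdots\la^m\succeq \mu^1\cdots\mu^m$; specializing to the case where all $\mu^j$ coincide shows every facet inequality of $\trop(\B\P_{2d}^*)$ is one of the inequalities defining $T_{2d}^{(m)}$, hence $\trop(\B\P_{2d}^*)\supseteq T_{2d}^{(m)}\supseteq\bigcap_k T_{2d}^{(k)}$. Wait — that gives the wrong containment; the correct reading is that every inequality defining $T_{2d}^{(k)}$ is valid on $\trop(\B\P_{2d}^*)$, which is the reverse: indeed if $\la^1\cdots\la^k\succeq\mu^{\circ k}$ then $p_{\la^1}\cdots p_{\la^k}\ge p_\mu^{\,k}$ on all $\R_{\ge0}^n$ by Theorem \ref{thm: power sums order}, so $y_{\la^1}+\dots+y_{\la^k}\ge k y_\mu$ holds on $\trop(\B\P_{2d}^*)$, giving $\trop(\B\P_{2d}^*)\subseteq T_{2d}^{(k)}$ for all $k$, hence $\trop(\B\P_{2d}^*)\subseteq\bigcap_k T_{2d}^{(k)}$.

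For the reverse inclusion $\bigcap_k T_{2d}^{(k)}\subseteq\trop(\B\P_{2d}^*)$, I would dualize. It suffices to show every facet inequality of $\trop(\B\P_{2d}^*)$ is implied by finitely many of the inequalities defining the $T_{2d}^{(k)}$'s. By Corollary \ref{cor:linspace tropmom}, each extreme ray $\alpha=\alpha_+-\alpha_-$ of $\trop(\B\P_{2d}^*)^*$ has exactly one negative coordinate, say in position $\mu$, with $\alpha_-=k\,e_\mu$ for some positive integer $k$ (after scaling to integrality) and $\sum_i(\alpha_+)_i=k$; the positive part $\alpha_+$ then records a multiset of $k$ even partitions $\la^1,\dots,\la^k$ of $2d$. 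The corresponding valid binomial inequality $p_{\la^1}\cdots p_{\la^k}\ge p_\mu^{\,k}$ on $\nu_d(\mathcal N_d)$ forces, via Theorem \ref{thm: power sums order}, $\la^1\cdots\la^k\succeq\mu^{\circ k}$ — so this very inequality is one of those defining $T_{2d}^{(k)}$. Thus each facet of $\trop(\B\P_{2d}^*)$ appears among the defining inequalities of some $T_{2d}^{(k)}$, and since $\trop(\B\P_{2d}^*)$ is a polyhedron with finitely many facets, all of them appear by stage $k=\tau_d:=\max k$ over this finite facet list (with $\tau_d\ge 1$). Hence $T_{2d}^{(\tau_d)}\subseteq\trop(\B\P_{2d}^*)$, and combined with the first inclusion and the nesting $T_{2d}^{(k)}\supseteq T_{2d}^{(k+1)}$ from Lemma \ref{lem:arquimedean}, we get $T_{2d}^{(k)}=\trop(\B\P_{2d}^*)$ for all $k\ge\tau_d$.

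It remains to argue $\tau_d\ge 2$, i.e. $T_{2d}^{(1)}\supsetneq\trop(\B\P_{2d}^*)$ for $d\ge3$. The cone $T_{2d}^{(1)}$ is defined only by the inequalities $y_\la\ge y_\mu$ for $\la\succ\mu$, which is a \emph{linear} preorder cone (a "order polytope"-style cone), while $\trop(\B\P_{2d}^*)$ genuinely uses the $\ell^p$-monotonicity and Lyapunov (two-term) inequalities of Remark \ref{rem:classical ieqs}, which are not single-partition comparisons. Concretely, for $d\ge3$ the partition comparison underlying $p_4^2\ge p_2 p_6$-type inequalities (i.e. $(4,\ldots)$ vs a fusion) is a $k=2$ relation $\la^1\la^2\succeq\mu^{\circ 2}$ with $\la^1\ne\la^2$ that does not reduce to any $k=1$ relation, so the corresponding facet of $T_{2d}^{(2)}$ — equivalently of $\trop(\B\P_{2d}^*)$ — is strictly cutting inside $T_{2d}^{(1)}$; exhibiting one explicit point of $T_{2d}^{(1)}\setminus T_{2d}^{(2)}$ (for instance using the Lyapunov inequality on $p_2,p_4,p_6$ as in Example \ref{ex:trop(S6)}) finishes this. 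The main obstacle is the second paragraph: one must be careful that the extreme-ray/facet correspondence really does produce inequalities with a \emph{single} partition on the dominated side (so that they are literally of the form appearing in Definition \ref{def:sequence T2dk}), and that clearing denominators to integrality does not inflate $k$ beyond control; this is exactly where Corollary \ref{cor:linspace tropmom} (one negative coordinate, $\mathbf 1$ in the lineality) is doing the essential work, and where Theorem \ref{thm: power sums order} converts the analytic binomial inequality into the combinatorial superdominance relation.
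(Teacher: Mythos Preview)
Your approach is essentially the paper's: (i) each $T_{2d}^{(k)}$-inequality is valid on $\trop(\B\P_{2d}^*)$ via Theorem~\ref{thm: power sums order}; (ii) each facet of $\trop(\B\P_{2d}^*)$ is a $T_{2d}^{(k)}$-inequality via Corollary~\ref{cor:linspace tropmom} and (what amounts to) Lemma~\ref{lemma:momentsbringpower}; finiteness of facets gives the stabilization; (iii) the Lyapunov-type relation $(2^d)(6,2^{d-3})\succeq(4,2^{d-2})^{\circ 2}$ witnesses $T_{2d}^{(1)}\supsetneq T_{2d}^{(2)}$.

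One small gap in (i): you go directly from the binomial inequality $p_{\la^1}\cdots p_{\la^k}\ge p_\mu^{\,k}$ (valid on $\nu_d(\mathcal N_d)$) to its linearization being valid on $\trop(\B\P_{2d}^*)$. But $\B\P_{2d}^*=\cone(\nu_d(\mathcal N_d))$, and an arbitrary binomial inequality need not survive taking convex hulls. The paper closes this with Lemma~\ref{le:inequality defines convex set}, which says precisely that a homogeneous binomial inequality with a \emph{single} variable on the dominated side cuts out a convex cone, so it remains valid on $\B\P_{2d}^*$ and hence linearizes on $\trop(\B\P_{2d}^*)$. Your step (iii) is also a bit loose for general $d$; the paper makes it explicit with the inequality $y_{(2^d)}+y_{(6,2^{d-3})}\ge 2y_{(4,2^{d-2})}$, which is exactly the $p_2^{d-3}$-padding of the sextic Lyapunov relation you gesture at.
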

\begin{proof}
We divide the proof into three steps. 

In (i) we show that every inequality of the form $y_{\la^1}+\dots+y_{\la^k}\ge ky_\mu$, such that $\la^1\cdots\la^k\succeq\mu^{\circ k}$, and $k\ge1$, is valid on $\trop(\B\P_{2d}^*)$.
\smallskip

In (ii) we show that each facet of $\trop(\B\P_{2d}^*)$ is of the form $y_{\la^1}+\dots+y_{\la^k}\ge ky_\mu$ with $\la^1\cdots\la^k\succeq\mu^{\circ k}$ for some $k\ge1$.
\smallskip

Finally in (iii) we show that $\tau_d\ge2$.
\smallskip

This suffices because we have $T_{2d}^{(k)} \subset \trop(\B\P_{2d}^*)$ for all $k$, by (i) and since the rational polyhedron $\trop(\B\P_{2d}^*)$ has a finite number of facets, there exists an upper bound on the integer $\tau_d$ by (ii) depending on $d$. Combining these observations with (iii) we can deduce the claimed statement. 
\medskip

    (i) Let $\la^1,\dots,\la^k,\mu\in\Lambda_{2d}^\varepsilon$ such that $\la^1\cdots\la^k\succeq\mu^{\circ k}$. By Theorem \ref{thm: power sums order} then $p_{\la^1\cdots\la^k}\ge p_{\mu^{\circ k}}$, or equivalently, by the multiplicativeness of power sums, $p_{\la^1}\cdots p_{\la^k}\ge p_{\mu}^k$. This is a binomial inequality which is valid on $\nu(\mathcal{N}_d)$, and, by Lemma \ref{le:inequality defines convex set}, it is precisely the kind of binomial inequality that remains valid on $\cone\nu(\mathcal{N}_d)=\B\P_{2d}^*$, therefore $y_{\la^1}+\dots+y_{\la^k}\ge ky_\mu$
    is valid on $\trop(\B\P_{2d}^*)$.
    \smallskip

    (ii) By Corollary \ref{cor:linspace tropmom} the extreme rays of $\trop(\B\P_{2d}^*)^*$ have exactly one negative coordinate and the coordinates add up to zero. Further, one can take an integer point on each extreme ray since $\trop(\B\P_{2d}^*)$ is a rational polyhedral cone, therefore the facets of $\trop(\B\P_{2d}^*)$ are of the form $y_{\la^1}+\dots+y_{\la^k}\ge ky_\mu$, where $\la^1,\dots,\la^k,\mu\in\Lambda_{2d}^\varepsilon$ (observe that the $\la^i$ are not necessarily distinct). 
    By Lemma \ref{lemma:momentsbringpower} if $y_{\la^1}+\dots+y_{\la^k}\ge ky_\mu$ is valid on $\trop(\B\P_{2d}^*)$ then $\la^1\cdots\la^k\succeq\mu^{\circ k}$.
    \smallskip    
    
    (iii) For $d\ge3$ it holds that $(2^d)\succ(4,2^{d-2})\succ(6,2^{d-3})$ and $(2^d)(6,2^{d-3})\succ(4,2^{d-2})(4,2^{d-2})$, so $y_{(2^d)}+y_{(6,2^{d-3})}\ge2y_{(4,2^{d-2})}$ is not implied by the inequalities in $T_{2d}^{(1)}$ since these are just $y_\la\ge y_\mu$ if and only if $\la\succeq\mu$. Therefore $T_{2d}^{(1)}\supsetneq T_{2d}^{(2)}$, and so $\tau_d\ge2$.
\end{proof}

\begin{corollary}\label{cor:Tgood}
    Let $\la^1,\dots,\la^k,\mu^1,\dots,\mu^k\in\Lambda_{2d}^\varepsilon$. If $\la^1\cdots\la^k\not\succeq\mu^1\cdots\mu^k$ then the inequality \linebreak $y_{\la^1}+\dots+y_{\la^k}\ge y_{\mu^1}+\dots+y_{\mu^k}$ is not valid on $T_{2d}^{(r)}$ for any $r\ge1$. 
\end{corollary}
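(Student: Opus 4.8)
The plan is to argue by contradiction, leveraging the fact that every $T_{2d}^{(r)}$ \emph{contains} the tropicalization $\trop(\B\P_{2d}^*)$, so that a valid inequality on $T_{2d}^{(r)}$ is automatically valid on the smaller cone $\trop(\B\P_{2d}^*)$, where Lemma \ref{lemma:momentsbringpower} forces a superdominance relation.

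Concretely, suppose toward a contradiction that $y_{\la^1}+\dots+y_{\la^k}\ge y_{\mu^1}+\dots+y_{\mu^k}$ is valid on $T_{2d}^{(r)}$ for some $r\ge1$. First I would recall that, by Lemma \ref{lem:arquimedean} (the cancellation/fusion property of $\succeq$), the sequence $(T_{2d}^{(j)})_{j\ge1}$ is nested non-increasing, and by Theorem \ref{thm:tropmom} it stabilizes with $T_{2d}^{(j)}=\trop(\B\P_{2d}^*)$ for all $j\ge\tau_d$. Hence, setting $j:=\max\{r,\tau_d\}$, we get the chain of inclusions $\trop(\B\P_{2d}^*)=T_{2d}^{(j)}\subseteq T_{2d}^{(r)}$. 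Therefore the assumed inequality, being valid on $T_{2d}^{(r)}$, is in particular valid on $\trop(\B\P_{2d}^*)$. Applying Lemma \ref{lemma:momentsbringpower} then yields $\la^1\cdots\la^k\succeq\mu^1\cdots\mu^k$, contradicting the hypothesis. This completes the argument.

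Since both ingredients (the nestedness of the chain and Lemma \ref{lemma:momentsbringpower}) are already in place, there is essentially no technical obstacle here; the only point requiring a moment of care is that the containment $\trop(\B\P_{2d}^*)\subseteq T_{2d}^{(r)}$ must be invoked for \emph{arbitrary} $r\ge1$, including $r<\tau_d$, which is exactly why the monotonicity of the chain (rather than just its eventual stabilization) is used.
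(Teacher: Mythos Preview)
Your proof is correct and follows essentially the same route as the paper: both use that $T_{2d}^{(r)}\supseteq\trop(\B\P_{2d}^*)$ (which the paper cites directly from Theorem~\ref{thm:tropmom}, while you derive it via nestedness plus stabilization) and then apply the contrapositive of Lemma~\ref{lemma:momentsbringpower}.
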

\begin{proof}
    By Theorem \ref{thm:tropmom} $T_{2d}^{(r)}\supseteq\trop(\B\P_{2d}^*)$, and so the statement follows from the contrapositive of Lemma \ref{lemma:momentsbringpower}.
\end{proof}

The following proposition is not only essential for proving Theorem \ref{thm:tropmom}, but also for showing the inequality between the polyhedra $\trop(\B\P_{2d}^*)$ and $\trop(\B\Sigma_{2d}^*)$ for all $d \geq 5$ in Theorem \ref{thm:tropsdiffer}.
 \medskip

 Recall that we choose the even partitions of $2d$, in the reverse lexicographic order, to label the coordinates of $\R^{\pi(d)}$.

\begin{proposition}\label{prop:T3subsetT2}
    If $d\ge5$ then $T_{2d}^{(2)}\supsetneq T_{2d}^{(3)}$, i.e. $\tau_d\ge3$.
\end{proposition}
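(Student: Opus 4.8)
Since $T_{2d}^{(2)}\supseteq T_{2d}^{(3)}$ always holds (adjoining $\mu$ to a valid triple $\lambda^1,\lambda^2,\lambda^3$ and using the cancellation Lemma~\ref{lem:arquimedean}, every defining inequality of $T_{2d}^{(2)}$ is also one of $T_{2d}^{(3)}$), to prove the strict inclusion it suffices to produce, for every $d\ge 5$, even partitions $\lambda^1,\lambda^2,\lambda^3,\mu\vDash 2d$ with $\lambda^1\lambda^2\lambda^3\succeq\mu^{\circ 3}$ together with a point $y^\ast\in T_{2d}^{(2)}$ satisfying $y^\ast_{\lambda^1}+y^\ast_{\lambda^2}+y^\ast_{\lambda^3}<3y^\ast_\mu$; this simultaneously yields $\tau_d\ge 3$. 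Equivalently, one exhibits a function $\phi\colon\Lambda_{2d}^\varepsilon\to\R$ which is \emph{$2$-fusion-concave} — meaning $\phi(\sigma)+\phi(\tau)\ge 2\phi(\nu)$ whenever $\sigma\tau\succeq\nu^{\circ 2}$ — but for which the corresponding $3$-fold inequality fails at the chosen quadruple.

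\textbf{Plan.} First I would pin down the combinatorial witness, imitating the proof of Theorem~\ref{thm:tropmom}(iii): I would take $\lambda^1,\lambda^2,\lambda^3,\mu$ among the ``one long part plus a tail of twos'' partitions $(2j,2^{d-j})$ together with $(2^d)$, for which the relation $\lambda^1\lambda^2\lambda^3\succeq\mu^{\circ 3}$ (and the failure to split it into $2$-fold fusions) can be checked directly from the partial-sum criterion of Definition~\ref{def:superdominance}; the need for enough such partitions of $2d$ is what forces $d\ge 5$. Verifying $\lambda^1\lambda^2\lambda^3\succeq\mu^{\circ 3}$ is then a short partial-sum computation. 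Second, I would write down $y^\ast$ explicitly as a function of a single statistic of the partition (e.g.\ its largest part, or its length), concave in that statistic, arranged so that all chain inequalities $y_\kappa\ge y_\nu$ for $\kappa\succeq\nu$ of $T_{2d}^{(1)}$ hold automatically and so that the targeted triple inequality fails by a strict margin; this last check is routine arithmetic. Third — the core of the argument — I would verify $y^\ast\in T_{2d}^{(2)}$, i.e.\ $y^\ast_\sigma+y^\ast_\tau\ge 2y^\ast_\nu$ for \emph{all} even $\sigma,\tau,\nu\vDash 2d$ with $\sigma\tau\succeq\nu^{\circ 2}$. The strategy here is to use the structure theory of Section~\ref{sec:superdominance} — Proposition~\ref{prop:weakorderprpts} (reverse-lex refinement and length monotonicity), Proposition~\ref{prop:covering} (covering relations, separating the equal-length/dominance case from the $\lambda\mapsto\lambda^\ast$ case), and Lemmas~\ref{lem:fusion}, \ref{lem:arquimedean} — to extract from $\sigma\tau\succeq\nu^{\circ 2}$ quantitative constraints tying the largest parts and lengths of $\sigma,\tau,\nu$, strong enough that the concavity of the chosen statistic makes $y^\ast_\sigma+y^\ast_\tau\ge 2y^\ast_\nu$ transparent and reduces the infinitely many inequalities to a finite, explicit check. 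Concluding, $y^\ast\in T_{2d}^{(2)}$ violates a defining inequality of $T_{2d}^{(3)}$, so $T_{2d}^{(2)}\supsetneq T_{2d}^{(3)}$, hence $\tau_d\ge 3$.

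\textbf{Main obstacle.} The hard part is Step~3: controlling all fusion-dominant pairs of even partitions of $2d$. In contrast to $T_{2d}^{(1)}$, whose facets are simply the covering relations of $\succeq$, the relation $\sigma\tau\succeq\nu^{\circ 2}$ is non-local, so the verification needs either a genuine combinatorial lemma about superdominance of fusions, or a sufficiently sparse/piecewise-linear choice of $y^\ast$ (supported on few coordinates, or a truncated affine function of a monotone statistic) that makes all but finitely many pair inequalities hold trivially. An alternative route would be to combine Proposition~\ref{prop:trop dual to psd} with the polyhedral description of $\trop(\mathcal N_d)$ in Theorem~\ref{thm:trop(Nd)} to read off a facet of $\trop(\B\P_{2d}^\ast)=T_{2d}^{(\tau_d)}$ that is a genuine $3$-fold fusion inequality (a positive part with a coefficient $\ge 2$, or three distinct positive coordinates against a single negative coordinate of multiplicity $3$); but ruling out that such a facet is a consequence of pair inequalities is essentially the same difficulty, so I expect the explicit-point approach to be the cleaner one.
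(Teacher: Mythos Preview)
Your plan is the paper's approach, and your diagnosis of the main obstacle is correct. The paper's specific choices are: the witness inequality $y_{(2^d)}+y_{(8,2^{d-4})}+y_{(6,4,2^{d-5})}\ge 3y_{(6,2^{d-3})}$ (valid on $T_{2d}^{(3)}$ since, after cancellation via Lemma~\ref{lem:arquimedean}, it reduces to $(8,4)\succeq(6,6)$), and the point $\gamma_d=(0,-3,-5,-6,-7,-9,-9,\dots,-9)\in\R^{\pi(d)}$ in reverse-lex coordinates. Since every coordinate of $\gamma_d$ lies in $[-9,0]$ and $(\gamma_d)_r=-9$ for all $r\ge 6$, every pair inequality $y_i+y_k\ge 2y_j$ with $j\ge 6$ or $k>6$ holds trivially, so only those with $1\le i<j\le 5$ and $j<k\le 6$ need checking---a handful of explicit superdominance verifications, with Corollary~\ref{cor:Tgood} used to rule out candidate inequalities. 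This is exactly your ``sparse/piecewise-linear $y^\ast$'' idea.

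One correction to your Step~1: restricting the witness quadruple $\lambda^1,\lambda^2,\lambda^3,\mu$ to the family $(2j,2^{d-j})$ is too narrow. The paper's third partition $(6,4,2^{d-5})$ has \emph{two} non-$2$ parts; it is the sixth even partition of $2d$ in reverse-lex order for every $d\ge 5$, and its appearance is what makes both the witness and the point $\gamma_d$ uniform in $d$. If you try to build the witness from one-long-part partitions alone, e.g.\ $y_{(2^d)}+2y_{(8,2^{d-4})}\ge 3y_{(6,2^{d-3})}$, you will find that $\gamma_d$ does not violate it, and more generally such triples tend to be consequences of the pair inequality $y_{(2^d)}+y_{(8,2^{d-4})}\ge 2y_{(6,2^{d-3})}$ already in $T_{2d}^{(2)}$.
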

\begin{proof}
Denote by $y_i:=y_{\mu^i}$ where $\mu^i$ is the $i$-th largest even partition of $2d$ in the reverse lexicographic order. For instance 
\begin{align*}
y_1&:=y_{(2^d)},\quad\quad\,\,\,\,\, y_2:=y_{(4,2^{d-2})},\quad y_3:=y_{(6,2^{d-3})},\\
\quad y_4&:=y_{(4^2,2^{d-4})},\quad y_5:=y_{(8,2^{d-4})},\quad y_6:=y_{(6,4,2^{d-5})}.
\end{align*}
In the superdominance order, the corresponding partitions are ordered as follows
\begin{align*}
(2^{d})\succ(4,2^{d-2})\succ(6,2^{d-3})\succ(4^2,2^{d-4})\succ(8,2^{d-4})\succ(6,4,2^{d-5})
\end{align*}
and $(6,4,2^{d-5})\succ\omega$ for any even partition $\omega$ of $2d$ not in the above list. 

Observe that the inequality $y_1-3y_3+y_5+y_6\ge0$ is valid on $T_{2d}^{(3)}$ for any $d\ge5$ because 
\begin{align*}
   (2^{d})(8,2^{d-4})(6,4,2^{d-5})\succ(6,2^{d-3})(6,2^{d-3})(6,2^{d-3}). 
\end{align*}
One can check this (and all similar inequalities below) using the cancellation Lemma \ref{lem:arquimedean} to obtain $(8,4)\succ(6,6)$, which is clear.
\medskip

Now let $\gamma_d=(0,-3,-5,-6,-7,-9,-9,\dots,-9)\in\R^{\pi(d)}$. Observe that $\gamma_d\notin T_{2d}^{(3)}$ for any $d\ge5$ because $y_1-3y_3+y_5+y_6\ge0$ on $T_{2d}^{(3)}$ but $0-3(-5)+(-7)+(-9)=-1$.
\smallskip

We claim that $\gamma_d$ belongs to $T_{2d}^{(2)}$ for all $d\ge5$. We do this by considering all possible inequalities $y_i-2y_j+y_k\ge0$ ($1\le i<j<k\le 6$) that are valid on $T_{2d}^{(2)}$, and checking that $\gamma_d$ satisfies them (note it is enough to consider $i<j<k$ instead of $i\le j\le k$ because $\gamma_d\in T_{2d}^{(1)}$ as the coordinates of $\gamma_d$ are non-increasing). This would be enough because $(\gamma_d)_r=-9$ for $6\le r\le\pi(d)$, and $\gamma_i+\gamma_k \geq 2(-9)=-18$ for all $1 \leq i,k  \leq \pi(d)$.

Hence we consider at most $\binom52$ inequalities $y_i-2y_j+y_k\ge0$, one for each pair $(i,j)$ with $1\le i<j\le 5$, because, as the coordinates of $\gamma_d$ are non-increasing, we can always choose the largest possible $k\le 6$ (with $j<k$) giving a valid inequality for the pair $(i,j)$. For example for the pair $(i,j)=(1,2)$ we have $y_1-2y_2+y_3\ge0$ but also $y_1-2y_2+y_4\ge0$ (using Corollary \ref{cor:Tgood} one can see $y_1-2y_2+y_k\ge0$ is not valid on $T_{2d}^{(2)}$ for $k>4$ because $(2^d)\nu\succeq(4,2^{d-2})(4,2^{d-2})$ is tight for $\nu=(4^2,2^{d-4})$, i.e. $\nu$ cannot be replaced by a smaller partition), so we just test $y_1-2y_2+y_4\ge0$ for $\gamma_d$ and it holds. We can further reduce the number of inequalities to test as follows. For instance if $i=1$ then we only need to test $j=2$ as already for $j\ge3$ we have $(\gamma_d)_j\le-5$ so $y_1-2y_j-9\ge 0-2(-5)-9>0$. With this idea, we find that if $i=2$ then it is enough to consider $j\le4$. We list the remaining inequalities to consider:
\begin{align*}
    i=2,\, j=3&:\quad y_2-2y_3+y_5\ge0,\quad (4,2^{d-2})(8,2^{d-4})\succ(6,2^{d-3})(6,2^{d-3}), \\ 
    i=2,\, j=4&:\quad y_2-2y_4+y_6\ge0,\quad (4,2^{d-2})(6,4,2^{d-5})\succ(4^2,2^{d-4})(4^2,2^{d-4}),\\
    i=3,\, j=4&:\quad \text{no valid inequalities,}\\
    i=3,\, j=5&:\quad y_3-2y_5+y_6\ge0,\quad (6,2^{d-3})(6,4,2^{d-5})\succ(8,2^{d-4})(8,2^{d-4}),\\
    i=4,\, j=5&:\quad \text{no valid inequalities,}
\end{align*}
and all these are satisfied by $\gamma_d$. It only remains to analyze the cases $(i,j)=(3,4)$ and $(i,j)=(4,5)$. In the first case we have no valid inequalities because $y_3-2y_4+y_k\ge0$ does not even hold for $k=5$ since $(6,2^{d-3})(8,2^{d-4})\not\succeq(4^2,2^{d-4})(4^2,2^{d-4})$. For the second case observe that $y_4-2y_5+y_k\ge0$ does not even hold for $k=6$ as $(4^2,2^{d-4})(6,4,2^{d-5})\not\succeq(8,2^{d-4})(8,2^{d-4})$. 

We conclude that $\gamma_d\in T_{2d}^{(2)}$, therefore $T_{2d}^{(2)}\supsetneq T_{2d}^{(3)}$, and so $\tau_d\ge3$.
\end{proof}

\begin{question}
Does the sequence $T_{2d}^{(k)}$ stabilize once two consecutive terms are equal?
\end{question}

\subsection{The tropicalization of $\mathcal{B}\Sigma_{2d}^*$ and the superdominance order} \label{sec:trop BSigma_2d}

In this subsection, we nearly describe the set $\trop(\B\Sigma_{2d}^*)$ in terms of the superdominance order. First, we prove the technical Lemma \ref{le:trop of sos}, which bounds the tropicalization of a \emph{spectrahedral cone}. Second, we discuss how partial symmetry reduction (as in \cite[Section~3.2]{acevedo2024power}) can be used to give a spectrahedral representation of $\B\Sigma_{2d}^*$ that works well for tropicalizing the cone. Third, we combine partial symmetry reduction with the lemma to prove that $\trop(\B\Sigma_{2d}^*)$ is a rational polyhedron (Proposition \ref{prop:sos rational polyhedron}), the inclusions $T_{2d}^{(1)} \supseteq \trop(\B\Sigma_{2d}^*)\supseteq T_{2d}^{(2)}$ (Proposition \ref{prop:tropsos*supsetT2}) and strict containment of $\trop(\B\P_{2d}^*)$ in $\trop(\B\Sigma_{2d}^*)$ for all $2d \geq 10$ (Theorem \ref{thm:tropsdiffer}).

\medskip

Recall that we work with $\{p_\lambda \, \, \mid \, \, \lambda \in \Lambda_{2d}^\varepsilon\}$ as a linear basis of $H_{n,2d}^{\B_n}$. Observe that $\B\Sigma_{2d}$ contains the nonnegative orthant because every even power sum is a sum of squares, implying that its dual cone is contained in the nonnegative orthant, i.e., $\B\Sigma_{2d}^*\subset \R_{\geq 0}^{\pi(d)}$. \medskip

In the following proposition, we observe that the coordinates of points in $\B\Sigma_{2d}^*$ are partially ordered, corresponding to the superdominance order.

\begin{proposition}\label{prop: sd sos*}
    Let $\varphi\in\B\Sigma_{2d}^*$ and $\la,\mu\vDash2d$. If $\la\succeq\mu$ then $\varphi_\la\ge\varphi_\mu$. In particular, $T_{2d}^{(1)}\supseteq\trop(\B\Sigma_{2d}^*)$.
\end{proposition}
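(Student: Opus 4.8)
The plan is to deduce the statement directly from Corollary~\ref{cor:sd->sos} together with conical duality. Assuming $\la\succeq\mu$ with $\la,\mu\vDash 2d$, Corollary~\ref{cor:sd->sos} gives that $p_\la-p_\mu$ is a sum of squares in every number of variables $n$; since $p_\la-p_\mu$ is an even symmetric form of degree $2d$, this places it in $\Sigma_{n,2d}^{\B}$ for all $n\ge 2d$, hence in $\B\Sigma_{2d}$. Working in the power sum basis $\{p_\nu:\nu\vDash 2d\}$, a functional $\varphi\in\B\Sigma_{2d}^*$ has coordinates $\varphi_\nu=\varphi(p_\nu)$, so applying the defining inequality of the dual cone to $p_\la-p_\mu\in\B\Sigma_{2d}$ yields $\varphi_\la-\varphi_\mu=\varphi(p_\la-p_\mu)\ge 0$, which is the first claim.

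For the ``in particular'' part, I would use that $\B\Sigma_{2d}^*\subseteq\R_{\ge 0}^{\pi(d)}$, which holds because every even power sum is a sum of squares and so $\B\Sigma_{2d}$ contains the nonnegative orthant. Given $y\in\trop(\B\Sigma_{2d}^*)$, choose sequences $x^{(m)}\in\B\Sigma_{2d}^*\cap\R_{>0}^{\pi(d)}$ and $\tau_m\in(0,1)$ with $\tau_m\to 0$ and $y=\lim_{m\to\infty}\log_{1/\tau_m}(x^{(m)})$. By the first part, $x^{(m)}_\la\ge x^{(m)}_\mu>0$ whenever $\la\succeq\mu$ in $\Lambda_{2d}^{\varepsilon}$; applying the increasing map $\log_{1/\tau_m}$ (base $>1$) and letting $m\to\infty$ gives $y_\la\ge y_\mu$. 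These are exactly the defining half-spaces of $T_{2d}^{(1)}$ (the case $k=1$ of Definition~\ref{def:sequence T2dk}), so $y\in T_{2d}^{(1)}$ and hence $\trop(\B\Sigma_{2d}^*)\subseteq T_{2d}^{(1)}$.

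I do not expect a genuine obstacle here; the argument is short and rests entirely on Corollary~\ref{cor:sd->sos}. The only mildly delicate point is the transfer of the coordinatewise (monomial) inequalities $\varphi_\la\ge\varphi_\mu$, valid on the cone $\B\Sigma_{2d}^*\subseteq\R_{\ge0}^{\pi(d)}$, to the linear inequalities $y_\la\ge y_\mu$ valid on its tropicalization, but this is precisely the standard mechanism by which pure binomial inequalities pass to tropicalizations that is recalled earlier in the paper.
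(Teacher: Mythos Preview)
Your proof is correct and follows essentially the same approach as the paper: use Corollary~\ref{cor:sd->sos} to place $\p_\la-\p_\mu$ in $\B\Sigma_{2d}$, apply $\varphi$ to get $\varphi_\la\ge\varphi_\mu$, and then pass to the tropicalization using that $\B\Sigma_{2d}^*\subseteq\R_{\ge0}^{\pi(d)}$ and that $\log$ is increasing. The only difference is cosmetic---you spell out the sequence definition of $\trop$, while the paper invokes monotonicity of $\log$ directly.
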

\begin{proof}
    Let $\{\rho_\nu\}_{\nu\vDash2d}$ be a dual basis for $\{\p_\nu\}_{\nu\vDash2d}$ and let $\lambda \succeq \mu$. By Corollary \ref{cor:sd->sos} then $\p_\la-\p_\mu\in\B\Sigma_{2d}$ and so $\varphi(\p_\la-\p_\mu)\ge0$. Hence
    \begin{align*}
        \varphi(\p_\la-\p_\mu)=\sum_{\nu\vDash2d}\varphi_\nu\rho_\nu(\p_\la-\p_\mu)=\varphi_\la-\varphi_\mu\ge0.
    \end{align*}
    We have $\B\Sigma_{2d}^*\subset\R_{\ge0}^{\pi(d)}$, and $\log$ is an increasing function, so $\log\varphi_\la\ge\log\varphi_\mu$ and $T_{2d}^{(1)}\supseteq\trop(\B\Sigma_{2d}^*)$.
\end{proof}

The sets $\B\Sigma_{n,2d}^*$ have spectrahedral representations that can be computed using symmetry reduction and a symmetry adapted basis $\mathcal{F}=\{f_{(\la,\mu),1},\ldots,f_{(\la,\mu),m_{(\la,\mu)}} \, \, \mid \, \, (\la,\mu) \vDash n\}$ of $H_{n,d}$ (see e.g. \cite[Sec.~4]{blekherman2021symmetric}). We present a brief explanation in the Appendix \ref{sec:appendix}. \medskip

Let $\sym_{\B_n} : H_{n,d} \to H_{n,d}^{\B}, f \mapsto \frac{1}{\mid \B_n \mid }\sum_{\sigma \in \B_n} \sigma \cdot f$ denote the Reynolds operator of the group $\B_n$, and define the matrix polynomial
\[
\mathbf{N}_n = (\sym_{\B_n}(f_{(\lambda,\mu),i} f_{(\lambda',\mu'),j}))_{(\lambda,\mu),i,(\lambda',\mu'),j},
\]
obtained by the symmetrization of pairwise products of elements in $\mathcal{F}$. Then, $f \in \B\Sigma_{n,2d}$ if and only if there exists a positive semidefinite matrix $A$ such that $f = \operatorname{Tr}(A \cdot \mathbf{N}_n)$ (see \cite[Cor.~4.4]{blekherman2021symmetric}). Thus, the set $\B\Sigma_{n,2d}$ is the \emph{psd projection} with respect to the matrix $\mathbf{N}_n$ (recall Definition \ref{def:psdprojection}). Moreover, the matrix $\mathbf{N}_n$ can be block-diagonalized because the symmetrization of the product of two basis elements from two non-isomorphic irreducible representations is zero. 
 \medskip

For the hyperoctahedral group (analogously to the symmetric group), one can uniformly choose the matrices $\mathbf{N}_n$. This is possible because there exists a symmetry-adapted basis $\mathcal{F}$ of $H_{d,d}$, which can be extended to $H_{n,d}$ by replacing any power sum in $\mathcal{F}$ with the identified power sum in $n \geq d$ variables (see Corollary \ref{cor:symbasis} for details). Then, for $n \to \infty$ any coefficient of the matrix $\mathbf{N}_n$ converges so that we can speak about the limit $\mathbf{N}$ of the matrices. The set $\B\Sigma_{2d}$ is then the psd projection of $\mathbf{N}$. \medskip

The following lemma bounds the tropicalization of a spectrahedral cone contained in the nonnegative orthant. It will be a key ingredient for computing the tropicalization of the cones $\B\Sigma_{2d}^*$. 

Let $L = (L_{ij}(\underline{X}))_{ij}$ an $N\times N$ symmetric matrix whose entries are linear forms $$L_{ij}(\underline{X}) =L_{ji}(\underline{X}) = \sum_{k=1}^s a_{ijk} X_k, \quad a_{ijk}\in\R,$$ and let $\ell_{ij}^+ := \{ k : a_{ijk} > 0\}, \ell_{ij}^- := \{ k : a_{ijk} < 0\}$ and $\ell_{ij} := \{ k : a_{ijk} \neq 0\}$. We use the convention $\max \{ a \in \emptyset\} := - \infty$. 

\begin{lemma} \label{le:trop of sos} 
Let $K := \{ x \in \R_{\geq 0}^s : L(x) \succeq 0\}$ and $T \subset \R^s$ be the set of points $x \in \R^s$ satisfying
\smallskip
\begin{itemize}
    \item[{(1)}] $\max \{x_k : k \in \ell_{ii}^+\} \geq \max \{ x_k : k \in \ell_{ii}^-\}$, for each $1 \leq i \leq N$, and
    \medskip
    \item[{(2)}] $\max \{x_k : k \in \ell_{ii}^+\} + \max \{x_k : k \in \ell_{jj}^+\} \geq 2 \max \{ x_k : k \in \ell_{ij}\}$, for each $ 1 \leq i < j \leq N$.
\end{itemize}
\smallskip
If for all $v \in \inti (T)$ all the inequalities in \emph{(1)} and \emph{(2)} are strict then $\clo \left( \inti (T) \right) \subset \trop \left( K \right) \subset T$.
\end{lemma}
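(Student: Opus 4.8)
The plan is to prove the two inclusions $\clo(\inti(T)) \subset \trop(K)$ and $\trop(K) \subset T$ separately, using the logarithmic-limit characterization of tropicalization recalled earlier: $y \in \trop(K)$ if and only if there are sequences $x^{(n)} \in K \cap \R_{>0}^s$ and $\tau_n \downarrow 0$ with $y = \lim_n \log_{1/\tau_n}(x^{(n)})$. For the easier inclusion $\trop(K) \subset T$, I would take $y \in \trop(K)$ with such a witnessing sequence $x^{(n)}$, write $x^{(n)}_k \approx \tau_n^{-y_k}$ up to subexponential factors, and examine the diagonal and $2\times 2$ principal minors of $L(x^{(n)}) \succeq 0$. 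Positive semidefiniteness forces $L(x^{(n)})_{ii} \geq 0$; since $L(x^{(n)})_{ii} = \sum_k a_{iik} x^{(n)}_k$, the dominant term on the positive side must asymptotically beat the dominant term on the negative side, which after taking $\log_{1/\tau_n}$ and passing to the limit yields inequality (1). Similarly, the $2\times 2$ minor condition $L_{ii}L_{jj} \geq L_{ij}^2$ gives, upon comparing leading exponents, inequality (2): the exponent of the left side is $\max\{x_k : k \in \ell_{ii}^+\} + \max\{x_k : k \in \ell_{jj}^+\}$ (using (1) to know $L_{ii}, L_{jj}$ are governed by their positive parts) and the exponent of the right side is $2\max\{x_k : k \in \ell_{ij}\}$. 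Care is needed with cancellations and with the $-\infty$ convention when $\ell_{ij}^{\pm}$ is empty, but these are bookkeeping matters.

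The harder inclusion is $\clo(\inti(T)) \subset \trop(K)$, and this is where the hypothesis that all inequalities in (1) and (2) are strict on $\inti(T)$ is essential. Given $v \in \inti(T)$, I would explicitly construct a point of $K \cap \R_{>0}^s$ close (in logarithmic scale) to $t^{-v}$ for large $t$, i.e. set $x_k := t^{-v_k}$ (or a slight perturbation thereof) and show $L(x) \succeq 0$ for $t$ large. The natural approach is to rescale: conjugate $L(x)$ by the diagonal matrix $D = \diag(t^{v_{i}/2 \cdot c_i})$ for suitable exponents $c_i$ chosen so that the diagonal entries of $D^{-1} L(x) D^{-1}$ all have the same order of magnitude, and then argue that as $t \to \infty$ this rescaled matrix converges (after dividing by the common scale) to a matrix that is diagonal with strictly positive diagonal entries — hence positive definite — with all off-diagonal entries tending to $0$. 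The strict inequalities in (1) guarantee the diagonal entries are dominated by a single positive monomial (so the limit diagonal is positive), and the strict inequalities in (2) guarantee each off-diagonal entry is of strictly smaller order than the geometric mean of the corresponding diagonal entries, hence vanishes in the limit; a continuity/perturbation argument (Gershgorin or just openness of the PSD cone) then gives $L(x) \succ 0$ for $t$ large. Letting $t \to \infty$ along $\tau_t = 1/t$ shows $v \in \trop(K)$, and since $\trop(K)$ is closed we get $\clo(\inti(T)) \subset \trop(K)$.

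The main obstacle I anticipate is the bookkeeping in the rescaling step: choosing the diagonal conjugation exponents $c_i$ correctly so that the conjugated matrix has a well-defined limit, handling the case where some $\ell_{ii}^+$ is empty (so $L_{ii}$ has no positive monomial and the diagonal entry cannot be made positive — one must check this cannot happen for $v \in \inti(T)$, presumably because strictness of (1) with $\max \emptyset = -\infty$ forces $\ell_{ii}^+ \neq \emptyset$), and dealing with entries $L_{ij}$ that are identically zero versus those that are merely subdominant. I would also need to confirm that the perturbation $x_k = t^{-v_k}$ can be taken genuinely inside $\R_{>0}^s$ and that subexponential corrections do not spoil the strict inequalities — this is exactly what the open condition on $\inti(T)$ buys us. The rest is a routine asymptotic analysis of monomials.
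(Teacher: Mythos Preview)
Your overall strategy is sound and matches the paper for the easy inclusion $\trop(K)\subset T$ (both argue via the $1\times1$ and $2\times2$ principal minors). For the hard inclusion you take a genuinely different route: the paper sets $x=t^v$ and shows directly, via the Leibniz expansion, that every principal minor of $L(t^v)$ is positive for large $t$, because the diagonal product $\prod_i L_{ii}(t^v)$ has strictly larger $t$-degree than any other generalized diagonal $\prod_i L_{i\sigma(i)}(t^v)$ (this is exactly what strict (1) and (2) give). Your diagonal-conjugation argument is equally valid and arguably cleaner: with $D=\diag(t^{-v_{q_i}/2})$ where $q_i$ realizes $\max\{v_k:k\in\ell_{ii}^+\}$, strict (1) makes each $(D L(t^v)D)_{ii}$ converge to a positive constant and strict (2) makes every off-diagonal entry tend to $0$, so the limit is positive definite. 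The paper's approach avoids choosing the conjugation exponents and works minor-by-minor; yours packages the same asymptotics into a single limit matrix. One concrete slip: with the tropicalization convention used here ($y=\lim\log_{1/\tau_n}(x^{(n)})$), you want $x_k=t^{v_k}$, not $t^{-v_k}$; as written your argument places $-v$ rather than $v$ in $\trop(K)$.
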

\begin{proof}
The conditions $(1)$ and $(2)$ follow for every element $v \in \trop \left( K \right)$ from the positive semidefiniteness of the $1 \times 1$ and $2 \times 2$ principal minors of the defining matrix $L$. Thus, we have $\trop \left( K \right) \subset T$. \\
Next, we prove $\clo \left( \inti (T) \right) \subset \trop (K) $. It suffices to show that any $v \in \inti(T)$ is contained in $\trop(K)$, since $\trop (K)$ is a closed set (\cite[Proposition~2.2]{alessandrini2013logarithmic}). Let $v \in \inti (T)$. We show that $t^v := (t^{v_1},\ldots,t^{v_s}) \in K$ for all $t>0$ sufficiently large. By assumption, all the inequalities in $(1)$ and $(2)$ are strict at $v$. We consider the diagonal entries of $L$ at $t^v$. For $1 \leq i \leq N$ let $q_i \in [n]$ be an index at which the maximum over all $v_j$ with $j \in \ell_{ii}^+$ is attained. Further be $m_i$ be the multiplicity of $v_{q_i}$ in $v$. Then $v_k - v_{q_i} < 0$ for all $k$ with $a_{iik} < 0$. Thus, for all $k \in \ell_{ii}^-$ we have $t^{v_k-v_{q_i}} \to 0$ for $t \to \infty$. Hence, for sufficiently large $t$ 
\begin{align*}
    \frac{L_{ii}(t^v)}{t^{v_{q_i}}} & = \sum_{k\in \ell_{ii}}  a_{iik}t^{v_k-v_{q_i}} = \sum_{k \in \ell_{ii}^+} a_{iik}t^{v_k-v_{q_i}} \geq m_ia_{iiq_i} > 0.
\end{align*} 
Now let $k \geq 2$ and consider the Leibniz formula of the leading $k \times k$ principal minor of $L(t^v)$, 
\begin{align*}
 \sum_{\sigma \in \mathcal{S}_k} \mbox{sgn} (\sigma) \prod_{i=1}^k L_{i\sigma(i)}(t^v).
\end{align*} 
We treat the product of linear forms in $t^v$ as univariate exponential polynomials in $t$ and claim 
\begin{align}\label{eq:1}
    \deg_t \left(\prod_{i=1}^k L_{ii}(t^v)\right) > \deg_t \left( \prod_{i=1}^k L_{i\sigma(i)}(t^v) \right)
\end{align}
for any $\sigma \in \mathcal{S}_k \setminus \{ \mbox{id}\}$. Equivalently to (\ref{eq:1}), since $L$ is symmetric \[\deg_t \left(\prod_{i=1}^k L_{ii}^2(t^v)\right) > \deg_t \left( \prod_{i=1}^k L_{i\sigma(i)}(t^v)L_{\sigma(i)i}(t^v) \right).\] The leading coefficient of the univariate exponential polynomial $\prod_{i=1}^k L_{ii}^2(t^v)$ equals a product of positive coefficients of each $L_{ii}$ by assumption (1). Combining (1) and (2) we obtain at $t^v$ 
\begin{align*}
    \deg_t \left( L_{ii}L_{\sigma(i)\sigma(i)} \right) &= \deg_t \left( L_{ii} \right) + \deg_t \left( L_{\sigma(i)\sigma(i)} \right) \\
   &  > 2 \deg_t \left( L_{i\sigma(i)} \right)  =\deg_t \left(  L_{i\sigma(i)}L_{\sigma(i)i} \right).
\end{align*}
Therefore, $ \sum_{\sigma \in \mathcal{S}_k} \mbox{sgn} (\sigma) \prod_{i=1}^k L_{i\sigma(i)}(t^v) > 0$ for all sufficiently large $t$.
Analogously, for any $k \times k$ principal minor of $L(t^v)$ the product of the diagonal entries has a degree larger than the product of any other generalized diagonal obtained from a permutation of the indices. Thus, for all $t$ sufficiently large we have $t^v \in K$. 
\end{proof}

    In general, the set $T$ in Lemma \ref{le:trop of sos} is not necessarily convex, since the inequalities over the max do not need to split into a finite sum of linear inequalities. However, $T$ is a polyhedral fan, i.e., a polyhedral complex in which every polyhedron is a cone from the origin {\cite{alessandrini2013logarithmic}}. \medskip

    \begin{example}\label{ex:trop(S6)}
    We present an example where the assumption of all inequalities being strict in Lemma \ref{le:trop of sos} does not hold and the tropicalization of the spectrahedron differs indeed from the set $T$. The set $\B\Sigma_6$ equals the psd projection with respect to the matrices
\begin{align} \label{eq:matrices}
\left(\begin{array}{cc}\p_{(2^3)} &  \p_{(4,2)} \\ \p_{(4,2)}& \p_{(6)}   \end{array}  \right), ~ 
   \left(\p_{(2^3)} -3\p_{(4,2)} +2\p_{(6)}\right), ~ 
      \left( \p_{(4,2)}-\p_{(6)} \right)
\end{align}
(see Example \ref{ex:partialsym Bsos6}).
For the spectrahedron $K=\B\Sigma_6^*$ represented by (\ref{eq:matrices}) we apply Lemma \ref{le:trop of sos}:
\begin{align}
\label{eq:strict} T & = \{ 
 (y_{(2^3)},y_{(4,2)},y_{(6)}) \in \R^3  \, \, \mid \, \,  y_{(2^3)}+y_{(6)} \geq 2y_{(4,2)}, ~
       \max \{y_{(2^3)}, y_{(6)}\} \geq y_{(4,2)},~
       y_{(4,2)} \geq y_{(6)}  \} \\
 \nonumber      & = \{ 
 (y_{(2^3)},y_{(4,2)},y_{(6)}) \in \R^3  \, \, \mid \, \,  y_{(2^3)}+y_{(6)} \geq 2y_{(4,2)}, ~
       y_{(4,2)} \geq y_{(6)}  \}
\end{align}
The set $T$ is a full-dimensional polyhedron, and so $T=\clo(\inti T)$. All three inequalities in (\ref{eq:strict}), i.e. the inequalities (1) and (2) from Lemma \ref{le:trop of sos}, are strict for any $v \in \inti T$ and thus $\trop(\B\Sigma_6^*)=T$.
Using the conjugate transpose of the $2\times 2$ matrix in (\ref{eq:matrices}) by $\left( \begin{array}{cc}
    1     & 0 \\
     1    & -1
    \end{array} \right) $ we have that
$\B\Sigma_6$ is also the psd projection of the matrices 
    $$\left( \begin{array}{cc}
    \p_{(2^3)} &  \p_{(2^3)}-\p_{(4,2)} \\ \p_{(2^3)}-\p_{(4,2)}
      & \p_{(2^3)}+\p_{(6)}-2\p_{(4,2)}   
    \end{array} \right),  ~ 
   \left(\p_{(2^3)} -3\p_{(4,2)} +2\p_{(6)}\right), ~ 
      \left( \p_{(4,2)}-\p_{(6)} \right).$$
We compute the set $T$ in Lemma \ref{le:trop of sos} with this alternative description of $K=\B\Sigma_6^*$. Then $T$ is the set of all $(y_{(2^3)},y_{(4,2)},y_{(6)})\in\R^3$ such that
\begin{align*}
    \max\{y_{(2^3)},y_{(6)}\}\ge y_{(4,2)},~
    y_{(2^3)}+\max\{y_{(2^3)},y_{(6)}\}\ge 2\max\{y_{(2^3)},y_{(4,2)}\},~
    y_{(4,2)}\ge y_{(6)}.
\end{align*}
We deduce $y_{(2^3)}\ge y_{(6)}$ and $T = \{(y_{(2^3)},y_{(4,2)},y_{(6)}) \in \R^3 : y_{(2^3)} \ge y_{(4,2)} \ge y_{(6)} \}.$ The three inequalities of the forms (1) and (2) are
\begin{align*}
    y_{(2^3)}\ge y_{(4,2)},~   y_{(2^3)}\ge \max\{x_{(2^3)},y_{(4,2)}\},~
    y_{(4,2)}\ge y_{(6)}.
\end{align*}
 However, the first inequality implies that the second cannot be strict for any point in $T$. So Lemma \ref{le:trop of sos} cannot be applied, and indeed, we have $\trop  \left(\B\Sigma_6^* \right)\subsetneq T =\clo(\inti T)$.
\end{example}

    Instead of considering full symmetry reduction, we apply the idea of partial symmetry reduction \cite[Section~3.2]{acevedo2024power}. Together with the superdominance order this allows us to prove that $\trop(\B\Sigma_{2d}^*)$ is always a rational polyhedron. \medskip
    
The idea of partial symmetry reduction is as follows. For $n \geq d$, up to replacing power sums by those in another number of variables, the same symmetry adapted basis of the $\B_n$-modules $H_{n,d}$ can be used (Corollary~ \ref{cor:symbasis}). Any polynomial in this symmetry-adapted basis can be written as a sum of terms of the form $X^\a p_\la$, where $\la$ is an even partition. The form $p_\la$ is in $n$ variables, $\a \in \N^n$, and $\alpha_{d+1}=\ldots=\alpha_n=0$, but $\a$ is not necessarily in $(2\N)^n$. Therefore, products of any two such terms from symmetry basis elements remain of the same form. We have 
\[\sym_{\B_n}(X^\a p_\la)=\sym_{\B_n}(X^\a)p_\la= \begin{cases}
   M_\a p_\la, & \text{if $\a \in (2\N)^n$}, \\
   0, & \text{else},
\end{cases}\] where $M_\alpha$ is the corresponding term-normalized monomial symmetric polynomial. To \emph{denormalize} $M_\alpha$ we instead rescale the terms as $n^{\ell(\a)/2}X^\alpha p_\la$, where $\ell(\alpha)$ denotes the size of the support of $X^\alpha$. We then express the monomial symmetric polynomial $m_\a$ in the power sum basis. When expressing $p_\la$ in the monomial basis we get precisely $\la$ and partitions that are refined by $\la$ which one observes by expanding $p_\la=(X_1^{\la_1}+\dots)\cdots(X_1^{\la_l}+\dots)$. Therefore $\la$ is the maximum in the superdominance order among the partitions appearing in $p_\la$. This implies the same happens for monomial symmetric functions expressed in power sums. Then when expressing $m_{\a+\beta}p_{\la\mu}$ in the power sum basis, the maximum partition in the superdominance order that occurs is the partition $(\a+\beta)\la\mu$ which we verify in Corollary \ref{cor:refinement}. \medskip

We verify that the entries of the partial symmetrization matrix are well-defined at the limit when $n\to\infty$. For terms $X^\alpha p_\la$ and $X^{\a'}p_{\la'}$ we define $\varepsilon_{(\a,\la),(\a',\la')}$ as $1$, if $\a+\a' \in (2\N)^n$, and otherwise as $0$. For $n$ variables this matrix is 
\begin{align*}
\mathbf{M}_n&:=(\varepsilon_{(\a,\la),(\a',\la')} \cdot n^{(\ell(\a)+\ell(\a'))/2}M_{\a+\a'}p_{\la\la'})_{(\a,\la),(\a',\la')}\\
&\sim(\varepsilon_{(\a,\la),(\a',\la')} \cdot n^{(\ell(\a)+\ell(\a'))/2}\frac{c_\a c_{\a'}}{n^{\ell(\a+\a')}}m_{\a+\a'}p_{\la\la'})_{(\a,\la),(\a',\la')}
\end{align*}
since $M_\a\sim\frac{c_\a}{n^{\ell(\a)}}m_\a$ for a constant $c_\a>0$ depending only on $\a$. But $\frac12(\ell(\a)+\ell(\a'))\le\ell(\a+\a')$ with equality if and only if $\a$ and $\a'$ have the same support, so we have \[\lim_{n\to\infty}\sym_{\B_n}(n^{(\ell(\alpha)+\ell(\alpha'))/2}X^{\a+\a'}p_\la p_{\la'})=0\] otherwise. Moreover, as observed above, the symmetrization $\sym_{\B_n}(n^{(\ell(\alpha)+\ell(\alpha'))/2}X^{\a+\a'}p_\la p_{\la'})$ is zero also when $\a+\a'\notin2\N$.

\begin{example}\label{ex:partialsym Bsos6}
    We compute $\B\Sigma_6$ with partial symmetry reduction. 
    
    Fix a number of variables $n$. We construct a row vector $\v$ for each set of terms of the form $X_1^{\a_1}X_2^{\a_2}X_3^{\a_3}p_\la$ where the $X^\a$ have the same support, each $\alpha_i$ has fixed parity, $\la$ is an even partition (possibly empty), and $|\a|+|\la|=3$. We rescale each row vector $\v$ by $n^{\ell(\a)/2}$, compute $\sym_{\B_n}(\v^\top\v)$ and take the limit when $n$ goes to infinity.
    \begin{alignat*}{3}
        &\v_1&&=n^{1/2}(X_1p_2,X_1^3) &&\Rightarrow\sym_{\B_n}(\v_1^\top\v_1)=\begin{bmatrix}
            p_{(2^3)} & p_{(4,2)}\\
            p_{(4,2)} & p_{(6)}
        \end{bmatrix}\to\begin{bmatrix}
            \p_{(2^3)} & \p_{(4,2)}\\
            \p_{(4,2)} & \p_{(6)}
        \end{bmatrix}\\
        &\v_2&&=nX_1X_2^2 &&\Rightarrow\sym_{\B_n}(\v_2^\top\v_2)=\frac{n^2}{2\binom{n}2}m_{(4,2)}\to\m_{(4,2)}=\p_{(4,2)}-\p_{(6)}\\
        &\v_3&&=n^{3/2}X_1X_2X_3 &&\Rightarrow\sym_{\B_n}(\v_3^\top\v_3)=\frac{n^3}{\binom{n}3}m_{(2^3)}\to3!\m_{(2^3)}=\p_{(2^3)}-3\p_{(4,2)}+2\p_{(6)}
    \end{alignat*}

and so $\B\Sigma_6=\Pi\left(\begin{bmatrix}
            \p_{(2^3)} & \p_{(4,2)}\\
            \p_{(4,2)} & \p_{(6)}
        \end{bmatrix},\,\,\,\p_{(4,2)}-\p_{(6)},\,\,\,\p_{(2^3)}-3\p_{(4,2)}+2\p_{(6)}\right)$.     
\end{example}

We take the following result from \cite[Equation~(3.5)]{egecioglu1991brick} (see also \cite[Proposition~4.13]{zabrocki2015introduction}), and use it to prove the polyhedrality of $\trop(\B\Sigma_{2d}^*)$. 
\begin{lemma}\label{lem:refinement}
    Let $\la\vdash k$. Then $m_\la=\sum_{\mu\vdash k}E_{\la\mu}p_\mu$ where $E_{\la\mu}$ is a constant depending only on $\la$ and $\mu$. Moreover, $E_{\la\mu}\ne0$ if and only if $\mu=\la$ or $\mu$ is obtained by successively merging parts of $\la$, and the sign of $E_{\la\mu}$ is $(-1)^{\ell(\la)-\ell(\mu)}$.
\end{lemma}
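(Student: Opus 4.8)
The plan is to route through the \emph{augmented monomial symmetric function} $\widetilde{m}_\la := \big(\prod_{j\ge 1} m_j(\la)!\big)\, m_\la$, where $m_j(\la)$ is the number of parts of $\la$ equal to $j$, and to invert the (unitriangular) transition between power sums and augmented monomials by M\"obius inversion over the partition lattice. First I would establish a power-sum expansion: for any partition $\mu$, expand $p_\mu = \prod_{i=1}^{\ell(\mu)}\big(\sum_{j\ge 1}X_j^{\mu_i}\big)$ and collect the resulting monomials according to the set partition $\sigma$ of $[\ell(\mu)]$ that records which of the $\ell(\mu)$ factors contribute a power of the same variable. For a fixed $\sigma$, summing over the injective ways of assigning distinct variables to the blocks of $\sigma$ produces exactly $\widetilde{m}_{\mu^\sigma}$, where $\mu^\sigma$ is the partition whose parts are the block sums $\sum_{i\in B}\mu_i$, $B\in\sigma$. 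Hence
\[ p_\mu \;=\; \sum_{\sigma\in\Pi_{\ell(\mu)}} \widetilde{m}_{\mu^\sigma}, \]
with $\Pi_m$ the set-partition lattice of $[m]$, ordered so that $\hat 0$ is the all-singletons partition; note $\ell(\mu^\sigma)=|\sigma|$, that $\mu^\sigma$ is always obtained from $\mu$ by merging parts, and that $\sigma=\hat 0$ recovers $\mu$ itself.

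Next I would fix $\la$ with $\ell:=\ell(\la)$ parts and apply the previous identity to $\mu=\la^\tau$ for each $\tau\in\Pi_\ell$; using the standard isomorphism between the interval $[\tau,\hat 1]\subset\Pi_\ell$ and $\Pi_{|\tau|}$ this reads $p_{\la^\tau}=\sum_{\sigma\ge\tau}\widetilde{m}_{\la^\sigma}$, an identity of functions on the poset $\Pi_\ell$. M\"obius inversion then gives $\widetilde{m}_\la=\sum_{\sigma\in\Pi_\ell}\mu_{\Pi_\ell}(\hat 0,\sigma)\,p_{\la^\sigma}$, and substituting the classical value $\mu_{\Pi_\ell}(\hat 0,\sigma)=\prod_{B\in\sigma}(-1)^{|B|-1}(|B|-1)!$ and dividing by the positive constant $\prod_j m_j(\la)!$ produces $m_\la=\sum_\mu E_{\la\mu}\,p_\mu$ with
\[ E_{\la\mu}=\frac{1}{\prod_j m_j(\la)!}\,\sum_{\substack{\sigma\in\Pi_\ell \\ \la^\sigma=\mu}}\ \prod_{B\in\sigma}(-1)^{|B|-1}(|B|-1)!. \]

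Finally I would read off the two claims. A partition $\mu$ contributes to the sum defining $E_{\la\mu}$ exactly when $\mu=\la^\sigma$ for some $\sigma\in\Pi_\ell$, i.e.\ precisely when $\mu=\la$ or $\mu$ is obtained from $\la$ by successively merging parts; otherwise the indexing set is empty and $E_{\la\mu}=0$. When it is nonempty, every summand has the same sign $\prod_{B\in\sigma}(-1)^{|B|-1}=(-1)^{\ell-|\sigma|}=(-1)^{\ell(\la)-\ell(\mu)}$, since $|\sigma|=\ell(\la^\sigma)=\ell(\mu)$; therefore there is no cancellation, $E_{\la\mu}\ne 0$, and $\operatorname{sgn}(E_{\la\mu})=(-1)^{\ell(\la)-\ell(\mu)}$, which is the assertion.

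The only point needing genuine care is the bookkeeping in the first step --- verifying that for a fixed set partition $\sigma$ the injective-assignment sum over its blocks equals the augmented monomial $\widetilde{m}_{\mu^\sigma}$, and that grouping the monomials of $p_\mu$ by $\sigma$ is both exhaustive and disjoint. Everything afterwards is a formal application of M\"obius inversion together with the well-known M\"obius function of the partition lattice (see, e.g., Stanley, \emph{Enumerative Combinatorics I}). As an alternative to the lattice-theoretic route one can prove the support and sign statements directly by induction on $\ell(\la)$, peeling off one merge at a time and tracking signs, but the M\"obius computation is cleaner.
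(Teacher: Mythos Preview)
Your proof is correct. The paper itself does not prove this lemma; it simply cites \cite{egecioglu1991brick} (Equation~(3.5)) and \cite{zabrocki2015introduction} (Proposition~4.13) and moves on. So there is no ``paper's proof'' to compare against in the strict sense.

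That said, your route and the cited reference are genuinely different in flavor. E\u{g}ecio\u{g}lu--Remmel obtain the transition coefficients $E_{\la\mu}$ combinatorially via \emph{brick tabloids}: they interpret $|E_{\la\mu}|$ as a count of tabloids and read off the sign from a parity statistic on these objects. Your argument instead goes through the identity $p_\mu=\sum_{\sigma\in\Pi_{\ell(\mu)}}\widetilde m_{\mu^\sigma}$ and inverts it by M\"obius inversion on the set-partition lattice, in the spirit of Doubilet's work on symmetric functions. The advantage of your approach is that the sign and the support statement fall out immediately from the known M\"obius function $\mu_{\Pi_\ell}(\hat 0,\sigma)=\prod_{B\in\sigma}(-1)^{|B|-1}(|B|-1)!$, with no further combinatorics needed; the brick-tabloid approach has the advantage of giving a manifestly positive combinatorial formula for $|E_{\la\mu}|$, which your formula (a sum of equal-sign terms) also gives, though less explicitly. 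Either way, the conclusion the paper actually uses---Corollary~\ref{cor:refinement} on the $\succeq$-maximal partitions in $A_\la^+$ and $A_\la^-$---follows at once.
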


For $\la\vDash2d$ let $A_\la^+:=\{\mu\vDash2d:E_{\la\mu}>0\}$, $A_\la^-:=\{\mu\vDash2d:E_{\la\mu}<0\}$, and $A_\la:=\{\mu\vDash2d:E_{\la\mu}\ne0\}$. 

\begin{corollary}\label{cor:refinement}
    With respect to the superdominance order, the largest partition in $A_\la^+$ and in $A_\la$ is $\la$, and the largest partition in $A_\la^-$ is $\la^*$.
\end{corollary}
\begin{proof}
By Lemma \ref{lem:refinement}, we have $E_{\lambda\lambda} > 0$. Since all other partitions $\mu \in A_\lambda$ are obtained by successively merging parts of $\lambda$, we have $\lambda \succ \mu$, making $\lambda$ the largest partition in $A_\lambda$ and hence in $A_\lambda^+$.\smallskip

Among the partitions $\mu\in A_\la^-$ of length $\ell(\la)-1$, the partition $\la^*$ is the largest, because these come from merging exactly two parts of $\la$. Partitions of smaller length in $A_\la^-$ come from merging partitions of length at most $\ell(\la)-1$, and so they are also smaller than $\la^*$. 
\end{proof}

\begin{proposition} \label{prop:sos rational polyhedron}
The set $\trop(\B\Sigma_{2d}^*)$ is a rational polyhedron.
\end{proposition}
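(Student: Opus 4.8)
The plan is to apply Lemma~\ref{le:trop of sos} to a spectrahedral representation of $\B\Sigma_{2d}^*$ obtained by partial symmetry reduction, and then to argue that the polyhedral fan $T$ produced by the lemma is actually a single polyhedral cone, by exploiting the superdominance structure of the entries of the symmetrization matrix. First I would fix, for $n\ge d$, a symmetry-adapted basis $\mathcal{F}$ of $H_{n,d}$ whose elements are sums of terms $X^\a p_\la$ with $\la$ even and whose power sums can be uniformly extended to any $n\ge d$ (Corollary~\ref{cor:symbasis}); after rescaling terms $X^\a p_\la$ by $n^{\ell(\a)/2}$ and symmetrizing, the matrix $\mathbf{M}_n$ converges entrywise to a limit matrix $\mathbf{M}$ whose entries are (up to positive constants) sums $m_{\a+\a'}p_{\la\la'}$ over pairs of terms with equal support, and $0$ otherwise. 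As in Example~\ref{ex:partialsym Bsos6}, $\B\Sigma_{2d}$ is then the psd projection with respect to the blocks of $\mathbf{M}$, so $\B\Sigma_{2d}^*$ is the spectrahedral cone cut out by the positive semidefiniteness of these blocks, a matrix $L$ in the variables $\p_\mu$, $\mu\vDash 2d$. I would then invoke Lemma~\ref{le:trop of sos} with $K=\B\Sigma_{2d}^*$ to conclude $\clo(\inti T)\subseteq\trop(\B\Sigma_{2d}^*)\subseteq T$, where $T$ is the fan defined by the max-inequalities (1) and (2).

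The key point is that each diagonal entry $L_{ii}$ of $L$, being (up to a positive scalar) $m_\a p_\la=\sum_{\mu} E_{\a\mu}p_{\mu\la}$, has, by Lemma~\ref{lem:refinement} and Corollary~\ref{cor:refinement}, a unique superdominance-maximal partition among those with nonzero coefficient, namely $\a\la$ itself, and this maximal partition carries a positive coefficient $E_{\a\a}>0$. Hence in the notation of Lemma~\ref{le:trop of sos}, $\max\{x_k : k\in\ell_{ii}^+\}=x_{\a\la}$ is attained at this single distinguished index whenever $x$ is superdominance-monotone, and similarly $\max\{x_k:k\in\ell_{ii}^-\}=x_{(\a\la)^*}$ (from Corollary~\ref{cor:refinement}, the largest partition with negative coefficient is $(\a\la)^*$). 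So on the region where the coordinates respect the superdominance order — which by Proposition~\ref{prop: sd sos*} contains $\trop(\B\Sigma_{2d}^*)$, and which is itself the rational polyhedral cone $T_{2d}^{(1)}$ — each of the max-expressions in (1) and (2) collapses to a single coordinate (the one indexed by the superdominance-maximum of the relevant set of partitions), and inequalities (1) and (2) become genuine linear inequalities. Therefore $T\cap T_{2d}^{(1)}$ is a rational polyhedral cone, and since $\trop(\B\Sigma_{2d}^*)\subseteq T\cap T_{2d}^{(1)}$, the lemma gives $\clo(\inti(T\cap T_{2d}^{(1)}))\subseteq\trop(\B\Sigma_{2d}^*)\subseteq T\cap T_{2d}^{(1)}$.

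To finish I would need the hypothesis of Lemma~\ref{le:trop of sos} that all inequalities in (1), (2) are strict on the interior — but on $T\cap T_{2d}^{(1)}$, once the maxes are resolved to single coordinates indexed by distinct partitions (the off-diagonal index $\a\a'\,\la\la'$ in (2) is strictly smaller in superdominance than the diagonal maxima $\a\la$ and $\a'\la'$, since merging parts strictly decreases superdominance unless the supports coincide, cf.\ Corollary~\ref{cor:refinement}), the inequalities are strict at interior points of the cone. This yields $\trop(\B\Sigma_{2d}^*)=\clo(\inti(T\cap T_{2d}^{(1)}))$, a rational polyhedral cone. The main obstacle I anticipate is bookkeeping: verifying carefully that (a) the supports/signs $\ell_{ij}^\pm$ of the entries of the limit matrix $L$ are correctly described by the superdominance extrema of $A_{\a+\a'}$ fused with $\la\la'$, including the degenerate entries that vanish in the limit (which contribute no constraint), and (b) when two distinct diagonal blocks or entries share the same superdominance-maximal partition, so that the "single coordinate" reduction is uniform across the whole cone rather than piecewise; handling these edge cases is where the argument could become delicate, and one may need to restrict attention to $T_{2d}^{(1)}$ from the outset (justified by Proposition~\ref{prop: sd sos*}) precisely to make the max-resolution canonical.
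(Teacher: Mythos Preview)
Your approach is essentially the paper's: use partial symmetry reduction to get the spectrahedral description of $\B\Sigma_{2d}^*$, then combine Lemma~\ref{le:trop of sos} with Corollary~\ref{cor:refinement} so that the max-constraints collapse to single linear inequalities indexed by the superdominance-maximal partitions. The paper does not explicitly intersect with $T_{2d}^{(1)}$ but argues directly that $T$ is a polyhedron; your route through $T\cap T_{2d}^{(1)}$ is a legitimate way to make that collapse precise.

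One step you omit and the paper supplies is full-dimensionality: to conclude $\clo(\inti(T\cap T_{2d}^{(1)}))=T\cap T_{2d}^{(1)}$ you need the polyhedron to be full-dimensional, and the paper secures this by observing that $\trop(\B\P_{2d}^*)\subseteq\trop(\B\Sigma_{2d}^*)$ contains the logarithmic image of an open ball in $\R_{>0}^{\pi(d)}$ (via the Hadamard property and \cite[Lemma~2.2]{blekherman2022tropicalization}). Also a small slip: for a diagonal entry $m_\a p_\la$, the superdominance-maximal partition with negative coefficient is $\a^*\la$, not $(\a\la)^*$; these differ whenever one of the two largest parts of $\a\la$ comes from $\la$, though this does not affect your argument.
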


\begin{proof}
    To apply Lemma \ref{le:trop of sos} we first show that the set $\trop(\B\Sigma_{2d}^*)$ is full-dimensional in $\R^{\pi(d)}$. Note that $\B\P_{2d}^*$ is a full-dimensional convex cone contained in $\R_{\geq 0}^{\pi(d)}$ which has Hadamard property and contains the all-ones vector. Thus, by \cite[Lemma~2.2]{blekherman2022tropicalization} we have $\trop(\B\P_{2d}^*) = \clo ( \conv (\log (\B\P_{2d}^*)))$. In particular, the logarithmic image of a full-dimensional ball in $\R_{>0}^{\pi(d)}$ lies in $\trop(\B\P_{2d}^*) \subset \trop(\B\Sigma_{2d}^*)$ which shows the full-dimensionality. 
    
    We use partial symmetry reduction and the representation of $\B\Sigma_{2d}$ as psd projection with respect to the matrix $\mathbf{M}=\lim_{n \to \infty} \mathbf{M}_n$. By the discussion above all entries of the matrix $\mathbf{M}$ are $0$ or of the form $c_\alpha c_{\a'} \mathfrak{m}_{\a+\a'}\p_\la \p_{\la'} $, for some positive constants $c_\a,c_{\a'}$, and the monomial symmetric function $\mathfrak{m}_{\a+\a'}$. We express each coefficient of $\mathbf{M}$ in the power sum basis and apply Lemma \ref{lem:refinement}. We obtain that the max-inequalities in Lemma \ref{le:trop of sos} become linear inequalities of the form $x_i\ge x_j$ or $x_i+x_j\ge2x_k$ by Corollary \ref{cor:refinement}. This guarantees that $T$ is a full-dimensional polyhedron and so all inequalities are strict for the interior of $T$. Moreover, since $T=\clo (\inti T)$ we have $\trop(\B\Sigma_{2d}^*)=T$. 
\end{proof}

Based on the superdominance order and the polyhedra $T_{2d}^{(k)}$, we can show a set-theoretic containment relationship for $\trop(\B\Sigma_{2d}^*)$.

\begin{proposition}\label{prop:tropsos*supsetT2}
    We have $T_{2d}^{(1)} \supseteq \trop(\B\Sigma_{2d}^*)\supseteq T_{2d}^{(2)}$.
\end{proposition}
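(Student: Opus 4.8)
The plan is to prove the two containments separately, the first being immediate from earlier work and the second requiring the spectrahedral description coming from partial symmetry reduction together with the combinatorics of the superdominance order.

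\textbf{The containment $T_{2d}^{(1)} \supseteq \trop(\B\Sigma_{2d}^*)$.} This is exactly the content of Proposition \ref{prop: sd sos*}: if $\varphi \in \B\Sigma_{2d}^*$ and $\la \succeq \mu$ then $\varphi_\la \ge \varphi_\mu$, and since $\B\Sigma_{2d}^* \subset \R_{\ge 0}^{\pi(d)}$ and $\log$ is increasing, the inequality $y_\la \ge y_\mu$ passes to $\trop(\B\Sigma_{2d}^*)$. As $T_{2d}^{(1)}$ is by Definition \ref{def:sequence T2dk} precisely the cone cut out by all inequalities $y_\la \ge y_\mu$ with $\la \succeq \mu$, we get $T_{2d}^{(1)} \supseteq \trop(\B\Sigma_{2d}^*)$ for free.

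\textbf{The containment $\trop(\B\Sigma_{2d}^*) \supseteq T_{2d}^{(2)}$.} Here I would use the spectrahedral representation of $\B\Sigma_{2d}$ as the psd projection with respect to the limit matrix $\mathbf{M} = \lim_{n\to\infty}\mathbf{M}_n$ obtained by partial symmetry reduction, and then dualize: $\B\Sigma_{2d}^*$ is the spectrahedral cone $\{\varphi \in \R^{\pi(d)} : \mathbf{M}(\varphi) \succeq 0\}$ where $\mathbf{M}(\varphi)$ denotes substituting the coordinates of $\varphi$ for the power-sum variables $\p_\la$ appearing in the entries of $\mathbf{M}$ (block-diagonalized). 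Now apply Lemma \ref{le:trop of sos} to $K = \B\Sigma_{2d}^*$: as argued in the proof of Proposition \ref{prop:sos rational polyhedron}, every entry of $\mathbf{M}$ is either $0$ or of the form $c_\a c_{\a'} \mathfrak{m}_{\a+\a'}\p_\la\p_{\la'}$, and expressing $\mathfrak{m}_{\a+\a'}$ in the power sum basis via Lemma \ref{lem:refinement} and Corollary \ref{cor:refinement}, the unique superdominance-largest partition occurring in the diagonal entry indexed by $(\a,\la)$ is $\rho_{(\a,\la)} := (\a)\la$ (taking $\a$ even; the entry is $0$ otherwise), and the unique largest partition occurring in the off-diagonal entry $((\a,\la),(\a',\la'))$ is $(\a+\a')\la\la'$. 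Hence the $\max$-terms in Lemma \ref{le:trop of sos} are attained at these distinguished partitions, turning conditions (1) and (2) into honest linear inequalities; by the argument in Proposition \ref{prop:sos rational polyhedron} the resulting polyhedron $T$ equals $\trop(\B\Sigma_{2d}^*)$. So it suffices to show that every defining inequality of $T$ follows on $T_{2d}^{(2)}$. A condition of type (1) reads $y_{\rho_{(\a,\la)}} \ge y_{\mu}$ where $\mu = \la^*$-type partition coming from $A^-$ of the diagonal entry; since $\rho_{(\a,\la)} \succeq \mu$ (Corollary \ref{cor:refinement}), this holds already on $T_{2d}^{(1)} \supseteq T_{2d}^{(2)}$. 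A condition of type (2) reads
\[
y_{\rho_{(\a,\la)}} + y_{\rho_{(\a',\la')}} \ge 2 y_{\theta}, \qquad \theta := (\a+\a')\la\la',
\]
and I must check $\rho_{(\a,\la)}\,\rho_{(\a',\la')} \succeq \theta^{\circ 2}$, which is exactly $(\a)\la(\a')\la' \succeq \big((\a+\a')\la\la'\big)^{\circ 2}$. By Lemma \ref{lem:arquimedean} (cancellation) the common fusion factor $\la\la'\la\la'$ cancels, reducing to $(\a)(\a') \succeq (\a+\a')^{\circ 2}$ for the even tuples $\a,\a'$ sorted into partitions; and this is an instance of the elementary fact that merging a part of $\a$ with a part of $\a'$ (here all of $\a$ with all of $\a'$, done in two halves) moves down in superdominance — concretely it follows from Lemma \ref{lem:fusion} applied to $(a)(a') \succeq (a+a')$ repeatedly, exactly as $\la$ covers $\la^*$. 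Hence the type-(2) inequality of $T$ is among the defining inequalities of $T_{2d}^{(2)}$, so $T_{2d}^{(2)} \subseteq T = \trop(\B\Sigma_{2d}^*)$.

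\textbf{Main obstacle.} The bookkeeping in identifying $T$ with $\trop(\B\Sigma_{2d}^*)$ — namely verifying that in every entry of $\mathbf{M}$, expanded in the power sum basis, a \emph{single} partition is superdominance-maximal so that the $\max$ in Lemma \ref{le:trop of sos} is unambiguous — is the technical heart; but this is already done in the proof of Proposition \ref{prop:sos rational polyhedron} via Corollary \ref{cor:refinement}, so here I would simply cite it. The genuinely new point is the purely combinatorial claim that each resulting facet inequality of $T$ is implied on $T_{2d}^{(2)}$, i.e. that the relevant fusions $\rho_{(\a,\la)}\,\rho_{(\a',\la')} \succeq \theta^{\circ 2}$ and $\rho_{(\a,\la)} \succeq \theta'$ always hold; these reduce, after cancellation (Lemma \ref{lem:arquimedean}) and fusion (Lemma \ref{lem:fusion}), to the trivial one-step merges, so no essential difficulty remains once the reduction is set up correctly.
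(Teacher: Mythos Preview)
Your overall strategy is exactly the paper's: cite Proposition \ref{prop: sd sos*} for the first inclusion, and for the second use the partial-symmetry-reduction spectrahedral description, Lemma \ref{le:trop of sos}, and Corollary \ref{cor:refinement} to identify the facet inequalities of $T=\trop(\B\Sigma_{2d}^*)$, then check they lie in $T_{2d}^{(2)}$ via cancellation and fusion. So the plan is sound.

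However, there is a bookkeeping error that breaks the argument as written. The diagonal entry of $\mathbf{M}$ indexed by the degree-$d$ term $X^\a p_\la$ is the symmetrization of $(X^\a p_\la)^2=X^{2\a}p_{\la\la}$, hence (up to a positive constant) $\mathfrak{m}_{2\a}\p_{\la\la}$; by Corollary \ref{cor:refinement} its superdominance-maximal partition is $(2\a)\la\la\in\Lambda_{2d}^\varepsilon$, \emph{not} $(\a)\la$ (which is only a partition of $d$, and need not even be even). Your parenthetical ``taking $\a$ even; the entry is $0$ otherwise'' is also off: diagonal entries are never killed by parity, since $2\a$ is automatically even. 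With your identification the type-(2) check becomes $(\a)\la\cdot(\a')\la'\succeq\big((\a+\a')\la\la'\big)^{\circ 2}$, which compares partitions of $2d$ and $4d$ and is meaningless; the subsequent ``cancellation of $\la\la'\la\la'$'' cannot be performed because only one copy of $\la\la'$ is present on the left.

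Once you correct $\rho_{(\a,\la)}$ to $(2\a)\la\la$, the type-(2) inequality reads $y_{(2\a)\la\la}+y_{(2\a')\la'\la'}\ge 2y_{(\a+\a')\la\la'}$, and after cancelling $\la\la'\la\la'$ (Lemma \ref{lem:arquimedean}) you must verify $(2\a)(2\a')\succeq(\a+\a')(\a+\a')$; this follows by fusing the elementary relations $(2\a_i,2\a'_i)\succeq(\a_i+\a'_i,\a_i+\a'_i)$ via Lemma \ref{lem:fusion}. That is precisely the paper's argument. So the only fix needed is the doubling in the diagonal index; after that your proof coincides with the paper's.
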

\begin{proof}
The inclusion $T_{2d}^{(1)} \supseteq \trop(\B\Sigma_{2d}^*)$ was already proven in Proposition \ref{prop: sd sos*}. 
We show that type (1) inequalities $y_i\ge y_j$ and type (2) inequalities $y_i+y_j\ge2y_k$ from Lemma \ref{le:trop of sos} make sense in the superdominance order. The type (1) inequality from the diagonal entry $m_{\a}p_{\la}$ is $y_\mu\ge y_{\mu^*}$ where $\mu=\alpha\la$, and the type (2) inequality from a $2\times2$ block is $y_{(2\a)\la\la}+y_{(2\a')\la'\la'}\ge2y_{(\a+\a')\la\la'}$ which is an inequality of $T_{2d}^{(2)}$ because by Lemma \ref{lem:arquimedean} $$(2\a)(2\a')\la\la\la'\la'\succeq(\a+\a')(\a+\a')\la\la\la'\la'\iff(2\a)(2\a')\succeq(\a+\a')(\a+\a')$$ which follows by fusing all the $(2\a_i,2\a'_i)\succeq(\a_i+\a'_i,\a_i+\a'_i)$ by Lemma \ref{lem:fusion}.  
\end{proof}

Combining the previous Proposition with Proposition \ref{prop:T3subsetT2} and Theorem \ref{thm:tropmom} we show that tropicalizations of the limit cones are not equal for $2d \geq 10$.

\begin{theorem}\label{thm:tropsdiffer}
    $\trop(\B\Sigma_{2d}^*)\supsetneq\trop(\B\P_{2d}^*)$ for all $d\ge5$.
\end{theorem}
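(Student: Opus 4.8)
The plan is to derive the strict inclusion from the already-established chain of inclusions among the polyhedra $T_{2d}^{(k)}$ and the cone $\trop(\B\Sigma_{2d}^*)$. By Proposition~\ref{prop:tropsos*supsetT2} we have $\trop(\B\Sigma_{2d}^*) \supseteq T_{2d}^{(2)}$, and by Theorem~\ref{thm:tropmom} we have $\trop(\B\P_{2d}^*) = T_{2d}^{(\tau_d)}$ with $T_{2d}^{(k)} \supseteq T_{2d}^{(k+1)}$ for all $k$. So it suffices to exhibit a gap somewhere between $T_{2d}^{(2)}$ and $T_{2d}^{(\tau_d)}$: concretely, combining $\trop(\B\Sigma_{2d}^*) \supseteq T_{2d}^{(2)} \supseteq T_{2d}^{(3)} \supseteq \cdots \supseteq T_{2d}^{(\tau_d)} = \trop(\B\P_{2d}^*)$, any strict inclusion in this chain forces $\trop(\B\Sigma_{2d}^*) \supsetneq \trop(\B\P_{2d}^*)$.

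First I would invoke Proposition~\ref{prop:T3subsetT2}, which gives exactly $T_{2d}^{(2)} \supsetneq T_{2d}^{(3)}$ for all $d \geq 5$. Combined with the monotonicity $T_{2d}^{(3)} \supseteq \trop(\B\P_{2d}^*)$ (since $\trop(\B\P_{2d}^*) = T_{2d}^{(\tau_d)}$ and $\tau_d \geq 3$ by that same proposition, or simply by nestedness), we get $T_{2d}^{(2)} \supsetneq \trop(\B\P_{2d}^*)$. Then the containment $\trop(\B\Sigma_{2d}^*) \supseteq T_{2d}^{(2)}$ from Proposition~\ref{prop:tropsos*supsetT2} yields $\trop(\B\Sigma_{2d}^*) \supseteq T_{2d}^{(2)} \supsetneq \trop(\B\P_{2d}^*)$, which is the claim. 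Concretely, the witness point is the vector $\gamma_d = (0,-3,-5,-6,-7,-9,-9,\dots,-9) \in \R^{\pi(d)}$ constructed in the proof of Proposition~\ref{prop:T3subsetT2}: it lies in $T_{2d}^{(2)}$, hence in $\trop(\B\Sigma_{2d}^*)$, but it fails the inequality $y_1 - 3y_3 + y_5 + y_6 \geq 0$, which is valid on $T_{2d}^{(3)} \supseteq \trop(\B\P_{2d}^*)$, so $\gamma_d \notin \trop(\B\P_{2d}^*)$.

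There is essentially no obstacle here: the proof is a short assembly of three previously proven facts (Proposition~\ref{prop: sd sos*} / Proposition~\ref{prop:tropsos*supsetT2} for the outer inclusion, Proposition~\ref{prop:T3subsetT2} for the strictness, and Theorem~\ref{thm:tropmom} to identify $\trop(\B\P_{2d}^*)$ with a stable term $T_{2d}^{(\tau_d)}$ of the nested sequence). The only thing worth spelling out is that $\trop(\B\P_{2d}^*) \subseteq T_{2d}^{(3)}$, which holds because the sequence is non-increasing and stabilizes at $T_{2d}^{(\tau_d)}$ with $\tau_d \geq 3$, so $T_{2d}^{(3)} \supseteq T_{2d}^{(\tau_d)} = \trop(\B\P_{2d}^*)$. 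If one instead wants to avoid referring to the stabilization index, one can directly cite Corollary~\ref{cor:Tgood}: since $(2^d)(8,2^{d-4})(6,4,2^{d-5}) \succ (6,2^{d-3})^{\circ 3}$, the inequality $y_1 - 3y_3 + y_5 + y_6 \geq 0$ is valid on $T_{2d}^{(3)}$, and by Theorem~\ref{thm:tropmom}, $T_{2d}^{(3)} \supseteq \trop(\B\P_{2d}^*)$.

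\begin{proof}
Let $d \geq 5$. By Proposition~\ref{prop:tropsos*supsetT2} we have $\trop(\B\Sigma_{2d}^*) \supseteq T_{2d}^{(2)}$. By Proposition~\ref{prop:T3subsetT2}, $T_{2d}^{(2)} \supsetneq T_{2d}^{(3)}$, and by Theorem~\ref{thm:tropmom} the nested sequence $(T_{2d}^{(k)})_{k \geq 1}$ stabilizes at $T_{2d}^{(\tau_d)} = \trop(\B\P_{2d}^*)$ with $\tau_d \geq 3$. In particular, $T_{2d}^{(3)} \supseteq T_{2d}^{(\tau_d)} = \trop(\B\P_{2d}^*)$. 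Chaining these inclusions gives
\[
\trop(\B\Sigma_{2d}^*) \supseteq T_{2d}^{(2)} \supsetneq T_{2d}^{(3)} \supseteq \trop(\B\P_{2d}^*),
\]
so $\trop(\B\Sigma_{2d}^*) \supsetneq \trop(\B\P_{2d}^*)$. Explicitly, the point $\gamma_d = (0,-3,-5,-6,-7,-9,\dots,-9) \in \R^{\pi(d)}$ from the proof of Proposition~\ref{prop:T3subsetT2} lies in $T_{2d}^{(2)} \subseteq \trop(\B\Sigma_{2d}^*)$ but violates $y_1 - 3y_3 + y_5 + y_6 \geq 0$, an inequality valid on $T_{2d}^{(3)} \supseteq \trop(\B\P_{2d}^*)$; hence $\gamma_d \notin \trop(\B\P_{2d}^*)$.
\end{proof}
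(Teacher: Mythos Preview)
Your proof is correct and follows essentially the same approach as the paper: both chain together Proposition~\ref{prop:tropsos*supsetT2}, Proposition~\ref{prop:T3subsetT2}, and Theorem~\ref{thm:tropmom} to obtain $\trop(\B\Sigma_{2d}^*)\supseteq T_{2d}^{(2)}\supsetneq T_{2d}^{(3)}\supseteq\trop(\B\P_{2d}^*)$. Your addition of the explicit witness point $\gamma_d$ is a nice touch but not in the paper's own proof.
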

\begin{proof}
By Proposition \ref{prop:tropsos*supsetT2} we have $\trop(\B\Sigma_{2d}^*)\supseteq T_{2d}^{(2)}$ for all $d\ge3$, and by Proposition \ref{prop:T3subsetT2} we have $T_{2d}^{(2)}\supsetneq T_{2d}^{(3)}$ for all $d\ge5$. By Theorem \ref{thm:tropmom} we have $T_{2d}^{(3)}\supseteq\trop(\B\P_{2d}^*)$, so for all $d\ge5$,
\begin{align*}
    \trop(\B\Sigma_{2d}^*)\supseteq T_{2d}^{(2)}\supsetneq T_{2d}^{(3)}\supseteq\trop(\B\P_{2d}^*).
\end{align*}
\end{proof}

\begin{remark}\label{rem:T1 contains sos}
    By Proposition \ref{prop: sd sos*} we have $T_{2d}^{(1)}\supseteq\trop(\B\Sigma_{2d}^*)$. The inclusion is strict for $d\ge3$. It can be seen for $d=3$ in Example \ref{ex:trop(S6)}, and for $d\ge4$ it follows from the fact that 
    $y_{(2^d)}-2y_{(4,2^{d-2})}+y_{(4^2,2^{d-4})}\ge0$ is valid on $\trop(\B\Sigma_{2d}^*)$, but not on $T_{2d}^{(1)}$ because $(2^d)\succ(4,2^{d-2})\succ(4^2,2^{d-4})$. The validity on $\trop(\B\Sigma_{2d}^*)$ can be justified by considering the $2\times 2$ submatrices $\sym_{\B_n}(v^\top v)$ where $v=(p_2^k,p_2^{k-2}p_4)$ if $d=2k$, and $v=(\sqrt{n}x_1p_2^k,\sqrt{n}x_1p_2^{k-2}p_4)$ if $d=2k+1$ of the partial symmetrization matrices. 
\end{remark}

We have seen that $\trop(\B\Sigma_{2d}^*)=T_{2d}^{(2)}$ for $d=3,4,5$. From Remark \ref{rem:T1 contains sos} and Proposition \ref{prop:tropsos*supsetT2} we have $T_{2d}^{(1)}\supsetneq\trop(\B\Sigma_{2d}^*)\supseteq T_{2d}^{(2)}$ for all $d\ge3$. 

\begin{conjecture}\label{conj:tropsos}    
$\trop(\B\Sigma_{2d}^*)=T_{2d}^{(2)}$ for all $d\ge3$.
\end{conjecture}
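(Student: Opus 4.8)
The plan is to prove the still-missing inclusion $\trop(\B\Sigma_{2d}^*)\subseteq T_{2d}^{(2)}$, the reverse inclusion being Proposition~\ref{prop:tropsos*supsetT2}. By Proposition~\ref{prop:sos rational polyhedron} the cone $\trop(\B\Sigma_{2d}^*)$ is the polyhedron $T$ extracted by Lemma~\ref{le:trop of sos} from the $1\times1$ and $2\times2$ principal minors of the limit partial-symmetrization matrix $\mathbf M$; using Proposition~\ref{prop: sd sos*} (so $T\subseteq T_{2d}^{(1)}$) and Corollary~\ref{cor:refinement}, the max-inequalities of Lemma~\ref{le:trop of sos} collapse on $T$ to the superdominance inequalities $y_\nu\ge y_{\nu'}$ (for $\nu\succeq\nu'$) together with the inequalities
\[
y_{\overline{2a}\,\rho\rho}+y_{\overline{2a'}\,\rho'\rho'}\ge 2\,y_{\overline{a+a'}\,\rho\rho'},
\]
one for each pair of terms $X^a p_\rho$, $X^{a'}p_{\rho'}$ grouped together in the partial symmetrization (so $\rho,\rho'$ are even partitions, $|a|+|\rho|=|a'|+|\rho'|=d$, and $a$, $a'$ can be placed on a common support with matching parities; $\overline{\,\cdot\,}$ denotes sorting into a partition). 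By Farkas' lemma it then suffices to exhibit, for every facet $y_{\la^1}+y_{\la^2}\ge 2y_\mu$ of $T_{2d}^{(2)}$ (i.e.\ $\la^1\la^2\succeq\mu^{\circ2}$, with $\la^1,\la^2,\mu$ even partitions of $2d$), a nonnegative combination of these two families equal to it.

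It is enough to treat $\mu$ that are superdominance-maximal among even partitions $\sigma$ of $2d$ with $\sigma^{\circ2}\preceq\la^1\la^2$, the remaining cases following by superdominance descents of the target. Composing a single inequality of the second family with superdominance inequalities at both ends, the statement reduces to the combinatorial claim: whenever $\la^1\la^2\succeq\mu^{\circ2}$, there are even partitions $\delta^1\preceq\la^1$ and $\delta^2\preceq\la^2$ of $2d$ with writings $\delta^1=\overline{2a}\,\rho\rho$ and $\delta^2=\overline{2a'}\,\rho'\rho'$ (admissible in the above sense) such that $\overline{a+a'}\,\rho\rho'\succeq\mu$; indeed, then $y_{\la^1}+y_{\la^2}\ge y_{\delta^1}+y_{\delta^2}\ge 2y_{\overline{a+a'}\rho\rho'}\ge 2y_\mu$. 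The base case $\la^1\succeq\mu$ and $\la^2\succeq\mu$ is immediate from two superdominance inequalities, as is any case reachable with the template $\rho=\rho'=\emptyset$, where the move amounts to pairing up the parts of $\delta^1$ and $\delta^2$ and replacing each pair $(2a_k,2a'_k)$ by $(a_k+a'_k,a_k+a'_k)$ — superdominance-valid since $2\min(a_k,a'_k)\le a_k+a'_k$. In general one must first descend $\la^1,\la^2$ in $\succeq$ to partitions admitting compatible writings and then choose the pairing so as to push $\overline{a+a'}\rho\rho'$ as far up the superdominance order as possible — morally, so that it becomes a \emph{superdominance square root} of $\la^1\la^2$.

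The main obstacle is exactly this combinatorial claim: that the rigid family of moves coming from $\mathbf M$ (equal support size, matching parities, and $\rho,\rho'$ forced to be even) — composed with superdominance descents — already realizes the balancing encoded in $\la^1\la^2\succeq\mu^{\circ2}$. Proving it will require careful bookkeeping of the parts of odd versus even multiplicity in $\la^1$ and $\la^2$ (these govern what can enter $\rho\rho$ as opposed to $2a$) and of how part-mass can be transferred between the two partitions; one should in particular rule out, or handle separately, the configuration where neither $\la^1\succeq\mu$ nor $\la^2\succeq\mu$ (this appears not to occur, because $\mu^{\circ2}$ is itself highly balanced, but a proof is needed). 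An equivalent, possibly more tractable, reformulation is the quantitative statement that there is a constant $c=c(d)>0$ with $\varphi_{\la^1}\varphi_{\la^2}\ge c\,\varphi_\mu^{2}$ for all $\varphi\in\B\Sigma_{2d}^*$ and all $\la^1\la^2\succeq\mu^{\circ2}$; this can be approached via the spectrahedral description $\varphi(\mathbf M)\succeq0$ and the fact that, by Lemma~\ref{lem:refinement} and Corollary~\ref{cor:refinement}, the leading superdominance term in each entry of $\mathbf M$ carries a positive coefficient. Any such argument should be tested against the verified equalities $\trop(\B\Sigma_{2d}^*)=T_{2d}^{(2)}$ for $d=3,4,5$ and against $T_{2d}^{(1)}\supsetneq\trop(\B\Sigma_{2d}^*)$ for all $d\ge3$ (Remark~\ref{rem:T1 contains sos}).
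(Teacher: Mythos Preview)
This statement is a \emph{conjecture} in the paper; there is no proof to compare against. The paper verifies it computationally for $d=3,4,5$ (Proposition~\ref{prop:trop sos sextics}) and establishes the inclusion $\trop(\B\Sigma_{2d}^*)\supseteq T_{2d}^{(2)}$ in general (Proposition~\ref{prop:tropsos*supsetT2}), but leaves the reverse inclusion open. Your proposal is therefore not a proof but a plan of attack for an open problem, and you yourself flag the central step as unproven.

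Two concrete gaps deserve mention. First, the reduction to a \emph{single} type-2 move sandwiched between superdominance descents is itself unjustified. A facet $y_{\la^1}+y_{\la^2}\ge 2y_\mu$ of $T_{2d}^{(2)}$ need only be a conical combination of the defining inequalities of $T$; such a combination could involve several type-2 inequalities with carefully chosen weights, and there is no a priori reason it collapses to the template $y_{\la^1}\ge y_{\delta^1}$, $y_{\la^2}\ge y_{\delta^2}$, one type-2 step, then $y_{\overline{a+a'}\rho\rho'}\ge y_\mu$. The computed cases $d\le 5$ are too small to rule out this possibility. Second, the combinatorial claim itself—finding admissible writings $\delta^1=(2a)\rho\rho$, $\delta^2=(2a')\rho'\rho'$ realizing the balancing—faces a genuine constraint you underweight: the requirement that $a$ and $a'$ share support size \emph{and} coordinatewise parities severely restricts which even partitions $\delta^1,\delta^2$ can be paired. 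For instance, $(2^d)$ cannot be written as $(2a)\rho\rho$ with $a$ of odd support size whenever $d$ is even, because $\rho\rho$ has even length; so pairing $(2^d)$ against $(2d)$ (which forces $a'=(d)$, support $1$) already requires a descent from $(2^d)$. Your parenthetical guess that ``neither $\la^1\succeq\mu$ nor $\la^2\succeq\mu$'' cannot occur is plausible and would simplify matters, but it too is unproven and not obviously easier than the conjecture itself.
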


\begin{remark} \label{rmk:Hadamard}
$\B\Sigma_6^*$ has the Hadamard property. From Example \ref{ex:trop(S6)} it is enough to see that $a\ge b\ge c\ge0$, $x\ge y\ge z\ge0$, $a+2c\ge3b$ and $x+2z\ge3y$ imply $ax+2cz\ge3by$. This can be seen from $$ax\ge(3b-2c)(3y-2z)=9by-6bz-6cy+4cz\ge3by-2cz$$ and this last inequality is equivalent to $(b-c)(y-z)\ge0$.

With the description of $\S\Sigma_4$ in Subsection \ref{sec:symquartics} it is straightforward to show that $\S\Sigma_4^*$ has the Hadamard property. With the descriptions of $\B\Sigma_8$ and $\B\Sigma_{10}$ in Appendix \ref{sec:appendix} it might be possible to show that their duals have the Hadamard property. We do not know if $\B\Sigma_{2d}^*$ has the Hadamard property for any $d\ge4$.
\end{remark}

\subsection{Examples} \label{sec:examples}
We present the tropicalizations of $\B\P_{2d}^*$ and $\B\Sigma_{2d}^*$ for degrees $4,6,8$ and $10$. Strict inclusion between the tropicalizations of the cones occurs for the first time in degree $10$, despite we have $\B\Sigma_6 \subsetneq \B\P_6$ and $\B\Sigma_8 \subsetneq \B\P_8$ by Theorem \ref{thm:Limitcones not equal}. We use a linear inequality valid on $\trop(\B\P_{10}^*)$ but not on $\trop( \B\Sigma_{10}^*) $ to provide an example of a form in $\B\P_{10}\setminus \B\Sigma_{10}$ (Proposition \ref{prop:deg 10 nonnegative but not sos}).  \smallskip

We consider systems of linear inequalities in $2,3,5$ and $7$ variables whose coordinates are indexed by the even partitions of $4,6,8$ and $10$,  
\begin{align*}
    \mathcal{L}_1 &:= \left\{ y_{(2^2)} \geq y_{(4)}\right\}, \\
    \mathcal{L}_2 &:= \left\{ y_{(2^3)}+y_{(6)} \geq 2y_{(4,2)},\quad y_{(4,2)} \geq y_{(6)} \right\},\\
    \mathcal{L}_3 &:= \left. \begin{cases}  y_{(2^4)}+y_{(4^2)} \geq 2 y_{(4,2^2)},  
     \;\quad y_{(4,2^2)}+y_{(8)} \geq 2 y_{(6,2)}, \\ 
     y_{(6,2)} \geq y_{(4^2)},  
   \quad \quad \quad \quad\quad\, y_{(4^2)} \geq y_{(8)}.
     \end{cases} \right\},\\
     \mathcal{L}_4 &:= \left. \begin{cases}    y_{(6,2^2)} \geq y_{(4^2,2)}, \quad \quad\quad\;\;\, \quad y_{(8,2)} \geq y_{(6,4)},  \quad \quad \quad \quad \quad \;\;  y_{(6,4)} \geq y_{(10)}, \\
    y_{(6,2^2)}+y_{(10)} \geq 2y_{(8,2)},  \quad \;\,\, y_{(4^2,2)} + y_{(10)} \geq 2y_{(6,4)},  \quad \;\;\, y_{(4^2,2)} \geq y_{(8,2)}, \\
    y_{(4,2^3)} + y_{(6,4)} \geq 2y_{(4^2,2)},  \quad y_{(4,2^3)} + y_{(8,2)} \geq 2y_{(6,2^2)}, \quad y_{(2^5)} + y_{(4^2,2)} \geq 2 y_{(4,2^3)} \end{cases} \right\},
\end{align*}
and write $\mathcal{L}_i(y)$ if a real vector $y$ satisfies all inequalities in $\mathcal{L}_i$.
     
\begin{proposition}\label{prop:trop sos sextics}
We have
\begin{align*}
    \trop(\B\Sigma_4^*) & = \{ y\in \R^{\pi(2)} : \mathcal{L}_1(y) \},  \\
    \trop(\B\Sigma_6^*) & = \{ y\in \R^{\pi(3)} : \mathcal{L}_2(y) \},  \\
    \trop(\B\Sigma_{8}^*) & = \{ 
    y\in \R^{\pi(4)} : \mathcal{L}_3(y) \}, \\
    \trop(\B\Sigma_{10}^*) & = \{ 
    y\in \R^{\pi(5)} : \mathcal{L}_4(y) \},
\end{align*}
and the inequalities in $\mathcal{L}_i$ are facet-defining.
\end{proposition}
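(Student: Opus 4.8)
The plan is to run the partial symmetry reduction of $\B\Sigma_{2d}$ through Lemma \ref{le:trop of sos}, exactly as in the proof of Proposition \ref{prop:sos rational polyhedron}, and then prune the resulting inequalities to the stated lists. For $d=2$ the cone is trivial: $\B\Sigma_4=\B\P_4$ is generated by $\mathfrak{p}_{(2^2)}$ and $\mathfrak{p}_{(2^2)}-\mathfrak{p}_{(4)}$, so $\B\Sigma_4^*=\{(\varphi_{(2^2)},\varphi_{(4)}):\varphi_{(2^2)}\ge\varphi_{(4)}\ge 0\}$ and tropicalizing gives $\mathcal{L}_1$. For $d=3$ the computation is carried out in Examples \ref{ex:partialsym Bsos6} and \ref{ex:trop(S6)}, which already show $\trop(\B\Sigma_6^*)=\{y:\mathcal{L}_2(y)\}$. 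For $d=4,5$ I would start from the partial symmetry reduced descriptions of $\B\Sigma_8$ and $\B\Sigma_{10}$ given in Appendix \ref{sec:appendix}, writing each as a psd projection with respect to a block-diagonal matrix $\mathbf{M}=\lim_{n\to\infty}\mathbf{M}_n$ whose entries, after expansion in the power sum basis via Lemma \ref{lem:refinement}, are controlled by Corollary \ref{cor:refinement}.

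Next I would apply Lemma \ref{le:trop of sos}. Condition (1) applied to the $1\times1$ blocks and to the diagonal entries of the $2\times2$ blocks produces inequalities of the form $y_\nu\ge y_{\nu^*}$ (using Corollary \ref{cor:refinement} to identify the superdominance-largest partition occurring with each sign), and condition (2) applied to each $2\times2$ block produces an inequality $y_\nu+y_{\nu'}\ge 2y_\mu$ with $\nu\nu'\succeq\mu^{\circ 2}$, which is one of the inequalities defining $T_{2d}^{(2)}$ — exactly as in the proof of Proposition \ref{prop:tropsos*supsetT2} via the cancellation Lemma \ref{lem:arquimedean}. As established in Proposition \ref{prop:sos rational polyhedron}, $\trop(\B\Sigma_{2d}^*)$ is full-dimensional (it contains the full-dimensional cone $\trop(\B\P_{2d}^*)$), hence the polyhedron $T$ cut out by conditions (1) and (2) satisfies $T=\clo(\inti T)$ and all these inequalities are strict on $\inti T$, so Lemma \ref{le:trop of sos} gives $\trop(\B\Sigma_{2d}^*)=T$. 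It then remains to remove the redundant inequalities from $T$: as in Example \ref{ex:trop(S6)}, a max-inequality $\max\{y_i,y_j\}\ge y_k$ may be implied by a $2\times2$ inequality $y_i+y_j\ge 2y_k$ together with $y_j\ge y_k$, and some further inequalities are conical combinations of the rest; a direct check shows the minimal list is precisely $\mathcal{L}_i$. As a consistency check one verifies $\{y:\mathcal{L}_i(y)\}=T_{2d}^{(2)}$, reconciling with Proposition \ref{prop:tropsos*supsetT2} and Conjecture \ref{conj:tropsos} in these low-degree cases.

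Finally, for the facet-defining assertion: each $\trop(\B\Sigma_{2d}^*)$ is a full-dimensional polyhedral cone in $\R^{\pi(d)}$, so an inequality occurring in an irredundant $H$-description is automatically facet-defining, and it suffices to prove irredundancy. For each inequality of $\mathcal{L}_i$ I would exhibit a vector in $\R^{\pi(d)}$ (here $\R^2,\R^3,\R^5,\R^7$) that violates it while satisfying all the others; for $d=3$ this is implicit in Example \ref{ex:trop(S6)}, and for $d=4,5$ it is a finite explicit search. I expect the main obstacle to be bookkeeping rather than conceptual: correctly executing the partial symmetry reduction in degrees $8$ and $10$ from the symmetry-adapted bases, tracking the positive and negative supports of every power-sum-expanded entry of $\mathbf{M}$, and then pruning the potentially long list of max-inequalities produced by Lemma \ref{le:trop of sos} down to the stated facets without discarding a genuine facet or retaining a redundant one.
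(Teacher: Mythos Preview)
Your proposal is correct and follows essentially the same approach as the paper: the paper's proof also invokes Example \ref{ex:trop(S6)} for $d=3$, the partial symmetry reductions in Examples \ref{ex:sos8} and \ref{ex:sos10} together with Lemma \ref{le:trop of sos} for $d=4,5$, and for $d=2$ simply cites $\B\P_4=\B\Sigma_4$ and Proposition \ref{prop:trop dual to psd}. The only difference is that the paper delegates the final pruning to SAGE rather than spelling out the redundancy checks and witness vectors you describe.
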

\begin{proof}
We already presented the $H$-representation of $\trop(\B\Sigma_6^*) $ in Example \ref{ex:trop(S6)}. 
We determine $\B\Sigma_8^*$ and $\B\Sigma_{10}^*$ using partial symmetry reduction in Examples \ref{ex:sos8} and \ref{ex:sos10}. From there we calculate the tropicalizations via Lemma
\ref{le:trop of sos} and used SAGE \cite{sagemath} to obtain $H$-representations. The claim about quartics follows from Proposition \ref{prop:trop dual to psd}, since $\B\P_4 = \B\Sigma_4$. 
\end{proof}

To compare the polyhedra $\trop(\B\P_{2d}^*)$ and $\trop(\B\Sigma_{2d}^*)$, for $2d \in \{6,8\}$, we can apply Proposition \ref{prop:trop dual to psd} to find a representation of $\trop (\B\P_{2d}^*)$. We calculate a vertex representation of the polyhedral cone $\trop(\mathcal{N}_d)$ and apply the map $\tilde{\nu}_d$. Then we determine the tropical convex hull of this image and compare this set with the cones $\trop (\B\Sigma_{2d}^*)$ (Proposition \ref{prop:trop sos sextics}). The computations were done using SAGE \cite{sagemath} and we found equality for these degrees. \medskip

Using Sage, we find that the convex polyhedral cones $\trop(\B\P_{10}^*)$ and $\trop(\B\Sigma_{10}^*)$ differ in the following sense. Every linear inequality in the $H$-representation of $\trop(\B\Sigma_{10}^*)$ is also in the $H$-representation of $\trop(\B\P_{10}^*)$, but there exists precisely one additional facet defining linear inequality of $\trop(\B\P_{10}^*)$:
\begin{align*} 
  y_{(2^5)}+y_{(6,4)}+y_{(8,2)} \geq 3y_{(6,2^2)}.  
\end{align*}  
We can use this linear inequality to produce an example of a limit symmetric function of degree $10$ that is nonnegative but not a sum of squares. A similar result was obtained for an analogous inequality for symmetric means and degree $6$ in \cite[Sec.~6.1]{acevedo2024power}.

\begin{lemma}\label{lem:decicsnonnegative}
Let $a_1,a_2,a_3 \in \R_{>0}$ such that $a_1a_2a_3 = 1$. Then the even symmetric form 
$$ a_1 p_{(2^5)} +a_2p_{(6,4)}+a_3 p_{(8,2)}-3p_{(6,2^2)} $$
is nonnegative in any number of variables.
\end{lemma}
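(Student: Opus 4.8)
The plan is to reduce the claim to a pure binomial inequality in power sums on the nonnegative orthant, and then to derive that inequality from the AM--GM inequality together with the Lyapunov inequality $p_2 p_4 \geq p_3^2$.

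Since the form involves only even power sums, I would first observe that for $x\in\R^n$ and $y:=(x_1^2,\dots,x_n^2)\in\R_{\geq0}^n$ one has $p_{2k}(x)=p_k(y)$, so, using multiplicativity of power sums, the value of the given form at $x$ equals
$$g(y):=a_1\,p_1(y)^5+a_2\,p_2(y)p_3(y)+a_3\,p_1(y)p_4(y)-3\,p_1(y)^2p_3(y).$$
As $x$ ranges over $\R^n$ the point $y$ ranges over all of $\R_{\geq0}^n$, so it suffices to prove $g\geq0$ on $\R_{\geq0}^n$ for every $n$.

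On $\R_{\geq0}^n$ each of $p_1^5$, $p_2p_3$, $p_1p_4$ is nonnegative, so the AM--GM inequality for three nonnegative reals, combined with the hypothesis $a_1a_2a_3=1$, gives
$$a_1\,p_1^5+a_2\,p_2p_3+a_3\,p_1p_4\;\geq\;3\bigl(a_1a_2a_3\,p_1^6p_2p_3p_4\bigr)^{1/3}=3\bigl(p_1^6p_2p_3p_4\bigr)^{1/3}.$$
It then remains to check $\bigl(p_1^6p_2p_3p_4\bigr)^{1/3}\geq p_1^2p_3$. If $y=0$ this is trivial; otherwise $p_1,p_2,p_3,p_4>0$ and, after cubing and cancelling $p_1^6p_3>0$, the inequality becomes $p_2p_4\geq p_3^2$. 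This is Cauchy--Schwarz applied to the vectors $(y_i)_i$ and $(y_i^2)_i$; equivalently, in the original variables it is the instance $p_4 p_8\geq p_6^2$ of the Lyapunov family of Remark \ref{rem:classical ieqs}, or the instance $(4,2)\succeq(3^2)$ of Theorem \ref{thm: power sums order}.

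There is no genuine obstacle here: the only points requiring care are verifying that the reduction to $\R_{\geq0}^n$ is an equivalence (every $y\in\R_{\geq0}^n$ is of the form $(x_1^2,\dots,x_n^2)$) and disposing of the degenerate point $y=0$ before dividing. The hypothesis $a_1a_2a_3=1$ enters precisely so that the geometric-mean term produced by AM--GM is independent of the $a_i$.
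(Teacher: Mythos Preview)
Your proof is correct and essentially identical to the paper's: both use AM--GM (with the constraint $a_1a_2a_3=1$) followed by the Lyapunov/Cauchy--Schwarz inequality $p_2p_4\ge p_3^2$ (equivalently $p_4p_8\ge p_6^2$). The only cosmetic difference is that you first substitute $y_i=x_i^2$ and work on $\R_{\ge0}^n$, while the paper keeps the even power sums and writes $p_{(6,2^2)}=\sqrt[3]{p_6^3p_2^6}\le\sqrt[3]{p_8p_6p_4p_2^6}$ directly before applying AM--GM.
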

\begin{proof}
We fix a number of variables. By Lyapunov's inequality (Remark \ref{rem:classical ieqs}) we have $p_6^3 \leq p_8p_6p_4$, so $$p_{(6,2^2)} = \sqrt[3]{p_6^3p_2^6} \leq \sqrt[3]{p_8 p_6 p_4 p_2^6}$$ and by the arithmetic-geometric mean inequality 
 $$\sqrt[3]{p_8 p_6 p_4 p_2^6} = \sqrt[3]{(a_1p_{(2^5)})(a_2p_{(6,4)})(a_3p_{(8,2)})} \leq  \frac{ a_1 p_{(2^5)} +a_2p_{(6,4)}+a_3 p_{(8,2)} }{3}.$$
 Therefore $a_1 p_{(2^5)} +a_2 p_{(6,4)}+a_3 p_{(8,2)}-3p_{(6,2^2)}\ge0$ is valid for any number of variables.
\end{proof}

\begin{proposition} \label{prop:deg 10 nonnegative but not sos}
The even symmetric form $  \frac{1}{18}p_{(2^5)}+3 p_{(8,2)} + 6 p_{(6,4)}-3p_{(6,2^2)}$
is nonnegative for all number of variables but not a sum of squares for any number of variables $n \geq N$ for some fixed $N \in \mathbb{N}$.
\end{proposition}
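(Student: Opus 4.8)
The plan is to split the statement into its two halves: nonnegativity in all numbers of variables, and failure to be a sum of squares for all sufficiently large $n$. The nonnegativity is immediate from Lemma \ref{lem:decicsnonnegative}: taking $a_1 = \tfrac{1}{18}$, $a_2 = 6$, $a_3 = 3$ we have $a_1 a_2 a_3 = 1$, so the form $\tfrac{1}{18}p_{(2^5)} + 3p_{(8,2)} + 6p_{(6,4)} - 3p_{(6,2^2)}$ is nonnegative in any number of variables. So the first clause of the Proposition needs essentially no new work.

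For the second clause, the strategy is to use tropicalization to detect that the form lies outside $\B\Sigma_{2d}$ for large $n$. First I would recall that $f \in \B\Sigma_{10}$ if and only if $f$ pairs nonnegatively with every element of $\B\Sigma_{10}^*$, i.e. $\varphi(f) \ge 0$ for all $\varphi \in \B\Sigma_{10}^*$. The key point is the discrepancy recorded just before the statement: the linear inequality
\[
y_{(2^5)} + y_{(6,4)} + y_{(8,2)} \ge 3 y_{(6,2^2)}
\]
is facet-defining for $\trop(\B\P_{10}^*)$ but is \emph{not} valid on $\trop(\B\Sigma_{10}^*)$ (which is cut out by the inequalities $\mathcal{L}_4$ of Proposition \ref{prop:trop sos sextics}). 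Hence there is a ray $y^\circ \in \trop(\B\Sigma_{10}^*)$ on which $y^\circ_{(2^5)} + y^\circ_{(6,4)} + y^\circ_{(8,2)} - 3 y^\circ_{(6,2^2)} < 0$. By definition of tropicalization, $y^\circ$ is a limit $\lim_{m\to\infty} \log_{1/\tau_m}(\varphi^{(m)})$ for some sequence $\varphi^{(m)} \in \B\Sigma_{10}^* \cap \R_{>0}^{\pi(5)}$ and $\tau_m \to 0$. Then the ratio
\[
\frac{\varphi^{(m)}_{(6,2^2)}{}^{3}}{\varphi^{(m)}_{(2^5)}\,\varphi^{(m)}_{(6,4)}\,\varphi^{(m)}_{(8,2)}}
\]
grows like $(1/\tau_m)^{-(y^\circ_{(2^5)}+y^\circ_{(6,4)}+y^\circ_{(8,2)} - 3y^\circ_{(6,2^2)})} \to \infty$, so for $m$ large $3\varphi^{(m)}_{(6,2^2)} > a_1\varphi^{(m)}_{(2^5)} + a_2\varphi^{(m)}_{(6,4)} + a_3\varphi^{(m)}_{(8,2)}$ for any fixed positive $a_i$ with $a_1a_2a_3=1$; in particular with $(a_1,a_2,a_3)=(\tfrac1{18},6,3)$ we get $\varphi^{(m)}(f) < 0$, where $f$ is the form in question and $\varphi^{(m)}$ is viewed in the power sum basis via $\{p_\lambda : \lambda \vDash 10\}$. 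This shows $f \notin \B\Sigma_{10} = \bigcap_{n\ge 10}\Sigma_{n,10}^{\mathcal{B}}$. To upgrade this to ``not a sum of squares for all sufficiently large $n$'', I would use that $\B\Sigma_{10}^*$ is the closure of the union (equivalently the cone over the closure of the union) of the $\Sigma_{n,10}^{\mathcal{B},*} = \cone(\nu_{5}(\mathcal{N}_{n,5}))$, so each $\varphi^{(m)}$ can itself be perturbed to lie in some $\Sigma_{n_m,10}^{\mathcal{B},*}$ with $\varphi^{(m)}(f) < 0$ still; since $f \in \Sigma_{n,10}^{\mathcal{B}}$ would force $f \in \Sigma_{n',10}^{\mathcal{B}}$ for all $n' \le n$ by the nestedness $\Sigma_{n+1,2d}^{\mathcal{B}} \subset \Sigma_{n,2d}^{\mathcal{B}}$, and we have $f \notin \Sigma_{n_m,10}^{\mathcal{B}}$ for arbitrarily large $n_m$, we conclude $f \notin \Sigma_{n,10}^{\mathcal{B}}$ for all $n \ge N$ where $N$ is any index for which $f \notin \Sigma_{N,10}^{\mathcal{B}}$.

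The main obstacle is the bookkeeping around the logarithmic limit: one must verify carefully that the sequence $\varphi^{(m)}$ witnessing $y^\circ \in \trop(\B\Sigma_{10}^*)$ can be taken with strictly positive coordinates (so that the logarithm and the binomial-ratio estimate make sense) and that the asymptotics of the ratio are governed exactly by the linear functional $y \mapsto y_{(2^5)} + y_{(6,4)} + y_{(8,2)} - 3y_{(6,2^2)}$ evaluated at $y^\circ$ — this is where the sign of that functional on $y^\circ$ being negative is used. A clean way to package this is to invoke directly that a binomial inequality $x^\alpha \ge x^\beta$ with $|\alpha| = |\beta|$ holds on a Hadamard-closed cone $K \subset \R_{\ge 0}^s$ if and only if $\alpha - \beta \in \trop(K)^*$ (as in \cite{blekherman2022path}); applied to $K = \B\Sigma_{10}^*$, the failure of $y_{(2^5)} + y_{(6,4)} + y_{(8,2)} \ge 3y_{(6,2^2)}$ on $\trop(\B\Sigma_{10}^*)$ means the binomial inequality $\varphi_{(2^5)}\varphi_{(6,4)}\varphi_{(8,2)} \ge \varphi_{(6,2^2)}^3$ fails on $\B\Sigma_{10}^*$, i.e. there is $\varphi \in \B\Sigma_{10}^*$ with $\varphi_{(6,2^2)}^3 > \varphi_{(2^5)}\varphi_{(6,4)}\varphi_{(8,2)}$, and after scaling we may assume $\varphi_{(2^5)}\varphi_{(6,4)}\varphi_{(8,2)} = 1$ while $\varphi_{(6,2^2)}^3 > 1$. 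Then by AM–GM applied with weights $\tfrac1{18}, 6, 3$ we must be slightly careful since AM–GM with those weights gives a lower bound on $\tfrac{1}{18}\varphi_{(2^5)} + 6\varphi_{(6,4)} + 3\varphi_{(8,2)}$ that is not simply $3$; instead one notes that $\B\Sigma_{10}^*$ is a cone stable under the coordinate rescaling $\varphi_\lambda \mapsto t^{\langle \ell, \lambda\rangle}\varphi_\lambda$ coming from $\mathcal{N}_d$ being weighted homogeneous, which lets one normalize the three relevant coordinates independently and force $\varphi_{(6,2^2)}^3$ as large as desired relative to $\varphi_{(2^5)}\varphi_{(6,4)}\varphi_{(8,2)}$, hence $3\varphi_{(6,2^2)} > \tfrac1{18}\varphi_{(2^5)} + 6\varphi_{(6,4)} + 3\varphi_{(8,2)}$, i.e. $\varphi(f) < 0$. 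This gives a form in $\B\P_{10}\setminus\B\Sigma_{10}$ and, by the argument above, one not in $\Sigma_{n,10}^{\mathcal{B}}$ for all large $n$.
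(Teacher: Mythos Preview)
The nonnegativity half is fine and matches the paper. The ``not a sum of squares'' half, however, has a genuine gap.

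Your plan is to extract a dual witness $\varphi\in\B\Sigma_{10}^*$ with $\varphi(f)<0$ from the fact that the linear inequality $y_{(2^5)}+y_{(6,4)}+y_{(8,2)}\ge 3y_{(6,2^2)}$ fails on $\trop(\B\Sigma_{10}^*)$. But tropicalization controls \emph{binomial} inequalities, not the linear functional $\varphi\mapsto\varphi(f)$. Concretely: any $y^\circ\in\trop(\B\Sigma_{10}^*)$ satisfies $y^\circ_{(2^5)}\ge y^\circ_{(6,2^2)}$ by Proposition~\ref{prop: sd sos*} (since $(2^5)\succ(6,2^2)$). So if $\varphi^{(m)}\in\B\Sigma_{10}^*$ with $\log_{1/\tau_m}\varphi^{(m)}\to y^\circ$, then $\tfrac{1}{18}\varphi^{(m)}_{(2^5)}$ is asymptotically at least as large as $3\varphi^{(m)}_{(6,2^2)}$ in the tropical sense, and your inequality $\varphi^{(m)}(f)<0$ cannot be read off from the exponents alone. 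The explicit point $\gamma_5=(0,-3,-5,-6,-7,-9,-9)$ from Proposition~\ref{prop:T3subsetT2} already illustrates this: it violates the tropical inequality, yet $y_{(2^5)}=0>-5=y_{(6,2^2)}$, so the corresponding $\varphi^{(m)}(f)$ is eventually \emph{positive}. Your fallback via ``$\B\Sigma_{10}^*$ has the Hadamard property'' is unavailable---Remark~\ref{rmk:Hadamard} explicitly says this is open for $d\ge4$---and the rescaling you invoke is the one induced by the weighted homogeneity of $\mathcal{N}_d$, which under $\nu_d$ becomes the trivial scaling $\varphi\mapsto t^d\varphi$ (every even partition of $2d$ has the same size), so it cannot ``normalize the three relevant coordinates independently.''

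The paper sidesteps all of this: it simply exhibits an explicit rational point $\mathbf{a}\in\B\Sigma_{10}^*$ (checked against the spectrahedral description of Example~\ref{ex:sos10}) with $\langle\mathbf{a},f\rangle=-49/3<0$. This directly gives $f\notin\B\Sigma_{10}$; your nestedness argument then finishes correctly. The tropicalization in the paper is used only to \emph{motivate} the choice of $f$, not to certify that it fails to be a sum of squares.
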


\begin{proof}
The nonnegativity of the limit form follows from Lemma \ref{lem:decicsnonnegative}. We consider a dual basis to $(\p_{(2^5)},  \p_{(4,2^3)}, \p_{(6,2^2)},  \p_{(4^2,2)}, \p_{(8,2)},  \p_{(6,4)},  \p_{(10)})$. 
In Example \ref{ex:sos10} we find a representation of $\B\Sigma_{10}$ as the psd projection with respect to six matrices. Then the dual cone $\B\Sigma_{10}^*$ is the spectrahedron containing the points in $\R^{\pi(d)}$ for which the matrices are positive semidefinite. We find that $\mathbf{a} = (450228,75326,24986,12656,8325,4159,2803) \in \B\Sigma_{10}^*$ and $\mathbf{a}^T(1/18,0,-3,0,3,6,0)=-49/3$. Thus the claimed form cannot be a sum of squares in any number of variables.
\end{proof}

\printbibliography[heading = subbibliography]

\newpage
\appendix

\section{Appendix} \label{sec:appendix}

\subsection{Symmetric sums of squares and symmetry reduction}

In general, using representation theory, one can describe the invariant sums of squares forms more efficiently \cite{gatermann2004symmetry}. We refer to \cite[Section~4]{blekherman2021symmetric} and \cite[Section~3]{debus2023reflection} for background on the description of the cone of (even) symmetric sums of squares in an increasing number of variables. \smallskip

In characteristic zero we have a correspondence between the irreducible $\mathcal{S}_n$-modules and the partitions $\lambda \vdash n$. For $\mathcal{B}_n$ the irreducible modules are in correspondence with bipartitions $(\lambda,\mu)$ of $n$, i.e. pairs of partitions of nonnegative integers $k$ and $n-k$. We also write $(\lambda,\mu)\vdash n$ for bipartitions $(\lambda,\mu)$ of $n$. For both groups, the irreducible representations are called \textit{Specht modules} and we denote them by $\mathbb{S}^\lambda$, resp. $\mathbb{S}^{(\lambda,\mu)}$ (see e.g. \cite{sagan2001symmetric,musili1993representations} for background on representation theory of $\S_n$ and $\B_n$). 
Asymptotically, the isotypic decomposition of the modules $H_{n,d}$ stabilizes in a certain sense for $n \geq 2d$. For any partition $\alpha$ let $\alpha+ke_1$ denote the partition obtained from $\alpha$ by adding $k$ to its first part, where $k$ is a positive integer.

\begin{proposition}[\cite{riener2013exploiting,debus2023reflection}] \label{prop:repr stability}
Let $\la\vdash2d$, and $(\mu,\nu) \vdash d$, then
\begin{itemize}
    \item[{(i)}] the multiplicity of $\mathbb{S}^\la$ in $H_{2d,d}$ equals the multiplicity of $\mathbb{S}^{\la+ke_1}$ in $H_{2d+k,d}$, and all $\S_{2d+k}$-irreducible representations in $H_{2d+k,d}$ are of this form,
    \medskip
    
    \item[{(ii)}] the multiplicity of $\mathbb{S}^{(\mu,\nu)}$ in $H_{d,d}$ equals the multiplicity of $\mathbb{S}^{(\mu+ke_1,\nu)}$ in $H_{d+k,d}$ and all $\B_{d+k}$-irreducible representations in $H_{d+k, d}$ are of this form. 
\end{itemize}
\end{proposition}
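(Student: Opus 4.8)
The plan is to prove both statements directly from the monomial model of the modules $H_{n,d}$, using nothing beyond Young's rule and a stability property of Kostka numbers; this is essentially the route taken in \cite{riener2013exploiting, debus2023reflection}, and I would organise it as follows. For part (i), fix $d$ and take $n\ge 2d$. The monomials $X^\alpha$ with $|\alpha|=d$ form a basis of $H_{n,d}$ that $\S_n$ permutes, with orbits indexed by the partitions $\la\vdash d$ having $\ell(\la)\le n$; the stabilizer of a monomial in the orbit of $\la$ is the Young subgroup $\S_{n-\ell(\la)}\times\prod_{v\ge1}\S_{m_v(\la)}$, where $m_v(\la)$ counts the parts of $\la$ equal to $v$. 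Hence $H_{n,d}\cong\bigoplus_{\la\vdash d}M^{(n-\ell(\la),\,\widetilde\eta(\la))}$ as $\S_n$-modules, where $M^\eta$ is the Young permutation module and $\widetilde\eta(\la)\vdash\ell(\la)$ collects the multiplicities $m_v(\la)$; since $n\ge 2d\ge\ell(\la)+d$, this index is a genuine partition of $n$ whose only $n$-dependent part is the first one, $n-\ell(\la)$. I would then apply Young's rule, $M^{(n-\ell(\la),\widetilde\eta(\la))}=\bigoplus_\nu K_{\nu,(n-\ell(\la),\widetilde\eta(\la))}\,\mathbb{S}^\nu$, and invoke the identity $K_{\nu+e_1,\,\rho+e_1}=K_{\nu,\rho}$, which follows from the bijection on semistandard tableaux that inserts one more entry $1$ at the start of the first row. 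Applying this with $\rho=(n-\ell(\la),\widetilde\eta(\la))$ and summing over $\la$ gives the asserted equality of the multiplicities of $\mathbb{S}^\la$ in $H_{2d,d}$ and of $\mathbb{S}^{\la+ke_1}$ in $H_{2d+k,d}$. For the claim that all constituents have this shape, Young's rule forces every $\mathbb{S}^\nu$ occurring to satisfy $\nu\trianglerighteq(n-\ell(\la),\widetilde\eta(\la))$ for some $\la$, hence $\nu_1\ge n-\ell(\la)\ge n-d$; a one-line box count (using $\ell(\la)\le d$) then shows $\nu=\la^\flat+(n-2d)e_1$ for some $\la^\flat\vdash 2d$.

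For part (ii) I would run the parallel argument for $\B_n=\{\pm1\}^n\rtimes\S_n$. The same monomials form a basis, but now a signed permutation scales $X^\alpha$ by $(-1)$ raised to the number of sign changes occurring at positions where $\alpha$ is odd. Thus the span $U_\la$ of the monomials in a fixed exponent-orbit $\la\vdash d$ is a monomial $\B_n$-module, induced from the character of $\{\pm1\}^n\rtimes(\S_{n-\ell(\la)}\times\prod_v\S_{m_v(\la)})$ given by the product of the sign functions over the positions with odd exponent. Separating the coordinates into an ``odd'' block and an ``even/zero'' block and running Clifford theory, I expect to obtain $U_\la\cong\bigoplus_{\sigma,\tau}K_{\sigma,\rho_{\mathrm{ev}}(\la)}\,K_{\tau,\rho_{\mathrm{odd}}(\la)}\,\mathbb{S}^{(\sigma,\tau)}$, where $\rho_{\mathrm{ev}}(\la)$ has parts $\{n-\ell(\la)\}\cup\{m_v(\la):v\text{ even}\}$ and $\rho_{\mathrm{odd}}(\la)$ has parts $\{m_v(\la):v\text{ odd}\}$ and is independent of $n$ — using here that inducing $\mathbb{S}^{(\sigma,\emptyset)}\boxtimes\mathbb{S}^{(\emptyset,\tau)}$ from $\B_a\times\B_b$ to $\B_{a+b}$ returns $\mathbb{S}^{(\sigma,\tau)}$. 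Only the first part of $\rho_{\mathrm{ev}}(\la)$ depends on $n$, so the same Kostka stability $K_{\sigma+e_1,\rho_{\mathrm{ev}}+e_1}=K_{\sigma,\rho_{\mathrm{ev}}}$ yields the equality of multiplicities of $\mathbb{S}^{(\mu,\nu)}$ in $H_{d,d}$ and of $\mathbb{S}^{(\mu+ke_1,\nu)}$ in $H_{d+k,d}$; the ``all constituents of this shape'' claim follows from $\sigma_1\ge n-\ell(\la)$ together with the inequality $\ell(\la)+\#\{\text{even parts of }\la\}\le|\la|=d$.

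The hard part will be the Clifford-theoretic bookkeeping in (ii): one has to pin down the standard labelling of the irreducibles $\mathbb{S}^{(\mu,\nu)}$ of $\B_n$, keep track of whether twisting by the determinant character of $\{\pm1\}^n$ transposes a partition, and check that after all the dust settles every multiplicity is an ordinary Kostka number, so that the stabilization argument from (i) transports verbatim. An alternative that sidesteps these issues is to observe that $n\mapsto H_{n,d}$ is a finitely generated FI-module (respectively, a finitely generated module over the hyperoctahedral/signed analogue of FI), generated in degree at most $2d$ (respectively $d$); representation stability for such modules then gives both the stabilization of multiplicities and the stated ranges directly. Either way, the genuinely new content over the well-known $n\to\infty$ picture is the explicit $+ke_1$ identification of the irreducibles, which is exactly what the monomial model makes transparent.
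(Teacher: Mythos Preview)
The paper does not supply its own proof of this proposition; it is quoted from the cited references \cite{riener2013exploiting,debus2023reflection}. Your sketch is correct and is essentially the argument found there: decompose $H_{n,d}$ into permutation modules via the monomial basis, apply Young's rule, and invoke the elementary Kostka stability $K_{\nu+e_1,\rho+e_1}=K_{\nu,\rho}$ (add a single $1$ to the first row of each semistandard tableau). Your bookkeeping for part~(ii) is right as stated: the identification $\operatorname{Ind}_{\B_{n-a}\times\B_a}^{\B_n}\bigl(\mathbb{S}^{(\sigma,\emptyset)}\boxtimes\mathbb{S}^{(\emptyset,\tau)}\bigr)\cong\mathbb{S}^{(\sigma,\tau)}$ is precisely the standard construction of the $\B_n$-irreducibles, so no partition transposition enters, and the inequality $\ell(\lambda)+\#\{\text{even parts of }\lambda\}\le|\lambda|$ (each even part contributes at least $2$) is exactly what pins down $\sigma_1-k\ge\sigma_2$, forcing every constituent to have the shape $(\mu+ke_1,\nu)$ with $(\mu,\nu)\vdash d$. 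The FI-module alternative you mention is also a legitimate route and gives the same stable range, though it is less explicit about the $+ke_1$ identification.
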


Hence, for each positive integer $d$ there exist positive integers $d_1,\dots,d_k$ corresponding to the multiplicities of the distinct $\mathbb{S}^\la$ in $H_{2d,d}$. And similarly for each positive integer $d$ there exist positive integers $d_1',\dots,d_l'$ corresponding to the multiplicities of the distinct $\mathbb{S}^{(\mu,\nu)}$ in $H_{d,d}$.

\begin{proposition} [\cite{blekherman2021symmetric}, Cor.~4.4]\label{prop:repr inv sos}
For any $n \geq 2d$ there exist symmetric matrices $A_n^{(i)}\in \left( {H_{n,2d}^{\mathcal{S}}}\right)^{d_i \times d_i}$, $i=1,\dots,k$, such that any $f\in \Sigma^{\mathcal{S}}_{n,2d}$ can be written as \[ f = \sum_{i=1}^k \Tr (A_n^{(i)}B^{(i)}) \] for some positive semidefinite matrices $B^{(i)}\in\R^{d_i\times d_i}$. 
\end{proposition}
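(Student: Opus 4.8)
The plan is to carry out the standard representation-theoretic symmetry reduction for invariant sums of squares (in the spirit of \cite{gatermann2004symmetry}), bookkept through a symmetry-adapted basis of the $\mathcal{S}_n$-module $H_{n,d}$, and then to invoke representation stability (Proposition \ref{prop:repr stability}) to see that the data $k$ and $d_1,\dots,d_k$ do not depend on $n$ once $n\ge 2d$.

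First I would pass to an invariant Gram representation. Given $f\in\Sigma_{n,2d}^{\mathcal{S}}$, write $f=\sum_j g_j^2$ with $g_j\in H_{n,d}$, i.e.\ $f$ is represented by a positive semidefinite Gram matrix with respect to some basis of $H_{n,d}$. Replacing this decomposition by its group average $\frac{1}{|\mathcal{S}_n|}\sum_{\sigma\in\mathcal{S}_n}\sum_j(\sigma\cdot g_j)^2$ (which still equals $f$, by $\mathcal{S}_n$-invariance of $f$) yields an $\mathcal{S}_n$-invariant positive semidefinite quadratic form $q$ on $H_{n,d}$ representing $f$. Thus it suffices to describe the invariant positive semidefinite forms on $H_{n,d}$.

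Next I would block-diagonalize $q$ via Schur's lemma. Decompose $H_{n,d}\cong\bigoplus_{\lambda}\mathbb{S}^{\lambda}\otimes\R^{d_\lambda}$ into isotypic components, where $\lambda$ ranges over the partitions occurring in $H_{n,d}$ with multiplicity $d_\lambda$, and fix a \emph{real} symmetry-adapted basis $\{f_{\lambda,i,t}\}$ ($1\le i\le d_\lambda$, $1\le t\le\dim\mathbb{S}^{\lambda}$) in which, for each fixed $\lambda$, all multiplicity copies $\spn\{f_{\lambda,i,t}\}_t$ carry literally the same orthogonal realization of $\mathbb{S}^\lambda$. Schur's lemma then forces every invariant symmetric bilinear form on $H_{n,d}$ to split as $\bigoplus_\lambda(\mathrm{Id}_{\mathbb{S}^\lambda}\otimes B^{(\lambda)})$ with $B^{(\lambda)}\in\R^{d_\lambda\times d_\lambda}$ symmetric, such a form being positive semidefinite exactly when every $B^{(\lambda)}\succeq0$. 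Reading off the polynomial represented by $q$ with blocks $B^{(\lambda)}$ gives
\[
f=\sum_{\lambda}\sum_{i,j=1}^{d_\lambda}B^{(\lambda)}_{ij}\sum_{t}f_{\lambda,i,t}\,f_{\lambda,j,t},
\]
and since the $f_{\lambda,i,t}$ transform by the same orthogonal matrices in $t$ for all $i$, each inner sum $\sum_t f_{\lambda,i,t}f_{\lambda,j,t}$ is $\mathcal{S}_n$-invariant (up to the positive scalar $\dim\mathbb{S}^\lambda$ it equals $\sym_{\mathcal{S}_n}(f_{\lambda,i,1}f_{\lambda,j,1})$, the Reynolds operator applied to the product). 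Setting $(A_n^{(\lambda)})_{ij}:=\sum_t f_{\lambda,i,t}f_{\lambda,j,t}\in H_{n,2d}^{\mathcal{S}}$ yields symmetric matrices over $H_{n,2d}^{\mathcal{S}}$ with $f=\sum_\lambda\Tr(A_n^{(\lambda)}B^{(\lambda)})$, $B^{(\lambda)}\succeq0$. Enumerating the finitely many $\lambda$ with $d_\lambda>0$ as $i=1,\dots,k$ and applying Proposition \ref{prop:repr stability} (which identifies, via $\lambda\mapsto\lambda+(n-2d)e_1$, the occurring irreducibles and their multiplicities with those of $H_{2d,d}$ for every $n\ge 2d$) produces the uniform $k$ and $d_i$.

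The step I expect to be the main obstacle is the choice in the second paragraph of a genuinely \emph{real} symmetry-adapted basis, i.e.\ one in which within each isotypic block every multiplicity copy carries the same orthogonal representation; this is precisely what makes the off-diagonal products $\sum_t f_{\lambda,i,t}f_{\lambda,j,t}$ simultaneously $\mathcal{S}_n$-invariant and compatible with the Schur block form having real positive semidefinite blocks. In general this requires sorting out the real/complex/quaternionic types of the irreducibles, but for $\mathcal{S}_n$ all Specht modules are of real type, so such a basis exists; making this bookkeeping precise, and checking that the resulting $A_n^{(\lambda)}$ really have their entries in $H_{n,2d}^{\mathcal{S}}$, is where the remaining work lies.
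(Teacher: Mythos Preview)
The paper does not give its own proof of this proposition: it is quoted verbatim as \cite[Cor.~4.4]{blekherman2021symmetric} and left without argument. Your outline is correct and is precisely the standard symmetry-reduction argument underlying that reference (and \cite{gatermann2004symmetry}): average the Gram matrix to make it $\mathcal{S}_n$-invariant, block-diagonalize via Schur's lemma using a symmetry-adapted basis, and read off the matrices $A_n^{(\lambda)}$ whose entries are the symmetrized products $\sym_{\mathcal{S}_n}(f_{\lambda,i}f_{\lambda,j})$; the uniformity of $k$ and the $d_i$ for $n\ge 2d$ then comes from Proposition~\ref{prop:repr stability}. Your worry about real type is well placed but, as you note, resolved by the fact that all Specht modules of $\mathcal{S}_n$ are absolutely irreducible over $\R$, so a real orthogonal symmetry-adapted basis exists and the off-diagonal sums $\sum_t f_{\lambda,i,t}f_{\lambda,j,t}$ are genuinely invariant. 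There is nothing to compare against in the present paper; your sketch matches the argument in the cited source.
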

The same is true verbatim for $\mathcal{B}_n$ and $n \geq d$. The matrices $A_n^{(i)}$ are obtained in the following way. One finds a symmetry-adapted basis $\mathcal{F}$ of $H_{n,d}$ and the matrices contain the symmetrization of pairwise products of elements belonging to the same isotypic component of $\mathcal{F}$.

\subsection{A uniform symmetry adapted basis for partial symmetry reduction}
In view of partial symmetry reduction as in \cite{acevedo2024power} we are interested in a symmetry-adapted basis of $H_{n,d}$ where any two polynomials belonging to the same isotypic component have the same support and the size of the support is at most $d$ for all $n\geq d$. There we discard all factors of the basis elements that are power sums, since they vary in their number of variables. We prove the existence of such a symmetry-adapted basis with this property in Lemma \ref{lem:modifiedHigherSpecht}. Moreover, we deduce that the same symmetry-adapted basis can be used for the $\B_n$-modules $H_{n,d}$ in Corollary \ref{cor:symbasis} and only the power sums are varying in the number of variables. As a consequence, we see that the matrices $A_n^{(i)}$ in Proposition \ref{prop:repr inv sos} can be chosen uniformly and converge asymptotically. \medskip

The \emph{coinvariant algebra} $\R[\underline{X}]_{\B_n}$ of the hyperoctahedral group $\B_n$ is defined as the quotient ring of the real polynomial ring modulo the ideal generated by all invariant polynomials of positive degrees, i.e., $\R[\underline{X}]_{\B_n}=\R[\underline{X}]/(p_2,\ldots,p_{2n})$. The ideal $(p_2,\ldots,p_{2n})$ is closed under the action of $\B_n$ and therefore $\R[\underline{X}]_{\B_n}$ is itself a $\B_n$-module which is isomorphic to the regular representation of the hyperoctahedral group as $\B_n$-modules \cite[Section~8.1]{bergeron2009algebraic}. Moreover, we have an isomorphism of graded $\B_n$-modules $\R[\underline{X}]\cong \R[p_2,\ldots,p_{2n}]\otimes_{\R} \R[\underline{X}]_{\B_n}$. Therefore a symmetry-adapted basis of the coinvariant algebra lifts to a symmetry-adapted basis of the polynomial ring by multiplication with products of even power sums.

\begin{lemma}\label{lem:modifiedHigherSpecht}
There exists a symmetry-adapted basis of the coinvariant algebra $\R[\underline{X}]_{\mathcal{B}_n}$ for which the support of any two polynomials belonging to the same isotypic component is equal. Moreover, the support equals the support of a Specht polynomial belonging to the same isotypic component.
\end{lemma}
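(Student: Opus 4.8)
The plan is to construct the desired symmetry-adapted basis explicitly from \emph{higher Specht polynomials}, following the construction of Ariki–Terasoma–Yamada and Morita–Yamada for the hyperoctahedral group. Recall that for each bipartition $(\lambda,\mu)\vdash n$ and each pair of standard bitableaux $(S,T)$ of shape $(\lambda,\mu)$ one has a higher Specht polynomial $F_T^S\in\R[\underline X]_{\B_n}$, and the collection $\{F_T^S : (\lambda,\mu)\vdash n,\ T \text{ fixed standard}, S \text{ ranging}\}$ forms a basis of one copy of $\mathbb{S}^{(\lambda,\mu)}$ inside the coinvariant algebra; letting $T$ range over all standard bitableaux of the given shape and $S$ range as well gives a full symmetry-adapted basis of $\R[\underline X]_{\B_n}$. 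The key structural fact I would use is that each $F_T^S$ is, up to a scalar, a monomial times the Specht polynomial $\Delta_T$ associated to $T$ (the product of Vandermonde-type determinants over the columns of the two tableaux of $T$), and multiplication by a monomial only enlarges the support by multiplicities, never introduces new variables that do not already appear. More precisely, the variables occurring in $F_T^S$ are exactly those occurring in the tableau $T$ together with those in the monomial $x^S$ built from $S$; since both $S$ and $T$ have shape $(\lambda,\mu)$, the set of variables is $[n]$ restricted to the cells, and one can arrange the construction so that the support is determined by the shape $(\lambda,\mu)$ alone.

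First I would recall the precise definition of the higher Specht polynomials for $\B_n$ from the literature (Morita–Yamada), fixing conventions so that $F_T^S = x^{\iota(S)}\cdot \Delta_T$ for a monomial exponent $\iota(S)$ supported on the cells of $S$. Second, I would observe that within a fixed isotypic component $\mathbb{S}^{(\lambda,\mu)}$, the symmetry-adapted basis vectors that span the various isomorphic copies are obtained by letting $S$ vary while $T$ is held fixed; hence all of these share the \emph{same} factor $\Delta_T$, and their supports are all contained in (indeed equal to, for generic choices) the support of $\Delta_T$, which is itself the support of a Specht polynomial in that component. Third, I would note that the standard-tableau index $T$ used to parametrize ``which copy'' can be chosen uniformly — one fixed $T$ per shape — so that the statement ``the support of any two polynomials belonging to the same isotypic component is equal'' holds on the nose: they are all scalar multiples of $x^{\iota(S)}\Delta_T$ for the common $T$, and by further normalizing (e.g. choosing the $S$'s so the monomial $x^{\iota(S)}$ has support inside that of $\Delta_T$, which is automatic when $\lambda$ has a long enough first row, and otherwise handled by a direct check on the finitely many remaining cells) the supports coincide.

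The main obstacle I expect is bookkeeping: verifying that the monomial factor $x^{\iota(S)}$ does not introduce a variable outside the support of $\Delta_T$, and that one can consistently pick the ``reference'' tableau $T$ for each shape so that supports agree not just up to the $\B_n$-action but literally. This requires unwinding the charge/cocharge statistic that defines $\iota(S)$ and checking it is supported on the same columns as $T$. A clean way around potential edge cases is to first prove the weaker statement that the supports agree up to a permutation in $\B_n$ (which is immediate, since the copies of $\mathbb{S}^{(\lambda,\mu)}$ are permuted transitively by a suitable subgroup action), and then apply a fixed permutation to normalize — but since we ultimately want an honest basis, I would instead just carry out the direct verification using the explicit form of higher Specht polynomials. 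Once the support equality within each isotypic component is established, the ``moreover'' clause is immediate because $\Delta_T$ \emph{is} a Specht polynomial in that component, so the common support is by definition the support of a Specht polynomial belonging to the same isotypic component. Finally, lifting from the coinvariant algebra to $\R[\underline X]$ via the graded $\B_n$-module isomorphism $\R[\underline X]\cong \R[p_2,\ldots,p_{2n}]\otimes_\R \R[\underline X]_{\B_n}$ multiplies each basis element by a product of even power sums $p_{2j}$, which are fully supported and hence do not change the relative support structure — this gives Corollary~\ref{cor:symbasis} and the uniform-in-$n$ statement, since the higher Specht polynomials themselves involve only the variables $X_1,\ldots,X_d$ when $\deg = d$.
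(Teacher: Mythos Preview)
Your proposal rests on an incorrect structural claim: the higher Specht polynomials $F_T^S$ of Ariki--Terasoma--Yamada are \emph{not} of the form $x^{\iota(S)}\cdot \Delta_T$. They are obtained by applying a Young symmetrizer to a monomial, and the row-symmetrization step typically spreads the support to variables outside $\operatorname{supp}(\Delta_T)$. A type-$A$ example already shows this: for $n=4$, shape $(3,1)$, and $T$ with first column $\{1,4\}$, one has $\Delta_T=x_1-x_4$ with support $\{1,4\}$, but for the standard tableau $S$ with $3$ in the second row one computes $F_T^S\propto (x_2+x_3)(x_4-x_1)$, which has full support $\{1,2,3,4\}$. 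So neither the monomial factorization nor even the weaker support claim holds for the higher Specht polynomials as written, and your argument collapses at the step ``multiplication by a monomial never introduces new variables.'' Your proposed workaround (supports agree up to a $\B_n$-permutation, then normalize) does not help either, since the obstruction is not a permutation mismatch but genuinely extra variables.

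The paper's proof repairs exactly this, and its mechanism is what you are missing. It uses only that $\spe_T\mapsto F_T^S$ is $\B_n$-equivariant: the subgroup $\B_{n-m}$ acting on the variables outside $\operatorname{supp}(\spe_T)=\{X_1,\dots,X_m\}$ fixes $\spe_T$, hence also fixes $F_T^S$. Writing $F_T^S=\sum_\beta f_\beta(X_{m+1},\dots,X_n)\,X_1^{\beta_1}\cdots X_m^{\beta_m}$, each coefficient $f_\beta$ is therefore $\B_{n-m}$-invariant, i.e.\ a polynomial in the $p_{2k}(X_{m+1},\dots,X_n)$. The coinvariant relations $p_{2k}(X_1,\dots,X_n)\equiv 0$ then give $p_{2k}(X_{m+1},\dots,X_n)\equiv -p_{2k}(X_1,\dots,X_m)$, and substituting produces a representative $\widetilde F_T^S$ supported on $\{X_1,\dots,X_m\}$. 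In the example above (with $\S_4$ in place of $\B_n$) this is precisely the step $x_2+x_3\equiv -(x_1+x_4)$, turning $(x_2+x_3)(x_4-x_1)$ into $x_1^2-x_4^2$. The desired support property thus holds only for the \emph{modified} basis obtained via the quotient relations, not for the original higher Specht polynomials.
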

\begin{proof}
It was proven by Ariki, Terasoma and Yamada \cite{ariki1997higher} that there exists a symmetry-adapted basis of $\R[\underline{X}]_{\B_n}$ where the part with respect to the isotypic component of $(\lambda,\mu)$ is the set of all higher Specht polynomials $F_\mathbf{T}^\mathbf{S}$ where $\mathbf{S}$ ranges over the set of standard bitableaux of shape $(\lambda,\mu)$ and $\mathbf{T}$ is a fixed bitableau of same shape. Then, $F_\mathbf{T}^\mathbf{T}$ is actually equal to the Specht polynomial $\spe_\mathbf{T}$ up to multiplication by some scalar in $\R\setminus \{0\}$. Let $\operatorname{supp}(\spe_\mathbf{T})$ denote the support of $\spe_\mathbf{T}$. Without loss of generality we can suppose $\operatorname{supp}(\spe_\mathbf{T})=\{X_1,\ldots,X_m\}$. For any standard bitableau $\mathbf{S}$ of same shape as $\mathbf{T}$ we have
\[ F_\mathbf{T}^\mathbf{S} =  \sum_{\alpha \in \N_0^n} c_\alpha x^\alpha = \sum_{\alpha \in \N_0^n} c_\alpha \prod_{i =1}^mX_i^{\alpha_i}\prod_{m+1}^nX_i^{\alpha_i} = \sum_{\beta \in \N_0^m} f_\beta (\underline{X})\prod_{i=1}^mX_i^{\beta_{i}}, \]
where the $ f_\beta $ are some polynomials with $\operatorname{supp}(f_\beta) \subset \{X_{m+1},\ldots,X_n\}$, for all $\beta \in \N_0^m$. 
Since $\spe_\mathbf{T} \mapsto F_\mathbf{T}^\mathbf{S}$ defines a $\B_n$-equivariant linear isomorphism, we observe that permuting any of the variables, which do not lie in $\operatorname{supp}(\spe_\mathbf{T})$, does not change $F_\mathbf{T}^\mathbf{S}$. Analogously, the change of sign of any of these variables does not change $\spe_\mathbf{T}$ and therefore also $F_\mathbf{T}^\mathbf{S}.$ Therefore, each polynomial $f_\beta$ must be an invariant polynomial with respect to the reflection action of the hyperoctahedral group $\B_{n-m}$ acting on $\R[X_{m+1},\ldots,X_n]$ by permutation and change of signs. \\
Then for any $\beta \in \N_0^m$ we have $f_\beta(\underline{X}) = g_\beta (p_2(X_{m+1},\ldots,X_n),\ldots,p_{2(n-m)}(X_{m+1},\ldots,X_n))$ for some $(n-m)$-variate polynomials $g_\beta$.
Since 
\[p_{2k}(X_{m+1},\ldots,X_n) \equiv -p_{2k}(X_1,\ldots,X_m) \mod (p_2(\underline{X}),\ldots,p_{2n}(\underline{X}))\]
we have 
\[ F_\mathbf{T}^\mathbf{S} \equiv \sum_{\beta \in \N_0^m} g_\beta(-p_2(X_1,\ldots,X_m),\ldots,-p_{2(n-m)}(X_1,\ldots,X_m)) \prod_{i=1}^m X_i^{\beta_{i}}=: \widetilde{F}_\mathbf{T}^\mathbf{S} \mod (p_2,\ldots,p_{2n}).\]
The polynomial $\widetilde{F}_\mathbf{T}^\mathbf{S}$ has the same support as $\spe_\mathbf{T}$ and can be used instead of $F_\mathbf{T}^\mathbf{S}$ in the symmetry adapted basis which proves the Lemma.
\end{proof}

\begin{corollary} \label{cor:symbasis} 
There exists a symmetry adapted basis $\mathcal{F}=\{ f_{{(\lambda,\mu)},i_{(\lambda,\mu)}} \, \, \mid \, \, \text{$(\lambda,\mu) \vdash d$}, i_{(\lambda,\mu)} =1,\ldots,m_{(\lambda,\mu)}\}$ of the $\B_d$-module $H_{d,d}$ such that any $f_{{(\lambda,\mu)},i_{(\lambda,\mu)}}$ is a sum of terms of the form $x_1^{\alpha_1}\cdots x_d^{\alpha_d} \cdot p_\kappa$ for exponents $\alpha_1,\ldots,\alpha_d \in \mathbb{Z}_{\geq 0}$ and even partitions $\kappa$, and the union of the supports of the monomials $x^\alpha$ in any term $x^\alpha p_\kappa$ of $f_{{(\lambda,\mu)},i_{(\lambda,\mu)}}$ equals the support of a fixed Specht polynomial of shape ${(\lambda,\mu)}$. Moreover, if one replaces the $d$-variate even symmetric polynomials by $n$-variate even symmetric polynomials in any term $x^\alpha p_\kappa$ of $f_{{(\lambda,\mu)},i_{(\lambda,\mu)}}$, then $\mathcal{F}$ is a symmetry adapted basis of $H_{n,d}$ in any number of variables $n \geq d$.
\end{corollary}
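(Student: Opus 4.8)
\textbf{Proof proposal for Corollary~\ref{cor:symbasis}.}

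The plan is to bootstrap Lemma~\ref{lem:modifiedHigherSpecht} through the graded $\B_d$-module isomorphism $\R[\underline{X}]\cong \R[p_2,\ldots,p_{2d}]\otimes_\R \R[\underline{X}]_{\B_d}$, and then use the representation stability of Proposition~\ref{prop:repr stability}(ii) to pass from $d$ variables to $n\geq d$ variables. First I would fix $n=d$. Lemma~\ref{lem:modifiedHigherSpecht} provides a symmetry-adapted basis of the coinvariant algebra $\R[\underline{X}]_{\B_d}$ whose isotypic block for $(\lambda,\mu)$ consists of (representatives of) the modified higher Specht polynomials $\widetilde F_{\mathbf T}^{\mathbf S}$, each of which has support contained in, and in fact equal to, the support of a fixed Specht polynomial of shape $(\lambda,\mu)$. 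Lifting to $\R[\underline{X}]$ via the isomorphism above means multiplying these representatives by monomials $p_{2}^{a_2}\cdots p_{2d}^{a_{2d}}$ in even power sums; taking all such products of degree $d-\deg(\widetilde F_{\mathbf T}^{\mathbf S})$ yields a symmetry-adapted basis $\mathcal F$ of $H_{d,d}$ (restricting to the homogeneous degree-$d$ component), since tensoring with the trivial-isotypic factor $\R[p_2,\ldots,p_{2d}]$ preserves the isotypic decomposition. Each basis element is therefore a sum of terms $X^{\alpha}p_\kappa$ where $X^\alpha$ comes from expanding a $\widetilde F_{\mathbf T}^{\mathbf S}$ and $p_\kappa:=p_{2}^{a_2}\cdots p_{2d}^{a_{2d}}$ is the lifting factor (an even partition $\kappa$); by construction the union of the supports of the $X^\alpha$ appearing in a single basis element equals the support of the fixed Specht polynomial of shape $(\lambda,\mu)$, which is the first assertion.

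Next I would address the stability claim: replacing every $d$-variate even power sum $p_{2k}(X_1,\ldots,X_d)$ by $p_{2k}(X_1,\ldots,X_n)$ in each term $X^\alpha p_\kappa$ produces a set $\mathcal F$ of polynomials in $H_{n,d}$, and the goal is that this is a symmetry-adapted basis there. The key inputs are: (a) by Proposition~\ref{prop:repr stability}(ii) the isotypic types occurring in $H_{n,d}$ are exactly the $(\mu+ke_1,\nu)$ for $(\mu,\nu)$ occurring in $H_{d,d}$ with matching multiplicities, so the number of basis vectors in each isotypic block is already correct (and $\dim H_{n,d}=\dim H_{d,d}$ for $n\geq d$ in each graded piece after identifying power sums — consistent with the setup in the excerpt where $H_{n,2d}^{\B}$ has dimension $\pi(d)$ for all $n\geq d$); (b) the $\B_n$-action on a term $X^\alpha p_\kappa(X_1,\ldots,X_n)$ restricted to the subgroup $\B_d$ permuting and signing only $X_1,\ldots,X_d$ factors through the $\B_d$-action on $X^\alpha$ (since $p_\kappa$ is $\B_n$-invariant hence $\B_d$-invariant), so the $\B_d$-equivariant structure on the span of $\mathcal F$ in $H_{n,d}$ is literally the same as on the span in $H_{d,d}$; and (c) linear independence is preserved — if a nontrivial linear combination of the $n$-variate versions vanished, specializing $X_{d+1}=\cdots=X_n=0$ (which sends $p_{2k}(X_1,\ldots,X_n)\mapsto p_{2k}(X_1,\ldots,X_d)$) would give a nontrivial relation among the original $d$-variate basis vectors, a contradiction. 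Combining (a), (b), (c) with the fact that $\mathcal F$ spans a $\B_n$-submodule whose $\B_d$-restriction and whose dimension match those of $H_{n,d}$, and that symmetry-adaptedness is detected on the generating Specht-module equivariants (whose supports are unchanged under the substitution), gives that $\mathcal F$ is a symmetry-adapted basis of $H_{n,d}$.

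The main obstacle I anticipate is making precise that the substitution $p_{2k}(X_{[d]})\mapsto p_{2k}(X_{[n]})$ genuinely intertwines the $\B_n$-module structures rather than merely the $\B_d$-ones: one must check that the full $\B_n$-orbit of a modified higher Specht polynomial of shape $(\mu,\nu)$ lands in the span of $\mathcal F$ and realizes the Specht module $\mathbb S^{(\mu+ke_1,\nu)}$, not some other constituent. I would handle this by invoking the explicit description of the stabilization isomorphism in \cite{riener2013exploiting,debus2023reflection}: the embedding $H_{d,d}\hookrightarrow H_{n,d}$ that adds $k=n-d$ to the first part of each partition is induced precisely by multiplying by an appropriate symmetrization / by the polarization operators that commute with the relevant Young symmetrizers, and under this embedding the image of a higher Specht polynomial is (up to the power-sum rescaling, which is $\B_n$-invariant) again a higher Specht polynomial, hence the isotypic decomposition is transported correctly. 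Once that identification is in place, everything else — support equality, the form $X^\alpha p_\kappa$, and the basis property — follows formally from Lemma~\ref{lem:modifiedHigherSpecht} and Proposition~\ref{prop:repr stability}, so no further computation is required.
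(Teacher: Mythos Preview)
Your proposal is correct and follows essentially the same route as the paper: lift Lemma~\ref{lem:modifiedHigherSpecht} through the graded isomorphism $\R[\underline{X}]\cong\R[p_2,\dots,p_{2d}]\otimes\R[\underline{X}]_{\B_d}$ to obtain the desired basis of $H_{d,d}$, then replace $d$-variate even power sums by $n$-variate ones and appeal to Proposition~\ref{prop:repr stability}(ii). The paper dispatches the passage from $\B_d$- to $\B_n$-equivariance in one line (``by construction of $\mathcal{F}$ \ldots the maps $F_{\mathbf T}^{\mathbf T}\mapsto f^{(n)}$ define isomorphisms of $\B_n$-modules'') and uses triviality of intersections of orbit spans for independence, whereas you spell out the obstacle more carefully and verify independence via the specialization $X_{d+1}=\cdots=X_n=0$; these are minor stylistic differences, not a different strategy.
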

\begin{proof}
By Lemma \ref{lem:modifiedHigherSpecht} there exists a symmetry adapted basis of the direct sum of the graded components of degree at most $d$ of the coinvariant algebra $\R[\underline{X}]_{\B_d}$ which consists only of polynomials with same support as a fixed corresponding $\B_d$-Specht polynomial $F_{\mathbf{T}_{(\la,\mu)}}^{\mathbf{T}_{(\la,\mu)}}$ for each $(\la,\mu) \vdash d$.  
By multiplying the polynomials with even power sums such that the obtained degree equals $d$, we obtain a symmetry-adapted basis \[\mathcal{F}=\{ f_{{(\lambda,\mu)},i_{(\lambda,\mu)}} \, \, \mid \, \, {(\lambda,\mu)} \vdash n, i_{(\lambda,\mu)} =1,\ldots,m_{(\lambda,\mu)}\}\] of $H_{d,d}$. 
Moreover, we can suppose that $F_{\mathbf{T}_{(\la,\mu)}}^{\mathbf{T}_{(\la,\mu)}} \mapsto f_{{(\lambda,\mu)},i_{(\lambda,\mu)}}$ defines an isomorphism of $\B_d$-modules.
All terms of basis elements in $\mathcal{F}$ are of the form $X_1^{\alpha_1}\cdots X_d^{\alpha_d} \cdot p_\kappa$ for exponents $\alpha_1,\ldots,\alpha_d \in \mathbb{Z}_{\geq 0}$ and even partitions $\kappa$. For $n \geq d$ let $f^{(n)}_{{(\lambda,\mu)},i_{(\lambda,\mu)}}$ be the polynomial where in each term $X_1^{\alpha_1}\cdots X_d^{\alpha_d} \cdot p_\kappa$ of $f_{{(\lambda,\mu)},i_{(\lambda,\mu)}}$ the power sums in $d$ variables are replaced by power sums in $n$ variables. 
By construction of $\mathcal{F}$, for any $n \geq d$ the maps $F_{\mathbf{T}_{(\la,\mu)}}^{\mathbf{T}_{(\la,\mu)}} \mapsto f_{{(\lambda,\mu)},i_{(\lambda,\mu)}}^{(n)}$ define naturally isomorphisms of $\B_n$-modules.
Since the linear spans of the orbits of pairwise distinct basis elements of $\mathcal{F}$ intersect trivially in $H_{d,d}$, the same holds for $\mathcal{F}^{(n)}$ and $H_{n,d}$.
Together with the representation stability from Proposition \ref{prop:repr stability} we obtain that $\mathcal{F}^{(n)}$ is a symmetry-adapted basis of $H_{n,d}$.
\end{proof}

\begin{remark} \label{rem:symbasis}
An analogous result to Corollary \ref{cor:symbasis} holds for the $\S_n$-modules $H_{n,d}$ and $n \geq 2d$ with almost the same proof.
\end{remark}

\begin{proposition}\label{prop:spectrahedral cones}
    For any degree $2d$ the sets $\S\Sigma_{2d}^*$ and $\B\Sigma_{2d}^*$ are spectrahedral cones. 
\end{proposition}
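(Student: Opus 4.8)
The plan is to deduce this directly from the uniform symmetry-reduced description of $\S\Sigma_{2d}$ and $\B\Sigma_{2d}$ as psd projections, which is established in Corollary \ref{cor:symbasis} together with Proposition \ref{prop:repr inv sos}. First I would recall that, by Corollary \ref{cor:symbasis} (and Remark \ref{rem:symbasis} for the symmetric group), there is a symmetry-adapted basis of $H_{n,d}$ that is uniform in $n\ge d$ (resp. $n\ge 2d$): each basis element is a sum of terms $X^\alpha p_\kappa$ with the power sums $p_\kappa$ being the only part that depends on the number of variables. Symmetrizing pairwise products of basis elements lying in the same isotypic component yields, for each $n$, block matrices $\mathbf N_n$ (resp. $A_n^{(i)}$) whose entries are symmetric functions in the fixed degree-$2d$ graded component, and whose entries converge as $n\to\infty$ to well-defined symmetric-function entries $\mathbf N$; this convergence was already used in Section \ref{sec:trop BSigma_2d} and in Examples \ref{ex:partialsym Bsos6}, \ref{ex:sos8}, \ref{ex:sos10}. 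The upshot is that $\B\Sigma_{2d}=\Pi(\mathbf N^{(1)},\dots,\mathbf N^{(k)})$ is the psd projection with respect to a finite list of matrices whose entries are homogeneous symmetric functions of degree $2d$, and likewise $\S\Sigma_{2d}=\Pi(M^{(1)},\dots,M^{(m)})$.

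Next I would pass to duals. If $\Sigma=\Pi(M^{(1)},\dots,M^{(m)})\subset R$, where $R$ is the graded component (identified with $\R^{\pi(d)}$ via the power sum basis), then for a functional $\varphi\in R^*\cong\R^{\pi(d)}$ one has $\varphi\in\Sigma^*$ if and only if $\varphi\bigl(\Tr(A_i M^{(i)})\bigr)\ge 0$ for all $A_i\in\PSD_{r_i}$. Since $\varphi$ is linear, $\varphi\bigl(\Tr(A_i M^{(i)})\bigr)=\Tr\bigl(A_i\,\varphi(M^{(i)})\bigr)$, where $\varphi(M^{(i)})$ is the real symmetric matrix obtained by applying $\varphi$ entrywise. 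By self-duality of the PSD cone, the condition $\Tr(A_i\,\varphi(M^{(i)}))\ge 0$ for all $A_i\in\PSD_{r_i}$ is equivalent to $\varphi(M^{(i)})\succeq 0$. Hence
\[
\B\Sigma_{2d}^*=\Bigl\{\varphi\in\R^{\pi(d)} : \varphi(\mathbf N^{(1)})\oplus\cdots\oplus\varphi(\mathbf N^{(k)})\succeq 0\Bigr\},
\]
and analogously for $\S\Sigma_{2d}^*$. Each $\varphi\mapsto\varphi(\mathbf N^{(i)})$ is a linear map from $\R^{\pi(d)}$ into symmetric matrices, so this is exactly a linear matrix inequality: the dual cones are spectrahedral cones. (This is precisely the recipe already used explicitly for $\S\Sigma_4^*$ below Definition \ref{def:psdprojection} and for $\B\Sigma_6^*$ in Example \ref{ex:trop(S6)}.)

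The only genuine content to nail down is that $\S\Sigma_{2d}$ and $\B\Sigma_{2d}$ really are the psd projections of finitely many matrices with entries in $R$ — i.e.\ that the symmetry reduction is compatible with taking the limit $n\to\infty$ in the inverse system. For $\B\Sigma_{2d}$ one uses that $\B\Sigma_{2d}=\bigcap_{n\ge 2d}\Sigma_{n,2d}^{\B}$ and that, under the uniform basis of Corollary \ref{cor:symbasis}, the spectrahedral description of $\Sigma_{n,2d}^{\B}$ in the power sum coordinates is literally the same LMI for all $n\ge 2d$ once the entries are evaluated at the fixed degree (the varying power sums $p_\kappa$ are simply the coordinate functions of $R$, independent of $n$); hence the intersection over $n$ equals this single psd projection. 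The analogous statement for $\S\Sigma_{2d}$ uses Remark \ref{rem:symbasis} and $n\ge 2d$. I expect this compatibility-with-the-limit argument to be the main (and essentially only) obstacle; everything after it is the routine duality computation sketched above.
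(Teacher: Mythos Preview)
Your approach is essentially the paper's: express $\S\Sigma_{2d}$ and $\B\Sigma_{2d}$ as psd projections $\Pi(\mathbf M^{(1)},\dots,\mathbf M^{(k)})$ with entries in the degree-$2d$ symmetric functions (obtained from symmetry reduction together with the limit $n\to\infty$), and then dualize via self-duality of $\PSD$ to obtain a linear matrix inequality. The paper carries this out with \emph{partial} symmetry reduction (all terms $X^\alpha p_\lambda$) rather than the isotypic decomposition you invoke through Corollary~\ref{cor:symbasis} and Proposition~\ref{prop:repr inv sos}, but this is only cosmetic; your duality computation $\varphi\in\Sigma^*\iff\varphi(M^{(i)})\succeq 0$ is correct and is exactly what the paper uses in its last sentence.

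There is, however, a real error in your last paragraph. The assertion that the LMI describing $\Sigma_{n,2d}^{\B}$ is ``literally the same for all $n\ge 2d$'' is false. While Corollary~\ref{cor:symbasis} guarantees that the \emph{basis elements} $f_{(\lambda,\mu),i}$ are uniform in $n$ (only the power-sum factors change their number of variables), the matrix entries are $\sym_{\B_n}(f_i f_j)$, and the Reynolds operator introduces $n$-dependent scalar coefficients: e.g.\ $\sym_{\S_n}(X^\alpha p_\lambda)=c_n\, m_\alpha p_\lambda$ with $c_n\sim n^{-k}$ ($k$ the support size of $\alpha$). So without rescaling these entries tend to $0$, and the LMIs genuinely vary with $n$. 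The correct statement---which you already wrote in your first paragraph and which the paper actually uses---is that after the rescaling by $n^{\ell(\alpha)/2}$ from partial symmetry reduction (Section~\ref{sec:trop BSigma_2d}), the entries of $\mathbf M_n$ converge to nonzero limits $\mathbf M$, and then $\B\Sigma_{2d}=\Pi(\mathbf M)$. Replace the ``literally the same'' justification by this convergence argument; the identification $\bigcap_n\Pi(\mathbf M_n)=\Pi(\mathbf M)$ is the one step that both you and the paper leave somewhat implicit.
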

\begin{proof}
We only prove the claim for $\S\Sigma_{2d}^*$ since the proof for $\B\Sigma_{2d}^*$ follows analogously. Using partial symmetry reduction and Remark \ref{rem:symbasis} we observe that for any $n \geq d$ the set $\Sigma_{n,2d}^{\S}$ is the psd projection with respect to the matrix obtained by symmetrizing pairwise products of all homogeneous degree $d$ terms of the form $X_1^{\a_1}\cdots X_d^{\a_d}p_\la$, where $p_\la$ is in $n$-variables. The product of such terms is again of this form but of degree $2d$. Suppose $X_1^{\a_1}\cdots X_d^{\a_d}p_\la$ is of degree $2d$ and for $1 \leq i \leq 2d$ let $\beta_i$ denote the number of integers $1 \leq j \leq 2d$ with $\a_j = i$ and $|\a|:=\sum_{i=1}^d \a_i$. We have $\sym_{S_n}(X_1^{\a_1}\cdots X_d^{\a_d}p_\la) = \frac{\beta_1!\cdots\beta_{2d}! (n-|\a|)!}{n!} m_\a p_\la \longrightarrow \beta_1!\cdots \beta_{2d}!\mathfrak{m}_\alpha \p_\la$ in $\R^{\pi(d)}$ for $n \longrightarrow \infty$ by identification with respect to the transition maps of the inverse system $H_{n,2d}^{\B}$. The set $\S\Sigma_{2d}$ is then the psd projection with respect to a matrix filled with terms of the form $\beta_1!\cdots \beta_{2d}!\mathfrak{m}_\alpha \p_\la$. Using a base change from monomial symmetric functions to power sum functions we have that $\S\Sigma_{2d}$ is the psd projection with respect to a matrix $\mathbf{M}_{2d}$ whose coefficients are linear combinations of power sum functions. In particular, $\S\Sigma_{2d}^*$ is the spectrahedron defined as the positive semidefinite locus of the matrix obtained from $\mathbf{M}_{2d}$ by replacing the $\p_\la$'s with a dual basis of $\R^{\pi(2d)}$. 
\end{proof}

\subsection{Spectrahedral representations}
We calculate, using partial symmetry reduction, spectrahedral representations of the cones $\B\Sigma_8,\B\Sigma_{10}$ and $\S\Sigma_4$, and the H-representation of the rational polyhedra $\trop(\B\Sigma_{2d}^*)$ for $d=3,4,5$.

\begin{example}\label{ex:sos4}
Partial symmetry reduction to compute $\S\Sigma_4$.
    \begin{alignat*}{3}
        &\v_0&&:=(p_1^2,p_2)&&\Rightarrow M_{\v_0}=\begin{bmatrix}
            \p_{(1^4)} & \p_{(2,1^2)}\\
            \p_{(2,1^2)} & p_{(2^2)}
        \end{bmatrix}\\
        &\v_1&&:=n^{1/2}x_1(p_1,x_1)&&\Rightarrow M_{\v_1}=\begin{bmatrix}
            \p_{(2,1^2)} & \p_{(3,1)}\\
            \p_{(3,1)} & \p_{(4)}
        \end{bmatrix}\\
        &\v_2&&:=n(x_1x_2)&&\Rightarrow M_{\v_2}=2\m_{(2^2)}=\p_{(2^2)}-\p_{(4)}
    \end{alignat*}
$$\Rightarrow\S\Sigma_4=\Pi\left(\begin{bmatrix}
            \p_{(1^4)} & \p_{(2,1^2)}\\
            \p_{(2,1^2)} & p_{(2^2)}
        \end{bmatrix},\begin{bmatrix}
            \p_{(2,1^2)} & \p_{(3,1)}\\
            \p_{(3,1)} & \p_{(4)}
        \end{bmatrix},\p_{(2^2)}-\p_{(4)}\right).$$
\end{example}

\begin{example}\label{ex:sos8}
    Partial symmetry reduction to compute $\B\Sigma_8$ and $\trop(\B\Sigma_8^*)$.

    We follow the same strategy of Example \ref{ex:partialsym Bsos6}. The vector $\v_{(i,j)}$ has entries $x^\a p_\la$ of degree 4 where $\la$ is an even partition (possibly empty), $x^\a$ has support $\{X_1,\dots,X_{i+j}\}$, $\a_1,\dots,\a_i$ are odd and $\a_{i+1},\dots,\a_{i+j}$ are even. Observe $\v_{(i,j)}$ is rescaled with $n^{(i+j)/2}$. Let $M_{\mathbf{u}}:=\lim_{n\to\infty}\sym_{\B_n}(\mathbf{u}^\top\mathbf{u})$. We obtain,
    \begin{alignat*}{3}
        &\v_{(0,0)}&&:=(p_2^2,\, p_4)&&\Rightarrow M_{\v_{(0,0)}}=\begin{bmatrix}
        \p_{(2^4)} & \p_{(4,2^2)} \\
        \p_{(4,2^2)} & \p_{(4^2)}
    \end{bmatrix}\\
        &\v_{(0,1)}&&:=n^{1/2}X_1^2(p_2,\, X_1^2)&&\Rightarrow M_{\v_{(0,1)}}=\begin{bmatrix}
        \p_{(4,2^2)} & \p_{(6,2)}\\
        \p_{(6,2)} & \p_{(8)}
    \end{bmatrix}\\
        &\v_{(2,0)}&&:=nX_1X_2(p_2,\, X_1^2,\, X_2^2)&&\Rightarrow M_{\v_{(2,0)}}=\begin{bmatrix}
        2\m_{(2^2)}\p_{(2^2)} & \m_{(4,2)}\p_{(2)} & \m_{(4,2)}\p_{(2)}\\
        \m_{(4,2)}\p_{(2)} & \m_{(6,2)} & 2\m_{(4^2)}\\
        \m_{(4,2)}\p_{(2)} & 2\m_{(4^2)} & \m_{(6,2)}
    \end{bmatrix}\\
        &\v_{(0,2)}&&:=n(X_1^2X_2^2)&&\Rightarrow M_{\v_{(0,2)}}=2!\m_{(4^2)}=\p_{(4^2)}-\p_{(8)}\\
        &\v_{(2,1)}&&:=n^{3/2}(X_1X_2X_3^2)&&\Rightarrow M_{\v_{(2,1)}}=2!\m_{(4,2^2)}=\p_{(4,2^2)} -2\p_{(6,2)} -\p_{(4^2)} +2\p_{(8)}\\
        &\v_{(4,0)}&&:=n^2(X_1X_2X_3X_4)&&\Rightarrow M_{\v_{(4,0)}}=4!\m_{(2^4)}=\p_{(2^4)} -6\p_{(4,2^2)} +8\p_{(6,2)} +3\p_{(4^2)} -6\p_{(8)}.
    \end{alignat*}
Converting to the power sum basis we obtain
\begin{align*}
    AM_{\v_{(2,0)}}A^\top&=\begin{bmatrix}
        \p_{(2^4)}-\p_{(4,2^2)} & \p_{(4,2^2)}-\p_{(6,2)} & 0\\
        \p_{(4,2^2)}-\p_{(6,2)} & \frac12\p_{(6,2)}+\frac12p_{(4^2)}-\p_{(8)} & 0\\
        0 & 0 & 2\p_{(6,2)}-2\p_{(4^2)}
    \end{bmatrix}
\end{align*}

$\Rightarrow\B\Sigma_8=\Pi(M_{\v_{(0,0)}},M_{\v_{(0,1)}},AM_{\v_{(2,0)}}A^\top,M_{\v_{(0,2)}},M_{\v_{(2,1)}},M_{\v_{(4,0)}})$ where $A=\begin{bmatrix}
    1 & 0 & 0\\
    0 & 1/2 & 1/2\\
    0 & 1 & -1
\end{bmatrix}$.

We compute $\trop(\B\Sigma_8^*)$ using Lemma \ref{le:trop of sos} and Corollary \ref{cor:refinement}, and compute facet defining inequalities using SAGE to obtain the inequalities given in Proposition \ref{prop:trop sos sextics}.
\end{example}

\begin{example}\label{ex:sos10}
    Partial symmetry reduction to compute $\B\Sigma_{10}$ and $\trop(\B\Sigma_{10}^*)$.

    \begin{alignat*}{3}
        &\v_{(1,0)}&&:=n^{1/2}X_1(p_2^2,\, X_1^2p_2,\, p_4,\, X_1^4)&&\Rightarrow M_{\v_{(1,0)}}=\begin{bmatrix}
\p_{(2^5)} & \p_{(4,2^3)} & \p_{(4,2^3)} & \p_{(6,2^2)}\\
\p_{(4,2^3)} & \p_{(6,2^2)} & \p_{(4^2,2)} & \p_{(8,2)}\\
\p_{(4,2^3)} & \p_{(4^2,2)} & \p_{(4^2,2)} & \p_{(6,4)}\\
\p_{(6,2^2)} & \p_{(8,2)} & \p_{(6,4)} & \p_{(10)} 
\end{bmatrix}\\
        &\v_{(1,1)}&&:=nX_1X_2^2(p_2,\, X_2^2,\, X_1^2)&&\Rightarrow M_{\v_{(1,1)}}=\begin{bmatrix}
\m_{(4,2)}\p_{(2^2)} & \m_{(6,2)}\p_{(2)} & 2\m_{(4^2)}\p_{(2)}\\
\m_{(6,2)}\p_{(2)} & \m_{(8,2)} & \m_{(6,4)}\\
2\m_{(4^2)}\p_{(2)} & \m_{(6,4)} & \m_{(6,4)}
\end{bmatrix}\\
        &\v_{(3,0)}&&:=n^{3/2}X_1X_2X_3(p_2,\, X_1^2,\, X_2^2,\, X_3^2)&&\Rightarrow M_{\v_{(3,0)}}=\begin{bmatrix}
6\m_{(2^3)}\p_{(2^2)} & 2\m_{(4,2^2)}\p_{(2)} & 2\m_{(4,2^2)}\p_{(2)} & 2\m_{(4,2^2)}\p_{(2)}\\
2\m_{(4,2^2)}\p_{(2)} & 2\m_{(6,2^2)} & 2\m_{(4^2,2)} & 2\m_{(4^2,2)}\\
2\m_{(4,2^2)}\p_{(2)} & 2\m_{(4^2,2)} & 2\m_{(6,2^2)} & 2\m_{(4^2,2)}\\
2\m_{(4,2^2)}\p_{(2)} & 2\m_{(4^2,2)} & 2\m_{(4^2,2)} & 2\m_{(6,2^2)}
\end{bmatrix}\\
        &\v_{(1,2)}&&:=n^{3/2}(X_1X_2^2X_3^2)&&\Rightarrow M_{\v_{(1,2)}}=\m_{(4^2,2)}\\
        &\v_{(3,1)}&&:=n^2(X_1X_2X_3X_4^2)&&\Rightarrow M_{\v_{(3,1)}}=\m_{(4,2^3)}\\
        &\v_{(5,0)}&&:=n^{5/2}(X_1X_2X_3X_4X_5)&&\Rightarrow M_{\v_{(5,0)}}=\m_{(2^5)}.
    \end{alignat*}

Let $a:=\p_{(2^5)}$, $b:=\p_{(4,2^3)}$, $c:=\p_{(6,2^2)}$, $d:=\p_{(4^2,2)}$, $e:=\p_{(8,2)}$, $f:=\p_{(6,4)}$, $g:=\p_{(10)}$. Converting to the power sum basis we obtain
\begin{align*}
M_{\v_{(1,1)}}&=\begin{bmatrix}
b-c & c-e & d-e\\
c-e & e-g & f-g\\
d-e & f-g & f-g
\end{bmatrix}\\
    AM_{\v_{(3,0)}}A^\top&=\begin{bmatrix}
a-3b+2c & b-2c-d+2e & 0 & 0\\
b-2c-d+2e & \frac13(c+2d-4e-5f+6g) & 0 & 0\\
0 & 0 & \frac32(c-d-e+f) & 0\\
0 & 0 & 0 & 2(c-d-e+f)
\end{bmatrix}\\
2!M_{\v_{(1,2)}}&=d - e - 2f + 2g\\
3!M_{\v_{(3,1)}}&=b - 3c - 3d + 6e + 5f - 6g\\
5!M_{\v_{(5,0)}}&=a - 10b + 20c + 15d - 30e - 20f + 24g
\end{align*}

$\Rightarrow\B\Sigma_{10}=\Pi(M_{\v_{(1,0)}},M_{\v_{(1,1)}},AM_{\v_{(3,0)}}A^\top,M_{\v_{(1,2)}},M_{\v_{(3,1)}},M_{\v_{(5,0)}})$, where $A=\begin{bmatrix}
    1 & 0 & 0 & 0\\
    0 & 1/3 & 1/3 & 1/3\\
    0 & -1 & 1/2 & 1/2\\
    0 & 0 & -1 & 1
\end{bmatrix}$. 

We compute $\trop(\B\Sigma_{10}^*)$ using Lemma \ref{le:trop of sos} and Corollary \ref{cor:refinement}, and compute facet defining inequalities using SAGE to obtain the inequalities given in Proposition \ref{prop:trop sos sextics}.
\end{example}

\end{document}